\documentclass[reqno]{amsart}
\usepackage{amssymb,amsmath,hyperref}
\usepackage{amsrefs, float}
\usepackage[foot]{amsaddr}
\usepackage{bbold,stackrel, multicol}
\usepackage{mathrsfs}  
 

\synctex=1
\newcommand{\Z}{{\textsf{\textup{Z}}}}



\newtheorem{thm}{Theorem}

\newtheorem{cor}[thm]{Corollary}
\newtheorem{defi}[thm]{Definition}
\newtheorem{rem}[thm]{Remark}
\newtheorem{nota}[thm]{Notation}

\newtheorem{princ}[thm]{Principle}

\newtheorem{ack}[thm]{Acknowledgement}

\newtheorem*{tempo*}{Template}

\newcommand\be{\begin{equation}}
\newcommand\ee{\end{equation}} 

\usepackage{amsmath,amsfonts} 

\usepackage[applemac]{inputenc}

\def\bdefi{\begin{defi}}
\def\edefi{\end{defi}}
\def\bnota{\begin{nota}\rm}
\def\enota{\end{nota}}
\def\FIVE{\Pi_{1}^{1}\text{-\textup{\textsf{CA}}}_{0}}
\def\SIX{\Pi_{2}^{1}\text{-\textsf{\textup{CA}}}_{0}}

\def\SIXK{\Pi_{k}^{1}\text{-\textsf{\textup{CA}}}_{0}^{\omega}}

\def\ZFC{\textup{\textsf{ZFC}}}
\def\ZF{\textup{\textsf{ZF}}}

\def\osc{\textup{\textsf{osc}}}

\def\L{\textsf{\textup{L}}}

 \def\r{\mathbb{r}}

\def\RCA{\textup{\textsf{RCA}}}
\def\({\textup{(}}
\def\){\textup{)}}

\def\RCAo{\textup{\textsf{RCA}}_{0}^{\omega}}
\def\ACAo{\textup{\textsf{ACA}}_{0}^{\omega}}

\def\WKL{\textup{\textsf{WKL}}}

\def\bye{\end{document}}
\def\N{{\mathbb  N}}
\def\Q{{\mathbb  Q}}
\def\R{{\mathbb  R}}

\def\SS{\textup{\textsf{S}}}

\def\di{\rightarrow}

\def\asa{\leftrightarrow}

\def\ACA{\textup{\textsf{ACA}}}

\def\QFAC{\textup{\textsf{QF-AC}}}
\def\AC{\textup{\textsf{AC}}}

\def\ACG{\textup{\textsf{ACG}}}
\def\PHP{\textup{\textsf{PHP}}}

\def\osc{\textup{\textsf{osc}}}
\def\FIN{\textup{\textsf{FIN}}}
\def\alt{\textup{\textsf{alt}}}

\def\NCC{\textup{\textsf{NCC}}}
\def\MCC{\textup{\textsf{MCC}}}

\def\INDY{\textup{\textsf{IND}}_{0}}

\def\NIN{\textup{\textsf{NIN}}}
\def\NCC{\textup{\textsf{NCC}}}

\def\BCT{\textup{\textsf{BCT}}}
\def\WBCT{\textup{\textsf{WBCT}}}

\def\open{\textup{\textsf{open}}}

\def\IND{\textup{\textsf{IND}}}

\def\eps{\varepsilon}

\def\ECF{\textup{\textsf{ECF}}}

\newcommand{\F}{{\bf F}}

\usepackage{graphicx}
\usepackage{tikz}
\usetikzlibrary{matrix}
\usepackage{comment,tikz-cd}

\setcounter{tocdepth}{3}
\numberwithin{equation}{section}
\numberwithin{thm}{section}

\begin{document}
\title{Big in Reverse Mathematics: Measure and Category}
\author{Sam Sanders}
\address{Department of Philosophy II, RUB Bochum, Germany}
\email{sasander@me.com}
\keywords{Baire category theorem, pigeonhole principle for measure spaces, Reverse Mathematics, higher-order arithmetic, weak continuity}
\subjclass[2010]{03B30, 03F35}
\begin{abstract}
The smooth development of large parts of mathematics hinges on the idea that some sets are `small' or `negligible' and can therefore
be ignored for a given purpose.  The perhaps most famous smallness notion, namely `measure zero', originated with Lebesgue, while a second smallness notion, namely `meagre' or `first category', originated 
with Baire around the same time.  The associated \emph{Baire category theorem} is a central result governing the properties of meagre (and related) sets, while the same holds for Tao's \emph{pigeonhole principle for measure spaces} and measure zero sets.  
In this paper, we study these theorems in Kohlenbach's \emph{higher-order Reverse Mathematics}, identifying a considerable number of equivalent and robust theorems.
The latter involve most basic properties of semi-continuous and pointwise discontinuous functions, Blumberg's theorem, Riemann integration, and Volterra's early work circa 1881. 
All the aforementioned theorems fall (far) outside of the \emph{Big Five} of Reverse Mathematics, and we investigate natural restrictions like Baire 1 and quasi-continuity that make these theorems provable again in the Big Five (or similar).  Finally, despite the fundamental differences between measure and category, the proofs of our equivalences turn out to be similar.
\end{abstract}


\maketitle
\thispagestyle{empty}

\section{Introduction}\label{intro}
In nutshell, we study basic theorems concerning measure and category in Kohlenbach's \emph{higher-order} Reverse Mathematics (RM for short; see Section \ref{prelim}).  
We obtain numerous equivalences for the \emph{Baire category theorem} and Tao's \emph{pigeonhole principle for measure spaces}, involving basic properties of \emph{semi-continuous}, \emph{Riemann integrable}, and \emph{pointwise discontinuous} functions, where the first notion is fairly tame and the latter is wild, as discussed in Remark \ref{donola}.  
An overview of our results is in Section~\ref{vower} while preliminaries may be found in Section~\ref{helim}.
We discuss Volterra's early work from 1881 in Section~\ref{vintro}, as it pertains to this paper.   
\subsection{Summary and background}\label{vower}
We provide some background for and an overview of the results to be proved in Sections \ref{mainn} and \ref{related}. 

\smallskip

First of all, it is a commonplace that the smooth development of large parts of mathematics hinges on the idea that some sets are `small' or `negligible' and can therefore
be ignored for a given purpose.  While some calculus students may not (ever) realise it, 
even introductory calculus depends (perhaps indirectly) on a specific smallness notion, in light of the \emph{Vitali-Lebesgue theorem} as follows.
\begin{thm}[Vitali-Lebesgue]
{A function on the unit interval is Riemann integrable if and only if it is continuous {almost everywhere} and bounded. }
\end{thm}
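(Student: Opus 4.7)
The plan is to work via the \emph{oscillation function}. For $f$ bounded on $[0,1]$, define $\osc_f(x) := \inf_{\delta > 0}\sup\bigl\{|f(y) - f(z)| : y, z \in (x-\delta, x+\delta)\cap [0,1]\bigr\}$. Then $f$ is continuous at $x$ if and only if $\osc_f(x) = 0$, and for each $\eta > 0$ the set $D_\eta := \{x\in [0,1] : \osc_f(x) \geq \eta\}$ is closed in $[0,1]$. The full discontinuity set of $f$ equals $D := \bigcup_{n\geq 1} D_{1/n}$, an $F_\sigma$-set.

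For the forward direction, suppose $f$ is Riemann integrable; boundedness then holds by definition. Given $\eta, \epsilon > 0$, pick a partition $P$ of $[0,1]$ with $U(f,P) - L(f,P) < \eta\epsilon$. If the interior of a subinterval of $P$ meets $D_\eta$, its oscillation is at least $\eta$, so the total length of such subintervals is at most $\epsilon$. Together with the finitely many partition points (of length zero), this exhibits $D_\eta$ as covered by finitely many intervals of total length $\leq \epsilon$. Letting $\epsilon \to 0$ shows each $D_{1/n}$ has outer measure zero, and therefore so does $D$.

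For the backward direction, assume $|f|\leq M$ and $D$ has measure zero. Given $\epsilon > 0$, fix $\eta \in (0,\epsilon/2)$. Since $D_\eta \subseteq D$ is closed (hence compact in $[0,1]$) and of measure zero, cover it by a \emph{finite} family of open intervals of total length $< \epsilon/(4M+1)$. The complement of this cover in $[0,1]$ is a finite union of compact intervals $K_j$ at each point of which $\osc_f < \eta$; a standard compactness argument yields a subpartition of each $K_j$ whose pieces have oscillation $< \eta$. Assembling these into a single partition $P$ of $[0,1]$ gives $U(f,P) - L(f,P) < 2M \cdot \epsilon/(4M+1) + \eta \cdot 1 < \epsilon$, and Darboux's criterion yields Riemann integrability.

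The main obstacle is the cover-extraction step in the backward direction: passing from the qualitative assumption that $D_\eta$ has measure zero (phrased via countable open covers) and the compactness of $D_\eta$ to a genuinely \emph{finite} subcover of controlled total length. Classically this is immediate from Heine--Borel, but from the higher-order RM perspective informing the paper, principles combining compactness with measure-theoretic smallness (such as $\WHBU$-style statements) are exactly the sort that tend to lie outside the Big Five, so a fully formal proof would need to keep careful track of which such principle is being invoked. The forward direction, by contrast, is essentially a routine application of the Darboux criterion once the oscillation function is in hand.
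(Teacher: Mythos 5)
Your proof is the standard classical argument and, at the level of the underlying mathematics, it follows the same route the paper takes: decompose the discontinuity set as $D_f=\bigcup_n D_{1/n}$ via the oscillation function, show each $D_\eta$ is closed and (in the forward direction) of measure zero by a partition argument, and handle the backward direction by compactness. The forward partition argument is essentially the paper's Corollary \ref{sepa}, and the backward direction is what the paper delegates to a formalisation of Brown's Riemann-sum proof in $\ACAo+\QFAC^{0,1}$.

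That said, your assessment of where the difficulty lies is inverted relative to the paper's analysis, and the one step you treat as free is the entire point. The inference ``each $D_{1/n}$ has outer measure zero, and therefore so does $D$'' is precisely Tao's pigeonhole principle for measure spaces $\PHP_{[0,1]}$ applied to the increasing sequence of closed measure-zero sets $D_{1/n}$; Theorem \ref{duck555} shows that this direction of Vitali--Lebesgue is \emph{equivalent} to $\PHP_{[0,1]}$, and Theorem \ref{pinko} shows that $\PHP_{[0,1]}$ is not provable in $\Z_{2}^{\omega}+\QFAC^{0,1}$. So the forward direction is not ``routine'': its last line carries all the logical weight, whereas the cover-extraction step in the backward direction, which you single out as the main obstacle, is exactly the part the paper can carry out in $\ACAo+\QFAC^{0,1}$. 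Two smaller discrepancies: boundedness does not hold ``by definition'' under the Riemann-sum definition of integrability used here --- the paper proves it via sequential compactness and $\QFAC^{0,1}$ --- and the Darboux criterion $U(f,P)-L(f,P)<\epsilon$ is not directly available, since the supremum principle for Riemann integrable functions is itself hard to prove in this setting; the paper's partition argument instead compares two tagged Riemann sums $S(f,P')$ and $S(f,P'')$ whose tags are chosen inside the subintervals meeting $D_\eta$.
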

The associated smallness notion `measure zero' originated with Lebesgue (\cite{lebesborn}) and is perhaps the most famous\footnote{A less famous smallness notion is \emph{porosity} (see \cite{poro}), which we may study in a future paper.} such notion.  
A second smallness notion, namely `meagre' or `first category', originated with Baire around the same time (\cite{beren2}).  
Tao discusses the connection between these two notions in \cite{taoeps}*{\S1.7}.  As pointed out by Smith in \cite{snutg}, the conflation of measure and category 
led to a number of incorrect results by Hankel (\cite{hankelwoot}) and others regarding the Riemann integral.   
In this context, Smith introduced the famous \emph{Cantor set} some years before Cantor did (\cite{snutg}).  

\smallskip

Secondly, the \emph{Baire category theorem} is a central result governing the properties of meagre sets (see  \cite{taokejes}*{\S3.2}), while Tao's \emph{pigeonhole principle for measure spaces} plays the same role for measure zero sets (\cite{taoeps}*{\S1.7}).  We shall use the obvious abbreviations $\BCT_{[0,1]}$ and $\PHP_{[0,1]}$ in the below.  
By Theorems \ref{tonko} and~\ref{pinko},  the third-order principles $\BCT_{[0,1]}$ and $\PHP_{[0,1]}$ are \emph{hard to prove} in terms of
conventional\footnote{The system $\Z_{2}^{\omega}$ from Section \ref{lll} is a conservative extension of $\Z_{2}$ via third-order comprehension functionals $\SS_{k}^{2}$ that decide $\Pi_{k}^{1}$-formulas.  By \cite{dagsamVII}*{Theorem 6.16} or \cite{dagsamX}*{Theorem 3.4}, $\Z_{2}^{\omega}+\QFAC^{0,1}$ cannot prove $\BCT_{[0,1]}$; a similar result is proved in Theorem \ref{tonko}.\label{kabal}} comprehension.  
Similar hardness results (see \cite{dagsamVII}) exist for Kleene's computability theory based on S1-S9 (\cite{longmann}); moreover, we have established a large number of (S1-S9) \emph{computationally} equivalent principles for $\BCT_{[0,1]}$ in \cite{samcsl23}.  By contrast, $\BCT_{[0,1]}$ and $\PHP_{[0,1]}$ \emph{restricted to second-order codes for closed sets} (see \cite{simpson2}*{II.5.6}) are provable in relatively weak systems by \cite{simpson2}*{II.5.8} and Theorem \ref{pinko}.

\smallskip

Thirdly, in light of the above, the RM-study of $\BCT_{[0,1]}$ and $\PHP_{[0,1]}$ is an important and natural enterprise, and constitutes the topic of this paper.
In particular, we show in Section \ref{mainn} that $\BCT_{[0,1]}$ is equivalent to the well-known principles listed below, working in {higher-order} RM (see Section~\ref{prelim}). 
The notion of \emph{semi-continuity} goes back to Baire (\cite{beren}) while the notion of \emph{pointwise discontinuity} goes back to Hankel (\cite{hankelwoot}) and is equivalent to \emph{cliquishness} (see Section \ref{cdef} for definitions).
\begin{enumerate}
\renewcommand{\theenumi}{\roman{enumi}}
\item For upper semi-continuous (usco for short) $f:[0,1]\di \R$, there is a point $x\in [0,1]$ where $f$ is continuous (or: quasi-continuous, or: Darboux).\label{fil1}
\item For Baire $1^{*}$ $f:[0,1]\di \R$, there is a point $x\in [0,1]$ where $f$ is continuous.\label{fil2}
\item For fragmented $f:[0,1]\di \R$, there is $x\in [0,1]$ where $f$ is continuous.\label{filx}
\item For cliquish $f:[0,1]\di \R$, there is a point $x\in [0,1]$ where $f$ is continuous.\label{fil3}
\item (Volterra \cite{volaarde2}) For cliquish $f,g:[0,1]\di \R$, there is $x\in [0,1]$ such that $f$ and $g$ are both continuous or both discontinuous at $x$. \label{volkert1}
\item (Volterra \cite{volaarde2}) For cliquish $f:[0,1]\di \R$, there is either $q\in \Q\cap [0,1]$ where $f$ is discontinuous, or $x\in [0,1]\setminus \Q$ where $f$ is continuous.\label{volkert2}
\item The previous two items restricted to usco functions. 
\item Blumberg's theorem (\cite{bloemeken}) restricted to cliquish (or: usco) functions. \label{fil8}
\end{enumerate}
Some of the above theorems stem from Volterra's early work (1881) in the spirit of -but predating- the Baire category theorem, as discussed in Section~\ref{vintro}.  
The (necessity for the) absence of `Baire 1' from the above items is explained at the end of this section.  
In Section \ref{related}, we obtain equivalences for $\PHP_{[0,1]}$, including the following well-known principles, like the aforementioned Vitali-Lebesgue theorem.
\begin{enumerate}
\renewcommand{\theenumi}{\roman{enumi}}
\setcounter{enumi}{8}
\item \(Vitali-Lebesgue\) For Riemann integrable $f:[0,1]\di \R$, the set of continuity points $C_{f}$ has measure one.\label{tao1X}
\item For Riemann integrable $f:[0,1]\di [0,1]$ such that $\int_{0}^{1}f(x)dx=0$, the set $\{x\in [0,1]:f(x)=0\}$ has measure one.\label{taoX3}

\item \textsf{\textup{(FTC)}} For Riemann integrable $f:[0,1]\di \R$ with $F(x):=\lambda x.\int_{0}^{x}f(t)dt$, the following set exists and has measure one: 
\[
\{x\in [0,1]:F \textup{ is differentiable at $x$ with derivative } f(x)\}
\]
\item The previous item restricted to usco or cliquish functions.\label{taoX7}
\end{enumerate}
We obtain similar equivalences for $\WBCT_{[0,1]}$, a `hybrid' version of $\BCT_{[0,1]}$ and $\PHP_{[0,1]}$, with equivalences that are interesting in their own right.  
The relation between these and related principles may be found in Figure \ref{xxx} in Section \ref{reflm}. 

\smallskip
\noindent
Fourth, the above items suggests that our results are \emph{robust} as follows:
\begin{quote}
A system is \emph{robust} if it is equivalent to small perturbations of itself. (\cite{montahue}*{p.\ 432}; emphasis in original)
\end{quote}
Most of our results shall be seen to exhibit a similar (or stronger) level of robustness.  For instance, item \eqref{fil3} above can be restricted to any Baire class $\alpha\geq 2$ and the equivalence still goes through.  
In this light, we feel that $\BCT_{[0,1]}$ and $\PHP_{[0,1]}$ deserve the moniker `big' system in the way this notion is used in second-order RM, namely as boasting many equivalences from different fields of mathematics. 

\smallskip

Fifth, items \eqref{fil1}-\eqref{taoX7} are \emph{hard to prove} in terms of conventional comprehension, in the sense of Footnote \ref{kabal}.  
We shall identify natural extra conditions that render the above items
provable in terms of arithmetical comprehension (or at least the Big Five), like quasi-continuity (resp.\ Baire 1) for item \eqref{fil3} (resp.\ item \eqref{fil1}).
As discussed in Remark \ref{donola}, quasi-continuity and cliquishness are however closely related. 
Moreover, usco (and fragmented) functions are of course Baire 1, say over $\ZF$; 
there is no contradiction here as e.g.\ the statement 
\begin{center}
\emph{a bounded usco function on the unit interval is Baire 1} 
\end{center}
already implies (stronger principles than) $\BCT_{[0,1]}$ by Corollary~\ref{lopi}.  This explains why the above items~\eqref{fil1}-\eqref{taoX7} do not (and cannot) deal with Baire 1 or quasi-continuous functions: the latter conditions make the associated theorems provable in relatively weak systems, like arithmetical comprehension.  

\smallskip

Finally, we discuss the bigger picture relating to $\PHP_{[0,1]}$ and $\BCT_{[0,1]}$ in Section~\ref{bigger}. 
We argue that the RM of the uncountability of $\R$ is the product of taking the RM of $\PHP_{[0,1]}$ and the RM of $\BCT_{[0,1]}$ and pushing everything down from usco/cliquish functions to the level of regulated/bounded variation functions.  This explains why the RM of $\PHP_{[0,1]}$ and the RM of $\BCT_{[0,1]}$ are similar, despite the fundamental differences between measure and category. 
We also explore the connections between our results and set theory (Section \ref{biggerer}), and show (Section \ref{nex}) that \emph{slight} variations of Banach's theorem, i.e.\ that the continuous nowhere differentiable are dense in $C([0,1])$, are hard to prove in the sense of Footnote \ref{kabal}. 

\subsection{Volterra's early work and related results}\label{vintro}
We introduce Volterra's early work from \cite{volaarde2}, called a \emph{historical gem} in \cite{dendunne}, and related results.

\smallskip

First of all, the Riemann integral was groundbreaking for a number of reasons, including its ability to integrate functions with infinitely many points of discontinuity, as shown by Riemann himself (\cite{riehabi}). 
A natural question is then `how discontinuous' a Riemann integrable function can be.  In this context, Thomae introduced the function $T:\R\di\R$ around 1875 in \cite{thomeke}*{p.\ 14, \S20} as follows:
\be\label{thomae}\tag{\textup{\textsf{T}}}
T(x):=
\begin{cases} 
0 & \textup{if } x\in \R\setminus\Q\\
\frac{1}{q} & \textup{if $x=\frac{p}{q}$ and $p, q$ are co-prime} 
\end{cases}.
\ee
Thomae's function $T$ is integrable on any interval, but has a dense set of points of discontinuity, namely $\Q$, and a dense set of points of continuity, namely $\R\setminus \Q$. 

\smallskip

The perceptive student, upon seeing Thomae's function as in \eqref{thomae}, will ask for a function continuous at each rational point and discontinuous at each irrational one.
Such a function cannot exist, as is generally proved using the Baire category theorem.  
However, Volterra in \cite{volaarde2} already established this negative result about twenty years before the publication of the Baire category theorem.

\smallskip

Secondly, as to the content of Volterra's paper \cite{volaarde2}, we find the following theorem, where a function is \emph{pointwise discontinuous} if it has a dense set of continuity points.
On the real line, this is equivalent to being \emph{cliquish} (see Section \ref{cdef}).
\begin{thm}[Volterra, 1881]\label{VOL}
There do not exist pointwise discontinuous functions defined on an interval for which the continuity points of one are the discontinuity points of the other, and vice versa.
\end{thm}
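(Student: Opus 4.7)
The plan is to reproduce Volterra's original argument, which predates (and avoids) the Baire category theorem and relies only on the nested interval property of $\R$ together with the density of the continuity sets guaranteed by pointwise discontinuity.

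Suppose for contradiction that $f,g\colon[a,b]\to\R$ are both pointwise discontinuous and that the continuity set of each equals the discontinuity set of the other, so in particular the continuity sets $C_f$ and $C_g$ are disjoint. First I would pick a continuity point $x_1$ of $f$ in the interior of $[a,b]$, which exists since $C_f$ is dense, and then use the oscillation characterisation of continuity to choose a closed interval $I_1\subset(a,b)$ around $x_1$ on which the oscillation of $f$ is less than $\tfrac{1}{2}$. Since $C_g$ is likewise dense, I can pick a continuity point $x_2$ of $g$ in the interior of $I_1$ and shrink to a closed interval $I_2\subset I_1$ of diameter at most $\tfrac{1}{2}|I_1|$ on which $g$ has oscillation less than $\tfrac{1}{4}$. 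Iterating and alternating the roles of $f$ and $g$ produces a nested sequence $I_1\supset I_2\supset I_3\supset\cdots$ of closed intervals with diameters tending to $0$, on which the oscillation of $f$ (for odd indices) and of $g$ (for even indices) is less than $2^{-n}$. The nested interval property delivers a point $x^{*}\in\bigcap_{n}I_n$, and the combined oscillation bounds force both $f$ and $g$ to be continuous at $x^{*}$, contradicting $C_f\cap C_g=\emptyset$.

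The main subtlety is the alternating selection step: at each stage one has to actually exhibit a continuity point of one of the functions inside the open interior of the previously-constructed closed interval. Classically this is immediate from density of the respective continuity set, but in the higher-order RM framework that motivates the paper one has to be careful about how such points are witnessed, which is presumably why the equivalent formulation via cliquish functions (recall Section~\ref{cdef}) becomes the right handle for the subsequent Reverse Mathematics analysis. Apart from this point, the only ingredients used are the completeness of $\R$ and the standard local upper bound on oscillation coming from pointwise continuity, both of which are available even in very weak base systems.
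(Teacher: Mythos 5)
Your argument is correct as a classical proof — it is in fact Volterra's original 1881 argument — but it takes a genuinely different route from the paper. The paper never proves Theorem \ref{VOL} by nested intervals: the statement is established (in its formalised versions) in Theorem \ref{flonk} by deriving it from $\BCT_{[0,1]}$, namely by covering $D_{f}$ and $D_{g}$ with countable unions of closed nowhere dense sets (the sets $F_{q}$ from the proof of Theorem \ref{clockn} for usco functions, or the oscillation level sets $D_{k}^{f}=\{x:\osc_{f}(x)\geq \frac{1}{2^{k}}\}$ for cliquish/pointwise discontinuous ones), observing that the hypothesis $C_{f}=D_{g}$ and $C_{g}=D_{f}$ forces $[0,1]=D_{f}\cup D_{g}=\cup_{q\in\Q}(F_{q}\cup H_{q})$, and contradicting the Baire category theorem. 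Your nested-interval construction essentially inlines the one instance of $\BCT_{[0,1]}$ that is needed; it is more elementary and self-contained, and it faithfully reproduces the historical proof predating Baire. What the paper's route buys is the reversal: the same decomposition into closed nowhere dense sets is what makes Volterra's theorem \emph{equivalent} to $\BCT_{[0,1]}$ over $\ACAo$, hence unprovable in $\Z_{2}^{\omega}$. This vindicates the caveat you raise yourself: the alternating selection of continuity points, and of intervals witnessing small oscillation around them, is exactly where the logical strength hides, so your argument, while classically valid, cannot be carried out in the weak higher-order systems the paper works in. One small point of hygiene: take $I_{n+1}$ inside the \emph{interior} of $I_{n}$, so that the limit point $x^{*}$ is interior to every $I_{n}$ and the oscillation of $f$ \emph{at} $x^{*}$ (computed over balls centred at $x^{*}$) is genuinely bounded by the oscillation of $f$ over the odd-indexed intervals.
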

Volterra then states two corollaries, of which the following is perhaps well-known in `popular mathematics' and constitutes the aforementioned negative result. 
\begin{cor}[Volterra, 1881]\label{VOLcor}
There is no $\R\di\R$ function that is continuous on $\Q$ and discontinuous on $\R\setminus\Q$. 
\end{cor}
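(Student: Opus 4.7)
The plan is to derive Corollary \ref{VOLcor} as an immediate consequence of Theorem \ref{VOL} by pairing the hypothetical function with Thomae's function $T$ from \eqref{thomae}. First, I would suppose for contradiction that some $f:\R\to\R$ is continuous at every $q\in\Q$ and discontinuous at every $x\in\R\setminus\Q$. Since $\Q$ is dense in $\R$, the continuity set of $f$ is dense, so $f$ is pointwise discontinuous in the sense used in Theorem \ref{VOL} (equivalently, cliquish on the real line, as noted in Section \ref{cdef}). On the other hand, Thomae's function $T$ from \eqref{thomae} is continuous precisely on $\R\setminus\Q$ and discontinuous precisely on $\Q$, and since $\R\setminus\Q$ is dense, $T$ is likewise pointwise discontinuous.

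The second step is simply to observe that the continuity sets of $f$ and $T$ are exact complements in $\R$: the continuity points of $f$ are the discontinuity points of $T$ (namely $\Q$), and vice versa (namely $\R\setminus\Q$). Thus the pair $(f,T)$ satisfies the hypothesis ruled out by Theorem \ref{VOL}, yielding the desired contradiction. The only point that requires a word of care is the verification that $T$ behaves as claimed; this is standard and can be carried out without leaving the base theory, using only the definition in \eqref{thomae} and the density of $\Q$ and $\R\setminus\Q$. I do not expect any real obstacle: the corollary is essentially a one-line specialisation of Theorem \ref{VOL} once Thomae's function is in hand.
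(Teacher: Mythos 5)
Your proposal is correct and is essentially the paper's own derivation: the paper obtains Volterra's corollary from Volterra's theorem by pairing the hypothetical function with Thomae's function $T$ from \eqref{thomae}, which is continuous precisely on the irrationals and discontinuous precisely on $\Q$, so that the two functions have complementary continuity sets and are both pointwise discontinuous (see the treatment of the corresponding items in the proof of Theorem \ref{flonk}). No gaps.
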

We note that pointwise discontinuous functions were already studied by Dini before 1878, including an equivalent definition (see \cite{dinipi}*{\S63}) that amounts to \emph{cliquishness} (see Section \ref{cdef}) and the observation that Riemann integrable functions are pointwise discontinuous, following Hankel (see \cite{dinipi}*{\S188} and \cite{hankelwoot}).

\smallskip

Thirdly,  Volterra's results from \cite{volaarde2} are generalised in \cite{volterraplus,gaud}.  
The following theorem is immediate from these generalisations. 
\begin{thm}\label{dorki}
For any countable dense set $D\subset [0,1]$ and $f:[0,1]\di \R$, either $f$ is discontinuous at some point in $D$ or continuous at some point in $[0,1]\setminus D$. 
\end{thm}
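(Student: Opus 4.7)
The plan is to proceed by contradiction and reduce to Volterra's Theorem \ref{VOL}. Assume for contradiction that $f$ is continuous at every $d \in D$ and discontinuous at every $x \in [0,1]\setminus D$. Since $D$ is dense in $[0,1]$, the continuity set $C_f$ contains a dense set, so $f$ is pointwise discontinuous on $[0,1]$, equivalently (as noted in the excerpt) cliquish.

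The key step is to manufacture a cliquish companion function $g:[0,1]\to \R$ whose continuity set equals $[0,1]\setminus D$, so that the pair $(f,g)$ has interchanged continuity and discontinuity sets. Using the enumeration $D = (d_n)_{n\in\N}$ supplied by the countability hypothesis, I would set
\[
 g(x) := \sum_{n\,:\,d_n \leq x} 2^{-n},
\]
which is a bounded nondecreasing step function. At each $d_n$ the value $g$ jumps by $2^{-n}$, so $g$ is discontinuous on $D$. For any $x_0 \in [0,1]\setminus D$ and $\varepsilon > 0$, pick $N$ with $\sum_{n \geq N} 2^{-n} < \varepsilon$; since $x_0 \neq d_0,\dots,d_{N-1}$, there is a neighbourhood of $x_0$ avoiding these finitely many points, inside which $|g(x)-g(x_0)| \leq \sum_{n \geq N} 2^{-n} < \varepsilon$. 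Hence $g$ is continuous precisely on $[0,1]\setminus D$. This set is dense in $[0,1]$ because no interval can fit inside the countable set $D$, so $g$ is cliquish.

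Finally, the cliquish functions $f$ and $g$ have exactly complementary sets of continuity and discontinuity points, which contradicts Theorem \ref{VOL}. The main obstacle I anticipate is not a single hard step but rather making the construction of $g$ and its continuity analysis uniform enough to proceed in the weaker higher-order systems of interest: the tail-of-series estimate is entirely standard, but one has to verify that the finite-avoidance argument around $x_0$ is formalised in a way compatible with the available comprehension and the representation of the countable dense set $D$. Everything else — the cliquishness of $f$, the density of $[0,1]\setminus D$, and the final appeal to Theorem \ref{VOL} — is immediate from the hypotheses and from results already present in the excerpt.
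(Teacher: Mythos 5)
Your strategy is sound and, as far as one can tell, it is the intended one: the paper itself gives no proof of Theorem \ref{dorki}, merely asserting that it is ``immediate'' from the generalisations of Volterra's work in the cited references, and those generalisations proceed exactly as you do --- reduce to the two-function statement of Theorem \ref{VOL} by manufacturing a companion $g$ whose continuity set is $[0,1]\setminus D$. (This is also how the paper itself gets Corollary \ref{VOLcor}: there the companion is Thomae's function.) Your verification that $f$ is pointwise discontinuous, that $[0,1]\setminus D$ is dense, and the tail estimate for $g$ are all correct classically.

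Two small points. First, your monotone jump function $g(x)=\sum_{n:\,d_n\leq x}2^{-n}$ has an endpoint defect: if $0\in D$, say $0=d_m$, then the jump $2^{-m}$ occurs ``from the left'' and is invisible inside $[0,1]$, so $g$ is in fact continuous at $0$ and $C_g=([0,1]\setminus D)\cup\{0\}$, which breaks the exact complementarity required by Theorem \ref{VOL}. The cleanest repair is to use the generalised Thomae function instead: $g(d_n):=2^{-n}$ and $g(x):=0$ for $x\notin D$; the same choice of $N$ and the same finite-avoidance neighbourhood show $C_g=[0,1]\setminus D$ with no endpoint caveat, and $g$ is pointwise discontinuous since $[0,1]\setminus D$ is dense. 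Second, be aware that in this paper ``countable'' means only that some $Y:\R\di\N$ is injective on $D$ (Definition \ref{char}); an enumeration $(d_n)_{n\in\N}$ is what the paper calls ``enumerable'', and the two are not interchangeable over the weak base theories at issue. Since Theorem \ref{dorki} is stated in the introduction as a classical fact this does not affect your proof, but it is precisely the distinction the paper must confront when it revisits this statement in item \eqref{volare5} of Theorem \ref{flonk}, where the reverse-mathematical version is derived from $\BCT_{[0,1]}$ rather than from Theorem \ref{VOL}.
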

A related result is as follows, which we will study for e.g.\ cliquish functions. 
\begin{thm}[Blumberg's theorem, \cite{bloemeken}]
For any $f:\R\di \R$, there is a dense subset $D\subset \R$ such that the restriction of $f$ to $D$, usually denoted $f_{\upharpoonright D}$, is continuous.  
\end{thm}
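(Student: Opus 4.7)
The plan is to follow the classical proof of Blumberg's theorem, whose central ingredient is the Baire category theorem. The backbone is the following \emph{key lemma}: for any non-empty open interval $I\subseteq \R$ and any $\eps>0$, there exist a non-empty open subinterval $J\subseteq I$ and a set $D\subseteq J$ dense in $J$ such that the diameter of $f(D)$ is below $\eps$. To prove this, I would partition $\R$ into countably many half-open intervals $K_{m}$ ($m\in\N$) each of length less than $\eps$, and consider the pullbacks $A_{m}:=f^{-1}(K_{m})\cap I$. Since $I=\bigcup_{m\in \N}A_{m}$ and the Baire category theorem (applied on any compact subinterval of $I$) forbids $I$ from being a countable union of nowhere dense sets, some $A_{m_{0}}$ is somewhere dense: its closure contains a non-empty open $J\subseteq I$, whence $D:=A_{m_{0}}\cap J$ is dense in $J$ with $f(D)\subseteq K_{m_{0}}$.

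Next, I would iterate the key lemma. Enumerate the open intervals of $\R$ with rational endpoints as $(I_{k})_{k\in \N}$. At stage $n\geq 1$, build a maximal pairwise-disjoint family $\{J_{\alpha}^{n}\}_{\alpha}$ of non-empty open intervals of length at most $1/n$, together with subsets $E_{\alpha}^{n}\subseteq J_{\alpha}^{n}$ dense in $J_{\alpha}^{n}$ and satisfying $\mathrm{diam}(f(E_{\alpha}^{n}))<1/n$: start with the empty family and repeatedly insert a new piece provided by the key lemma into the complement of what has been chosen so far and inside some $I_{k}$. Maximality then forces $\bigcup_{\alpha}J_{\alpha}^{n}$ to be dense in $\R$. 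Arrange stage $n+1$ to refine stage $n$, in the sense that each $J_{\beta}^{n+1}$ sits inside some $J_{\alpha}^{n}$ with $E_{\beta}^{n+1}\subseteq E_{\alpha}^{n}$, by applying the key lemma inside rational subintervals of each $E_{\alpha}^{n}$.

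Finally, let $D$ be a countable dense selection from $\bigcap_{n}\bigcup_{\alpha}E_{\alpha}^{n}$, obtained by diagonalising across the rational enumeration (one point per $I_{k}$ eventually surviving all refinements). To verify continuity at $x\in D$: given $\eps>0$ pick $n$ with $1/n<\eps$ and the unique $\alpha$ with $x\in J_{\alpha}^{n}$; any $y\in D\cap J_{\alpha}^{n}$ lies in $E_{\alpha}^{n}$, hence $|f(x)-f(y)|<1/n<\eps$. The main obstacle I foresee is the bookkeeping required to keep $D$ dense under the nested refinements: the maximal choices at each stage must be interleaved with the rational enumeration in such a way that the diagonal selection is still dense in $\R$. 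This is the technical heart of Blumberg's original argument and is typically handled by insisting that each new piece added at stage $n$ be forced into every rational interval it can meet and be chosen fine enough to survive further refinement, the Baire-type argument of the key lemma ensuring that such pieces always exist.
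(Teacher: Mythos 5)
The paper does not actually prove this statement: Blumberg's theorem is quoted from \cite{bloemeken} as classical background, and the paper only establishes \emph{restricted} versions (for usco, cliquish, quasi-continuous, regulated $f$), where the proof is simply that $D=C_{f}$ works once the relevant form of the Baire category theorem shows that $C_{f}$ is dense. So your attempt at the full classical argument must be judged on its own merits.

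Your key lemma and the single-stage maximality argument are correct, and the final continuity check is fine, but the iteration has two gaps, one repairable and one fatal as written. The repairable one: ``applying the key lemma inside each $E_{\alpha}^{n}$'' is not licensed by the Baire category theorem, since a dense subset of an interval may be meagre (e.g.\ countable), and a countable decomposition of such a set need have no somewhere-dense piece; the repair is to remember that $E_{\alpha}^{n}$ has the special form $f^{-1}(K)\cap J_{\alpha}^{n}$ with $K$ a \emph{bounded} interval, so that the refinement is a \emph{finite} decomposition and one only needs that a finite union of nowhere dense sets is nowhere dense. The fatal one: the set $\bigcap_{n}\bigcup_{\alpha}E_{\alpha}^{n}$ from which you propose to select $D$ can be empty. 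Along any nested branch $J^{1}_{\alpha_{1}}\supseteq J^{2}_{\alpha_{2}}\supseteq\cdots$ with $|J^{n}_{\alpha_{n}}|\leq 1/n$, the sets $E^{n}_{\alpha_{n}}=f^{-1}(K_{n})\cap J^{n}_{\alpha_{n}}$ have intersection contained in $f^{-1}\big(\bigcap_{n}K_{n}\big)\cap\bigcap_{n}J^{n}_{\alpha_{n}}$, i.e.\ at most the single point $x_{0}:=\bigcap_{n}J^{n}_{\alpha_{n}}$, and only if $f(x_{0})$ happens to equal the single point $\bigcap_{n}K_{n}$ --- for which there is no reason. A decreasing sequence of dense sets can have empty intersection, and that is exactly the situation here. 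The classical proof does not take an intersection: it selects the points of $D$ one at a time during the induction and protects each selected point through all later refinements. That protection is only possible because the selection is restricted to points $x$ such that, for every rational interval $(p,q)\ni f(x)$, the preimage $f^{-1}((p,q))$ is dense in open sets arbitrarily close to $x$; the set of $x$ failing this for some $(p,q)$ is meagre, by the elementary observation that $A\setminus\overline{\textup{\textsf{int}}(\overline{A})}$ is always nowhere dense. Your sketch contains no substitute for this ingredient, and the ``bookkeeping'' you defer is precisely the place where it is needed.
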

\noindent
To be absolutely clear, the conclusion of Blumberg's theorem means that 
\[
(\forall x\in D, \eps>0)(\exists \delta>0)\underline{(\forall y\in D)}(|x-y|<\delta\di |f(x)-f(y)|<\eps)), 
\]
where the underlined quantifier marks the difference with `usual' continuity.

\smallskip

Fourth, the Baire category theorem for the real line was first proved by Osgood (\cite{fosgood}) and later by Baire (\cite{beren2}) in a more general setting.
\begin{thm}[Baire category theorem]\label{konkli}
If $ (O_n)_{n \in \N}$ is a sequence of dense open sets of reals, then 
$ \bigcap_{n \in\N } O_n$ is non-empty.
\end{thm}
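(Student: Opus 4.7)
\emph{Proof proposal.} My plan is to carry out the classical nested-interval construction, paying close attention to where choices of real numbers are invoked, since Footnote~\ref{kabal} flags precisely this step as the source of the metamathematical difficulty.

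First, given the sequence $(O_n)_{n\in\N}$, I would inductively build a nested sequence of non-degenerate closed intervals $[a_n,b_n]$ satisfying $b_n-a_n<2^{-n}$ and $[a_n,b_n]\subset O_n$. For the base case $n=0$, I pick any $y_0\in O_0$; by openness of $O_0$, there is $\eps_0>0$ with $(y_0-\eps_0,y_0+\eps_0)\subset O_0$, and I set $[a_0,b_0]$ to be the closed middle third of that interval. For the inductive step, assuming $[a_n,b_n]\subset O_n$ has been defined, density of $O_{n+1}$ yields some $y_{n+1}\in O_{n+1}\cap(a_n,b_n)$, and openness of $O_{n+1}$ then provides $\eps_{n+1}>0$ with $(y_{n+1}-\eps_{n+1},y_{n+1}+\eps_{n+1})\subset O_{n+1}\cap(a_n,b_n)$; from this I extract a closed sub-interval $[a_{n+1},b_{n+1}]$ of length less than $2^{-(n+1)}$.

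Second, the sequences $(a_n),(b_n)$ are Cauchy and nested with lengths tending to zero, so by sequential completeness of $\R$ they share a unique common limit $x\in\bigcap_n[a_n,b_n]$. Since $[a_n,b_n]\subset O_n$ by construction, we have $x\in O_n$ for every $n$, witnessing that $\bigcap_n O_n\neq\emptyset$.

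The main obstacle, in formalising this sketch inside a weak higher-order base theory, is exactly the clause ``pick $y_{n+1}\in O_{n+1}\cap(a_n,b_n)$, together with a witness $\eps_{n+1}$ of openness.'' Because each $O_{n+1}$ is an arbitrary third-order set, presented only via its characteristic functional, the uniform-in-$n$ selection of the pairs $(y_{n+1},\eps_{n+1})$ requires a choice principle such as $\QFAC^{0,1}$, or some canonical selector built from strong comprehension. As recorded in Footnote~\ref{kabal}, even $\Z_{2}^{\omega}+\QFAC^{0,1}$ is insufficient to prove $\BCT_{[0,1]}$; so while the sketch above is exactly the proof one teaches in undergraduate analysis, its faithful formalisation is the nontrivial issue, and this is precisely the phenomenon that drives the hardness and equivalence results catalogued in Section~\ref{mainn}.
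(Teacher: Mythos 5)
Your proof is correct and is exactly the classical nested-interval argument; the paper states Theorem \ref{konkli} only as a background result (attributed to Osgood and Baire) and gives no proof of its own, so there is nothing to diverge from. Your closing diagnosis --- that the one delicate step is the uniform selection of the pairs $(y_{n+1},\eps_{n+1})$, and that this is what makes the third-order $\BCT_{[0,1]}$ hard to prove --- matches the paper's own discussion exactly (Footnote \ref{kabal}, Theorem \ref{tonko}, and the remark that the R2-representation, which supplies precisely such a selector, brings the theorem down to $\ACAo$).
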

We have previously studied the above theorem restricted to the unit interval in \cite{dagsamVII}, but failed to obtain any equivalences.  
In Section \ref{mainn}, we obtain a number of equivalences for the Baire category theorem.  
To this end, we will need some preliminaries and definitions, as in Section~\ref{helim}.

\smallskip

Fifth, Baire has shown that separately continuous $\R^{2}\di \R$ are \emph{quasi-continuous} in one variable (see Section \ref{cdef} for the latter).  
Baire mentions in \cite{beren2}*{p.\ 95} that the latter notion (without naming it) was suggested to him by Volterra.  
The naming and independent study was done later by Kempisty in \cite{kemphaan}.

\smallskip

Finally, in light of our definition of open set in Definition \ref{char}, our study of the Baire category theorem (and the same for \cite{dagsamVII})
is based on a most general definition of open set, i.e.\ no additional (computational) information is given.  Besides the intrinsic interest of such an investigation,
there is a deeper reason, as discussed in Remark \ref{dichtbij}, namely that open sets without any representation are readily encountered `in the wild'.

\subsection{Preliminaries and definitions}\label{helim}
We briefly introduce the program \emph{Reverse Mathematics} in Section \ref{prelim}.
We introduce some essential axioms (Section \ref{lll}) and definitions (Section~\ref{cdef}).  
Our RM-study shall make (sometimes essential) use of \emph{oscillation functions}, a construct which goes back to Riemann and Hankel, as discussed in Section \ref{prebairen}.

\subsubsection{Reverse Mathematics}\label{prelim}
Reverse Mathematics (RM hereafter) is a program in the foundations of mathematics initiated around 1975 by Friedman (\cites{fried,fried2}) and developed extensively by Simpson (\cite{simpson2}).  
The aim of RM is to identify the minimal axioms needed to prove theorems of ordinary, i.e.\ non-set theoretical, mathematics. 

\smallskip

We refer to \cite{stillebron} for a basic introduction to RM and to \cite{simpson2, simpson1,damurm} for an overview of RM.  We expect basic familiarity with RM, in particular Kohlenbach's \emph{higher-order} RM (\cite{kohlenbach2}) essential to this paper, including the base theory $\RCAo$.   An extensive introduction can be found in e.g.\ \cites{dagsamIII, dagsamV, dagsamX} and elsewhere.  

\smallskip

We have chosen to include a brief introduction as a technical appendix, namely Section \ref{RMA}.  
All undefined notions may be found in the latter, while we do point out here that we shall sometimes use common notations from type theory.  For instance, the natural numbers are type $0$ objects, denoted $n^{0}$ or $n\in \N$.  
Similarly, elements of Baire space are type $1$ objects, denoted $f\in \N^{\N}$ or $f^{1}$.  Mappings from Baire space $\N^{\N}$ to $\N$ are denoted $Y:\N^{\N}\di \N$ or $Y^{2}$.

\subsubsection{Some comprehension functionals}\label{lll}
In second-order RM, the logical hardness of a theorem is measured via what fragment of the comprehension axiom is needed for a proof.  
For this reason, we introduce some axioms and functionals related to \emph{higher-order comprehension} in this section.
We are mostly dealing with \emph{conventional} comprehension here, i.e.\ only parameters over $\N$ and $\N^{\N}$ are allowed in formula classes like $\Pi_{k}^{1}$ and $\Sigma_{k}^{1}$.  

\smallskip
\noindent
First of all, the functional $\varphi$ in $(\exists^{2})$ is also \emph{Kleene's quantifier $\exists^{2}$} and is clearly discontinuous at $f=11\dots$ in Cantor space:
\be\label{muk}\tag{$\exists^{2}$}
(\exists \varphi^{2}\leq_{2}1)(\forall f^{1})\big[(\exists n^{0})(f(n)=0) \asa \varphi(f)=0    \big]. 
\ee
In fact, $(\exists^{2})$ is equivalent to the existence of $F:\R\di\R$ such that $F(x)=1$ if $x>_{\R}0$, and $0$ otherwise (see \cite{kohlenbach2}*{Prop.\ 3.12}).  
Related to $(\exists^{2})$, the functional $\mu^{2}$ in $(\mu^{2})$ is called \emph{Feferman's $\mu$} (see \cite{avi2}) and may be found -with the same symbol- in Hilbert-Bernays' Grundlagen (\cite{hillebilly2}*{Supplement IV}):
\begin{align}\label{mu}\tag{$\mu^{2}$}
(\exists \mu^{2})(\forall f^{1})\big[ (\exists n)(f(n)=0) \di [f(\mu(f))=0&\wedge (\forall i<\mu(f))(f(i)\ne 0) ]\\
& \wedge [ (\forall n)(f(n)\ne0)\di   \mu(f)=0]    \big]. \notag
\end{align}
We have $(\exists^{2})\asa (\mu^{2})$ over $\RCAo$ (see \cite{kohlenbach2}*{\S3}) and $\ACAo\equiv\RCAo+(\exists^{2})$ proves the same sentences as $\ACA_{0}$ by \cite{hunterphd}*{Theorem~2.5}. 

\smallskip

Secondly, the functional $\SS^{2}$ in $(\SS^{2})$ is called \emph{the Suslin functional} (\cite{kohlenbach2}):
\be\tag{$\SS^{2}$}
(\exists\SS^{2}\leq_{2}1)(\forall f^{1})\big[  (\exists g^{1})(\forall n^{0})(f(\overline{g}n)=0)\asa \SS(f)=0  \big].
\ee
The system $\FIVE^{\omega}\equiv \RCAo+(\SS^{2})$ proves the same $\Pi_{3}^{1}$-sentences as $\FIVE$ by \cite{yamayamaharehare}*{Theorem 2.2}.   
By definition, the Suslin functional $\SS^{2}$ can decide whether a $\Sigma_{1}^{1}$-formula as in the left-hand side of $(\SS^{2})$ is true or false.   We similarly define the functional $\SS_{k}^{2}$ which decides the truth or falsity of $\Sigma_{k}^{1}$-formulas from $\L_{2}$; we also define 
the system $\SIXK$ as $\RCAo+(\SS_{k}^{2})$, where  $(\SS_{k}^{2})$ expresses that $\SS_{k}^{2}$ exists.  
We note that the operators $\nu_{n}$ from \cite{boekskeopendoen}*{p.\ 129} are essentially $\SS_{n}^{2}$ strengthened to return a witness (if existant) to the $\Sigma_{n}^{1}$-formula at hand.  

\smallskip

\noindent
Thirdly, full second-order arithmetic $\Z_{2}$ is readily derived from $\cup_{k}\SIXK$, or from:
\be\tag{$\exists^{3}$}
(\exists E^{3}\leq_{3}1)(\forall Y^{2})\big[  (\exists f^{1})(Y(f)=0)\asa E(Y)=0  \big], 
\ee
and we therefore define $\Z_{2}^{\Omega}\equiv \RCAo+(\exists^{3})$ and $\Z_{2}^\omega\equiv \cup_{k}\SIXK$, which are conservative over $\Z_{2}$ by \cite{hunterphd}*{Cor.\ 2.6}. 
Despite this close connection, $\Z_{2}^{\omega}$ and $\Z_{2}^{\Omega}$ can behave quite differently, as discussed in e.g.\ \cite{dagsamIII}*{\S2.2}.   
The functional from $(\exists^{3})$ is also called `$\exists^{3}$', and we use the same convention for other functionals.  

\smallskip

Finally, the following negative results were established in \cite{dagsamX, dagsamXI} using the technique \emph{Gandy selection} from Kleene computability theory.
\begin{center}
\emph{Neither $\Z_{2}^{\omega}+\QFAC^{0,1}$ nor $\Z_{2}^{\omega}+\IND_{0}$ can prove $\NIN_{[01]}$, while $\Z_{2}^{\Omega}$ can.}
\end{center}
Here, $\NIN_{[0,1]}$ states that there is no injection from $[0,1]$ to $\N$ as follows:
\[
(\forall Y:[0,1]\di \N)(\exists x, y\in [0,1])(x\ne_{\R} y\wedge Y(x)=_{\N}Y(y)),
\]
and $\IND_{0}$ is the following fragment of the induction axiom. 
\bdefi[$\INDY$]
Let $Y^{2}$ satisfy $(\forall n\in \N)(\exists \textup{ at most one } f\in 2^{\N})(Y(f, n)=0)$.  
For $k\in \N$, there is $w^{1^{*}}$ with $|w|=k$ such that for $m\leq k$, we have:
\[
(w(m)\in 2^{\N}\wedge Y(w(m), m)=0) \asa (\exists f\in 2^{\N})(Y(f, m)=0).
\]
\edefi
A limited number of equivalences for $\IND_{0}$ (resp.\ $\NIN_{[0,1]}$) may be found in \cite{dagsamX}*{\S3} (resp.\ \cite{samcie22}).
A large number of equivalences for a slight \emph{variation} of $\NIN_{[0,1]}$ may be found in \cite{samBIG}, as also discussed in Section \ref{bigger}.

\subsubsection{Some definitions}\label{cdef}
We introduce some definitions needed in the below, mostly stemming from mainstream mathematics.
We assume the standard `epsilon-delta' definitions of continuity and Riemann integrability to be known.
We note that subsets of $\R$ are given by their characteristic functions as in Definition \ref{char}, well-known from measure and probability theory.
We shall generally work over $\ACAo$ as some definitions make little sense over $\RCAo$.

\smallskip

First of all, we make use the usual definition of (open) set, where $B(x, r)$ is the open ball with radius $r>0$ centred at $x\in \R$.
 We note that our notion of `measure zero' does not depend on (the existence of) the Lebesgue measure.
\bdefi[Sets]\label{char}~
\begin{itemize}
\item A subset $A\subset \R$ is given by its characteristic function $F_{A}:\R\di \{0,1\}$, i.e.\ we write $x\in A$ for $ F_{A}(x)=1$, for any $x\in \R$.
\item A subset $O\subset \R$ is \emph{open} in case $x\in O$ implies that there is $k\in \N$ such that $B(x, \frac{1}{2^{k}})\subset O$.
\item A subset $C\subset \R$ is \emph{closed} if the complement $\R\setminus C$ is open. 
\item A set $A\subset \R$ is \emph{enumerable} if there is a sequence of reals that includes all elements of $A$.
\item A set $A\subset \R$ is \emph{countable} if there is $Y: \R\di \N$ that is injective on $A$, i.e.\
\[
(\forall x, y\in A)( Y(x)=_{0}Y(y)\di x=_{\R}y).  
\]
\item A set $A\subset \R$ is \emph{measure zero} if for any $\eps>0$ there is a sequence of open intervals $(I_{n})_{n\in \N}$ such that $\cup_{n\in \N}I_{n}$ covers $A$ and $\eps>\sum_{n=0}^{\infty}|I_{n}|$. 
\end{itemize}
\edefi
\noindent
As discussed in Remark \ref{dichtbij}, the study of regulated functions already gives rise to open sets that 
do not come with additional representation beyond Definition~\ref{char}.  We will often assume $(\exists^{2})$ from Section \ref{lll} to guarantee that
basic objects like the unit interval are sets in the sense of Def.\ \ref{char}.  An interesting constructive enrichment of open sets from \cite{dagsamVII} is as follows.
\bdefi\label{R2D}
A set $O\subset \R$ is \emph{R2-open} if there is $Y:\R\di \R$ such that $x\in O$ implies $Y(x)>0\wedge B(x, Y(x))\subset O$.  A set $C$ is R2-closed if $\R\setminus C$ is R2-open. 
\edefi
The R2-representation is strictly weaker than the RM-representation, i.e.\ unions of basic open sets as in \cite{simpson2}*{II.5.6}.  Nonetheless, the Baire category theorem for R2-open sets is provable in $\ACAo$ by \cite{dagsamVII}*{Theorem 7.10}.

\smallskip

Secondly, we study the following weak continuity notions, many of which are well-known and hark back to the days of Baire, Darboux, Dini, Hankel, and Volterra (\cites{beren,beren2,darb, volaarde2,hankelwoot,hankelijkheid,dinipi}).  
We use `sup' and related operators in the same `virtual' or `comparative' way as in second-order RM (see e.g.\ \cite{simpson2}*{X.1}).  In this way, a formula of the form `$\sup A>a$' makes sense as shorthand for a formula in the language of all finite types, even when $\sup A$ need not exist in $\RCAo$.  
As in \cite{basket, basket2}, the definition of Baire $n$-function proceeds via (external) induction over standard $n$.
\bdefi\label{flung} For $f:[0,1]\di \R$, we have the following definitions:
\begin{itemize}
\item $f$ is \emph{upper semi-continuous} at $x_{0}\in [0,1]$ if $f(x_{0})\geq_{\R}\lim\sup_{x\di x_{0}} f(x)$,
\item $f$ is \emph{lower semi-continuous} at $x_{0}\in [0,1]$ if $f(x_{0})\leq_{\R}\lim\inf_{x\di x_{0}} f(x)$,
\item $f$ is \emph{regulated} if for every $x_{0}$ in the domain, the `left' and `right' limit $f(x_{0}-)=\lim_{x\di x_{0}-}f(x)$ and $f(x_{0}+)=\lim_{x\di x_{0}+}f(x)$ exist.  
\item $f$ is \emph{Baire 0} if it is a continuous function. 
\item $f$ is \emph{Baire $n+1$} if it is the pointwise limit of a sequence of Baire $n$ functions.
\item $f$ is \emph{effectively Baire $n$} $(n\geq 2)$ if there is a sequence $(f_{m_{1}, \dots, m_{n}})_{m_{1}, \dots, m_{n}\in \N}$ of continuous functions such that for all $x\in [0,1]$, we have 
\[\textstyle
f(x)=\lim_{m_{1}\di \infty}\lim_{m_{2}\di \infty}\dots \lim_{m_{n}\di \infty}f_{m_{1},\dots ,m_{n}}(x).
\]
\item $f$ is \emph{Baire 1$^{*}$} if\footnote{The notion of Baire 1$^{*}$ goes back to \cite{ellis} and equivalent definitions may be found in \cite{kerkje}.  
In particular, Baire 1$^{*}$ is equivalent to the Jayne-Rogers notion of \emph{piecewise continuity} from \cite{JR}.} there is a sequence of closed sets $(C_{n})_{n\in \N}$ such $[0,1]=\cup_{n\in \N}C_{n}$ and $f_{\upharpoonright C_{m}}$ is continuous for all $m\in \N$.
\item $f$ is \emph{countably continuous}\footnote{The notion of countably discontinuity, under a different name, goes back to Lusin (see \cite{novady}).} if there is a sequence of sets $(E_{n})_{n\in \N}$ such $[0,1]=\cup_{n\in \N}E_{n}$ and $f_{\upharpoonright E_{m}}$ is continuous for all $m\in \N$.
\item $f$ is \emph{quasi-continuous} at $x_{0}\in [0, 1]$ if for $ \epsilon > 0$ and any open neighbourhood $U$ of $x_{0}$, 
there is non-empty open ${ G\subset U}$ with $(\forall x\in G) (|f(x_{0})-f(x)|<\eps)$.
\item $f$ is \emph{cliquish} at $x_{0}\in [0, 1]$ if for $ \epsilon > 0$ and any open neighbourhood $U$ of $x_{0}$, 
there is a non-empty open ${ G\subset U}$ with $(\forall x, y\in G) (|f(x)-f(y)|<\eps)$.
\item $f$ is \emph{upper \(resp.\ lower\) quasi-continuous} at $x_{0}\in [0, 1]$ if for $ \epsilon > 0$ and any open neighbourhood $U$ of $x_{0}$, 
there is a non-empty open ${ G\subset U}$ with $(\forall x\in G) (f(x)< f(x_{0})+\eps)$ \(resp.\  $(\forall x\in G) (f(x)> f(x_{0})-\eps)$\).
\end{itemize}
\edefi
As to notations, a common abbreviation is `usco' and `lsco' for the first two items and `uqco' (resp.\ lqco) for the final item. 
Moreover, if a function has a certain weak continuity property at all reals in $[0,1]$ (or its intended domain), we say that the function has that property.  
Regarding the notion of `effectively Baire $n$' in Definition \ref{flung}, the latter is used, using codes for continuous functions, in second-order RM (see \cite{basket, basket2}). 
Baire himself notes in \cite{beren2}*{p.\ 69} that Baire 2 functions can be \emph{represented} by effectively Baire~2 functions.  By the results in \cite{dagsamXIV}, there is a 
significant difference between the these notions.

\smallskip

Thirdly, the following sets are often crucial in proofs relating to discontinuous functions, as can be observed in e.g.\ \cite{voordedorst}*{Thm.\ 0.36}.
\bdefi
The sets $C_{f}$ and $D_{f}$ \(if they exist\) respectively gather the points where $f:\R\di \R$ is continuous and discontinuous.
\edefi
One problem with the sets $C_{f}, D_{f}$ is that the definition of continuity involves quantifiers over $\R$.  
In general, deciding whether a given $\R\di \R$-function is continuous at a given real, is as hard as $\exists^{3}$ from Section \ref{lll}.
For these reasons, the sets $C_{f}, D_{f}$ only exist in strong systems.
A solution is discussed in Section \ref{prebairen}.

\smallskip

Fourth, we introduce some notions most of which are found already in e.g.\ the work of Volterra, Smith, and Hankel (\cites{hankelwoot, snutg, volaarde2}).
\bdefi~
\begin{itemize}
\item A set $A\subset \R$ is \emph{dense} in $B\subset \R$ if for $k\in \N,b\in B$, there is $a\in A$ with $|a-b|<\frac{1}{2^{k}}$.
\item A function $f:\R\di \R$ is \emph{pointwise discontinuous} if for any $x\in \R, k\in \N$ there is $y\in B(x, \frac{1}{2^{k}})$ such that $f$ is continuous at $y$.
\item A set $A\subset \R$ is \emph{nowhere dense} in $B\subset \R$ if $A$ is not dense in any open sub-interval of $B$.  
\item A function $f:[0,1]\di \R$ is \emph{simply continuous} \(sico\) if for any open $G\subset \R$, the set $f^{-1}(G)$ is the union of an open and a nowhere dense set. 
\end{itemize}
\edefi
\noindent
Fifth, we also need the notion of `intermediate value property', also called the `Darboux property' in light of Darboux's work in \cite{darb}.
\bdefi[Darboux property] Let $f:[0,1]\di \R$ be given. 
\begin{itemize}
\item A real $y\in \R$ is a left \(resp.\ right\) \emph{cluster value} of $f$ at $x\in [0,1]$ if there is $(x_{n})_{n\in \N}$ such that $y=\lim_{n\di \infty} f(x_{n})$ and $x=\lim_{n\di \infty}x_{n}$ and $(\forall n\in \N)(x_{n}\leq x)$ \(resp.\ $(\forall n\in \N)(x_{n}\geq x)$\).  
\item A point $x\in [0,1]$ is a \emph{Darboux point} of $f:[0,1]\di \R$ if for any $\delta>0$ and any left \(resp.\ right\) cluster value $y$ of $f$ at $x$ and $z\in \R$ strictly between $y$ and $f(x)$, 
there is $w\in (x-\delta, x)$ \(resp.\ $w\in ( x, x+\delta)$\) such that $f(w)=y$.   
\end{itemize}
\edefi
\noindent
By definition, a point of continuity is also a Darboux point, but not vice versa.  

\smallskip

Finally, the following remark is meant to express that closed sets as in Definition~\ref{char}, i.e.\ without any additional representation (say provable in $\Z_{2}^{\omega}$), are readily found in the wild.  
This even holds for the `weak' representation from Def.\ \ref{R2D}.
\begin{rem}\label{dichtbij}\rm
First of all, fix a regulated function $f:[0,1]\di \R$ and consider the set $D_{k}$ as in \eqref{howie}, definable using $\exists^{2}$ and such that $D_{f}=\cup_{k\in \N}D_{k}$. 
\be\label{howie}\textstyle
D_{k}:=\{x\in [0,1]:  |f(x)-f(x+)|>\frac{1}{2^{k}}\vee |f(x)-f(x-)|>\frac{1}{2^{k}}\}.
\ee
This set is central to many proofs involving regulated functions (see e.g.\ \cite{voordedorst}*{Thm.~0.36}).  
Now, that $D_{k}$ is finite follows by a standard\footnote{If $A_{n}$ were infinite, the Bolzano-Weierstrass theorem implies the existence of a limit point $y\in [0,1]$ for $A_{n}$.  One readily shows that $f(y+)$ or $f(y-)$ does not exist, a contradiction as $f$ is assumed to be regulated.  This argument is readily formalised in $\ACAo+\QFAC^{0,1}$.\label{fkluk}} compactness argument.     
Hence, working in $\Z_{2}^{\Omega}$ from Section \ref{lll}, one readily proves the following:
\begin{center}
\emph{there is a sequence $(C_{k})_{k\in \N}$ of RM-codes for the sequence of closed sets $(D_{k})_{k\in \N}$,}
\end{center}
\begin{center}
\emph{there is a sequence $(Y_{k})_{k\in \N}$ of R2-representations for the closed sets $(D_{k})_{k\in \N}$.}
\end{center}
However, $\Z_{2}^{\omega}$ \emph{cannot}\footnote{The Baire category theorem for RM-open and R2-open sets is provable in $\ACAo$ by \cite{simpson2}*{II.5.8} and \cite{dagsamVII}*{Theorem 7.10}.  
Hence, in case $D_{k}$ as in \eqref{howie} have RM-codes or R2-representations, the associated Baire category theorem implies that $D_{f}=\cup_{k\in \N}D_{k}$ is not $[0,1]$, i.e.\ $C_{f}$ is non-empty.  
By \cite{samBIG}*{Theorem 3.7}, we obtain $\NIN_{[0,1]}$ which is not provable in $\Z_{2}^{\omega}$ by \cite{dagsamX}*{Theorem 3.1}. } prove either of the centred coding statements about \eqref{howie}.
Since regulated functions are usco and cliquish (by definition, say in $\RCAo$), this observation applies to the topic of this paper as well.  We can obtain the same results for functions of bounded variation, following  \cite{samBIG}*{\S3.4}.
\end{rem}

\subsubsection{On oscillation functions}\label{prebairen}
In this section, we introduce \emph{oscillation functions} and establish some required properties.
We have previously studied usco, Baire 1 and cliquish functions in Kleene's computability theory using oscillation functions (see \cite{samcsl23}). 
As will become clear, such functions are generally necessary for the RM-study in Section \ref{mainn} and \ref{related}.  

\smallskip

First of all, the study of regulated functions in \cites{dagsamXI, dagsamXII, dagsamXIII} is 
really only possible thanks to the associated left- and right limits (see Definition \ref{flung}) \emph{and} the fact that the latter are computable in $\exists^{2}$.  
Indeed, for regulated $f:\R\di \R$, the formula 
\be\label{figo}\tag{\textup{\textsf{C}}}
\text{\emph{ $f$ is continuous at a given real $x\in \R$}}
\ee
involves quantifiers over $\R$ but is equivalent to the \emph{arithmetical} formula $f(x+)=f(x)=f(x-)$.  
In this light, we can define the set $D_{f}$ of discontinuity points of $f$ -using only $\exists^{2}$- and proceed with the usual (textbook) proofs.  
An analogous approach, namely the study of usco, Baire 1, and cliquish functions in Kleene's computability theory, was used in \cite{samcsl23}.  
To this end, we used \emph{oscillation functions} as in Definition \ref{oscfn}.  We note that Riemann and Hankel already considered the notion of oscillation in the context of Riemann integration (\cites{hankelwoot, rieal}).  
\bdefi[Oscillation functions]\label{oscfn}
For any $f:\R\di \R$, the associated \emph{oscillation functions} are defined as follows: $\osc_{f}([a,b]):= \sup _{{x\in [a,b]}}f(x)-\inf _{{x\in [a,b]}}f(x)$ and $\osc_{f}(x):=\lim _{k \di \infty }\osc_{f}(B(x, \frac{1}{2^{k}}) ).$
\edefi
 We  stress that $\osc_{f}:\R\di \R$ is \textbf{only}\footnote{To be absolutely clear, the notation `$\osc_{f}$' and the appearance of $f$ therein in particular, is purely symbolic and lambda abstraction involving `$\lambda f$' is expressly forbidden.} 
 a third-order function, as clearly indicated by its type.   On a related technical note, while the suprema, infima, and limits in Definition~\ref{oscfn} do not always exist in weak systems, formulas like $\osc_{f}(x)>y$ \emph{always} make sense as shorthand 
 for the standard definition of the suprema, infima, and limits involved; this `virtual' or `comparative' meaning is part and parcel of (second-order) RM in light of \cite{simpson2}*{X.1}.

\smallskip

Now, our main interest in Definition \ref{oscfn} is that \eqref{figo} is equivalent to the \emph{arithmetical} formula $\osc_{f}(x)=0$, assuming the latter function is given.  
Hence, in the presence of $\osc_{f}:\R\di \R$ and $\exists^{2}$, we can define $D_{f}$ and proceed with the usual (textbook) proofs, which is the approach we took in \cite{samcsl23}.   
Below, we also show that we can avoid the use of oscillation functions for usco functions. 

\smallskip

Secondly, we sketch the connection between usco (or: cliquish) functions and the Baire category theorem, in both directions, to further motivate our use of oscillation functions.  
In one direction, fix a usco function $f:[0,1]\di \R$ and its oscillation function $\osc_{f}:\R\di \R$.  A standard textbook technique is to decompose the set $D_{f}=\{ x\in [0,1]: \osc_{f}(x)>0  \}$ as the union of the closed sets
\be\label{kokn}\textstyle
D_{k}:=\{ x\in [0,1]: \osc_{f}(x)\geq \frac{1}{2^{k}}  \} \textup{  for all $k\in \N$.}
\ee
The complement $O_{n}:= [0,1]\setminus D_{k}$ can be shown to be open and dense, as required for the antecedent of the Baire category theorem, i.e.\ $C_{f}$ is non-empty as a result.  
This connection also goes in the other direction as follows: fix a sequence of dense and open sets $(O_{n})_{n\in \N}$ in the unit interval, define $X_{n}:= [0,1]\setminus O_{n}$ and consider 
the following function $h:[0,1]\di \R$ defined using $\mu^{2}$:
\be\label{mopi}\tag{\textsf{\textup{H}}}
h(x):=
\begin{cases}
0 & x\not \in \cup_{m\in \N}X_{m} \\
\frac{1}{2^{n+1}} &  x\in X_{n} \textup{ and $n$ is the least such number}
\end{cases}.
\ee
The function $h$ may be found in the literature (\cite{myerson}*{p.\ 238}) and is usco by Theorem~\ref{flap}.  
Moreover, for $x\in C_{h}$ we also have $x\in \cap_{n\in \N}O_{n}$, as required for the consequent of the Baire category theorem.
Thus, the Baire category theorem is intimately connected to continuity properties of usco functions (in both directions).  
One can similarly obtain the same connection for e.g.\ Baire $1^{*}$ and cliquish functions,  \emph{assuming} we have access to \eqref{kokn}, which is why
we assume $\osc_{f}:\R\di \R$ to be given in general.    

\smallskip

Thirdly, much to our own surprise, the `counterexample' function from \eqref{mopi} has nice properties \emph{that are provable in weak systems}.  
\begin{thm}[$\ACAo$]\label{fronkn}
Let $(X_{n})_{n\in \N}$ be an increasing sequence of closed nowhere dense sets.  Then $h:[0,1]\di \R$ from \eqref{mopi} is its own oscillation function. 
\end{thm}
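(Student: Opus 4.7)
The plan is to establish $\osc_h(x) = h(x)$ for every $x \in [0,1]$, treating separately the two ways $h(x)$ can be defined by \eqref{mopi}. The key ingredients are that each $X_n$ is closed (allowing us, when $x \notin X_n$, to find a ball around $x$ disjoint from $X_n$) and nowhere dense (hence, being closed, contains no ball). We also use the fact that $h$ takes values only in the discrete set $\{0\}\cup\{1/2^{n+1}:n\in\N\}$, so suprema on balls are easy to control from above once we know which $X_n$ the ball meets. Throughout, because we work in $\ACAo$ (so $\exists^{2}$ and $\mu^{2}$ are available), the definition of $h$ by least witness makes sense, and inequalities of the form $\osc_h(B(x,r))>a$ or $<a$ have their usual \emph{virtual} meaning from \cite{simpson2}*{X.1}, which is all we need.

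First I would handle the case $x\notin\bigcup_{m\in\N}X_{m}$, where $h(x)=0$. Given $\eps>0$, pick $N$ with $1/2^{N+1}<\eps$. Since each of the finitely many closed sets $X_{0},\dots,X_{N-1}$ avoids $x$, the distances $d(x,X_m)$ are positive for $m<N$ (formalisable via $\exists^{2}$), and their minimum $\delta>0$ yields a ball $B(x,\delta)$ disjoint from $X_{0}\cup\cdots\cup X_{N-1}$. For any $y\in B(x,\delta)$, the least $m$ (if any) with $y\in X_m$ is at least $N$, so $h(y)\leq 1/2^{N+1}<\eps$. Hence $\osc_h(B(x,\delta))<\eps$, and letting $\eps\to0$ gives $\osc_h(x)=0=h(x)$.

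Next I would treat $x\in X_{n}$ with $n$ the least such index, where $h(x)=1/2^{n+1}$. Exactly as above, the closedness of $X_{0},\dots,X_{n-1}$ together with $x\notin X_m$ for $m<n$ produces $\delta_{0}>0$ with $B(x,\delta_{0})\cap X_m=\emptyset$ for every $m<n$. On this ball, for any $y$ the least $m$ with $y\in X_m$ is $\geq n$; combined with $h(x)=1/2^{n+1}$ attained at $x$ itself, this gives $\sup_{y\in B(x,r)}h(y)=1/2^{n+1}$ for all $0<r\leq \delta_{0}$. For the infimum side, fix $k\geq n$ arbitrary: nowhere density of the closed set $X_{k}$ means it contains no ball, so there exists $y\in B(x,r)\setminus X_{k}$; by monotonicity $y\notin X_{0}\cup\cdots\cup X_{k}$, so $h(y)\leq 1/2^{k+2}$ (or $h(y)=0$). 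Hence $\inf_{y\in B(x,r)}h(y)=0$ virtually, and therefore $\osc_h(B(x,r))=1/2^{n+1}$ for all small $r$, yielding $\osc_h(x)=1/2^{n+1}=h(x)$.

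The main obstacle I anticipate is not the mathematical content, which is elementary, but the careful bookkeeping in $\ACAo$: namely, expressing sup/inf of $h$ on balls \emph{virtually} (without asserting they exist as genuine reals), arranging $\delta$ uniformly in finitely many closed sets $X_{0},\dots,X_{N-1}$ (using $\mu^{2}$ to search for a sufficiently small rational distance), and ensuring that nowhere density is used in the right form, i.e.\ as the arithmetical statement that for every $x$, $k$, and $N$ there is a rational $y\in B(x,1/2^{k})\setminus X_{N}$. Once these formal matters are in place, the argument is a routine splice of the two cases above.
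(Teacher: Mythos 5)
Your argument is correct and matches the paper's proof in all essentials: the same two ingredients (openness of the complements $Y_{m}=[0,1]\setminus X_{m}$ to bound $h$ from above on a small ball around $x$, and nowhere density of the $X_{k}$ to drive the infimum of $h$ on every ball to $0$) drive the same case split on the value of $h(x)$. The paper organizes this as four cases, additionally treating the converse implications (e.g.\ $\osc_{h}(x)=0\Rightarrow h(x)=0$) by contradiction, but those are subsumed by your direct two-case computation since the cases on $h(x)$ are exhaustive.
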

\begin{proof}
Consider $h:[0,1]\di \R$ as in \eqref{mopi} where $(X_{n})_{n\in \N}$ is an increasing sequence of closed nowhere dense sets.  
We will show that $h(x)=\osc_{h}(x)$ for all $x\in [0,1]$, i.e.\ $h$ is its own oscillation function. 
To this end, we proceed by case distinction. 
\begin{itemize}
\item In case $h(x_{0})=0$ for some $x_{0}\in [0,1]$, then $x_{0}\in \cap_{n\in \N}Y_{n}$ where $Y_{n}:= [0,1]\setminus X_{n}$ is open.  Hence, for any $m\in \N$, there is $N\in \N$ such that $B(x_{0}, \frac{1}{2^{N}})\subset Y_{m}$, as the latter is open. By the definition of $\osc_{h}$, we have $\osc_{h}(x_{0})<\frac{1}{2^{m}}$ for all $m\in \N$, i.e.\ $\osc_{h}(x_{0})=h(x_{0})=0$.  
\item In case $\osc_{h}(x_{0})=0$ for some $x_{0}\in [0,1]$, we must have $x_{0}\not \in  \cup_{n\in \N}X_{n}$ and hence $h(x_{0})=0$ by definition.  
Indeed, if $x_{0}\in X_{n_{0}}$, then $\osc_{h}(x_{0})\geq \frac{1}{2^{n_{0}+1}}$ because $\inf_{x\in B(x_{0}, \frac{1}{2^{k}})}h(x)=0$  due to $Y_{m}$ being dense in $[0,1]$ for any $m\in \N$, 
while of course $\sup_{x\in B(x_{0}, \frac{1}{2^{k}})}h(x)\geq h(x_{0})\geq \frac{1}{2^{n_{0}+1}}$.   
\item In case $h(x_{0})=\frac{1}{2^{n_{0}+1}}$ for some $x_{0}\in [0,1]$, suppose $\osc_{h}(x_{0})\ne \frac{1}{2^{n_{0}+1}}$.   
Since by definition (and the previous item) $\osc_{h}(x_{0})\geq h(x_{0})=\frac{1}{2^{n_{0}+1}}$, we have $\osc_{h}(x_{0})>\frac{1}{2^{n_{0}+1}}$, implying $\osc_{h}(x_{0})\geq\frac{1}{2^{n_{0}}}$ and $n_{0}>0$.  Now, if $x_{0}\in Y_{n_{0}-1}$, then $B(x_{0}, \frac{1}{2^{N}})\subset Y_{n_{0}-1}$ for $N$ large enough, as $Y_{n_{0}-1}$ is open; by definition, the latter inclusion implies that $\osc_{h}(x_{0})\leq \frac{1}{2^{n_{0}}}$, a contradiction.  However, $x_{0}\not \in Y_{n_{0}-1}$  also leads to a contradiction as then $h(x_{0})\geq \frac{1}{2^{n_{0}}}$.  In conclusion, we have $h(x_{0})=\frac{1}{2^{n_{0}+1}}=\osc_{h}(x_{0})$. 
\item In case $\osc_{h}(x_{0})>0$ for some $x_{0}\in [0,1]$, suppose $h(x_{0})\ne \osc_{h}(x_{0})$.   
By definition (and the first item) we have $\osc_{h}(x_{0})\geq h(x_{0})>0$, implying $\osc_{h}(x_{0})>h(x_{0})=\frac{1}{2^{n_{0}+1}}$ for some $n_{0}\in \N$.  
In turn, we must have $\osc_{h}(x_{0})\geq \frac{1}{2^{n_{0}}}$ and $n_{0}>0$.
Now, if $x_{0}\in Y_{n_{0}}$, then $B(x_{0}, \frac{1}{2^{N}})\subset Y_{n_{0}}$ for $N$ large enough, as $Y_{n_{0}}$ is open; by definition, the latter inclusion implies that $\osc_{h}(x)\leq \frac{1}{2^{n_{0}+1}}$, a contradiction.  However, $x_{0}\not \in Y_{n_{0}}$, also leads to a contradiction as then $h(x_{0})\geq\frac{1}{2^{n_{0}}}$.  
Since both cases lead to contradiction, we have $h(x_{0})=\osc_{h}(x_{0})$. 
\end{itemize}
We thus have $h(x)=\osc_{h}(x)$ for all $x\in [0,1]$, as required.  
\end{proof}
As it happens, functions that are their own oscillation function have been studied in the mathematical literature (see e.g.\ \cite{kosten}). 
The previous theorem is surprising: computing the oscillation function of arbitrary functions is as hard as $\exists^{3}$.  

\smallskip

Fourth, we establish the well-known fact that oscillation functions are usco.
\begin{thm}[$\ACAo$]\label{flappie}
For any $f:\R\di \R$ such that the oscillation function $\osc_{f}:\R\di \R$ exists, the latter is usco.  
\end{thm}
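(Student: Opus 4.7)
The plan is to show that for each $x_{0}\in \R$, we have $\osc_{f}(x_{0})\geq \limsup_{x\to x_{0}}\osc_{f}(x)$, i.e.\ for every $\eps>0$ there exists $N\in \N$ such that $\osc_{f}(y)<\osc_{f}(x_{0})+\eps$ whenever $|y-x_{0}|<\frac{1}{2^{N}}$. The argument is entirely a matter of combining the defining formula from Definition \ref{oscfn} with a ball-containment observation, and $\ACAo$ is used only to ensure that the relevant suprema, infima, and monotone limits of sequences of reals exist in the usual sense (otherwise the statement would have to be read purely in the virtual sense of \cite{simpson2}*{X.1}).

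First, I would record the monotonicity of interval-oscillation: if $[a,b]\subset [c,d]$ then $\osc_{f}([a,b])\leq \osc_{f}([c,d])$, which is immediate from the fact that shrinking the set of points only decreases the sup and increases the inf. Applying this to the nested balls $B(x_{0},\frac{1}{2^{k+1}})\subset B(x_{0},\frac{1}{2^{k}})$ shows that the sequence $k\mapsto \osc_{f}(B(x_{0},\frac{1}{2^{k}}))$ is non-increasing and bounded below by $0$. Hence the limit defining $\osc_{f}(x_{0})$ in Definition \ref{oscfn} equals the infimum, yielding $\osc_{f}(B(x_{0},\frac{1}{2^{k}}))\geq \osc_{f}(x_{0})$ for every $k$ and, given $\eps>0$, a choice of $k_{0}\in \N$ with
\[
\osc_{f}\bigl(B(x_{0},\tfrac{1}{2^{k_{0}}})\bigr)<\osc_{f}(x_{0})+\eps.
\]

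Next, the key observation: if $|y-x_{0}|<\frac{1}{2^{k_{0}+1}}$ then $B(y,\frac{1}{2^{k_{0}+1}})\subset B(x_{0},\frac{1}{2^{k_{0}}})$ by the triangle inequality. Combining with monotonicity gives, for every $m\geq k_{0}+1$, the inequality $\osc_{f}(B(y,\frac{1}{2^{m}}))\leq \osc_{f}(B(x_{0},\frac{1}{2^{k_{0}}}))<\osc_{f}(x_{0})+\eps$. Taking the limit in $m$ on the left produces $\osc_{f}(y)\leq \osc_{f}(x_{0})+\eps$. Setting $N:=k_{0}+1$ finishes the verification of upper semi-continuity at $x_{0}$, and since $x_{0}$ was arbitrary we are done.

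The only real obstacle is foundational bookkeeping rather than mathematical content: one must ensure the suprema, infima, and decreasing limits invoked in the definition of $\osc_{f}$ genuinely exist as real numbers, which is why the base theory is taken to be $\ACAo$ (together with the hypothesis that $\osc_{f}:\R\to\R$ is given as a third-order object). Once these existence issues are granted, no quantifier over function types is needed and the argument is essentially the elementary textbook proof, the monotonicity in the first paragraph together with the ball containment in the second being the only genuine ingredients.
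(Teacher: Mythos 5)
Your proof is correct and follows essentially the same route as the paper's: both arguments rest on the monotonicity of $\osc_{f}$ under inclusion of intervals together with the observation that a small ball around a point near $x_{0}$ is contained in a slightly larger ball around $x_{0}$. Your write-up is merely more explicit about the choice of neighbourhood ($|y-x_{0}|<\frac{1}{2^{k_{0}+1}}$) and about the monotone-limit bookkeeping, but the mathematical content is identical.
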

\begin{proof}
Fix $x_{0}\in \R$ and consider $y\in \R$ such that $y>\osc_{f}(x_{0})$.  
Say $y>\osc_{f}(x_{0})+\eps$ for $\eps>0$ and let $n_{0}$ be such that for all $n\geq n_{0}$, we have $|\osc_{f}(B(x_{0}, \frac{1}{2^{n}}))-\osc_{f}(x_{0})|<\eps/2$.  
Now pick $z\in \R$ and $m_{0}\in \N$ such that $B(z, \frac{1}{2^{m_{0}}})\subset B(x_{0}, \frac{1}{2^{n_{0}}})$.  
By definition, we have $\osc_{f}([a,b])\leq \osc_{f}([c, d])$ in case $[a,b]\subseteq [c,d]$. 
Hence, we have $\osc_{f}(z)\leq \osc_{f}(B(z, \frac{1}{2^{m_{0}}}))\leq \osc_{f}(B(x_{0}, \frac{1}{2^{n_{0}}}))\leq \osc_{f}(x_{0})+\eps/2<y$, i.e.\ $f$ is usco at $x_{0}$ and we are done.
\end{proof}
Fifth, we provide a direct proof of some properties of equation \eqref{mopi}.
\begin{thm}[$\ACAo$]\label{flap}
Let $(X_{n})_{n\in \N}$ be an increasing sequence of closed sets.
The function $h:[0,1]\di \R$ from \eqref{mopi} is usco and cliquish.  
\end{thm}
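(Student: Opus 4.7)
My plan is to prove both properties by a direct case analysis on the value $h(x_0)$, exploiting crucially that the $X_n$ are closed and increasing. Throughout I would use $\mu^2$ (equivalently, $\exists^2$, which is available in $\ACAo$) to compute the least index $n$ with $x \in X_n$, making $h$ well-defined and the case distinctions effective.

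For the upper semi-continuity, I would fix $x_0 \in [0,1]$ and show the stronger claim that on some open neighborhood $V$ of $x_0$ we have $h(x) \leq h(x_0)$, which immediately gives $\limsup_{x \to x_0} h(x) \leq h(x_0)$. Case one: $h(x_0) = 0$, so $x_0 \notin \bigcup_n X_n$. Given $\epsilon > 0$, pick $n$ with $\tfrac{1}{2^{n+1}} < \epsilon$; since $h(y) \geq \epsilon$ forces $y \in X_m$ for some $m \leq n$, which by monotonicity means $y \in X_n$, and since $X_n$ is closed and misses $x_0$, there is an open neighborhood $V$ of $x_0$ disjoint from $X_n$, so $h < \epsilon$ on $V$. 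Case two: $h(x_0) = \tfrac{1}{2^{n_0+1}}$ with $n_0$ least such that $x_0 \in X_{n_0}$. If $n_0 = 0$ then $h \leq \tfrac{1}{2} = h(x_0)$ everywhere. If $n_0 > 0$, then $x_0 \notin X_{n_0 - 1}$ by minimality, and since $X_{n_0 - 1}$ is closed there is an open $V \ni x_0$ disjoint from $X_{n_0 - 1}$; on $V$, any $y$ with $h(y) > h(x_0)$ would satisfy $y \in X_m$ for some $m < n_0$, hence $y \in X_{n_0 - 1}$, a contradiction.

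For cliquishness, fix $x_0 \in [0,1]$, $\epsilon > 0$, and an open neighborhood $U$ of $x_0$. I split into two subcases depending on whether $U$ is eventually absorbed by the $X_n$. If $U \subseteq X_m$ for some $m$, let $m$ be the least such index; for $m = 0$ the function $h$ equals $\tfrac{1}{2}$ identically on $U$, and for $m > 0$ the set $G := U \setminus X_{m-1}$ is open, nonempty, and $h \equiv \tfrac{1}{2^{m+1}}$ on $G$, so differences vanish. Otherwise $U \not\subseteq X_n$ for every $n$; pick $n$ with $\tfrac{1}{2^{n}} < \epsilon$ and set $G := U \setminus X_n$, which is open and nonempty by assumption; on $G$, every value of $h$ lies in $\{0\} \cup \{\tfrac{1}{2^{m+1}} : m > n\}$, hence belongs to $[0, \tfrac{1}{2^{n+1}})$, and so $|h(x) - h(y)| < \tfrac{1}{2^{n+1}} < \epsilon$ for all $x, y \in G$.

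The main obstacle I anticipate is not any single estimate but rather ensuring that the case distinction `$x \in X_n$ with $n$ least' is itself arithmetical so that all the sets used (neighborhoods defined via closedness of $X_{n_0-1}$, the open sets $U \setminus X_n$, the minimum index function) are available in $\ACAo$. This is handled because $(X_n)_{n\in\N}$ is given as a sequence of closed sets (so membership is decidable from $\exists^2$) and $\mu^2$ extracts the least index whenever one exists; everything else is routine manipulation of opens and closeds under arithmetical comprehension.
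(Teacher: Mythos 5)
Your proof is correct. The usco half is essentially the paper's own argument: the same three-way case split on $h(x_0)$, using that $x_0\notin X_{n_0-1}$ together with closedness of $X_{n_0-1}$ to find a ball on which $h\leq h(x_0)$, and the trivial case $h(x_0)=\frac12$. (One cosmetic slip: your announced ``stronger claim'' that $h\leq h(x_0)$ on a neighbourhood is only what you actually prove in case two; in case one you correctly retreat to the $\eps$-version, since $h$ may be positive arbitrarily close to a zero of $h$.) The cliquish half, however, takes a genuinely different route. The paper picks $n_0$ with $\frac{1}{2^{n_0}}<\eps$, uses the \emph{density} of $Y_{n_0}=[0,1]\setminus X_{n_0}$ to find a point $x_1$ near $x_0$, and then a ball inside the open set $Y_{n_0}$ on which $h\leq\frac{1}{2^{n_0+1}}$; this argument needs the $X_n$ to be nowhere dense, a hypothesis that appears in all applications (and in Theorem \ref{fronkn}) but is absent from the statement of Theorem \ref{flap} itself. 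Your two-case analysis --- either $U$ is absorbed by some $X_m$, in which case $h$ is constant on the open nonempty set $U\setminus X_{m-1}$ (or on all of $U$ when $m=0$), or no $X_n$ absorbs $U$, in which case $h<\frac{1}{2^{n+1}}$ on the open nonempty set $U\setminus X_n$ --- needs only that the sequence is increasing and closed, so it proves the theorem exactly as stated and is the more robust argument. One small caveat: the predicate ``$U\subseteq X_m$'' quantifies over reals and is not arithmetical, so this case distinction is not effective in the sense you suggest; but since cliquishness is being \emph{proved} rather than \emph{decided}, classical case analysis suffices and nothing is lost.
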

\begin{proof}
First of all, fix $x_{0}\in [0,1]$ and note that if $h(x_{0})=0$, we have $x_{0}\in Y_{n}:=[0,1]\setminus X_{n}$ for all $n\in \N$.  
These sets are open, i.e.\ for each $n\in \N$ there is $N\in \N$ with $B(x_{0}, \frac{1}{2^{N}})\subset Y_{n}$.  Then $h(x)< \frac{1}{2^{n+1}}$ in the latter ball, i.e.\ the definition of usco is satisfied. 
The definition of usco is trivially satisfied if $h(x)=\frac{1}{2}$ as the latter is the supremum of $h$ on $[0,1]$.  
In case $h(x_{0})=\frac{1}{2^{n_{0}+1}}$ for $n_{0}>0$, then we have $(\exists N\in \N)(\forall y\in B(x_{0}  , \frac{1}{2^{N}}   )(y\not\in X_{n_{0}-1})$.  
Indeed, if $(\forall N\in \N)(\exists y\in B(x_{0}  , \frac{1}{2^{N}}   )(y\in X_{n_{0}-1})$, then $y\in X_{n_{0}-1}$ as the latter set is closed, contradicting the leastness of $n_{0}$. 
For $N_{0}\in \N$ such that $(\forall y\in B(x_{0}  , \frac{1}{2^{N_{0}}}   )(y\not\in X_{n_{0}-1})$, we have $h(y)\leq \frac{1}{2^{n_{0}+1}}$ in this ball. 

\smallskip

Secondly, $h$ from \eqref{mopi} is also cliquish.  To see this, fix $\eps>0$ and $x_{0}\in [0,1]$ and take large enough $n_{0}\in \N$ such that $\frac{1}{2^{n_{0}}}<\eps$.  
Since $Y_{n_{0}}$ is dense, we can find $x_{1}\in Y_{n_{0}}$ arbitrarily close to $x_{0}$.   However, since $Y_{n_{0}}$ is open, we have $B(x_{1}, \frac{1}{2^{N_{1}}})\subset Y_{n_{0}}$ for some $N_{1}\in \N$. 
Since $h$ is at most $\frac{1}{2^{n_{0}+1}}$ on this ball, we observe that the definition of cliquish function is satisfied.   
\end{proof}
The previous theorem is illustrative as follows: in case each $X_{n}$ is a `fat' Cantor set of measure at least $1-\frac{1}{2^{n}}$, then $h$ from \eqref{mopi} is a usco function that is continuous on a measure zero set, namely the complement of $\cup_{n\in \N}X_{n}$.  These facts can be established in $\ACAo$ in light of \cite{simpson2}*{p.\ 129}.  

\smallskip

Finally, the computational equivalences in \cite{samcsl23} deal with usco, Baire 1, and cliquish functions, while the RM-equivalences in this paper are only about usco and cliquish functions.
The deeper reason for the absence of `Baire 1' in our equivalences is unveiled by Theorems \ref{toblo}, \ref{flang}, and \ref{qvl}: even for the smaller class of usco functions, the extra assumption that the latter
are also Baire 1, makes e.g.\ the semi-continuity lemma provable in much weaker systems.

\section{The Baire category theorem}\label{mainn}
In this section, we obtain the equivalences sketched in Section \ref{vower} involving the Baire category theorem and theorems about semi-continuous and cliquish functions (see Section \ref{cdef} for the latter notions).  
We recall that the Baire category theorem has been studied in second-order RM (see e.g.\ \cite{trevor,mietje}); 
the use of codes for open sets means the theorem is provable in the base theory $\RCA_{0}$ or similarly weak system.  
To avoid issues of the representation of real numbers and sets thereof, we will always assume $\mu^{2}$ or $\exists^{2}$ from Section~\ref{lll}.
We shall often make use of the preliminary results from Section \ref{prebairen}.  

\subsection{Introduction}
In this section, we introduce the Baire category theorem and describe the results to be obtained in the below sections.  

\smallskip

First of all, we shall study the Baire category theorem formulated as follows.  
\begin{princ}[$\BCT_{[0,1]}$]\label{BCTI}
If $ (O_n)_{n \in \N}$ is a decreasing sequence of dense open sets of reals in $[0,1]$, then $ \bigcap_{n \in\N } O_n$ is non-empty.
\end{princ} 
\noindent
We assume that $O_{n+1}\subseteq O_{n}$ for all $n\in \N$ to avoid the use of induction to prove that a finite intersection of open and dense sets is again open and dense. 

\smallskip

Secondly, $\BCT_{[0,1]}$ is hard to prove in terms of (conventional) comprehension by \cite{dagsamVII}*{Theorem 6.16}, while the following theorem provides a slick proof of this fact.  
\begin{thm}\label{tonko}
The system  $\Z_{2}^{\omega}+\IND_{0}$ cannot prove $\BCT_{[0,1]}$.
\end{thm}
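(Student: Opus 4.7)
The plan is to reduce the claim to the negative result recalled in Section~\ref{lll}: by \cite{dagsamX, dagsamXI} (via the Gandy selection technique from Kleene computability theory), the system $\Z_{2}^{\omega}+\IND_{0}$ does not prove $\NIN_{[0,1]}$. Hence it suffices to establish, within some subsystem of $\Z_{2}^{\omega}$ such as $\ACA_{0}^{\omega}$, the implication $\BCT_{[0,1]}\di\NIN_{[0,1]}$; combining this implication with the cited non-provability yields the theorem.

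For this implication I would argue by contradiction. Suppose $Y:[0,1]\di\N$ is an injection. Using $\mu^{2}$, form the decreasing sequence of subsets $O_{n}:=\{x\in[0,1]:Y(x)>n\}$ via the characteristic functions $F_{O_{n}}(x)=1\asa Y(x)>n$. Plainly $\bigcap_{n\in\N}O_{n}=\emptyset$, since every natural number value $Y(x)$ is eventually exceeded. The heart of the argument is then to verify, over $\ACA_{0}^{\omega}$, that each $O_{n}$ is open and dense in $[0,1]$; applying $\BCT_{[0,1]}$ to $(O_{n})_{n\in\N}$ would then produce a point in the empty intersection, a contradiction proving $\NIN_{[0,1]}$.

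The density of $O_{n}$ is a finite-pigeonhole step: any $n+2$ distinct rationals in $B(x_{0},\tfrac{1}{2^{k}})\cap[0,1]$ have pairwise distinct $Y$-values by injectivity, so at least one has $Y$-value outside $\{0,\dots,n\}$ and hence lies in $O_{n}$. The openness of $O_{n}$ at $x\in O_{n}$ reduces to showing $B(x,\tfrac{1}{2^{k}})\cap[0,1]\subset O_{n}$ for some $k$; arguing contrapositively, assume every ball $B(x,\tfrac{1}{2^{k}})$ meets $[0,1]\setminus O_{n}$. Then for each $k$ the set $S_{k}:=\{c\le n:B(x,\tfrac{1}{2^{k}})\cap Y^{-1}(\{c\})\ne\emptyset\}\subseteq\{0,\dots,n\}$ is non-empty; by the classical fact that a finite union of bounded-in-$k$ sets cannot cover $\N$, some fixed $c^{*}\le n$ lies in $S_{k}$ for cofinally many $k$. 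Injectivity of $Y$ forces the unique preimage $y^{*}\in Y^{-1}(\{c^{*}\})$ to lie in $B(x,\tfrac{1}{2^{k}})$ for arbitrarily large $k$, whence $y^{*}=x$ and $Y(x)=c^{*}\le n$, contradicting $x\in O_{n}$.

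The main obstacle I anticipate is formalising the openness step in a base theory weak enough that we genuinely exploit the absence of countable choice: a naive argument would extract a sequence $(y_{k})_{k\in\N}$ of witnesses via $\QFAC^{0,1}$, precisely the principle whose removal motivates passing from \cite{dagsamVII}*{Theorem 6.16} to a statement about $\IND_{0}$. The reformulation above avoids this by working with the non-empty subsets $S_{k}$ of $\{0,\dots,n\}$ and applying only the classical (choice-free) pigeonhole; the ensuing argument involves one unique real $y^{*}$ identified through the injectivity of $Y$, and so is executable in $\ACA_{0}^{\omega}\subset\Z_{2}^{\omega}$ without any form of number-choice beyond what is already present.
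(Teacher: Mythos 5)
Your overall route is the same as the paper's: prove $\BCT_{[0,1]}\di\NIN_{[0,1]}$ and then invoke the unprovability of $\NIN_{[0,1]}$ in $\Z_{2}^{\omega}+\IND_{0}$; your sets $O_{n}=\{x\in[0,1]:Y(x)>n\}$ are just the complements of the paper's $C_{n}=\{x\in[0,1]:Y(x)\leq n\}$. The density argument via $n+2$ distinct rationals is fine. The gap lies in the openness step, specifically in the claim that it goes through in $\ACAo$ ``without any form of number-choice beyond what is already present''. To extract a $c^{*}\leq n$ lying in cofinally many $S_{k}$, you must either form the sets $S_{k}$ (equivalently the sets $T_{c}:=\{k:c\in S_{k}\}$) or run bounded collection/induction on $n$ for the formula ``$c\in S_{k}$''. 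That formula is $(\exists y\in B(x,\frac{1}{2^{k}})\cap[0,1])(Y(y)=c)$: it contains an existential quantifier over $\R$ and the \emph{type-two} parameter $Y$, so it is not an $\L_{2}$-formula and none of the comprehension functionals $\SS_{k}^{2}$ of $\Z_{2}^{\omega}$ applies to it; $\ACAo$ certainly does not decide it. Hence neither the comprehension nor the induction/collection needed for your pigeonhole is available in the system you name, and what is missing is exactly the finite-comprehension content of $\IND_{0}$, which returns, for each $m\leq k$, the unique witness of $Y(\cdot)=m$ when one exists.

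The repair is immediate and is what the paper does: since $\IND_{0}$ belongs to the base theory of the theorem being proved, your self-imposed restriction to $\ACAo$ is unnecessary. Apply $\IND_{0}$ to list the at most $n+1$ elements of $[0,1]\setminus O_{n}$ as an explicit finite sequence of reals; openness (and nowhere density) of $O_{n}$ then follows by taking the minimum distance from a given point to that finite list. With that change your argument matches the paper's proof, which defines $C_{n}$, uses $\IND_{0}$ to show each $C_{n}$ is closed and nowhere dense, and concludes via the cited unprovability of $\NIN_{[0,1]}$.
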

\begin{proof}
Let $Y:[0,1]\di \N$ be an injection and define $C_{n}:=\{x\in [0,1]: Y(x)\leq n\}$.   Use $\IND_{0}$ to prove that this set is closed and nowhere dense for each $n\in \N$.  
By assumption, $[0,1]=\cup_{n\in \N}C_{n}$, i.e.\ we have established  $\BCT_{[0,1]}\di \NIN_{[0,1]}$ via contraposition; the latter is not provable in $\Z_{2}^{\omega}+\IND_{0}$ by \cite{dagsamXI}*{Theorem 2.16}. 
\end{proof}
By contrast, the restriction of $\BCT_{[0,1]}$ to R2-open sets from Section \ref{cdef} can be proved in $\ACAo$ by \cite{dagsamVII}*{Thm 7.10}.
Basically, one follows the constructive proof (see e.g.\ \cite{bish1}) while avoiding the Axiom of (countable) Choice thanks to the R2-representation.  
This begs the question which other constructive enrichments make $\BCT_{[0,1]}$ (and equivalent theorems) provable in weak(er) systems.

\smallskip

We explore this `constructivisation' theme in Theorems \ref{clockn}, \ref{toblo}, and \ref{plonkook} as follows: the latter theorems show that replacing `cliquish' by the closely related notion `quasi-continuous' (see Remark \ref{donola}) results in theorems provable in $\ACAo$.  The same holds for the addition of `Baire 1' to theorems about usco functions; there is no contradiction here as the (classically true) statement
\begin{center}
\emph{a bounded usco function on the unit interval is Baire 1} 
\end{center}
already implies (stronger principles than) $\BCT_{[0,1]}$ by Corollary~\ref{lopi}.  Nonetheless, cliquishness and quasi-continuity (and the same for Baire 1 and usco) are closely related notions (see again Remark \ref{donola}), yet there is a great divide -or abyss- between their continuity properties.  

\smallskip

Thirdly, we shall obtain the following RM-results in the below sections.  
\begin{itemize}
\item We connect $\BCT_{[0,1]}$ to the \emph{semi-continuity lemma} in Section \ref{SCL}.
\item We generalise the results in Section \ref{SCL} to \emph{Baire $1^{*}$} in Section \ref{bong}.
\item We connect $\BCT_{[0,1]}$ to the \emph{uniform boundedness principle} in Section \ref{UBP}.
\item We connect $\BCT_{[0,1]}$ to properties of \emph{cliquish} functions in Section \ref{cliq}.
\item We connect $\BCT_{[0,1]}$ to properties of \emph{fragmented} functions in Section \ref{1227}.
\item We connect $\BCT_{[0,1]}$ to Volterra's early work (Section \ref{vintro}) in Section \ref{vork}.
\end{itemize}
As discussed in Remark \ref{donola}, the class of cliquish functions is quite large/wild and is identical to the class of 
pointwise discontinuous function, i.e.\ we study Volterra's theorem in full generality.  We note that `Baire 1' and `fragmented' are equivalent on $\R$ in 
strong enough systems (see Section \ref{1227}), but not in $\Z_{2}^{\omega}$ by Corollary~\ref{tokkiecor}. 

\smallskip

Finally, we have studied the RM of the \emph{gauge integral} in \cites{dagsamIII, dagsamVI}.
This integral generalises the Lebesgue integral but remains conceptually close to the Riemann integral by -intuitively speaking- replacing the real `$\delta>0$'  
by a \emph{gauge function} `$\delta:\R\di \R^{+}$' in the usual epsilon-delta definition (\cite{bartle}).  This gauge function can be taken to be usco (\cite{gepeperd}) in very general cases, i.e.\ 
the RM of usco functions in this paper may well shed new light on the RM of the gauge integral.

\subsection{The semi-continuity lemma}\label{SCL}
We establish the equivalence between $\BCT_{[0,1]}$ and the \emph{semi-continuity lemma}; 
the latter expresses that the continuity points of a semi-continuous functions are dense, and is therefore not provable in $\Z_{2}^{\omega}$. 
We also identify (slight) variations that are provable in weaker systems like $\ACAo$.  

\smallskip

First of all, we have the following theorem pertaining to usco functions; note that all items are immediately generalised from `one point of continuity' to `a dense set of points of continuity'.
Regarding item \eqref{bctv} and \eqref{bct6}, there are semi-continuous functions that are not countably continuous (\cite{novady}) or not bounded\footnote{Define $f:\R\di \R$ as: $f(x)=\frac{-1}{x}$ if $x>0$ and $f(x)=1$ if $x\leq 0$, which is usco but not bounded below.  By \cite{dagsamXIV}*{Theorem 2.9}, usco functions are bounded above in $\RCAo+\QFAC^{0,1}+\WKL$.} below.
\begin{thm}[$\ACAo$]\label{clockn} The following are equivalent. 
\begin{enumerate}
 \renewcommand{\theenumi}{\alph{enumi}}
\item The Baire category theorem as in $\BCT_{[0,1]}$.\label{bcti}
\item For usco $f:[0,1]\di \R$, there is a point $x\in [0,1]$ where $f$ is continuous.\label{bctii}
\item For usco $f:[0,1]\di \R$ which has an oscillation function $\osc_{f}:[0,1]\di \R$, there is a point $x\in [0,1]$ where $f$ is continuous. \label{bctiii}
\item For usco $f:[0,1]\di \R$ which is its own oscillation function, there is a point $x\in [0,1]$ where $f$ is continuous. \label{bctiv}
\item For usco and countably continuous $f:[0,1]\di \R$, there is a point $x\in [0,1]$ where $f$ is continuous.\label{bctv}
\item Any usco $f:[0,1]\di \R$ is bounded below on some interval, i.e.\ there exist $q\in \Q$ and $c, d\in [0,1]$ such that $(\forall y\in (c, d))(f(y)\geq q)$.\label{bct6}
\item For usco $f:[0,1]\di \R^{+}$, there are $c, d\in [0,1]$ with $0<\inf_{x\in [c, d]}f(x)$.\label{bct7}
\end{enumerate}
\end{thm}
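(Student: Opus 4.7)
The plan is to close the cycle (a)$\Rightarrow$(b)$\Rightarrow$(c)$\Rightarrow$(d)$\Rightarrow$(a) to establish items (a)--(d) as equivalent, and to separately prove (a)$\Rightarrow$(f)$\Rightarrow$(g)$\Rightarrow$(a) together with (a)$\Rightarrow$(e)$\Rightarrow$(a). The implications (b)$\Rightarrow$(c)$\Rightarrow$(d) and (a)$\Rightarrow$(e) are immediate since each later item merely strengthens the hypothesis on $f$.

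For (d)$\Rightarrow$(a), I would fix a decreasing sequence $(O_n)_{n\in\N}$ of dense open subsets of $[0,1]$, set $X_n:=[0,1]\setminus O_n$ so that $(X_n)_{n\in\N}$ is an increasing sequence of closed nowhere dense sets, and consider the function $h$ from \eqref{mopi}. Theorem \ref{flap} shows $h$ is usco, and Theorem \ref{fronkn} gives $h=\osc_h$, so (d) applies and yields $x\in[0,1]$ at which $h$ is continuous. Then $h(x)=\osc_h(x)=0$ forces $x\notin\bigcup_nX_n$, placing $x\in\bigcap_nO_n$.

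For (a)$\Rightarrow$(b), given usco $f:[0,1]\to\R$, I would define, via $\exists^{2}$, the sets
\[
A_k:=\{x\in[0,1]:(\forall n\in\N)(\exists y\in B(x,1/2^n))(f(y)<f(x)-1/2^k)\}.
\]
Upper semi-continuity of $f$ at a limit point of $A_k$ yields $\overline{A_k}\subseteq A_{k+1}$, so $D_f=\bigcup_k\overline{A_k}$ is a countable union of closed sets. To verify each $\overline{A_k}$ is nowhere dense, I would first invoke (a)$\Rightarrow$(f) to produce a subinterval on which $f$ is bounded below, and then the standard iteration (pick a nearby point of strictly smaller $f$-value witnessing $A_{k+1}$, then recurse) contradicts this local lower bound. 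Hence the $O_k:=[0,1]\setminus\overline{A_k}$ are dense open in $[0,1]$, and (a) provides a point of $\bigcap_kO_k\subseteq C_f$.

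The remaining implications run as follows. (a)$\Rightarrow$(f) holds by applying $\BCT_{[0,1]}$ to the complements of the closed (by usco) sets $\{x:f(x)\geq-n\}$, noting their intersection cannot equal $[0,1]$. For (f)$\Rightarrow$(g), I would apply (f) to the usco function $-1/f$ arising from a positive usco $f$; since $-1/f<0$, the resulting lower bound $q$ must be strictly negative, yielding $f\geq 1/|q|>0$ on the interval. For (g)$\Rightarrow$(a), I would argue by contradiction: assuming $\bigcap_nO_n=\emptyset$ so that $[0,1]=\bigcup_nX_n$, define $g(x):=1/2^{n+1}$ where $n$ is least with $x\in X_n$, verify via closedness of each $X_m$ and the leastness of $n$ that $g$ is positive and usco, and then (g) would place some $(c,d)$ inside a single $X_N$, contradicting nowhere-density. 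Finally, (e)$\Rightarrow$(a) is obtained by applying (e) to $h$ from \eqref{mopi}, which is countably continuous via the partition $\{\bigcap_nO_n,\;X_0,\;X_1\setminus X_0,\;X_2\setminus X_1,\ldots\}$ on each member of which $h$ is constant. The main obstacle is the nowhere-density step in (a)$\Rightarrow$(b): without a given oscillation function, the natural $\osc_f$-based argument is unavailable, and routing through (f) is what makes the iteration close cleanly inside $\ACAo$.
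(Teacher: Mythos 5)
Most of your architecture coincides with the paper's: the implication (d)$\Rightarrow$(a) via $h$ from \eqref{mopi} together with Theorems \ref{fronkn} and \ref{flap}, the treatment of items (e), (f) and (g), and the trivial weakenings are all essentially the paper's arguments (your detour (f)$\Rightarrow$(g) via $-1/f$ is a harmless variant of the paper's direct derivation from $\BCT_{[0,1]}$ using $C_{n}:=\{x: f(x)\geq \frac{1}{2^{n}}\}$). The problem lies in (a)$\Rightarrow$(b), which is the technical heart of the theorem. Your sets $A_{k}$ are indeed definable in $\ACAo$ (since $f$ is usco, $\{y:f(y)<c\}$ is open, so the inner existential quantifier over $\R$ can be replaced by one over $\Q$), but they are not closed; and $\overline{A_{k}}$, while closed, is \emph{not available as a set} in $\ACAo$: membership $x\in \overline{A_{k}}$ asserts $(\forall n)(\exists y\in B(x,\frac{1}{2^{n}}))(y\in A_{k})$, an existential quantifier over $\R$ applied to a predicate that is merely $\Pi^{0}_{1}$ in $f$ and not open, so there is no rational-witness reduction. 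Concretely, for $f=\mathbb{1}_{\{x_{0}\}}$ with $x_{0}$ irrational one has $A_{k}=\{x_{0}\}$ while $f$ vanishes on $\Q$, so no amount of rational sampling detects $A_{k}$. The paper avoids exactly this by working with $F_{q}:=C_{q}\setminus \textsf{int}(C_{q})$ for $C_{q}=\{x:f(x)\leq q\}$ ($f$ lsco): there the only real quantifier occurs in a condition of the form $(\exists z\in B(x,\frac{1}{2^{N}}))(f(z)>q)$, which lower semi-continuity makes equivalent to a rational-witness formula as in \eqref{ling2}, so $F_{q}$ is definable via $\exists^{2}$; moreover $F_{q}$ is nowhere dense \emph{by construction}, being the boundary of a closed set, so no separate density argument is needed.

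Your nowhere-density argument for $\overline{A_{k}}$ has a second formalisation problem: the iteration ``pick a nearby point of strictly smaller value and recurse'' needs, for each $j$, the existence of a point with $f<f(x_{0})-j/2^{k+1}$, proved by induction on $j$ for a formula containing an existential quantifier over $\R$; this induction (and the extraction of the finite descending sequence) is not available in $\ACAo$ --- it is essentially the principle $\IND_{\R}$ that the paper must add explicitly for Theorem \ref{clockal}, and the theorem here is stated over plain $\ACAo$. So your proof of (a)$\Rightarrow$(b) is sound over a strong system such as $\Z_{2}^{\Omega}$ but does not establish the equivalence over the stated base theory; you should replace the $\overline{A_{k}}$ decomposition by the $F_{q}$ construction, or otherwise exhibit an arithmetically definable sequence of closed nowhere dense sets covering $D_{f}$.
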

\begin{proof}
First of all, assume item \eqref{bctiv} and fix a decreasing sequence of dense open sets $(O_{n})_{n\in \N}$.  
Define the nowhere dense closed set $X_{n}:= [0,1]\setminus O_{n}$ and consider $h:[0,1]\di \R$ as in \eqref{mopi}.  
Following Theorems \ref{fronkn} and \ref{flap}, item \eqref{bctiv} yields a point of continuity $y\in [0,1]$ of $h$. 
If $\frac{1}{2^{m_{0}+1}}=h(y)>0$, then by definition $y\in X_{m_{0}}$ where $m_{0}\in \N$ is the least such number.  
Since $h$ is continuous at $y$, there is $N\in \N$ such that $z\in B(y, \frac{1}{2^{N}})$ implies $|h(z)-h(y)|<\frac{1}{2^{m_{0}+2}}$.
However, $O_{m_{0}+2}$ is dense in $[0,1]$, implying there is $z_{0}\in B(y, \frac{1}{2^{N}})\cap O_{m_{0}+2}$.  
Since $h(z_{0})\leq\frac{1}{2^{m_{0}+3}}$ by definition, we obtain a contradiction.  Hence, we must have $h(y)=0$ and the latter implies $y\in \cap_{n\in \N}O_{n}$ by definition, as required for $\BCT_{[0,1]}$.

\smallskip

Secondly, for item \eqref{bctii}, we first formulate a proof (say in $\Z_{2}^{\Omega}$) and then show how this proof goes through in $\ACAo+\BCT_{[0,1]}$.
Thus, fix lsco $f:[0,1]\di \R$ and define $C_{q}:=\{x\in [0,1]: f(x)\leq q\}$, which is closed by definition. 
Define $F_{q}$ as $C_{q}\setminus \textsf{int}(C_{q})$, where the open set $\textsf{int}(X)$ is the interior of $X$, i.e.\ those $x\in X$ such that there is $N\in \N$ with $B(x, \frac{1}{2^{N}})\subset X$. 
By definition, $F_{q}$ is closed and is nowhere dense, i.e.\ the Baire category theorem shows that there is $y\in \big([0,1 ]\setminus \cup_{q\in \Q}F_{q}\big)$.  
The Baire category theorem is provable in $\Z_{2}^{\Omega}$ by \cite{dagsamVII}*{\S6}.

\smallskip

We now show that for each $x\in [0,1]$ where $f$ is discontinuous, there is $q\in \Q$ such that $x\in F_{q}$, establishing that $f$ is continuous at the aforementioned point $y$, as required. 
Thus, let $f$ be discontinuous at $x_{0}\in [0,1]$ and note that $f$ cannot be usco at $x_{0}\in [0,1]$, i.e.\ we have 
\be\label{ting}\textstyle
(\exists l\in \N)(\forall N\in \N)(\exists z\in B(x_{0}, \frac{1}{2^{N}}))(f(z)\geq f(x_{0})+\frac{1}{2^{l}}).
\ee
Let $l_{0}$ be as in \eqref{ting} and consider $q_{0}\in\Q$ such that $f(x_{0})<q_{0}< f(x_{0})+\frac{1}{2^{l_{0}}}$. 
By definition, $x_{0}\in F_{q_{0}}$, as required.  

\smallskip

We now show how the above proof goes through in $\ACAo+\BCT_{[0,1]}$.  The (only) two obstacles are as follows:
\begin{itemize}
\item the definition of $\textsf{int}(X)$ involves quantification over $\R$, i.e.\ the former set cannot be defined in $\ACAo$.
\item the formula \eqref{ting} involves quantification over $\R$, i.e.\ the numbers $l_{0}$ and $q_{0}$ cannot be defined in terms of $x_{0}$.  
\end{itemize}
Both problems have (about) the same solution, as follows.  
For the second item, since $f$ is lsco, \eqref{ting} is equivalent to 
\be\label{ting2}\textstyle
(\exists l\in \N)(\forall N\in \N)\underline{(\exists r\in B(x, \frac{1}{2^{N}})\cap \Q)}(f(r)\geq f(x_{0})+\frac{1}{2^{l}}),
\ee
where the underlined quantifier is crucial.  Since \eqref{ting2} is decidable using $\exists^{2}$, the functional $\mu^{2}$ computes $l_{1}\in \N$, the least $l\in \N$ satisfying \eqref{ting2}; clearly, $l=l_{1}+1$ then witnesses \eqref{ting}.  Hence, $q_{0}$ from the previous paragraph is definable in terms of $x_{0}$ (and $f$ and $\mu^{2}$).  

\smallskip

For the first item, i.e.\ relating to the interior of sets, the following formula expresses that $x\in F_{q}= (C_{q}\setminus \textsf{int}(C_{q})$, by definition:
\be\label{ling}\textstyle
f(x)\leq q \wedge (\forall N\in \N)(\exists z\in B(x, \frac{1}{2^{N}}))( f(z)>q  ).
\ee
Similar to the second item and \eqref{ting}, since $f$ is lsco, \eqref{ling} is equivalent to 
\be\label{ling2}\textstyle
f(x)\leq q \wedge (\forall N\in \N)\underline{(\exists r\in B(x, \frac{1}{2^{N}})\cap \Q)}( f(r)>q  ),
\ee
where again the underlined quantifier is crucial.  Hence, we can define $F_{q}$ using $\exists^{2}$ and \eqref{ling2}.  
Using an enumeration of $\Q$, one readily obtains an increasing sequence of closed nowhere dense sets.  
The rest of the proof now goes through.  Since for lsco $f:[0,1]\di \R$, the function $-f$ is usco, this item is finished.

\smallskip

For item \eqref{bctv}, it is clear that $h$ from \eqref{mopi} is countably continuous as it is constant on each $X_{n}$ and the complement of $\cup_{n\in \N}X_{n}$.

\smallskip

To prove item \eqref{bct6} from $\BCT_{[0,1]}$, fix usco $f:[0,1]\di \R$ and note that $C_{n}:= \{ x\in [0,1]: f(x)\geq -n\}$ is closed and satisfies $[0,1]=\cup_{n\in \N}C_{n}$.  
By $\BCT_{[0,1]}$, at least one $C_{n}$ is not nowhere dense, as required for item \eqref{bct6}.  Now assume the latter and let $(X_{n})_{n\in \N}$ be an increasing sequence of closed sets such that $[0,1]=\cup_{n\in \N}X_{n}$.
Define $\tilde{h}:[0,1]\di \R$ as follows: $\tilde{h}(x)=-n$ in case $x\in X_{n}$ and $n\in \N$ is the least such number.  
Similar to $h:[0,1]\di \R$ from \eqref{mopi}, the function $\tilde{h}$ is usco.   Item \eqref{bct6} provides $N\in \N$ and $c, d\in [0,1]$ such that $\tilde{h}(y)\geq -N$ for $y\in (c, d)$. 
By definition, we have $(c, d)\subset X_{N}$, i.e.\ the latter is not nowhere dense.   Item \eqref{bct7} follows in the same way using $h$ from \eqref{mopi} and $C_{n}:=\{x\in [0,1]: f(x)\geq \frac{1}{2^{n}} \}$.
\end{proof}
Secondly, proofs in real analysis often use $C_{f}$ and $D_{f}$, the set of continuity and discontinuity points, as their starting points.  
The definition of continuity involves a quantifier over $\R$, i.e.\ these sets cannot be defined in general in e.g.\ $\Z_{2}^{\omega}$. 
In light of the proof of Theorem \ref{clockn}, we can define these sets for usco functions as follows.  
\begin{thm}[$\ACAo$]\label{plonk}
For usco $f:[0,1]\di \R$, the sets $C_{f}$ and $D_{f}$ exist.  
\end{thm}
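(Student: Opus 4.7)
The plan is to show that for usco $f$, the discontinuity set $D_{f}$ is defined by an arithmetic formula in $f$ and $\exists^{2}$. Arithmetical comprehension then yields its characteristic function, and $C_{f}=[0,1]\setminus D_{f}$ follows by Boolean complementation, delivering both sets in the sense of Definition~\ref{char}.

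First, I would observe that since $f$ is usco everywhere, continuity of $f$ at $x_{0}\in [0,1]$ is equivalent to lower semi-continuity at $x_{0}$. Hence $x_{0}\in D_{f}$ is equivalent to the failure of the lsco condition at $x_{0}$, namely that there is $l\in\N$ such that for every $N\in\N$ some $z\in B(x_{0},\frac{1}{2^{N}})\cap [0,1]$ satisfies $f(z)<f(x_{0})-\frac{1}{2^{l}}$.

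Next, in the spirit of the passage from \eqref{ting} to \eqref{ting2} in the proof of Theorem~\ref{clockn}, I would replace the real witness $z$ by a rational one. The key fact is that for usco $f$ the strict sublevel set $\{y\in [0,1]:f(y)<c\}$ is open for each $c\in\R$: if $f(y_{0})<c$, applying the usco condition at $y_{0}$ with $\eps=(c-f(y_{0}))/2$ yields $M\in\N$ such that $f(w)<c$ for all $w\in B(y_{0},\frac{1}{2^{M}})$. Consequently, any real witness $z$ above lies in an open interval on which $f$ stays below $f(x_{0})-\frac{1}{2^{l}}$, and by density of $\Q$ a rational witness $r\in B(x_{0},\frac{1}{2^{N}})\cap\Q$ is available. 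The resulting formula
\[
(\exists l\in\N)(\forall N\in\N)(\exists r\in B(x_{0},\tfrac{1}{2^{N}})\cap\Q)\bigl(f(r)<f(x_{0})-\tfrac{1}{2^{l}}\bigr)
\]
is arithmetic in $f$ (with the $\Q$-parameter coded by a natural number), and hence decidable using $\exists^{2}$; arithmetical comprehension then produces the characteristic function $F_{D_{f}}:[0,1]\to\{0,1\}$.

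The principal subtlety is precisely this rational-reduction step: it depends crucially on $f$ being usco, via openness of strict sublevel sets, and cannot be expected in general, since deciding continuity of an arbitrary $f:[0,1]\to\R$ at a point is as hard as $\exists^{3}$, as noted in Section~\ref{prebairen}. Once the reduction is in place, the remaining work is purely formal: one converts the arithmetic formula into a definition of $F_{D_{f}}$ by arithmetical comprehension in $\ACAo$, and sets $F_{C_{f}}:=1-F_{D_{f}}$.
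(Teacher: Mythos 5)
Your proof is correct and follows essentially the same route as the paper: continuity equals usco plus lsco, and the one-sided semi-continuity hypothesis lets the real witness to the failure of the other side be replaced by a rational one, making membership in $D_{f}$ arithmetical and hence decidable by $\exists^{2}$. The only (immaterial) difference is that you work directly with usco $f$ and detect failure of lsco via openness of strict sublevel sets, whereas the paper phrases the argument for lsco $f$ detecting failure of usco and passes to $-f$.
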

\begin{proof}
First of all, it is a matter of definitions to show the equivalence between:
\begin{itemize}
\item $g:\R\di \R$ is continuous at $x\in \R$,
\item $g:\R\di \R$ is usco and lsco at $x\in \R$.
\end{itemize}
Secondly, for lsco $f:[0,1]\di \R$, `$f$ is discontinuous at $x\in [0,1]$' is equivalent to 
\be\label{dumbel}\textstyle
(\exists l\in \N)(\forall k\in \N){(\exists y\in B(x, \frac{1}{2^{k}})}(f(y)\geq f(x)+\frac{1}{2^{l}} ),
\ee
which expresses that $f$ is not usco at $x\in [0,1]$.  Now, \eqref{dumbel} is equivalent to 
\be\label{dumbel1}\textstyle
(\exists l\in \N)(\forall k\in \N)\underline{(\exists r\in B(x, \frac{1}{2^{k}})\cap \Q)}(f(r)\geq f(x)+\frac{1}{2^{l}} ),
\ee
where in particular the underlined quantifier in \eqref{dumbel1} has rational range due to $f$ being lsco.  
Since \eqref{dumbel1} is arithmetical, $\exists^{2}$ allows us to define $D_{f}$, and hence $C_{f}$.  
\end{proof}
An important observation is that Theorem \ref{plonk} only states that the set $C_{f}$ \emph{exists} but does \emph{not} claim it to be non-empty.  
The latter claim can only\footnote{In fact, it is consistent with $\Z_{2}^{\omega}+\QFAC^{0,1}$ that there are bounded usco functions on the unit interval that are everywhere discontinuous (see \cite{dagsamXIV}*{\S3}).} be made in $\ACAo$ if we have additional information about the function $f$, e.g.\ as in Theorem \ref{toblo}.  

\smallskip

Thirdly, Theorem \ref{clockn} has a number of fairly easy generalisations. 
There is also degree of robustness: we may similarly replace `measure zero' by `measure at most $a\in (0,1)$'.
\begin{thm}\label{deng}
Theorem \ref{clockn} still goes through with the following restrictions.
\begin{itemize} 
\item Replace `continuous' in the consequent of items \eqref{bctii}-\eqref{bctiv} by:
\begin{center}
 regulated, \(lower\) quasi-continuituous, cadlag, Darboux, or lsco.
\end{center}
 \item Restrict `usco' to `usco and such that $C_{f}$ has measure zero'. 
 \item Restrict $\BCT_{[0,1]}$ to open sets such that $\cap_{n\in \N}O_{n}$ has measure zero. 
\end{itemize}
\end{thm}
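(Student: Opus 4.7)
The plan is to adapt the proof scheme of Theorem~\ref{clockn}, specifically the implication \eqref{bctiv}$\,\Rightarrow\,$\eqref{bcti}, to each of the three families of restrictions, exploiting the discrete range $\{0\}\cup\{\tfrac{1}{2^{m+1}}:m\in\N\}$ of the counterexample function $h$ from \eqref{mopi}. The forward direction, that $\BCT_{[0,1]}$ implies every restricted item, is immediate from Theorem~\ref{clockn}: a point of continuity witnesses each of the weaker local properties in the first bullet, lies trivially in whichever set is prescribed in the other bullets, and satisfies any measure-theoretic demand on $\cap_{n} O_n$. For the reverse directions, I would feed $h$ (or a mild modification) into the restricted hypothesis and argue that the resulting point $y$ still satisfies $h(y)=0$, hence lies in $\cap_{n} O_n$.

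For the first bullet, the case analysis runs as follows. For lsco, usco plus lsco at $y$ is continuity at $y$, so the argument of Theorem~\ref{clockn} applies verbatim. For (lower) quasi-continuous, assuming $h(y)=\tfrac{1}{2^{n_0+1}}>0$ and taking $\epsilon:=\tfrac{1}{2^{n_0+2}}$ produces a non-empty open $G$ on which $h(x)>\tfrac{1}{2^{n_0+2}}$; since the only values in the range of $h$ strictly exceeding $\tfrac{1}{2^{n_0+2}}$ are those $\geq\tfrac{1}{2^{n_0+1}}$, this forces $G\subseteq X_{n_0}$, contradicting the nowhere-density of $X_{n_0}$. For cadlag, right-continuity gives $h(y)=h(y+)$, and density of each $O_m$ combined with the discrete range of $h$ pins $h(y+)=0$. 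For Darboux, picking $x_k\in O_{m_k}$ with $x_k\to y^+$ and $m_k\to\infty$ witnesses $0$ as a right cluster value of $h$ at $y$; if $h(y)>0$, then $z:=\tfrac{1}{3}h(y)$ lies strictly between $0$ and $h(y)$ yet avoids the range of $h$, violating the Darboux property. The hard case will be regulated, since the default $h$ can exhibit a genuine jump at $y\in X_{n_0}$ with $h(y\pm)=0\ne h(y)$, so regulatedness alone does not yet force $y\in\cap_{n} O_n$. I plan to repair this by enlarging each $X_n$ to a closed nowhere dense $\tilde X_n:=X_n\cup K_n$ with $K_n$ chosen so that every point of $\tilde X_n$ is a two-sided accumulation point of $\tilde X_{n-1}$; in the resulting usco $\tilde h$, every $y$ with $\tilde h(y)=\tfrac{1}{2^{n_0+1}}>0$ has arbitrarily nearby points of $\tilde X_{n_0-1}$ (giving $\tilde h\geq\tfrac{1}{2^{n_0}}$) alongside points of $O_m$ for $m$ arbitrarily large (giving $\tilde h$ arbitrarily small), so no one-sided limit of $\tilde h$ can exist at $y$, and regulatedness at $y$ forces $\tilde h(y)=0$ and $y\in\cap_{n}\tilde O_n\subseteq\cap_{n} O_n$.

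For the second and third bullets, the key tool is the enlargement $\tilde X_n:=X_n\cup F_n$ with $F_n$ a fat Cantor set of measure at least $1-\tfrac{1}{2^n}$, available in $\ACAo$ by \cite{simpson2}*{p.\ 129}. The union of two closed nowhere dense sets is closed nowhere dense, so $\tilde O_n:=[0,1]\setminus\tilde X_n=O_n\cap([0,1]\setminus F_n)$ is dense open of measure at most $\tfrac{1}{2^n}$; hence the usco function $\tilde h$ built from $(\tilde X_n)$ via \eqref{mopi} satisfies $C_{\tilde h}=\cap_{n}\tilde O_n$ and this intersection has measure zero. Feeding $\tilde h$ to the second bullet produces $y\in C_{\tilde h}\subseteq\cap_{n}\tilde O_n\subseteq\cap_{n} O_n$, while feeding $(\tilde O_n)$ to the restricted $\BCT_{[0,1]}$ of the third bullet directly yields a point of $\cap_{n} O_n$. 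The equivalences then close via the cycle of Theorem~\ref{clockn}; throughout, the delicate step is the construction of the enlargement $K_n$ for the regulated case, where verifying compatibility with usco and with nowhere-density in $\ACAo$ is what most requires care.
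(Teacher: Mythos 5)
Your overall strategy -- feed $h$ from \eqref{mopi} into each restricted item and use the sparse range $\{0\}\cup\{\frac{1}{2^{m+1}}:m\in\N\}$, plus fat Cantor sets for the measure restrictions -- is exactly the paper's, and your case analyses for lsco, (lower) quasi-continuity, cadlag and Darboux are correct. The differences are these. For the first bullet the paper verifies only the lower quasi-continuity case, on the grounds that it is ``the weakest notion''; your case-by-case treatment is more careful, and rightly so: a Darboux point need not be a point of lower quasi-continuity in general (your direct argument via the gap in the range of $h$ is the correct route), and for \emph{regulated} the reduction genuinely fails -- $h$ can have existing one-sided limits (both equal to $0$) at a point $y\in X_{n_0}$ with $h(y)>0$, e.g.\ when $y$ is isolated in every $X_n$, so the paper's ``straightforward verification'' hides a real issue that you correctly diagnose. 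Your repair (enlarge $X_n$ so that every point of $\tilde X_n$ is a unilateral accumulation point of some $\tilde X_m$, forcing the one-sided limits of $\tilde h$ to oscillate between $0$ and values $\geq\frac{1}{2^{m+1}}$) is the right idea, but it is the one place where your proposal remains a sketch: the $X_n$ are given only by characteristic functions, so standard constructions (say $\tilde X_n:=\{x: d(x,X_n)\in C\}$ for a Cantor set $C\ni 0$) require the distance function to $X_n$, which is not available in $\ACAo$ for sets as in Definition \ref{char}; this still needs to be resolved. For the second bullet the paper has a shorter argument than yours: assuming $\cap_n O_n=\emptyset$ one gets $C_h=\emptyset$, which is trivially measure zero, so the restricted item applies to the unmodified $h$ and yields a contradiction; your fat-Cantor enlargement also works (and has the aesthetic advantage of handling bullets two and three uniformly and constructively), but it is not needed there. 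For the third bullet your argument coincides with the paper's, modulo the routine point that the sets $O_n\cap V_n$ must be arranged to be decreasing.
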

\begin{proof}
For the first item, we only need to derive $\BCT_{[0,1]}$ from items items \eqref{bctii}-\eqref{bctiv} in the theorem with `continuity' weakened to the notions from the corollary. 
This is a straightforward verification and we only establish the corollary for the weakest notion, namely lower quasi-continuity.  
Thus, assume item \eqref{bctii} with `lqco' in the consequent and consider the real $y\in [0,1]$ as in the first paragraph of the proof of the theorem.
If $\frac{1}{2^{m_{0}+1}}=h(y)>0$, then by definition $y\in X_{m_{0}}$ where $m_{0}\in \N$ is the least such number.  
Since $h$ is lqso at $y$, we have that for $\eps=\frac{1}{2^{m_{0}+2}}$ and any $N\in \N$, there is $(a, b)\subset B(y, \frac{1}{2^{N}})$ such that $z\in (a, b)$ implies $h(z)>h(y)-\frac{1}{2^{m_{0}+2}}=\frac{1}{2^{m_{0}+2}}$.
However, $O_{m_{0}+4}$ is dense in $[0,1]$, implying there is $z_{0}\in (a, b)\cap O_{m_{0}+4}$.  
Since $h(z_{0})\leq\frac{1}{2^{m_{0}+5}}$, we obtain a contradiction.  Hence, we must have $h(y)=0$ and the latter implies $y\in \cap_{n\in \N}O_{n}$ by definition, as required for $\BCT_{[0,1]}$.

\smallskip

For the second item, consider the first paragraph of the proof of Theorem \ref{clockn}.  Assuming $[0,1]=\cup_{n\in \N}X_{n}$, the usco function $h$ from \eqref{mopi} is such that $C_{f}=\emptyset$, i.e.\ definitely measure zero.  
Now proceed with the original proof and derive a contradiction.  Thus, we may restrict the items \eqref{bctii}-\eqref{bctiv} from Theorem \ref{clockn} to usco $f$ such that $C_{f}$ has measure zero. 

\smallskip

For the third item, the Cantor set can be defined in $\RCA_{0}$ (see \cite{simpson2}*{p.\ 129}), and similar for `fat' Cantor sets, i.e.\ closed nowhere dense sub-sets of $[0,1]$ that can have measure $1-\eps$ for any $\eps>0$.  
Thus, let $(C_{n})_{n\in \N}$ be a sequence of fat Cantor sets in $[0,1]$ such that $C_{n}$ has measure at least $1-\frac{1}{2^{n}}$ for all $n\in \N$.  
Now let $(O_{n})_{n\in \N}$ be any sequence of dense open sets in $[0,1]$ and note that  $O_{n}\cap V_{n}$ is also open and dense, where $V_{n}=:[0,1]\setminus C_{n}$.  Since $\cap_{n\in \N} V_{n}$ has measure zero, the third item in the theorem applies to $(O_{n}\cap V_{n})_{n\in \N}$, i.e.\ there is $y\in \cap_{n\in\N}\big(O_{n}\cap V_{n}\big )\subset \cap_{n\in \N}O_{n}$, as required for $\BCT_{[0,1]}$.  
\end{proof}
Ironically, the third item of Theorem \ref{deng} suggests that the essence of $\BCT_{[0,1]}$, a fundamental statement about category, in fact involves measure theory.

\smallskip

Fourth, by Theorem \ref{clockn}, the constructive enrichment provided by \emph{oscillation functions} does not change the strength of the semi-continuity lemma.  
Let us consider another constructive enrichment: a \emph{modulus} for usco $f:[0,1]\di \R$ is \emph{any} function $\Psi:[0,1]\di \R^{+}$ such that we have:
\[\textstyle
(\forall k\in \N) (\forall y\in B(x, \Psi(x,k)))( f(y)< f(x)+\frac{1}{2^{k}}   ).
\]  
Given a modulus, the semi-continuity lemma is provable in a weaker system.  
\begin{thm}[$\ACAo$]\label{toblo2}
Let $f:[0,1]\di \R$ be usco with a modulus.  Then $C_{f}\ne \emptyset$.
\end{thm}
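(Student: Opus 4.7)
The plan is to adapt the proof of Theorem~\ref{clockn}\eqref{bctii} (which works in $\ACAo+\BCT_{[0,1]}$) so that it runs in $\ACAo$ alone, using the modulus $\Psi$ to replace full $\BCT_{[0,1]}$ with its restriction to R2-open sets, provable in $\ACAo$ by \cite{dagsamVII}*{Thm.\ 7.10}.

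First, using $\exists^{2}$ and $\mu^{2}$, I would observe that for each rational $q$ the open set $\{x\in[0,1]:f(x)<q\}$ carries a canonical R2-representation via the modulus: for $x$ with $f(x)<q$ put $\phi_{q}(x):=\Psi(x,k)$ where $k$ is the least natural with $f(x)+\tfrac{1}{2^{k}}<q$. This representation then yields the key arithmetical characterisation
\[
x\in\overline{\{y:f(y)<q\}}\iff(\forall N\in\N)(\exists r\in\Q\cap B(x,\tfrac{1}{2^{N}}))(f(r)<q),
\]
whose forward direction uses that any witness $y\in B(x,\tfrac{1}{2^{N}})$ with $f(y)<q$ has, by the modulus, a nonempty open neighbourhood $B(y,\Psi(y,k'))$ inside $\{f<q\}$ meeting $B(x,\tfrac{1}{2^{N}})$ and hence containing a rational.

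Next, mirroring the decomposition in the proof of Theorem~\ref{clockn}, I would set $B_{q}:=\{x:f(x)\geq q\}\cap\overline{\{y:f(y)<q\}}$ for each rational $q$ and verify: (a) each $B_{q}$ is closed and nowhere dense, using that $\{f<q\}$ is open by usco; (b) $D_{f}\subset\bigcup_{q\in\Q}B_{q}$, since $f$ failing lsco at $x$ yields $l\in\N$ and then any rational $q\in(f(x)-\tfrac{1}{2^{l}},f(x))$ places $x$ in $B_{q}$; (c) the complement $O_{q}:=[0,1]\setminus B_{q}$ admits an explicit R2-representation $Y_{q}$ obtained by a case distinction decidable under $\exists^{2}$: set $Y_{q}(x):=\phi_{q}(x)$ when $f(x)<q$, and otherwise $Y_{q}(x):=\tfrac{1}{2^{N+1}}$ where $N$ is the least natural such that $(\forall r\in\Q\cap B(x,\tfrac{1}{2^{N}}))(f(r)\geq q)$, located by $\mu^{2}$. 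Finally I would invoke $\BCT$ for R2-open sets on $(O_{q})_{q\in\Q}$ (standardised to a decreasing sequence by finite intersections) to obtain $x\in\bigcap_{q\in\Q}O_{q}\subset[0,1]\setminus D_{f}=C_{f}$, so that $C_{f}\neq\emptyset$.

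The principal obstacle is the ``interior case'' in step (c), i.e.\ points $x\in O_{q}$ with $f(x)\geq q$: verifying $B(x,\tfrac{1}{2^{N}})\subset\{f\geq q\}$ a priori involves a universal quantifier over reals that $\ACAo$ cannot in general evaluate. The arithmetical characterisation of $\overline{\{f<q\}}$ obtained in the first step is precisely what reduces this quantifier to one over rationals, at which point $\mu^{2}$ produces the required witness $N$ whenever $x$ lies in the interior of $\{f\geq q\}$, thereby completing the R2-representation.
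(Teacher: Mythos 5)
Your proposal is correct and follows essentially the same route as the paper's proof: use the modulus to give R2-representations of the sets $\{f<q\}$ and of the complements of the boundary sets (your $B_q$ is exactly the paper's $F_q=C_q\setminus\textsf{int}(C_q)$, written as closed set intersect closure of complement, and your rational-quantifier characterisation of $\overline{\{f<q\}}$ is the paper's equivalence of \eqref{dumbel} and \eqref{dumbel1}), then invoke the Baire category theorem for R2-open sets from \cite{dagsamVII}*{Theorem 7.10}. Your write-up in fact supplies the details that the paper leaves to the reader in its one-paragraph sketch.
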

\begin{proof}
By definition, for usco $f:[0,1]\di \R$, the set $\{x\in [0,1]: f(x)\geq a\}$ is closed for any $a\in \R$.  
Let $O_{a}$ be the complement of the previous set.    
When given a modulus $\Psi$, one readily defines $Y:[0,1]\di \R$ such that $B(y, Y(y))\subset O_{a}$ in case $y\in O_{a}$.
Hence, $O_{a}$ is R2-open and the Baire category theorem for such sets is provable in $\ACAo$ by \cite{dagsamVII}*{Theorem 7.10}.  
One can now repeat the (modified) proof of item \eqref{bctii} in Theorem \ref{clockn} in $\ACAo$, noting that $\textsf{int}(C_{q})$ has an R2-open representation in light of the equivalence between \eqref{dumbel} and \eqref{dumbel1}.
\end{proof}
Fifth, comparing Theorems \ref{clockn} and \ref{toblo2}, the constructive enrichment provided by oscillation functions does not change the strength of the semi-continuity lemma, while a modulus for usco does make a significant difference.
We now show that the constructive enrichment provided by the usual definition of `Baire 1' is significant.  
\begin{thm}[$\ACAo$]\label{toblo}
Let $f:[0,1]\di \R$ be usco and let $(f_{n})_{n\in \N}$ be a sequence of continuous functions that converges pointwise to $f$ on $[0,1]$.  Then $C_{f}\ne \emptyset$.
\end{thm}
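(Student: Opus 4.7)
The plan is to reduce the conclusion to the Baire category theorem for R2-open sets, which is provable in $\ACAo$ by \cite{dagsamVII}*{Theorem 7.10}. For each $k,l\in\N$, define the closed set
\[
A_{k,l} := \big\{x\in[0,1] : (\forall m,m'\geq k)(|f_m(x)-f_{m'}(x)|\leq 1/2^l)\big\}.
\]
Pointwise convergence $f_n(y)\to f(y)$ entails that $(f_n(y))_{n\in\N}$ is Cauchy, so $[0,1]=\bigcup_{k\in\N}A_{k,l}$ for each fixed $l$. Since $A_{k,l}$ is closed in $[0,1]$, its boundary $\partial A_{k,l}$ is nowhere dense, whence $O_{k,l}:=[0,1]\setminus\partial A_{k,l}$ is open and dense.

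The technical core of the proof is to furnish, for each $k,l$, an R2-representation of $O_{k,l}$, i.e.\ a map $Y_{k,l}$ with $B(x,Y_{k,l}(x))\subset O_{k,l}$ for each $x\in O_{k,l}$. Given $x\in O_{k,l}$, I search with $\mu^2$ through coded tuples for a witness to either (a) some $m,m'\geq k$ and $i,j\in\N$ with $|f_m(x)-f_{m'}(x)|>1/2^l+1/2^i$ together with $\inf_{y\in\overline{B(x,1/2^j)}}|f_m(y)-f_{m'}(y)|>1/2^l$, ensuring $B(x,1/2^j)\cap A_{k,l}=\emptyset$; or (b) some $j\in\N$ with $(\forall m,m'\geq k)\big(\sup_{y\in\overline{B(x,1/2^j)}}|f_m(y)-f_{m'}(y)|\leq 1/2^l\big)$, ensuring $B(x,1/2^j)\subset A_{k,l}$. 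Since $\exists^2$ computes suprema and infima of continuous functions over compact subintervals, both alternatives are arithmetically decidable and the condition in (b) is $\Pi^0_1$. Moreover, for $x\notin A_{k,l}$ case (a) succeeds by continuity of $f_m,f_{m'}$ at $x$, while for $x\in\textsf{int}(A_{k,l})$ case (b) succeeds, so the search terminates exactly on $O_{k,l}$ and supplies $Y_{k,l}(x)=1/2^j$.

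Applying the R2-BCT to the sequence $(O_{k,l})_{k,l\in\N}$ produces $y\in\bigcap_{k,l}O_{k,l}$. For each $l$, the Cauchy property of $(f_n(y))$ provides $k$ with $y\in A_{k,l}$; combined with $y\in O_{k,l}$ this forces $y\in\textsf{int}(A_{k,l})$, hence some $\delta>0$ with $B(y,\delta)\subset A_{k,l}$. Letting $m'\to\infty$ in the defining inequality gives $|f_m(z)-f(z)|\leq 1/2^l$ for all $z\in B(y,\delta)$ and $m\geq k$. Continuity of $f$ at $y$ then follows by the standard triangle-inequality estimate $|f(z)-f(y)|\leq 2/2^l+|f_k(z)-f_k(y)|$, the last term being controlled via the continuity of $f_k$, exactly as in the classical proof for Baire 1 functions.

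The main obstacle is the second step, the effective R2-representation of $O_{k,l}$: one must verify that $\exists^2$ indeed suffices to evaluate the sup/inf of the continuous functions $|f_m-f_{m'}|$ over compact balls and to decide the $\Pi^0_1$ condition in (b), and that the disjunction (a)$\vee$(b) genuinely exhausts $O_{k,l}$ so that the $\mu^2$-search terminates precisely there. This is where the constructive enrichment provided by the sequence $(f_n)$ is essential, as without it the sets $A_{k,l}$ and their associated R2-representations would not be available in $\ACAo$.
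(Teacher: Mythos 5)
Your proof is correct, but it takes a genuinely different route from the paper's proof of this theorem. The paper keeps the usco hypothesis front and centre: it reuses the decomposition $F_{q}=C_{q}\setminus \textsf{int}(C_{q})$ with $C_{q}=\{x:f(x)\leq q\}$ from the proof of Theorem \ref{clockn} (where lower/upper semi-continuity is what makes membership arithmetical via rational test points), and the Baire~1 data enters only through the supremum principle of \cite{dagsamXIV}*{Theorem 2.9}, which supplies a functional $\Phi(x,N)=\inf_{y\in B(x,2^{-N})}f(y)$ and thereby turns the disjunct $f(x)>q$ into an explicit R2-radius for $O_{q}:=[0,1]\setminus F_{q}$. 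You instead discard the usco hypothesis entirely and run the classical Osgood--Baire argument on the sets $A_{k,l}$ determined by the approximating sequence, building R2-representations of $[0,1]\setminus \partial A_{k,l}$ directly from $\exists^{2}$-computable suprema and infima of the continuous $f_{m}-f_{m'}$ over compact balls. Your argument is sound: case (a) captures $[0,1]\setminus A_{k,l}$ by continuity of the $f_{m}$, case (b) captures $\textsf{int}(A_{k,l})$, both are arithmetically decidable, and the concluding triangle-inequality step is standard. What you have actually proved is the stronger first item of the paper's Theorem \ref{flang} --- \emph{any} Baire~1 function on $[0,1]$ has a continuity point in $\ACAo$ --- of which Theorem \ref{toblo} is a special case; the paper proves that stronger statement separately, using RM-codes for the $f_{n}$ and the second-order Baire category theorem rather than R2-representations. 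So your approach buys generality (no semi-continuity needed), while the paper's proof of \ref{toblo} is designed to slot into the $F_{q}$-machinery of Section \ref{SCL} and to isolate exactly which constructive enrichment (here, the infimum functional granted by a Baire~1 representation) collapses the strength of the semi-continuity lemma.
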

\begin{proof}
We use the proof of \eqref{bcti} $\di $ \eqref{bctii} from Theorem \ref{clockn}.  
In this proof, one defines a sequence of closed and nowhere dense sets $(F_{q})_{q\in \Q}$ such that the union contains $D_{f}$ for lsco $f:[0,1]\di \R$.  Applying $\BCT_{[0,1]}$, the set $C_{f}$ is observed to be non-empty.  
We show that the extra information provided by the Baire 1 representation of $f$ allows us to define an R2-representation for $F_{q}$.  Instead of $\BCT_{[0,1]}$, we can then apply the Baire category theorem for R2-representations, which is provable in $\ACAo$ by \cite{dagsamVII}*{Theorem 7.10}.  In particular, we note that by (the proof of) \cite{dagsamXIV}*{Theorem 2.9}, there is $\Phi:(\R\times \N)\di \R$ such that $\Phi(x, N)$ equals $\inf_{y\in B(x, \frac{1}{2^{N}})}f(y)$ for Baire 1 $f$.   

\smallskip

Let $f:[0,1]\di \R$ be lsco and let $(f_{n})_{n\in \N}$ be a sequence of continuous functions with pointwise limit $f$ on the unit interval.  
Recall the set $F_{q}\subset [0,1]$ defined as those $x\in [0,1]$ satisfying \eqref{ling2}, where the latter is equivalent to \eqref{ling}.
Define $O_{q}:=[0,1]\setminus F_{q}$ and note that `$x\in O_{q}$' is exactly
\be\textstyle \label{cornholio}
f(x)> q \vee (\exists N\in \N){(\forall r\in B(x, \frac{1}{2^{N}})\cap \Q)}( f(r)\leq q  ).
\ee
In case $x\in O_{q}$ satisfies the second disjunct in \eqref{cornholio}, we use $\mu^{2}$ to find the least $N\in \N$ such that $\big(B(x, \frac{1}{2^{N}})\cap \Q\big)\subset O_{q}$.
Now, since $f$ is lsco, the latter implies $B(x, \frac{1}{2^{N}})\subset O_{q}$, as required for a R2-representation.  
In case $x\in O_{q}$ satisfies the first disjunct in \eqref{cornholio}, observe that
\be\label{shalala}\textstyle
f(x)>q \asa (\exists N\in \N)(q<\inf_{y\in B(x, \frac{1}{2^{N}})}f(y)    ), 
\ee
as $f$ is lsco.   Since the infimum in the right-hand side of \eqref{shalala} is given by a function $\Phi:(\R\times \N)\di \R$, we may use $\mu^{2}$ to find the least $N_{0}\in \N$ such that $q<\inf_{y\in B(x, \frac{1}{2^{N_{0}}})}f(y)$.    By the definition of $O_{q}$, we have $B(x, \frac{1}{2^{N_{0}}})\subset O_{q}$, as required for a R2-representation.  Hence, we have obtained a R2-representation of $O_{q}$ based on the case distinction (decidable by $\exists^{2}$) provided by \eqref{cornholio}. 
\end{proof} 
Next, it is well-known that usco functions are Baire 1, but this fact is not provable in $\Z_{2}^{\omega}$ by Corollary \ref{lopi}.
In particular, the previous proof does not go through for usco functions (without a Baire 1 representation) as by \cite{dagsamXIV}*{Theorem 2.32}, the supremum principle for usco functions implies $\NIN_{[0,1]}$.  
\begin{cor}[$\ACAo$]\label{lopi}
The statement 
\begin{center}
\emph{any usco function $f:[0,1]\di \R$ is Baire 1} 
\end{center}
both implies $\BCT_{[0,1]}$ and the principle 
\begin{center}
\textsf{\textup{open:}} \emph{open sets in the unit interval have RM-codes}.
\end{center}
\end{cor}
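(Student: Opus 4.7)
The plan is to verify the two implications separately, abbreviating the hypothesis ``every usco $f:[0,1]\to\R$ is Baire 1'' by $(*)$. For $(*)\Rightarrow \BCT_{[0,1]}$, I would reuse the construction from the first paragraph of the proof of Theorem \ref{clockn}: given a decreasing sequence $(O_{n})_{n\in \N}$ of dense open sets in $[0,1]$, set $X_{n}:=[0,1]\setminus O_{n}$ and define $h:[0,1]\to\R$ via \eqref{mopi}. Theorem \ref{flap} shows $h$ is usco, so $(*)$ supplies a sequence of continuous functions converging pointwise to $h$; Theorem \ref{toblo} then furnishes $y\in C_{h}$. Repeating the case analysis from \eqref{bctiv}$\Rightarrow$\eqref{bcti} in Theorem \ref{clockn}---if $h(y)=\frac{1}{2^{m_{0}+1}}>0$, pick $z_{0}\in B(y,\frac{1}{2^{N}})\cap O_{m_{0}+2}$ by density and contradict continuity via $h(z_{0})\leq \frac{1}{2^{m_{0}+3}}$---forces $h(y)=0$, so $y\in \bigcap_{n\in\N}O_{n}$, as required.

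For $(*)\Rightarrow$ the RM-coding principle for open sets, I would start from an open $O\subseteq [0,1]$ and form $f:=1_{[0,1]\setminus O}$, which is usco because $[0,1]\setminus O$ is closed. By $(*)$ there are continuous $(f_{n})_{n\in\N}$ with $f_{n}\to f$ pointwise, so $g_{n}:=1-f_{n}$ is a Baire 1 representation of the lsco indicator $g:=1_{O}$. Invoking the same consequence of \cite{dagsamXIV}*{Theorem 2.9} already used inside the proof of Theorem \ref{toblo}, one obtains $\Phi:\R\times \N \to \R$ computing $\inf_{y\in B(x,\frac{1}{2^{N}})\cap [0,1]}g(y)$ directly from the sequence $(g_{n})_{n\in\N}$. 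Since $g$ is $\{0,1\}$-valued, the equation $\Phi(q,N)=1$ is equivalent to $B(q,\frac{1}{2^{N}})\cap [0,1]\subseteq O$; this dichotomy is arithmetical, hence decidable by $\exists^{2}$. Enumerating those rational pairs $(q,N)$ with $\Phi(q,N)=1$ then yields a sequence of basic open balls whose union is $O$, since openness places every $x\in O$ inside some such rational sub-ball, and this is precisely an RM-code.

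The main technical obstacle I anticipate is the domain bookkeeping in the second implication: \cite{dagsamXIV}*{Theorem 2.9} is phrased for infima over balls in $\R$, so one must adapt the construction of $\Phi$ to restrict to $B(q,\frac{1}{2^{N}})\cap [0,1]$ (naively extending $g$ by $0$ outside $[0,1]$ would corrupt the values of $\Phi$ near the endpoints $0$ and $1$). The conceptual point, however, is uniform across both implications: the Baire 1 assumption demotes questions that are prima facie third-order---the presence of a continuity point in $C_{h}$ for the first part, and the containment of a basic ball inside $O$ for the second---down to the arithmetical level, which is exactly enough to drive $\BCT_{[0,1]}$ and to produce an explicit RM-code.
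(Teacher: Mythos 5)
Your proposal is correct and follows essentially the same route as the paper: the first implication is the composition of Theorem \ref{toblo} with the equivalence in Theorem \ref{clockn} (the paper states this in one line; you unfold it via $h$ from \eqref{mopi}), and the second implication rests on exactly the same key fact, namely that the sup/inf functional of \cite{dagsamXIV}*{Theorem 2.9} becomes available once the usco indicator of a closed set is Baire 1, so that membership of a rational ball in the open set becomes arithmetical and an RM-code can be enumerated. The only cosmetic difference is that the paper tests $0=\sup_{y\in B(x,2^{-N})}\mathbb{1}_{C}(y)$ for the closed complement $C$ and extracts the least witness with $\mu^{2}$, whereas you test $\inf\mathbb{1}_{O}=1$ on the open set itself; these are interchangeable.
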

\begin{proof}
The first part is immediate from Theorem \ref{clockn} and the theorem.  For the second part, a set $C\subset [0,1]$ is closed if and only if $\mathbb{1}_{C}$ is usco.  
Now, in case $x\in O:=[0,1]\setminus C$, we have $(\exists N\in \N)(0= \sup_{y\in B(x, \frac{1}{2^{N}})} \mathbb{1}_{C}(y) )$.  As noted in the proof of the theorem, 
the latter supremum is not just notation but given by a functional in case the function is also Baire 1. Hence, we can use $\mu^{2}$ to define $Y:\R\di \N$ such that $B(x, \frac{1}{2^{Y(x)}})\subset O$ in case $x\in O$.
As $\mu^{2}$ returns the least witness, $\cup_{q\in O\cap \Q}B(q, \frac{1}{2^{Y(q)+1}})$ is an RM-code for $O$, and we are done. 
\end{proof}
The principle \textsf{open} in Corollary \ref{lopi} is `explosive' in that $\FIVE^{\omega}+\open$ proves $\SIX$ (see \cite{dagsamXI}), while $\FIVE^{\omega}$ is $\Pi_{3}^{1}$-conservative over $\FIVE$ (see \cite{yamayamaharehare}).  As an exercise, the first centred principle of Corollary \ref{lopi} is similarly explosive when restricted to usco functions that are continuous almost everywhere.  
We also have the following stronger result in a stronger system.  
\begin{cor}[$\ACAo+\FIVE$]
Let $f:[0,1]\di  [0,1]$ be usco and effectively Baire $n$ for $n\geq 2$.  Then $C_{f}\ne \emptyset$.
\end{cor}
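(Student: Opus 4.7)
The plan is to mimic the proof of Theorem \ref{toblo} verbatim, replacing the appeal to Baire 1 regularity (and the corresponding supremum/infimum functional from \cite{dagsamXIV}*{Theorem 2.9}) with an analogous construction that works for effectively Baire $n$ functions, at the cost of upgrading from $\ACAo$ to $\ACAo+\FIVE$. As a first step, set $g:=-f+1$, which is lsco and effectively Baire $n$ into $[0,1]$; it suffices to show $C_{g}\ne\emptyset$. Reproducing the bookkeeping of Theorem \ref{toblo}, define $F_{q}\subset [0,1]$ via \eqref{ling2} applied to $g$, so that $D_{g}\subseteq\bigcup_{q\in\Q}F_{q}$, and put $O_{q}:=[0,1]\setminus F_{q}$. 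The only non-trivial task is to show $(O_{q})_{q\in\Q}$ is given by R2-representations, after which the R2-version of the Baire category theorem, provable in $\ACAo$ by \cite{dagsamVII}*{Theorem 7.10}, yields a point $y\in\bigcap_{q\in\Q}O_{q}\subseteq C_{g}$.

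To supply the R2-representation, I would use the disjunction \eqref{cornholio} for $g$: for $x\in O_{q}$, either (a) $g(x)>q$, or (b) there is $N$ with $(\forall r\in B(x,\tfrac{1}{2^{N}})\cap\Q)(g(r)\leq q)$. In case (b), by lsco of $g$ the rational witnesses automatically extend to all reals in the ball, so $\mu^{2}$ locates the least such $N$ exactly as in the proof of Theorem \ref{toblo}. In case (a), one must find $N$ with $B(x,\tfrac{1}{2^{N}})\subseteq\{y:g(y)>q\}$; lsco guarantees such an $N$ exists, and the key point is that the predicate
\[
(\exists N\in\N)(\forall y\in B(x,\tfrac{1}{2^{N}}))\bigl(g(y)>q\bigr)
\]
involves only a universal quantifier over $[0,1]$ applied to the arithmetical unfolding of the $n$-fold nested limit defining $g$; thus it lies in $\Pi^{1}_{1}$. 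Consequently $\SS^{2}$ (available from $\FIVE$) decides it, and combined with $\mu^{2}$ one extracts the least witness $N$, producing the desired function $Y:\R\di\R$ with $B(x,Y(x))\subseteq O_{q}$ whenever $x\in O_{q}$. Combining the two cases via the case distinction decidable by $\exists^{2}$ gives an R2-representation of each $O_{q}$, uniformly in $q\in\Q$.

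The main obstacle is exactly this complexity bookkeeping: one must verify that, because the effectively Baire $n$ presentation of $g$ unfolds $g(y)>q$ into a \emph{fixed} arithmetical formula (of $n+1$ alternations) in parameters $y$ and the given continuous approximants $f_{m_{1},\dots,m_{n}}$, quantification over $y\in[0,1]$ keeps the total complexity within $\Pi^{1}_{1}$, irrespective of how large $n$ is. Once this is verified, the Suslin functional provides the search procedure needed to turn existential witnesses into an actual R2-modulus, and the remainder of the argument is a direct transcription of the proof of Theorem~\ref{toblo}. In particular, no further ingredient beyond $\ACAo+\FIVE$ is required, and the previously established R2-version of the Baire category theorem closes the argument to give $C_{f}=C_{g}\ne\emptyset$.
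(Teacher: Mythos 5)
Your overall strategy is the right one and matches the paper's: the paper's proof of this corollary is a one-line reduction to the argument of Theorem \ref{toblo}, with the only new ingredient being that the supremum (equivalently infimum) principle for bounded effectively Baire $n$ functions is provable in $\ACAo+\FIVE$ by \cite{dagsamXIV}*{Theorem 2.22}; that functional $\Phi(x,N)=\inf_{y\in B(x,\frac{1}{2^{N}})}g(y)$ is exactly what makes the first disjunct of \eqref{cornholio} yield an R2-modulus. You instead inline the justification: unfold the $n$-fold nested limits into a fixed arithmetical matrix (using RM-codes for the continuous approximants, a step you leave implicit but which is available via \cite{dagsamXIV}*{Cor.\ 2.5}) and decide the resulting $\Pi^{1}_{1}$ predicate $(\forall y\in B(x,\frac{1}{2^{N}}))(g(y)>q)$ levelwise, extracting the least $N$ with $\mu^{2}$. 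This is a perfectly good, and more self-contained, route to the same R2-representation; what the paper's citation buys is precisely this complexity bookkeeping, packaged once and for all.

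One point needs repair: you assert that $\SS^{2}$ is ``available from $\FIVE$''. It is not --- $\FIVE$ is the second-order comprehension \emph{schema}, whereas $(\SS^{2})$ is a third-order functional-existence axiom; the paper's own notation $\FIVE^{\omega}\equiv\RCAo+(\SS^{2})$ and the conservativity remarks make clear these are distinguished. Over the stated base theory $\ACAo+\FIVE$ you therefore cannot invoke a Suslin \emph{functional} to decide your $\Pi^{1}_{1}$ predicate uniformly in the real parameter $x$. The fix is routine: apply the $\Pi^{1}_{1}$-comprehension schema to the instances with only rational and code parameters, i.e.\ form the single set $S=\{(p,p',q)\in\Q^{3}:(\forall y\in (p,p'))(g(y)>q)\}$, and observe that $(\forall y\in B(x,\frac{1}{2^{N}}))(g(y)>q)$ is arithmetical in $S$ and $x$ since the open ball is the union of its rational subintervals; then $\exists^{2}$ and $\mu^{2}$ recover the uniform modulus. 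This is in effect how the supremum principle cited by the paper is obtained, so once this substitution is made your argument goes through in $\ACAo+\FIVE$ as claimed.
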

\begin{proof}
The supremum principle for bounded effectively Baire $n$ ($n\geq 2$) functions is provable in $\ACAo+\FIVE$ by \cite{dagsamXIV}*{Theorem 2.22}.  
\end{proof}
Finally, we show that e.g.\ items \eqref{bcti} and \eqref{bct6} of Theorem \ref{clockn} have a proof in $\ACAo$ for Baire 1 functions.  
\begin{thm}[$\ACAo$]\label{flang}
Let $f:[0,1]\di \R$ be Baire 1.  
\begin{itemize}
\item There exists $x\in [0,1]$ such that $f$ is continuous at $x$.
\item There exist $N\in \N$ and $c, d\in [0,1]$ such that $(\forall y\in (c, d))(|f(y)|\leq N+1)$.
\item The oscillation function $\osc_{f}:[0,1]\di \R$ exists.
\end{itemize}
\end{thm}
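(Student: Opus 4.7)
The plan is to derive all three items by building R2-closed sets from the continuous approximants witnessing the Baire 1 property of $f$, and then invoking the Baire category theorem for R2-open sets, which is provable in $\ACAo$ by \cite{dagsamVII}*{Theorem 7.10}. Fix a sequence $(f_n)_{n\in \N}$ of continuous functions converging pointwise to $f$ on $[0,1]$. I would prove the items in the order local boundedness, oscillation function, continuity point, since each step feeds naturally into the next.

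For item (ii), the local boundedness statement, define $E_n := \{x \in [0,1] : (\forall m \geq n)(|f_m(x)-f_n(x)| \leq 1)\}$. Each difference $f_m - f_n$ is continuous, so the set $\{y : |f_m(y)-f_n(y)|>1\}$ is open and $\mu^{2}$ computes, for any $x$ in this set, a witnessing radius $r>0$; by taking the least such $m$, one obtains an R2-representation of the complement $[0,1] \setminus E_n$. Because $(f_m(x))_m$ is Cauchy at every $x$, we have $\cup_n E_n = [0,1]$, and the R2-version of $\BCT_{[0,1]}$ forces some $E_{n_0}$ to contain an interval $(c,d)$. On this interval $|f(x)| \leq |f_{n_0}(x)|+1$, and continuity of $f_{n_0}$ yields an $N \in \N$ bounding $|f_{n_0}|+1$ on any closed subinterval, delivering item (ii).

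For item (iii), invoke (the proof of) \cite{dagsamXIV}*{Theorem 2.9}, cited in the proof of Theorem \ref{toblo}, to obtain functionals $\Phi_{\sup}, \Phi_{\inf}:(\R\times\N)\to \R$ giving the sup and inf of $f$ on balls $B(x,\tfrac{1}{2^N})$. Then $k \mapsto \Phi_{\sup}(x,k)-\Phi_{\inf}(x,k)$ is a monotone non-increasing sequence of non-negative reals, and its limit (a standard construction in $\ACAo$ using $\exists^{2}$) defines $\osc_f(x)$. The main obstacle is ensuring this limit is a genuine real at every $x \in [0,1]$, not merely a formal expression; this requires showing $f$ is locally bounded at every point, which is extracted by applying the argument of item (ii) to arbitrary subintervals (equivalently, by observing that the set of points where $f$ fails to be locally bounded must be closed and, by an R2-BCT application, nowhere dense, hence cannot contain any point whose oscillation we wish to compute because we can shrink to a bounded sub-neighbourhood).

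For item (i), set $D_k := \{x \in [0,1] : \osc_f(x) \geq \tfrac{1}{2^k}\}$. Each $D_k$ is closed by Theorem \ref{flappie}, and arguing as in the proof of Theorem \ref{toblo}, the complement is R2-open: if $\osc_f(x)<\tfrac{1}{2^k}$, then $\mu^{2}$ returns the least $N$ with $\Phi_{\sup}(x,N)-\Phi_{\inf}(x,N)<\tfrac{1}{2^k}$, and $B(x,\tfrac{1}{2^{N+1}})$ lies entirely in $[0,1]\setminus D_k$. The classical Baire 1 argument shows $D_k$ is nowhere dense: suppose $D_k \supset (a,b)$; the sets $F_{n,\eps} := \{x\in (a,b): (\forall m\geq n)(|f_m(x)-f_n(x)|\leq \eps)\}$ are R2-closed in $(a,b)$, increasing, and cover $(a,b)$, so R2-BCT yields a subinterval $(a',b') \subset F_{n_0,\eps}$; on $(a',b')$ one has $|f-f_{n_0}|\leq \eps$, so continuity of $f_{n_0}$ shrinks $\osc_f$ below $\tfrac{1}{2^k}$ for $\eps$ chosen small, contradicting $(a',b')\subset D_k$. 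Applying the R2-version of $\BCT_{[0,1]}$ to the dense open sets $O_k := [0,1]\setminus D_k$ produces $y \in \cap_k O_k$ with $\osc_f(y)=0$, i.e.\ $f$ is continuous at $y$.
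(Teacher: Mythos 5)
Your proposal is correct in outline for all three items and lands in the same system, but it takes a recognisably different route from the paper's. The paper first converts the continuous approximants $f_{n}$ into RM-codes (via \cite{dagsamXIV}*{Cor.\ 2.5}), so that the relevant closed sets -- $C_{n}:=\{x: (\forall m)(|f_{m}(x)|\leq n)\}$ for local boundedness and $D_{m,n}:=\{x: (\forall k\geq m)(|f_{k}(x)-f_{m}(x)|\leq \frac{1}{2^{n}})\}$ for the continuity point -- are $\Pi^{0}_{1}$ relative to those codes and hence RM-closed by \cite{simpson2}*{II.5.7}; everything is then finished by the \emph{second-order} Baire category theorem (\cite{simpson2}*{II.5.8}), provable already in $\RCA_{0}$. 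You instead manufacture R2-representations by hand with $\mu^{2}$ and invoke the R2-version of $\BCT_{[0,1]}$ from \cite{dagsamVII}*{Theorem 7.10}; this is adequate for a proof in $\ACAo$ but uses a heavier form of the category theorem than necessary. The more substantive divergence is in item (i): the paper argues directly with the boundary sets $F_{m,n}:=D_{m,n}\setminus \textsf{int}(D_{m,n})$, which are automatically nowhere dense and contain all discontinuity points, so no oscillation function and no separate nowhere-density argument are needed; you route item (i) through item (iii) and must then prove each $D_{k}=\{x:\osc_{f}(x)\geq \frac{1}{2^{k}}\}$ nowhere dense by a second, nested category argument. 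Both work; the paper's version is more self-contained, while yours makes the dependence on the oscillation machinery explicit.

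The one place where your reasoning actually goes astray is the local-boundedness digression in item (iii). Applying the argument of item (ii) to arbitrary subintervals shows only that the points of local boundedness contain a dense open set, not all of $[0,1]$, and a Baire 1 function need not be locally bounded everywhere (take $f(x)=1/x$ for $x>0$ and $f(0)=0$, the pointwise limit of the obvious truncations); the fact that the bad set is nowhere dense does not let you define $\osc_{f}$ \emph{at} a point of that bad set, since the oscillation there is computed over balls centred at that very point. The paper sidesteps this entirely by taking the sup/inf functionals of \cite{dagsamXIV}*{Theorem 2.9} as given real-valued data, after which $\osc_{f}(x)$ is just a monotone limit available in $\ACAo$ -- exactly your construction minus the patch. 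So either lean on the cited functionals as the paper does, or restrict attention to the regime where they return reals; the patch as written does not close the gap it opens.
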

\begin{proof}
Let $(f_{n})_{n\in \N}$ be a sequence of continuous functions with pointwise limit $f$.  
By \cite{dagsamXIV}*{Cor.\ 2.5}, there is an associated sequence of RM-codes $(\Phi_{n})_{n\in \N}$.  
For the second item, define the closed set $C_{n}$ as follows:
\[
C_{n}:= \{  x\in [0,1]: (\forall m\in \N)(|f_{m}(x)|\leq n ) \}.
\]
Thanks to the RM-codes $\Phi_{n}$ for $f_{n}$, the formula $(\forall m\in \N)(f_{m}(x)\leq n )$ is (second-order) $\Pi_{1}^{0}$.  
Hence, the sets $C_{n}$ have corresponding RM-codes by \cite{simpson2}*{II.5.7}.  Since $[0,1]=\cup_{n\in \N}C_{n}$, the Baire category theorem for RM-codes of open sets (provable in $\RCAo$ by \cite{simpson2}*{II.5.8}), yields that at least one $C_{n}$ is not nowhere dense. 

\smallskip

For the first item, define $D_{m,n}:=\{ x\in [0,1]: (\forall k\geq m)(|f_{k}(x)-f_{m}(x)|\leq \frac{1}{2^{n}}  ) $.
As in the previous paragraph, each $D_{m,n}$ is RM-closed.  To define the interior of the latter sets, put $x\in \textsf{int}(D_{m,n} )$ if and only if $(\exists N\in \N )(\forall r \in B(x, \frac{1}{2^{N}})\cap \Q)(r \in D_{m,n})$.  
Due to the continuity of the $f_{n}$, $x\in \textsf{int}(D_{m,n} )$ is equivalent to 
\[\textstyle
(\exists N\in \N )(\forall y \in B(x, \frac{1}{2^{N}}))(y \in D_{m,n}), 
\]
i.e.\ the `real' definition of the interior of $D_{m,n}$
As in the proof of Theorem~\ref{clockn}, define $F_{m,n}:=D_{m,n}\setminus \textsf{int}(D_{m,n} )$, which is RM-closed and nowhere dense, and note that each point where $f$ is discontinuous, is included in $\cup_{m,n\in \N}F_{m,n}$.  The Baire category theorem for RM-codes (provable in $\RCA_{0}$ by \cite{simpson2}*{II.5.7}) finishes the proof. 

\smallskip

Finally, for the third item. as noted in the proof of Theorem \ref{toblo}, we have access to the supremum operator for Baire 1 functions following \cite{dagsamXIV}*{Theorem 2.9}.
Hence, one readily defines the oscillation function using $\exists^{2}$. 
\end{proof}
In conclusion, we have established some equivalences involving $\BCT_{[0,1]}$ and the semi-continuity lemma.  
We have also shown that the latter become much easier to prove when assuming e.g.\ a Baire 1 representation or a modulus of usco. 

\subsection{Continuity properties of Baire one star functions}\label{bong}
We generalise the equivalences from the previous section.  In particular, we show that Theorem \ref{clockn} goes through if we replace `usco' by `bounded Baire $1^{*}$'.  

\smallskip

First of all, we have the following theorem establishing a decomposition theorem for Baire $1^{*}$ functions.  
\begin{thm}[$\ACAo$]\label{b1sd}
For bounded Baire $1^{*}$ $f:[0,1]\di \R$, there exists usco $g_{0}:[0,1]\di \R$ and lsco $g_{1}:[0,1]\di \R$ such that $f=g_{0}+g_{1}$ on $[0,1]$. 
\end{thm}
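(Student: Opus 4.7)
The plan is to construct $g_0$ and $g_1$ explicitly from the Baire $1^*$ witness $(C_n)_{n \in \N}$ via Tietze extensions, then verify the required semi-continuity. First I would prepare the witness by replacing each $C_n$ with the finite union $\bigcup_{k \leq n} C_k$ so that the sequence is increasing; each such union is closed, and $f$ restricted to it remains continuous, because any convergent sequence in the union must (pigeonhole plus closedness) eventually lie in those $C_k$ containing its limit, on each of which $f\restriction_{C_k}$ is continuous by hypothesis.

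Next I would apply Tietze's extension theorem to each $f\restriction_{C_n}$ to produce a continuous $f_n : [0,1] \to \R$ with $\|f_n\|_\infty \leq \|f\|_\infty$ and $f_n\restriction_{C_n} = f\restriction_{C_n}$. This is available in $\ACAo$: the restriction $f\restriction_{C_n}$ is uniformly continuous on the compact closed set $C_n$ by Heine--Borel, yielding a modulus of uniform continuity from which the extension can be written down explicitly. The $f_n$'s then let us compute suprema and infima of $f$ over open balls via the identity $\sup\{f(y) : y \in B\} = \sup_n \sup\{f_n(y) : y \in B \cap C_n\}$, each inner sup being attainable by continuity and compactness (reducing it to a sup over a countable dense subset, hence computable via $\exists^2$).

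Using this machinery, I would define the inclusive upper envelope $\bar f(x) := \lim_{N \to \infty} \sup\{f(y) : |y-x| < 2^{-N}\}$, its exclusive counterpart $\hat f(x)$, and the exclusive lower envelope $\check f(x)$. I then set $g_0(x) := \bar f(x) + \bigl(\hat f(x) - \check f(x)\bigr)$ and $g_1(x) := f(x) - g_0(x)$. By standard arguments $g_0$ is usco, being the sum of two usco functions: $\bar f$ is always usco (as an upper envelope), and $\hat f - \check f$ is usco because $\hat f$ is usco while $-\check f$ is usco (as $\check f$ is lsco).

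The hard part will be verifying that $g_1$ is lsco. For this, I would fix $x_0 \in [0,1]$, take $n_0$ least with $x_0 \in C_{n_0}$, and use continuity of $f\restriction_{C_{n_0}}$ at $x_0$ to bound $f(y)$ close to $f(x_0)$ along $C_{n_0}$-approaches, while using the defining formulas of $\hat f, \check f$ to absorb the oscillations coming from approaches through $C_m \setminus C_{n_0}$ for $m > n_0$. The Baire $1^*$ structure is essential here: the same construction cannot possibly work for arbitrary bounded functions (the Dirichlet function cannot be written as usco plus lsco at all), so careful bookkeeping with the sets $C_n$ to control limit behavior of $\bar f$, $\hat f$, and $\check f$ near $x_0$ will be needed to push through $\liminf_{y \to x_0} g_1(y) \geq g_1(x_0)$.
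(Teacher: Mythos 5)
Your decomposition is independent of the witnessing sequence $(C_n)_{n\in\N}$ --- the envelopes $\bar f$, $\hat f$, $\check f$ depend only on $f$ --- and the step you yourself flag as the hard part, namely that $g_1 = f - \bar f - (\hat f - \check f)$ is lsco, is in fact false. Counterexample: for $n\geq 2$ let $A_n=\{\frac{1}{n}+4^{-k}:k\geq k_n\}$ with $k_n$ so large that $A_n\subset(\frac{1}{n},\frac{1}{n-1})$, put $A=\bigcup_{n\geq 2}A_n$ and $f=\mathbb{1}_{A\cup\{0\}}$ on $[0,1]$. This $f$ is bounded and Baire $1^{*}$: cover $[0,1]$ by the singletons of the countable closed set $\overline{A}=A\cup\{\frac{1}{n}:n\geq2\}\cup\{0\}$ together with an exhaustion of the open set $[0,1]\setminus\overline{A}$ by closed sets; $f$ is constant on each piece. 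At $x=0$ we have $f(0)=1$, $\hat f(0)=1$, $\check f(0)=0$, $\bar f(0)=1$, hence $g_0(0)=2$ and $g_1(0)=-1$; at $x=\frac{1}{n}$ we have $f(\frac{1}{n})=0$, $\hat f(\frac{1}{n})=1$ (since $A_n$ accumulates at $\frac{1}{n}$), $\check f(\frac{1}{n})=0$, $\bar f(\frac{1}{n})=1$, hence $g_0(\frac{1}{n})=2$ and $g_1(\frac{1}{n})=-2$. Since $\frac{1}{n}\di 0$, we get $\liminf_{y\di 0}g_1(y)\leq -2<-1=g_1(0)$, so $g_1$ is not lsco at $0$ (while $g_0$ is still usco, so the defect cannot be shifted to the other summand). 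No amount of bookkeeping with the $C_n$ can repair this, because your $g_0,g_1$ never mention the $C_n$; the paper instead appeals to Menkyna's Lemma 5, whose construction builds the decomposition directly from the sets $C_n$.

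Independently of this, the preparatory steps are not available in $\ACAo$. The $C_n$ are closed sets in the sense of Definition \ref{char}, given only by characteristic functions; producing a Tietze extension of $f_{\upharpoonright C_n}$, a countable dense subset of $C_n$, or the value $\sup\{f_n(y):y\in B\cap C_n\}$ all require deciding whether $C_n$ meets a given rational interval, i.e.\ an existential quantifier over $\R$ applied to an arbitrary characteristic function, which is at the level of $\exists^{3}$. The paper's own Remark \ref{dichtbij} and Corollaries \ref{lopi} and \ref{tokkiecor} show that such coding data for closed sets yields principles like $\open$ and $\NIN_{[0,1]}$ that are unprovable in $\Z_2^{\omega}$, so this part of the plan cannot be carried out in the stated base theory either.
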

\begin{proof}
The proof of \cite{mentoch}*{Lemma 5} goes through with no modification since we assume an upper and lower bound for $f$.
\end{proof}
We study \emph{bounded} Baire $1^{*}$ functions as otherwise the proof of Theorem \ref{b1sd} would need $\NCC$ and $\MCC$ from \cite{dagsamIX}, which are rather non-trivial fragments of the Axioms of Choice (though provable in $\Z_{2}^{\Omega}$).  

\smallskip

Secondly, the previous decomposition theorem gives rise to the following equivalences, where we only treat some basic cases compared to Theorem \ref{clockn}. 
\begin{thm}[$\ACAo$]\label{clockm} The following are equivalent. 
\begin{enumerate}
 \renewcommand{\theenumi}{\alph{enumi}}
\item The Baire category theorem as in $\BCT_{[0,1]}$.\label{b11}
\item For Baire $1^{*}$ $f:[0,1]\di [0,1]$, there is $x\in [0,1]$ where $f$ is continuous.\label{b12}
\item For Baire $1^{*}$ $f:[0,1]\di [0,1]$ which has an oscillation function $\osc_{f}:[0,1]\di \R$, there is a point $x\in [0,1]$ where $f$ is continuous. \label{b13}
\end{enumerate}
\end{thm}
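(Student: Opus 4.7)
The plan is to prove (a)$\Rightarrow$(b)$\Rightarrow$(c)$\Rightarrow$(a), using the decomposition Theorem \ref{b1sd} in the forward direction and a proof-by-contradiction based on \eqref{mopi} in the reverse.

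For (a)$\Rightarrow$(b), given a Baire $1^{*}$ function $f:[0,1]\to[0,1]$, I would apply Theorem \ref{b1sd} to write $f=g_{0}+g_{1}$ with $g_{0}$ usco and $g_{1}$ lsco. As in the proof of Theorem \ref{clockn} (the argument around \eqref{ling} and \eqref{ling2}), each of $g_{0}$ and $g_{1}$ has its discontinuity set contained in a countable union of closed nowhere dense sets $F_{q}$ definable using $\exists^{2}$. Combining the two families yields another countable sequence of closed nowhere dense sets; taking decreasing finite intersections of their dense open complements and applying $\BCT_{[0,1]}$ produces a point avoiding every such $F_{q}$, which is simultaneously a continuity point of $g_{0}$ and $g_{1}$, hence of $f$.

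The implication (b)$\Rightarrow$(c) is immediate. For (c)$\Rightarrow$(a), I would assume (c), fix a decreasing sequence $(O_{n})_{n\in\N}$ of dense open subsets of $[0,1]$, set $X_{n}:=[0,1]\setminus O_{n}$, and suppose toward a contradiction that $\bigcap_{n}O_{n}=\emptyset$, equivalently $\bigcup_{n}X_{n}=[0,1]$. Consider $h$ from \eqref{mopi}: by Theorem \ref{flap} it is usco with values in $[0,1/2]\subseteq[0,1]$, and by Theorem \ref{fronkn} it is its own oscillation function, so the hypothesis on $\osc_{h}$ in (c) is automatic. The crucial new step is to verify $h$ is Baire $1^{*}$. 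For each $n,k\in\N$ introduce the closed set
\[
K_{n,k}:=X_{n}\cap\bigl\{x\in[0,1]:B(x,1/2^{k})\cap X_{n-1}=\emptyset\bigr\},
\]
with $X_{-1}:=\emptyset$, so $K_{0,k}=X_{0}$. Each point of $K_{n,k}$ lies in $X_{n}$ but not in the closed set $X_{n-1}$, whence $n$ is the least index with $x\in X_{n}$ and $h$ takes the constant value $1/2^{n+1}$ on $K_{n,k}$. Moreover $\bigcup_{k}K_{n,k}=X_{n}\setminus X_{n-1}$, so $\bigcup_{n,k}K_{n,k}=\bigcup_{n}X_{n}=[0,1]$ under the contradiction hypothesis. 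Hence $(K_{n,k})_{n,k\in\N}$ is a closed cover of $[0,1]$ on each piece of which $h$ is continuous, establishing that $h$ is Baire $1^{*}$. Applying (c) yields a continuity point $y$ of $h$; as in the first paragraph of the proof of Theorem \ref{clockn}, density of the $O_{m}$ forces $h(y)=0$, so $y\notin\bigcup_{n}X_{n}$, contradicting the standing assumption. This establishes $\BCT_{[0,1]}$.

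The main obstacle is precisely this Baire $1^{*}$ verification. Without the contradiction hypothesis $\bigcup X_{n}=[0,1]$, the leftover set $\bigcap O_{n}$ is a dense $G_{\delta}$ that generally fails to be $F_{\sigma}$ (witness: the irrationals), and $h$ then admits no evident closed decomposition exhibiting it as Baire $1^{*}$; the proof-by-contradiction route is exactly what makes the closed cover $(K_{n,k})$ actually exhaust $[0,1]$. A secondary concern is the $\ACAo$-level definability of the condition $B(x,1/2^{k})\cap X_{n-1}=\emptyset$, which I would handle by the same rational-approximation/$\exists^{2}$ technique used throughout the proof of Theorem \ref{clockn} to replace quantification over reals by quantification over rationals.
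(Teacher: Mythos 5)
Your proposal follows the same route as the paper's proof: the decomposition of Theorem \ref{b1sd} combined with the $F_{q}$-machinery from the proof of Theorem \ref{clockn} for the forward direction, and the function $h$ from \eqref{mopi} for the reversal. The forward direction is fine and matches the paper (the paper phrases the final step as ``$C_{g_{0}}$ and $C_{g_{1}}$ are dense, hence their intersection is non-empty''; your version, which applies $\BCT_{[0,1]}$ once to the combined family of closed nowhere dense sets, is the correct way to justify that phrase). For the reversal you have correctly identified the crux, and your diagnosis is in fact sharper than the paper's: the paper disposes of the Baire $1^{*}$ verification by asserting that $h_{\upharpoonright X_{n}}$ is continuous, which is false in general --- take $X_{0}=\{0\}$ and $X_{1}\supseteq\{0\}\cup\{1/k:k\geq 1\}$; then $h(0)=1/2$ while $h(1/k)=1/4$, so $h_{\upharpoonright X_{1}}$ is discontinuous at $0$. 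Your sets $K_{n,k}$, on which $h$ is genuinely constant and which exhaust $[0,1]$ under the contradiction hypothesis, are the correct classical repair.

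The genuine gap sits exactly where you flag your ``secondary concern'', and your proposed resolution does not work. The condition $B(x,1/2^{k})\cap X_{n-1}=\emptyset$ is a universal statement over the reals about an arbitrary closed set in the sense of Definition \ref{char}. The rational-approximation trick used throughout the proof of Theorem \ref{clockn} (e.g.\ passing from \eqref{ting} to \eqref{ting2}) is available there only because the function involved is semi-continuous, which lets real witnesses be replaced by rational ones. Here $X_{n-1}$ is nowhere dense and may contain no rationals whatsoever, so $B(x,1/2^{k})\cap X_{n-1}\cap\Q=\emptyset$ carries no information about $B(x,1/2^{k})\cap X_{n-1}$. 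Hence the characteristic functions of the $K_{n,k}$ are not definable from $\exists^{2}$ and the given data, and $\ACAo$ cannot, by this route, verify the existential statement ``there is a sequence of closed sets witnessing that $h$ is Baire $1^{*}$''. (The paper's own proof avoids this only by making the false continuity claim above, so the defect belongs to the argument as such rather than to your rendition of it; but as written your proposal does not close it either, and some additional idea --- or an added hypothesis giving the $X_{n}$ more structure --- is needed for the implication \eqref{b13}$\Rightarrow$\eqref{b11} to go through in $\ACAo$.)
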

\begin{proof}
Assume $\BCT_{[0,1]}$ and note that Theorem \ref{clockn} implies that for usco $g_{0}$ and lsco $g_{1}$, the sets $C_{g_{0}}$ and $C_{g_{1}}$ are dense, and hence the intersection is non-empty.  
Theorem \ref{b1sd} now implies items \eqref{b12}-\eqref{b13}.  

\smallskip

Now assume item \eqref{b13} and suppose $(X_{n})_{n\in \N}$ is an increasing sequence of nowhere dense sets such that $[0,1]=\cup_{n\in \N}X_{n}$.  
Now consider $h:[0,1]\di \R$ from \eqref{mopi}, which is Baire $1^{*}$ as $f_{\upharpoonright X_{n}}$ is continuous for any $n\in \N$. 
Item \eqref{b13} provides a point of continuity, which yields $x\not \in \cup_{n\in \N}X_{n}$ as in the proof of Theorem \ref{clockn}, as required.  
\end{proof}
By contrast, we have the following theorem.
\begin{thm}[$\ACAo$]\label{toblocor}
For $f:[0,1]\di [0,1]$ in Baire $1^{*}$ and Baire 1, $C_{f}\ne \emptyset$.
\end{thm}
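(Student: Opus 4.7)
The plan is to observe that the conclusion already follows from the Baire 1 hypothesis alone, via Theorem \ref{flang}. That theorem provides, in $\ACAo$ and for any Baire 1 function $f:[0,1]\di \R$ presented with its sequence of continuous approximants, a point $x\in [0,1]$ where $f$ is continuous; neither boundedness nor the Baire $1^{*}$ data enters into its proof. So the entire argument is a direct appeal: apply Theorem~\ref{flang} to $f$ (taking its Baire 1 representation as the witness) and read off $C_{f}\ne \emptyset$.

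The reason this statement is worth recording in spite of this brevity is the contrast with Theorem \ref{clockm}. Baire $1^{*}$ alone, even for $[0,1]$-valued functions, is already equivalent to $\BCT_{[0,1]}$ and hence unprovable in $\Z_{2}^{\omega}+\QFAC^{0,1}$ (cf.\ Footnote \ref{kabal}). Adding the Baire 1 representation supplies the extra constructive data -- namely RM-codes for the continuous approximants via \cite{dagsamXIV}*{Cor.\ 2.5} -- that makes the internal construction in Theorem \ref{flang} of RM-closed nowhere dense sets $F_{m,n}$ available, so that the weak Baire category theorem for RM-coded open sets (provable already in $\RCA_{0}$ by \cite{simpson2}*{II.5.8}) suffices. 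Thus the Baire $1^{*}$ hypothesis in the statement is technically redundant; it is included to situate the result against Theorem \ref{clockm} and the parallel Theorem \ref{toblo} in the usco case.

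An alternative route that I would \emph{not} follow is to invoke the decomposition $f=g_{0}+g_{1}$ of Theorem \ref{b1sd}, with $g_{0}$ usco and $g_{1}$ lsco, and then try to combine Theorems \ref{toblo} and \ref{plonk} to produce a common continuity point. The obstacle -- and the reason this would be the hard step -- is that Theorem \ref{toblo} requires a genuine Baire 1 representation of the usco summand $g_{0}$, and Corollary \ref{lopi} shows that such a representation cannot in general be extracted from a Baire 1 representation of $f$ within $\Z_{2}^{\omega}$. Routing the argument through Theorem \ref{flang} sidesteps this entirely, which is why the direct appeal is the right proof.
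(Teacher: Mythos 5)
Your proposal is correct, but it takes a genuinely different route from the paper. The paper proves this via the decomposition machinery: it invokes Theorem \ref{b1sd} to write $f=g_{0}+g_{1}$ with $g_{0}$ usco and $g_{1}$ lsco, observes that the proof of \cite{mentoch}*{Lemma 5} transfers the given Baire 1 representation of $f$ to Baire 1 representations of the two summands, applies Theorem \ref{toblo} to conclude that $C_{g_{0}}$ and $C_{g_{1}}$ are non-empty and dense, and finishes by intersecting. In other words, the paper follows exactly the route you flag as the one you would \emph{not} take; your stated obstacle does not arise there, because the summands are not arbitrary usco functions whose Baire 1 representations must be conjured (which is what Corollary \ref{lopi} obstructs) but are explicitly built from the representation of $f$, so their representations come for free from the construction. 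Your direct appeal to Theorem \ref{flang} is nevertheless valid and arguably cleaner: that theorem already yields a continuity point for any Baire 1 function in $\ACAo$, with no boundedness or Baire $1^{*}$ input, so the extra hypotheses are indeed redundant for the bare conclusion. What the paper's route buys is the explicit parallel with Theorems \ref{clockm} and \ref{toblo} --- it exhibits that the Baire $1^{*}$ decomposition is compatible with Baire 1 representations, which is the point of recording the theorem in that section --- while your route buys brevity and makes the redundancy of the Baire $1^{*}$ hypothesis transparent. Your surrounding discussion of why the statement is worth recording matches the paper's motivation.
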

\begin{proof}
The decomposition $f=g_{0}+g_{1}$ from Theorem \ref{b1sd} is provided by the proof of \cite{mentoch}*{Lemma 5}.  By the latter, a Baire 1 representation of $f$ also provides a Baire 1 representation of $g_{0}$ and $g_{1}$.  
Now apply Theorem \ref{toblo} to conclude that $C_{g_{0}}$ and $C_{g_{1}}$ are non-empty (and dense).  Then $C_{g_{0}}\cap C_{g_{1}}\ne \emptyset$, and we are done. 
\end{proof}
Finally, we mention a class between the continuous and the Baire $1^{*}$ functions, namely the Baire 1$^{**}$ functions introduced in \cite{pawla}.
Trivially, Baire $1^{**}$ functions have a point of continuity, i.e.\ we cannot obtain equivalences as in Theorem \ref{clockm}.

\subsection{The uniform boundedness principle}\label{UBP}
In this section, we connect the Baire category theorem and the uniform boundedness principle from \cite{bro}.

\smallskip

First of all, the uniform boundedness principle can be obtained by applying the Baire category theorem to the pointwise boundedness (see Def.\ \ref{lappel}) condition in the former, as done in the second-order proof (\cite{simpson2}*{II.10.8}).
One readily observes that this application of the Baire category theorem does not require continuity, but in fact immedately generalises to lsco functions, as has been established in e.g.\ \cite{bro, kozy, nguyen1}.  
Moreover, by \cite{bro}*{Theorem~1}, one can \emph{characterise} meagreness in terms of the uniform bounded principle for lsco functions.
In this light, the RM-study of the uniform boundedness principle for lsco functions from \cite{bro} is only natural.  

\smallskip

Secondly, we need the following definitions to formulate Theorem \ref{rink}.
\bdefi\label{lappel} A sequence of functions $(f_{n})_{n\in \N}$ is 
\begin{itemize}
\item \emph{\(pointwise\) bounded} on $X$ if $(\forall x\in X)(\exists N\in \N)(\forall n\in \N)(f_{n}(x)\leq N   )$,
\item \emph{uniformly bounded} on $X$ if $(\exists M\in \N)(\forall x\in X,  m\in \N)(f_{m}(x)\leq M   )$. 
\end{itemize}
\edefi
\begin{thm}[$\ACAo$]\label{rink}
The following are equivalent.
\begin{enumerate}
\renewcommand{\theenumi}{\alph{enumi}}
\item The Baire category theorem as in $\BCT_{[0,1]}$.
\item \emph{(Uniform boundedness principle \cite{bro})} A sequence of lsco functions that is pointwise bounded on $[0,1]$, is uniformly bounded on some $(c, d)\subset [0,1]$.\label{franken}
\end{enumerate}
\end{thm}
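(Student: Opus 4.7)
The plan is to adapt the classical textbook proof of the uniform boundedness principle to the third-order setting, using the lsco hypothesis to handle the relevant sets within $\ACAo$, in the style of the proof of Theorem \ref{clockn}.

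For the forward direction $\BCT_{[0,1]} \Rightarrow$ \eqref{franken}, let $(f_n)_{n \in \N}$ be a pointwise bounded sequence of lsco functions on $[0,1]$ and, for each $k \in \N$, define
\[
C_k := \{x \in [0,1] : (\forall n \in \N)(f_n(x) \leq k)\}.
\]
The characteristic function of $C_k$ is available in $\ACAo$ via $\mu^2$. Moreover, $C_k$ is closed per Definition \ref{char}: if $x \notin C_k$, then $\mu^2$ produces $n_0 \in \N$ with $f_{n_0}(x) > k$, and by the lsco property of $f_{n_0}$ some ball $B(x, 1/2^N)$ lies entirely in the complement of $C_k$. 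Assume for contradiction that item \eqref{franken} fails, so no $C_k$ contains any open sub-interval; then each $O_k := [0,1] \setminus C_k$ is open and dense in $[0,1]$, and $(O_k)_{k \in \N}$ is decreasing since $(C_k)_{k \in \N}$ is increasing. Applying $\BCT_{[0,1]}$ yields $x_0 \in \bigcap_k O_k$, meaning $(\forall k \in \N)(\exists n \in \N)(f_n(x_0) > k)$, which contradicts pointwise boundedness at $x_0$.

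For the converse, let $(O_n)_{n \in \N}$ be a decreasing sequence of dense open sets in $[0,1]$ and consider $f_n : [0,1] \to \R$ defined by $f_n(x) = n$ if $x \in O_n$ and $f_n(x) = 0$ otherwise. Each $f_n$ is lsco: on the open set $O_n$ the function is locally constant equal to $n$, while on the complement the value $0$ is a trivial lower bound for nearby values. Supposing $\bigcap_n O_n = \emptyset$, for each $x \in [0,1]$ there is a least $n_0$ with $x \notin O_{n_0}$; since $(O_n)$ is decreasing, $f_n(x) = 0$ for $n \geq n_0$, so $(f_n(x))_{n \in \N}$ is bounded by $n_0$. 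Invoking item \eqref{franken} produces $(c, d) \subset [0,1]$ and $M \in \N$ with $f_n(y) \leq M$ for all $y \in (c,d)$ and $n \in \N$; density of $O_{M+1}$ then provides $y_0 \in (c,d) \cap O_{M+1}$ with $f_{M+1}(y_0) = M+1 > M$, the desired contradiction, which establishes $\BCT_{[0,1]}$.

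The main delicate point is verifying that $C_k$ is genuinely closed per Definition \ref{char} (i.e., with an \emph{open} complement) while working in $\ACAo$, and this is precisely where the lsco hypothesis is essential, exactly analogously to its use in the proof of the implication \eqref{bcti}$\Rightarrow$\eqref{bctii} of Theorem \ref{clockn}; once this representation issue is in place, the classical contrapositive argument goes through without further difficulty.
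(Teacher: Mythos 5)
Your proof is correct and follows essentially the same route as the paper: the forward direction uses the same closed sets $F_{N}=\{x:(\forall n)(f_{n}(x)\leq N)\}$ (closedness via lsco, definability via $\mu^{2}$) and applies $\BCT_{[0,1]}$ in contrapositive form, while the reversal encodes the dense open sets into a pointwise bounded lsco sequence whose uniform bound on an interval yields a contradiction. The only (harmless) deviation is your choice of witness functions $f_{n}=n\cdot\mathbb{1}_{O_{n}}$ in the reversal, where the paper instead uses the truncations $\hat{h}_{n}$ of the rank function $\hat{h}$ from \eqref{tock}; both choices do the same job.
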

\begin{proof}
Assume $\BCT_{[0,1]}$ and let $(f_{n})_{n\in \N}$ be a sequence of lsco functions that is pointwise bounded on $[0,1]$.  
By definition, the set $E_{n,N}:=\{x\in [0,1]: f_{n}(x)\leq N\}$ is closed, and so is $F_{N}:=\cap_{n\in \N}E_{n, N}$. 
By pointwise boundedness, we have $[0,1]=\cup_{N\in \N}F_{N}$ and $\BCT_{[0,1]}$ implies there is $N_{0}\in \N$ such that $(c, d)\subset F_{N_{0}}$ for some $c, d\in [0,1]$.
This interval is as required for uniform boundedness in item \eqref{franken}. 

\smallskip

For the reversal, assume item \eqref{franken} and define $\hat{h}:[0,1]\di \R$ as follows:
\be\label{tock}
\hat{h}(x):=
\begin{cases}
0 & x\not\in \cup_{n\in \N}X_{n}\\
n & x\in X_{n} \textup{ and $n\in \N$ is the least such number}
\end{cases},
\ee
where $(X_{n})_{n\in \N}$ is an increasing sequence of closed sets such that $[0,1]=\cup_{n\in \N}X_{n}$. 
Now define $\hat{h}_{n}:[0,1]\di \R$ as follows: $\hat{h}_{n}(x):=\hat{h}(x)$ in case $\hat{h}(x)<n$, and $\hat{h}_{n}(x):=n$ otherwise.
One readily verifies that the functions $\hat{h}, \hat{h}_{n}$ are lsco using the definition.   
Now, $(\hat{h}_{n})_{n\in \N}$ is pointwise bounded on $[0,1]$, as $(\forall x\in X)(\forall n\in \N)(\hat{h}_{n}(x)\leq \hat{h}(x)   )$ by definition.
Apply item \eqref{franken} to find $(c, d)\subset [0,1]$ where $(\hat{h}_{n})_{n\in \N}$ is uniformly bounded, i.e.\ $(\exists M\in \N)(\forall m\in \N, x\in (c, d))(\hat{h}_{m}(x)\leq M )$.  
Taking $m=M+1$, we note that $(c, d)\subset X_{M}$, as required for $\BCT_{[0,1]}$.
\end{proof}
Next, basic properties of metric spaces imply $\NIN$ in light of \cite{dagsamX}*{\S3.2.3}. 
These results are based on the observation that an injection from $[0,1]\di \N$ gives rise to a `strange' metric on $[0,1]$.
We have not been able to obtain similarly `strange' norms on basic vector spaces.   Without such a norm, it seems we cannot obtain an equivalence between $\BCT_{[0,1]}$ and associated theorems 
about Banach spaces.  

\smallskip

Finally, the following theorem is often proved using the Baire category theorem:
\begin{center}
\emph{for continuous $f:[0, +\infty)\di \R$ such that $\lim_{n\di \infty}f(nx)=0$ for all $x\in [0, +\infty)$, we have $\lim_{x\di +\infty}f(x)=0$.}
\end{center}
Like the uniform boundedness principle, the centred statement generalises to lsco functions, which is an interesting exercise for the reader.

\subsection{Continuity properties of cliquish functions}\label{cliq}
We establish equivalences between $\BCT_{[0,1]}$ and basic theorems about cliquish and simply continuous functions (Theorems \ref{clockook} and \ref{flip}), which makes these basic theorems unprovable in $\Z_{2}^{\omega}$. 
By contrast, the associated theorems for quasi-continuous functions are provable in $\ACAo$ (Theorem~\ref{plonkook}).  
We discuss the intimate connection between cliquishness, simple continuity, and quasi-continuity in Remark \ref{donola}.  

\smallskip

First of all, we have the following results involving continuity where we note that item \eqref{la5} deals with \emph{any} function that is totally discontinuous.
We also note that item \eqref{la2} expresses the non-trivial part of the equivalence between `cliquish' and `pointwise discontinuous' on the reals, which was already observed by Dini (\cite{dinipi}*{\S 63}) and is discussed in detail in Remark \ref{donola}.
\begin{thm}[$\ACAo$]\label{clockook} The following are equivalent. 
\begin{enumerate}
 \renewcommand{\theenumi}{\alph{enumi}}
\item The Baire category theorem as in $\BCT_{[0,1]}$.\label{la1}
\item For cliquish $f:[0,1]\di \R$ which has an oscillation function $\osc_{f}:[0,1]\di \R$, there is a point $x\in [0,1]$ where $f$ is continuous. \label{la2}
\item For cliquish $f:[0,1]\di \R$ which has an oscillation function $\osc_{f}:[0,1]\di \R$, there is a point $x\in [0,1]$ where $f$ is lqco. \label{la3}
\item For cliquish and uqco $f:[0,1]\di \R$ which has an oscillation function $\osc_{f}:[0,1]\di \R$, there is a point $x\in [0,1]$ where $f$ is continuous. \label{la4}
\item For totally discontinuous $f:[0,1]\di \R$ with oscillation function $\osc_{f}:[0, 1]\di \R$, there is $N\in \N$ and $c, d\in  [0,1]$ with $\osc_{f}(x)\geq \frac{1}{2^{N}}$ for $x\in (c, d)$.\label{la5}
\end{enumerate}
\end{thm}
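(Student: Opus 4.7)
The approach is a cycle of implications patterned on the proof of Theorem~\ref{clockn}, with two recurring tools: (i) the usco property of oscillation functions from Theorem~\ref{flappie}, which ensures that each $D_k := \{x \in [0,1] : \osc_f(x) \geq \frac{1}{2^k}\}$ is closed once $\osc_f$ is supplied as data, and (ii) the counterexample function $h$ from \eqref{mopi}, which by Theorems~\ref{fronkn} and \ref{flap} is simultaneously its own oscillation function and usco and cliquish, whenever the underlying $X_n$ are an increasing sequence of closed nowhere dense subsets of $[0,1]$.

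For \eqref{la1} $\Rightarrow$ \eqref{la2}, cliquishness of $f$ forces each $D_k$ to be nowhere dense (any open $U$ contains a non-empty open $G$ with $\osc_f(G) < \frac{1}{2^k}$, hence $G \subset [0,1] \setminus D_k$); $\BCT_{[0,1]}$ applied to the dense open sets $O_k := [0,1]\setminus D_k$ then yields $y$ with $\osc_f(y) = 0$, i.e., a continuity point of $f$. Items \eqref{la3} and \eqref{la4} follow immediately since continuity implies lqco and since the uqco hypothesis in \eqref{la4} only restricts the class of admissible inputs. For \eqref{la1} $\Rightarrow$ \eqref{la5}, total discontinuity gives $[0,1] = \cup_N D_N$ directly, and the contrapositive of $\BCT_{[0,1]}$ forces some $D_N$ to have non-empty interior $(c,d)$.

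The reversals all feed the function $h$ from \eqref{mopi} into the hypothesis. Given a decreasing sequence $(O_n)$ of dense open subsets of $[0,1]$, put $X_n := [0,1] \setminus O_n$: these are closed and nowhere dense, and by Theorems~\ref{fronkn} and \ref{flap} the associated $h$ satisfies all the standing requirements of items \eqref{la2} and \eqref{la3}; for \eqref{la4}, I would additionally verify directly from \eqref{mopi} that $h$ is uqco, using density and openness of $Y_{n+1} := [0,1]\setminus X_{n+1}$ to obtain, near any $x_0$, an open ball on which $h$ is bounded above by $h(x_0) + \eps$. In each case the produced witness $y$ must satisfy $h(y) = 0$: if $h(y) = \frac{1}{2^{m_0+1}}$ with $m_0$ least such that $y \in X_{m_0}$, then density of $O_{m_0+2}$ meets any prescribed neighbourhood of $y$ at some $z$ with $h(z) \leq \frac{1}{2^{m_0+4}}$, and this contradicts either continuity or (on choosing $\eps = \frac{1}{2^{m_0+2}}$) lqco of $h$ at $y$. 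Therefore $y \in \cap_n O_n$, yielding $\BCT_{[0,1]}$. For \eqref{la5} $\Rightarrow$ \eqref{la1}, if $[0,1] = \cup_n X_n$ then $h = \osc_h$ is everywhere positive, so $h$ is totally discontinuous; \eqref{la5} then yields $(c,d) \subset \{h \geq \frac{1}{2^N}\} \subset X_{N-1}$, contradicting nowhere-density of $X_{N-1}$.

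The main obstacle is the implication \eqref{la3} $\Rightarrow$ \eqref{la1}, since lqco at $y$ gives only the one-sided control $h > h(y) - \eps$ on the produced open neighbourhood $G$, rather than the two-sided bound that continuity would provide; the precise choice $\eps = \frac{1}{2^{m_0+2}}$ combined with the density of $O_{m_0+2}$ inside $G$ is what makes the contradiction go through. A secondary technical point is that the entire argument must be formalised in $\ACAo$ rather than $\Z_{2}^{\Omega}$: the sets $D_k$ and their complements are arithmetically definable from $\osc_f$ and $\exists^2$, and the \emph{interior} and \emph{nowhere dense} predicates reduce to arithmetical formulas with rational-quantifier range, exactly as in the passage from \eqref{ting} to \eqref{ting2} in the proof of Theorem~\ref{clockn}.
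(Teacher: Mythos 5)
Your proposal is correct and follows essentially the same route as the paper: the forward directions decompose $D_f$ as $\bigcup_k\{x:\osc_f(x)\geq 2^{-k}\}$ (closed by Theorem \ref{flappie}, nowhere dense by cliquishness), and every reversal feeds the function $h$ from \eqref{mopi} (via Theorems \ref{fronkn} and \ref{flap}) into the hypothesis, with the same index-chasing contradiction in the lqco case. The only cosmetic deviations are that you verify uqco of $h$ directly where the paper just invokes its usco property, and you derive \eqref{la5}$\Rightarrow$\eqref{la1} directly from $h=\osc_h$ rather than routing through item \eqref{la2}; both are fine.
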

\begin{proof}
First of all, assume item \eqref{la2} and fix a decreasing sequence of dense open sets $(O_{n})_{n\in \N}$.  
Define the nowhere dense closed set $X_{n}:= [0,1]\setminus O_{n}$ and consider $h:[0,1]\di \R$ as in \eqref{mopi}.  
Following Theorem \ref{flap}, item \eqref{la2} yields a point of continuity $y\in [0,1]$ of $h$. 
If $\frac{1}{2^{m_{0}+1}}=h(y)>0$, then by definition $y\in X_{m_{0}}$ where $m_{0}\in \N$ is the least such number.  
Since $h$ is continuous at $y$, there is $N\in \N$ such that $z\in B(y, \frac{1}{2^{N}})$ implies $|h(z)-h(y)|<\frac{1}{2^{m_{0}+2}}$.
However, $O_{m_{0}+4}$ is dense in $[0,1]$, implying there is $z_{0}\in B(y, \frac{1}{2^{N}})\cap O_{m_{0}+3}$.  
Since $h(z_{0})\leq\frac{1}{2^{m_{0}+5}}$, we obtain a contradiction.  Hence, we must have $h(y)=0$ and the latter implies $y\in \cap_{n\in \N}O_{n}$ by definition, as required for $\BCT_{[0,1]}$.
As $h$ from \eqref{mopi} is usco by Theorem \ref{flap}, item \eqref{la4} also applies.   For item~\eqref{la3}, one uses the proof of Theorem \ref{deng}.

\smallskip

Secondly, assume $\BCT_{[0,1]}$ and define $D_{m}:= \{ x\in [0,1]:\osc_{f}(x)\geq \frac{1}{2^{m}} \}$, which is closed by definition (or by Theorem \ref{flappie}). 
Suppose $D_{f}=\cup_{n\in \N}D_{n}$ equals $[0,1]$.   By $\BCT_{[0,1]}$, there is $n_{0}\in \N$ such that $D_{n_{0}}$ is not nowhere dense, i.e.\ there is $[a, b]\subset D_{n_{0}} $.
Now fix $x_{0}\in [0,1]$, $\eps_{0}:=\frac{1}{2^{n_{0}+1}}$, and $N_{0}\in \N$ such that $B(x_{0},\frac{1}{2^{N_{0}}})\subset (a, b)$.
Since $f$ is cliquish, there is an interval $(c, d)\subset B(x_{0},\frac{1}{2^{N_{0}}}) $ such that for all $z, w\in (c, d)$, we have $|f(z)-f(w)|<\eps_{0}$.  
The latter implies that $\osc_{f}(z)\leq \eps_{0}$ for $z \in (c, d)$, which however contradicts the fact that $z\in (c, d)\subset D_{n_{0}}$.  Hence, $D_{f}$ does not equal $[0,1]$, as required for item \eqref{la2}.

\smallskip

Fourth, item \eqref{la4} follows since $h$ from \eqref{mopi} is usco by Theorem \ref{flap}. 

\smallskip

Fifth, let $f:[0,1]\di \R$ be as in item \eqref{la5} and define the closed set $D_{k}:=\{ x\in [0,1]: \osc_{f}(x)\geq\frac{1}{2^{k}}\}$. 
By assumption, $[0,1]=\cup_{k\in \N}D_{k}$ and $\BCT_{[0,1]}$ implies there is $k_{0}\in\N$ and $(c, d)\subset [0,1]$ such that $(c, d)\subset D_{k_{0}}$, as required.
For the reversal, assume item \eqref{la5} and let $f:[0,1]\di \R$ be cliquish with oscillation function $\osc_{f}:[0,1]\di \R$.  
In case $C_{f}\ne \emptyset$, we are done in light of item \eqref{la2}. 
In case $C_{f}=\emptyset$, $f$ is totally discontinuous and we can apply item \eqref{la5}.  
Let $N_{0}\in \N$ and $c, d\in  [0,1]$ be such that $\osc_{f}(x)\geq \frac{1}{2^{N_{0}}}$ for $x\in (c, d)$.  
Now fix $\eps_{0}>0$ and $x_{0}\in (c, d)$ such that  $ \eps_{0}<\frac{1}{2^{N_{0}}}$ and $B(x_{0}, \eps_{0})\subset (c,d)$; as $f$ is cliquish, there is an interval $(a, b)\subset B(x_{0}, \eps_{0})$ such that for all $x, y\in (a, b)$, we have $|f(x)-f(y)|<\eps_{0}$.  However, this implies that $\osc_{f}(z)\leq \frac{1}{2^{N_{0}}}$ for $z\in (a, b)$, contradicting $\osc_{f}(z)>\frac{1}{2^{N_{0}}}$ by $z\in (a, b)\subset (c, d)$.
 \end{proof}
Secondly, the next theorem is motivated by the \emph{supremum principle} for cliquish functions; this principle implies $\NIN_{[0,1]}$ by \cite{dagsamXIV}*{Theorem 2.32} and states the existence of $F:\Q^{2}\di \R$ such that $F(p, q)=\sup_{x\in [p,q]}f(x)$ for $p, q\in [0,1]\cap \Q$.
Perhaps surprisingly, the above equivalences for $\BCT_{[0,1}$ still go through if we restrict to cliquish functions that have a supremum functional.
\begin{thm}[$\ACAo+\QFAC^{0,1}$]\label{diender}
The following are equivalent.  
\begin{enumerate}
 \renewcommand{\theenumi}{\alph{enumi}}
\item The Baire category theorem as in $\BCT_{[0,1]}$.
\item For cliquish $f:[0,1]\di \R$ with a supremum functional and oscillation functional, there is $x\in [0,1]$ where $f$ is continuous.\label{super1}
\end{enumerate}
\end{thm}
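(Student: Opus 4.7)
\emph{Plan.} I would prove the two directions separately. The forward implication $\BCT_{[0,1]}\Rightarrow$ (b) requires no new work: item (b) is obtained from Theorem~\ref{clockook}\eqref{la2} by imposing the additional hypothesis that $f$ carries a supremum functional, so it follows immediately from Theorem~\ref{clockook} under $\BCT_{[0,1]}$.

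For the reverse direction (b)$\Rightarrow \BCT_{[0,1]}$, I would mirror the reversal of \eqref{la2}$\to$\eqref{la1} in Theorem~\ref{clockook}. Given a decreasing sequence $(O_n)_{n\in\N}$ of dense open subsets of $[0,1]$, set $X_n:=[0,1]\setminus O_n$ (an increasing sequence of closed nowhere dense sets) and let $h:[0,1]\to\R$ be the function from \eqref{mopi}. By Theorems~\ref{flap} and~\ref{fronkn}, $h$ is cliquish (indeed usco) and coincides with its own oscillation function, so the oscillation functional is automatically in hand. The missing ingredient is a supremum functional $F:\Q^{2}\to\R$ with $F(p,q)=\sup_{x\in[p,q]} h(x)$. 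Since $(X_n)$ is increasing, $F(p,q)=\tfrac{1}{2^{n^{\ast}(p,q)+1}}$, where $n^{\ast}(p,q)$ is the least $n\in\N$ with $X_n\cap[p,q]\neq\emptyset$ (and $F(p,q)=0$ if no such $n$ exists). My strategy would be to use $\QFAC^{0,1}$ to package, uniformly in the countable parameter space of triples $(n,p,q)$ with $p,q\in\Q\cap[0,1]$, a coherent family of witnesses, and then use $\mu^{2}$ to extract the minimum $n^{\ast}(p,q)$ from these data.

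With $h$ equipped with both functionals, item (b) supplies a continuity point $y\in[0,1]$ of $h$. The concluding argument then mirrors that of Theorem~\ref{clockook}: assuming $h(y)=\tfrac{1}{2^{m_{0}+1}}>0$ places $y$ in $X_{m_{0}}$, while the density of $O_{m_{0}+3}$ in $[0,1]$ produces points arbitrarily close to $y$ with $h$-value at most $\tfrac{1}{2^{m_{0}+4}}$, contradicting continuity of $h$ at $y$. Hence $h(y)=0$, which by the construction of $h$ forces $y\notin\cup_n X_n$, i.e.\ $y\in\cap_n O_n$, as required for $\BCT_{[0,1]}$.

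The principal obstacle, and the step that motivates the assumption $\QFAC^{0,1}$, is the construction of the supremum functional for $h$: deciding whether $X_n\cap[p,q]=\emptyset$ is a $\Pi^{1}_{1}$-flavoured condition not directly available from arithmetical comprehension, and the plan therefore turns on carefully exploiting $\QFAC^{0,1}$ to assemble the requisite witnesses in a uniform manner across the countable parameter space of rational intervals, without exceeding the strength of $\ACAo+\QFAC^{0,1}$.
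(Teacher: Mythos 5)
Your forward direction coincides with the paper's (a direct appeal to Theorem \ref{clockook}). The reversal, however, has a genuine gap at precisely the step you yourself flag as the ``principal obstacle''. For $h$ from \eqref{mopi} a supremum functional must return $\sup_{x\in[p,q]}h(x)=\frac{1}{2^{n^{*}(p,q)+1}}$, where $n^{*}(p,q)$ is the \emph{least} $n$ with $X_{n}\cap[p,q]\neq\emptyset$, and neither $\QFAC^{0,1}$ nor $\mu^{2}$ can produce this. $\QFAC^{0,1}$ only uniformises witnesses for a condition already known to hold for \emph{every} parameter; it does not decide, for a given triple $(n,p,q)$, the $\Sigma^{1}_{1}$-statement `$X_{n}\cap[p,q]\neq\emptyset$'. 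Likewise $\mu^{2}$ minimises only over arithmetical predicates, and this condition is not arithmetical: $X_{n}$ is a third-order closed set given only by its characteristic function, with no code and possibly containing no rationals, so the existential quantifier over $[p,q]$ cannot be reduced to one over $\Q$. At best your choice-plus-minimisation scheme yields \emph{some} $n$ with $X_{n}\cap[p,q]\neq\emptyset$, hence an underestimate of the supremum, not the supremum. Indeed, producing such suprema is essentially the supremum principle for cliquish functions, which implies $\NIN_{[0,1]}$ by \cite{dagsamXIV}*{Theorem 2.32} and is therefore itself unprovable in $\Z_{2}^{\omega}+\QFAC^{0,1}$; no argument in $\ACAo+\QFAC^{0,1}$ can equip $h$ with a supremum functional for arbitrary $(X_{n})_{n\in\N}$.

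The paper sidesteps this by changing the counterexample function rather than constructing the functional. Assuming $[0,1]=\cup_{n\in\N}X_{n}$, let $Z(x)$ be the least $n$ with $x\in X_{n}$ and put $e(x):=\sum_{n=0}^{Z(x)+1}\frac{x^{n}}{n!}$. Since each $O_{k}=[0,1]\setminus X_{k}$ is open and dense, every subinterval contains points with $Z$ arbitrarily large, whence $\sup_{x\in[p,q]}e(x)=e^{q}$ \emph{independently of the sets $X_{n}$}: the supremum functional is simply $(p,q)\mapsto e^{q}$, and the oscillation function is $e^{x}-e(x)$. An epsilon-over-three argument (comparing $e(y)$ to $e^{y}$) shows that $e$ is cliquish, while $e(x)<e^{x}$ everywhere makes it totally discontinuous, so item \eqref{super1} applied to $e$ yields the contradiction. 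If you wish to keep your skeleton, you must likewise replace $h$ by a function whose supremum on rational intervals is known in advance; the function $h$ itself cannot be used here.
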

\begin{proof}
The first item implies the second item by Theorem \ref{clockook}.  
Now assume the second item and let $(X_{n})_{n\in \N}$ be an increasing sequence of closed nowhere dense sets such that $[0,1]=\cup_{n\in \N}X_{n}$. 
Define $Z(x)$ as the least $n\in \N$ such that $x\in X_{n}$ and $e(x):= \sum_{n=0}^{Z(x)+1}\frac{x^{n}}{n!} $, implying $e(x)<e^{x}$ for $x\in [0,1]$ and $\sup_{x\in [p,q]}e(x)=e^{q}$ by definiton.
To show that $\lambda x.e(x)$ is cliquish, fix $x_{0}\in [0,1]$ and consider 
\[
|e(y)-e(z)|\leq |e(y)-e^{y}|+|e^{y}-e^{z}|+|e(z)-e^{z}|, 
\]
to which the usual `epsilon over three' trick applies as follows: the middle term can be made arbitrarily small thanks to the uniform continuity of $\lambda x.e^{x}$ on $[0,1]$.  
The other two terms can be made arbitrarily small by picking large enough $k\in \N$ and picking $x_{1}\in O_{k}:=[0,1]\setminus X_{k}$ close enough to $x_{0}$ (using the density of the sets $O_{n}$). 
Indeed, $B(x_{1}, \frac{1}{2^{N_{0}}})\subset O_{k}$ for some $N_{0}\in \N$, implying that $Y(w)\geq k$ for $w$ in the former ball, i.e.\ $e(w)$ approaches $e^{w}$ as $k$ grows.  
Since $\lambda x.e(x)$ is totally discontinuous, item \eqref{super1} yields a contradiction. Hence, there must be $y\not \in \cup_{n\in \N}X_{n}$ as required for $\BCT_{[0,1]}$, and we are done.  
\end{proof}
Thirdly, as discussed in Remark \ref{donola}, the sico functions yields a class between the quasi-continuous and cliquish ones.  
As it happens, we can restrict to sico functions in Theorem \ref{clockook} if we deal with \emph{sequentially}\footnote{A function $f:[0,1]\di \R$ is \emph{sequentially sico} if for any sequence of open sets $(G_{n})_{n\in \N}$, there are sequences $(O_{n})_{n\in \N}$ and $(N_{n})_{n\in \N}$ of open and nowhere dense sets with $f^{-1}(G_{n})=O_{n}\cup{N_{n}}$.  The proof that sico functions are cliquish (see \cite{nieuwbronna2}*{Theorem 2.1 and 2.2}) goes through without extra induction or fragments of the Axiom of Choice if we assume \emph{sequentially} sico functions.} sico functions.  
This restriction is interesting as e.g.\ Thomae's function from \eqref{thomae} is cliquish but not sico.  
\begin{thm}[$\ACAo+\QFAC^{0,1}$]\label{flip}
The following are equivalent.  
\begin{enumerate}
\renewcommand{\theenumi}{\alph{enumi}}
\item The Baire category theorem as in $\BCT_{[0,1]}$.
\item For sequentially sico $f:[0,1]\di \R$, there is $x\in [0,1]$ where $f$ is continuous.\label{kuper1}
\end{enumerate}
\end{thm}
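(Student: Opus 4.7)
The plan is to prove both directions in $\ACAo+\QFAC^{0,1}$: the forward direction uses the sequential decomposition furnished by the definition of sequential sico together with $\QFAC^{0,1}$ to manufacture dense open sets for $\BCT_{[0,1]}$; the reverse direction needs to construct a sequentially sico function with no continuity points from a putative failure of $\BCT_{[0,1]}$. I expect the reverse to be the main obstacle.

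For the forward direction, assume $\BCT_{[0,1]}$ and let $f:[0,1]\to\R$ be sequentially sico. Enumerate the rational open intervals of $\R$ as $(G_{n})_{n\in\N}$ and obtain, from the definition, sequences $(O_{n})_{n}, (N_{n})_{n}$ of open and nowhere dense sets with $f^{-1}(G_{n})=O_{n}\cup N_{n}$. For each $n$, nowhere density of $N_{n}$ means every rational sub-interval $(p,q)\subset[0,1]$ contains a rational sub-interval $(p',q')\subset(p,q)$ disjoint from $N_{n}$; using $\QFAC^{0,1}$ (whose applicability is justified because $\exists^{2}$ decides the relevant arithmetical condition on $N_{n}$, given as a characteristic function) pick such sub-intervals uniformly, and let $V_{n}$ be their union. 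Then $V_{n}$ is open, dense, and $V_{n}\cap N_{n}=\emptyset$. Define the decreasing sequence $W_{n}:=V_{0}\cap\cdots\cap V_{n}$ of dense open sets and apply $\BCT_{[0,1]}$ to obtain $x\in\bigcap_{n}V_{n}$, so $x\notin N_{n}$ for every $n$. Given $\eps>0$, pick a rational interval $G_{n}\ni f(x)$ of diameter $<\eps$; since $x\in f^{-1}(G_{n})=O_{n}\cup N_{n}$ and $x\notin N_{n}$, we have $x\in O_{n}$, which is open, and hence some $\delta>0$ gives $B(x,\delta)\subseteq O_{n}\subseteq f^{-1}(G_{n})\subseteq f^{-1}((f(x)-\eps,f(x)+\eps))$, witnessing continuity of $f$ at $x$.

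For the reverse direction, assume (b) and suppose towards a contradiction that $\BCT_{[0,1]}$ fails: a decreasing sequence $(O_{n})_{n}$ of dense open sets in $[0,1]$ satisfies $\bigcap_{n}O_{n}=\emptyset$, yielding an increasing sequence $(X_{n})_{n}$ of closed nowhere dense sets with $[0,1]=\bigcup_{n}X_{n}$. Let $Z(x):=\min\{n:x\in X_{n}\}$ using $\mu^{2}$. The task is to produce a sequentially sico function without continuity points from $(X_{n})_{n}$; the straightforward candidate $h$ from \eqref{mopi} is cliquish by Theorem \ref{flap} but is not in general sico (an open set selecting the values $1/2^{n+1}$ along, e.g., the even indices may give a preimage that is neither open nor nowhere dense). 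A viable strategy is to combine a continuous backbone with layered perturbations: take a sequence of continuous functions $g_{n}:[0,1]\to\R$ converging uniformly to a continuous $g$, and define $f(x):=g_{Z(x)+1}(x)$, mirroring the construction from Theorem \ref{diender}. For each open $G_{m}$ in a given sequence and each interior point $x$ of $g^{-1}(G_{m})$, uniform convergence forces $f(x)\in G_{m}$ once $Z(x)+1$ exceeds a threshold depending only on the distance of $g(x)$ to $\R\setminus G_{m}$; decomposing $f^{-1}(G_{m})=O_{m}\cup N_{m}$ with $O_{m}:=\bigcup_{k}[g^{-1}(G_{m,1/2^{k}})\cap ([0,1]\setminus X_{N_{k}-1})]$ (where $G_{m,\eta}$ are the points of $G_{m}$ at distance $>\eta$ from its complement and $N_{k}$ is the threshold for $1/2^{k}$-uniform approximation) and $N_{m}$ the residual, one attempts to verify that $O_{m}$ is open, $N_{m}$ is nowhere dense, and the decomposition is uniform in $m$. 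Applying (b) to such $f$ yields a continuity point, which would contradict the everywhere-discontinuity forced by the mismatch between $f$'s layer values and $g$.

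The hard part is this reverse-direction construction and its verification in $\ACAo+\QFAC^{0,1}$: establishing that the residual sets $N_{m}$ are genuinely nowhere dense (requiring control of how the finite-rank exceptions aggregate) and showing discontinuity at every point are the two main technical hurdles, with the former being the more delicate since the set of ranks for which $g_{k}$ and $g$ disagree on membership in $G_{m}$ must be shown to remain confined within some $X_{N}$, for which uniform convergence of $g_{k}\to g$ on $[0,1]$ and openness of $G_{m}$ should suffice, provided the threshold $N_{k}$ can be extracted from the data by $\exists^{2}$.
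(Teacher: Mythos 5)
Your reverse direction is where the real work lies, and it is not completed. The paper's own proof is far more direct: it takes $h:[0,1]\di\R$ from \eqref{mopi} for the given sequence $(X_{n})_{n\in\N}$ and argues that $h$ is sequentially sico by a case distinction on which of the values $\frac{1}{2^{m+1}}$ lie in a given open $G$, concluding that $h^{-1}(G)$ is either $[0,1]$ or contained in a single $X_{n_{0}}$; item \eqref{kuper1} applied to $h$ then yields $\BCT_{[0,1]}$ as in Theorem \ref{clockook}. Your worry about $h$ --- that an open $G$ selecting the values $\frac{1}{2^{m+1}}$ along, say, the even indices gives $h^{-1}(G)=\bigcup\{X_{m}\setminus X_{m-1}: m \text{ even}\}$, a countable union of nowhere dense sets that need not split as open plus nowhere dense --- targets precisely the case the paper disposes of by asserting $h^{-1}(G)=[0,1]$, so you have identified the delicate point rather than a non-issue. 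But having rejected $h$, you replace it by a construction whose two essential verifications (nowhere density of the residual sets $N_{m}$ and total discontinuity of $f$) you explicitly leave as ``hurdles''; these are the entire content of the direction, so nothing is proved. Moreover, the decomposition you propose cannot be formed in $\ACAo$: the sets $G_{m,1/2^{k}}$ of points of $G_{m}$ at distance greater than $\frac{1}{2^{k}}$ from $\R\setminus G_{m}$ are defined by a formula quantifying over all reals, applied to an arbitrary open set given only by its characteristic function, so $\exists^{2}$ does not yield their characteristic functions and your $O_{m}$ is not a set of the theory. (The paper's $h$ avoids this because $h^{-1}(G)$ is arithmetically determined by the countably many membership facts $\frac{1}{2^{m+1}}\in G$.) Finally, $f(x)=g_{Z(x)+1}(x)$ with $g_{k}\uparrow g$ is continuous at any point where all the $g_{k}$ agree with $g$ (for the exponential partial sums, at $x=0$), so ``no continuity points'' also needs repair.

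The forward direction is essentially the standard argument and is close in spirit to the paper's (which defers to Neubrunnov\'a's proof together with Theorem \ref{clockook}), but your justification for forming the dense open sets $V_{n}$ does not work as stated: the condition ``$(p',q')\cap N_{n}=\emptyset$'' is a universal statement over the reals about a set $N_{n}$ given only by a characteristic function, hence is not arithmetical, is not decided by $\exists^{2}$, and is not a quantifier-free matrix to which $\QFAC^{0,1}$ applies. Without some way of producing the $V_{n}$ (or the closures $\overline{N_{n}}$) as actual third-order objects, $\BCT_{[0,1]}$ as formulated cannot be invoked, so this direction also has a hole in $\ACAo+\QFAC^{0,1}$.
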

\begin{proof}
First of all, sico functions are cliquish (say in $\ZFC$) while the proof of \cite{nieuwbronna2}*{Theorem 2.1} immediately establishes that \emph{sequentially} sico functions are cliquish, working over $\ACAo+\BCT_{[0,1]}$.
Hence, item \eqref{kuper1} immediately follows from $\BCT_{[0,1]}$ via Theorem~\ref{clockook}.

\smallskip

Secondly, let $(X_{n})_{n\in \N}$ be an increasing sequence of closed and nowhere dense sets such that $[0,1]=\cup_{n\in \N}X_{n}$ and consider $h:[0,1]\di \R$ as in \eqref{mopi}.  
Fix $G\subset \R$ and consider the following two cases which show that $h$ is (sequentially) sico.
\begin{itemize}
\item In case $(\forall n\in \N)(\exists m> n)( \frac{1}{2^{m+1}}\in G)$, we have $h^{-1}(G)=\cup_{n\in \N}X_{n}=[0,1]$. 
\item In case $n_{0}\in \N$ is the least $n$ with $(\forall m> n)( \frac{1}{2^{m+1}}\not\in G)$, then $h^{-1}(G)= X_{n_{0}}$.  
\end{itemize}
Note that $\mu^{2}$ can find the number $n_{0}$ from the second item and decide which case holds, i.e.\ $h$ is also sequentially sico. 
Following Theorem \ref{fronkn}, item \eqref{kuper1} now provides a point of continuity for $h$, and $\BCT_{[0,1]}$ readily follows. 
\end{proof}
Folllowing \cite{nieuwbronna2}*{\S3}, it seems that Theorem \ref{flip} can be generalised to \emph{pseudo-continuity}, a notion equivalent to cliquishness where the inverse image of open sets is the union of an open and a meagre set.  

\smallskip

Fourth, while quasi-continuous functions can be quite `wild' in light of Remark~\ref{donola}, their basic properties are readily obtained in relatively weak systems by Theorem \ref{plonkook}.  
Comparing to Theorem \ref{clockook}, the latter result is all the more surprising since `cliquish' is quite close to `quasi-continuous', the former essentially being the closure of the latter under sums, as also discussed in Remark \ref{donola}.
\begin{thm}[$\ACAo$]\label{plonkook}
For quasi-continuous $f:[0,1]\di \R$, we have the following: 
\begin{itemize}
\item the set $C_{f}$ exists and is dense,
\item there is a sequence $(D_{n})_{n\in \N}$ of R2-closed nowhere dense sets with $D_{f}=\cup_{n\in \N}D_{n}$,
\item the oscillation function $\osc_{f}:[0,1]\di \R$ exists. 
\end{itemize}
\end{thm}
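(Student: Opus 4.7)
The plan is to first compute the oscillation function $\osc_f$ using $\exists^2$ and quasi-continuity, and then obtain the remaining two items as direct consequences. The pivotal observation is that for quasi-continuous $f$ and any rational interval $(a,b) \subset [0,1]$, one has $\sup_{x \in (a,b)} f(x) = \sup_{q \in (a,b) \cap \Q} f(q)$, and similarly for the infimum. Indeed, if some $y \in (a,b)$ satisfied $f(y) > \sup_{q \in (a,b) \cap \Q} f(q) + \eps$ for $\eps > 0$, then quasi-continuity at $y$ applied to the neighbourhood $(a,b)$ with tolerance $\eps/2$ would yield a non-empty open $G \subset (a,b)$ on which $|f(x) - f(y)| < \eps/2$; any rational point of $G$ would then contradict the assumed value of the supremum over rationals. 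Using $\mu^2$, $\osc_f([a,b])$ is therefore arithmetically computable for rational endpoints, and $\osc_f(x)$ is obtained as the monotone limit of $\osc_f([x - 1/2^k, x + 1/2^k] \cap [0,1])$ as $k \to \infty$.

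With $\osc_f$ available, the sets $C_f$ and $D_n := \{x \in [0,1] : \osc_f(x) \geq 1/2^n\}$ both exist by $\exists^2$, and $D_f = \bigcup_n D_n$. To witness R2-closedness of $D_n$, I would fix $y$ with $\osc_f(y) < 1/2^n$ and use $\mu^2$ to find the least $N$ with $\osc_f([y - 1/2^N, y + 1/2^N] \cap [0,1]) < 1/2^n$; setting $Y(y) := 1/2^{N+1}$, every $z \in B(y, Y(y))$ then satisfies $\osc_f(z) < 1/2^n$, since the ball of radius $1/2^{N+1}$ around $z$ sits inside the ball of radius $1/2^N$ around $y$. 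To establish nowhere-density of $D_n$, fix any open $U \subset [0,1]$ and any $x_0 \in U$: quasi-continuity with tolerance $1/2^{n+2}$ produces a non-empty open $G \subset U$ on which $|f(x) - f(x_0)| < 1/2^{n+2}$, whence $\osc_f(z) \leq 1/2^{n+1} < 1/2^n$ for every $z \in G$, so $G \cap D_n = \emptyset$.

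Density of $C_f$ can then be obtained in two ways: either by applying the Baire category theorem for R2-open sets, provable in $\ACAo$ by \cite{dagsamVII}*{Theorem 7.10}, to the R2-open dense sets $[0,1] \setminus D_n$ restricted to each open sub-interval; or directly via a nested-interval construction, where iterating quasi-continuity builds decreasing closed intervals $[a_{n+1}, b_{n+1}] \subset (a_n, b_n)$ of diameter at most $1/2^{n+1}$ on which $f$ varies by at most $1/2^{n+1}$, so that any $y \in \bigcap_n [a_n, b_n]$ has $\osc_f(y) = 0$. The main obstacle throughout is the opening lemma reducing interval suprema to rational suprema: this is precisely what fails for merely cliquish functions and explains the sharp contrast with Theorem \ref{clockook} highlighted in Remark \ref{donola}.
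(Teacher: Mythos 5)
Your proposal is correct and follows essentially the same route as the paper: use quasi-continuity to replace quantification over reals by quantification over rationals, extract R2-representations of the sets $O_{m}=[0,1]\setminus D_{m}$ via $\mu^{2}$, and invoke the Baire category theorem for R2-open sets from \cite{dagsamVII}*{Theorem 7.10}. The only cosmetic differences are that you build $\osc_{f}$ first and derive the $D_{n}$ from it (the paper defines the $O_{m}$ directly and observes afterwards that $D_{m}=\{x:\osc_{f}(x)\geq \frac{1}{2^{m}}\}$), and that you prove the rational-supremum lemma by hand where the paper cites \cite{dagsamXIV}*{Theorem 2.9}.
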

\begin{proof}
Fix quasi-continuous $f:[0,1]\di \R$ and use $\exists^{2}$ to define $x\in O_{m}$ in case 
\be\label{obvio}\textstyle
(\exists N\in \N)(\forall q, r\in  B(x, \frac{1}{2^{N}})\cap \Q)( |f(q)-f(r)|\leq \frac{1}{2^{m}}  ).  
\ee
By quasi-continuity, the formula \eqref{obvio} is equivalent to 
\be\label{poiuy}\textstyle
(\exists M\in \N)(\forall w, z\in  B(x, \frac{1}{2^{M}}))( |f(w)-f(z)|\leq \frac{1}{2^{m}}  ),   
\ee
which immediately implies that $O_{m}$ is open.  Apply $\mu^{2}$ to \eqref{obvio} to obtain an R2-representation of $O_{m}$.  
Now define $D_{m}:= [0,1]\setminus O_{m}$ and note that $D_{f}:=\cup_{m\in \N}D_{m}$ contains all points where $f$ is discontinuous (essentially by definition). 
By \cite{dagsamVII}*{Theorem 7.10}, $\ACAo$ proves $\BCT_{[0,1]}$ for R2-open sets, i.e.\ the first and second item now follow.  
Finally, by \cite{dagsamXIV}*{Theorem 2.9}, the supremum and infimum of $f$ exist (uniformly on any interval) given $\exists^{2}$.  Hence, the oscillation function as in the third item is readily defined, where we note that $D_{m}$ is by definition $\{x\in [0,1]:\osc_{f}(x)\geq \frac{1}{2^{m}}\}$, i.e.\ the usual definition.
\end{proof}
Finally, we discuss general properties of quasi-continuous and cliquish functions.  
\begin{rem}\label{donola}\rm
First of all, the class of cliquish functions on $\R$ has nice technical and conceptual properties, as witnessed by the following list.
\begin{itemize}
\item The class of cliquish functions is exactly the class of sums of quasi-continuous functions (\cite{bors, quasibor2, malin}). In particular, cliquish functions are closed under sums while quasi-continuous ones are not.
\item The pointwise limit (if it exists) of quasi-continuous functions, is always cliquish (\cite{holausco}*{Cor.\ 2.5.2}).
\item Similar to Baire 1 functions, cliquish functions are \emph{exactly} the limits of lqco and uqco sequences (see \cite{ewert2}). 
\item The set $C_{f}$ is dense in $\R$ if and only if $f:\R\di \R$ is cliquish (\cites{bors, dobo, dinipi}), i.e.\ the cliquish functions are exactly the pointwise discontinuous ones. 
The forward implication is trivial, while the other one is not by Theorem \ref{clockook}.
\item The simply continuous functions on $\R$ yield a class intermediate between the quasi-continuous and cliquish ones (see \cite{bors}). 
\end{itemize}
Secondly, quasi-continuous functions can be quite `wild': if $\mathfrak{c}$ is the cardinality of $\R$, there are $2^{\mathfrak{c}}$ non-measurable quasi-continuous $[0,1]\di \R$-functions and $2^{\mathfrak{c}}$ measurable non-Borel quasi-continuous $[0,1]\di [0,1]$-functions (see \cite{holaseg}).  Also, the class of quasi-continuous functions is closed under taking \emph{transfinite} limits (\cite{nieuwebronna}).  
\end{rem}

\subsection{Continuity properties of Baire one functions}\label{1227}
We establish equivalences between $\BCT_{[0,1]}$ and basic theorems about Baire 1 functions (Theorem \ref{clockal}), which makes the latter unprovable in $\Z_{2}^{\omega}$. 
By Theorem \ref{toblo}, basic theorems about Baire~1 functions are provable in weak systems, explaining why we need equivalent definitions of Baire 1 as in Definition \ref{frag}.

\smallskip

First of all, there are a surprisingly large number of rather diverse equivalent definitions of `Baire 1' on the reals (\cites{leebaire,beren,koumer}), including the following ones by \cite{koumer}*{Theorem~2.3} and \cite{kura}*{\S34, VII}.   
\bdefi\label{frag} For a function $f:[0,1]\di \R$, we say that 
\begin{itemize}
\item $f$ is \emph{fragmented} if for any $\eps>0$ and closed $C\subset [0,1]$, there is non-empty relatively\footnote{For $A\subseteq B\subset \R$, we say that \emph{$A$ is relatively open \(in $B$\)} if for any $a\in A$, there is $N\in \N$ such that $B(x, \frac{1}{2^{N}})\cap B\subset A$.  Note that $B$ is always relatively open in itself.} open $O\subset C$ such that $\textup{\textsf{diam}}(f(O))<\eps$,
\item $f$ is \emph{$B$-measurable of class 1} if for every open $Y\subset \R$, the set $f^{-1}(Y)$ is $\F_{\sigma}$, i.e.\ a union over $\N$ of closed sets. 
\end{itemize}
\edefi
The \emph{diameter} of a set $X$ of reals is defined as usual, namely $\textup{\textsf{diam}}(X):=\sup_{x, y \in X}|x-y|$, where the latter supremum need not exist for Definition \ref{frag}.  

\smallskip

Secondly, fragments of the induction axiom are sometimes used in an essential way in second-order RM (see e.g.\ \cite{neeman}).  
An important role of induction is to provide `finite comprehension' (see \cite{simpson2}*{X.4.4}).
We shall need the following fragment of finite comprehension for Theorem \ref{clockal} and in Section \ref{related}.  
\begin{princ}[$\IND_{\R}$]
For $F:(\R\times \N)\di \N, k\in \N$, there is $X\subset \N$ such that
\[
(\forall n\leq k)\big[ (\exists x\in \R)(F(x, n)=0)\asa n\in X\big].
\]
\end{princ}
Thirdly, we have the following results for Baire 1 functions as in Def.\ \ref{frag}. 
\begin{thm}[$\ACAo+\IND_{\R}$]\label{clockal} The following are equivalent. 
\begin{enumerate}
 \renewcommand{\theenumi}{\alph{enumi}}
\item The Baire category theorem as in $\BCT_{[0,1]}$.\label{fra1}
\item For fragmented $f:[0,1]\di \R$ which has an oscillation function $\osc_{f}:[0,1]\di \R$, there is a point $x\in [0,1]$ where $f$ is continuous. \label{fra2}
\item For $B$-measurable of class 1 and cliquish $f:[0,1]\di \R$ which has an oscillation function $\osc_{f}:[0,1]\di \R$, there is $x\in [0,1]$ where $f$ is continuous.\label{fra3}
\end{enumerate}
\end{thm}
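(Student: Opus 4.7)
My plan is to prove the cycle $(a)\Rightarrow(b)$, $(a)\Rightarrow(c)$, $(b)\Rightarrow(a)$, $(c)\Rightarrow(a)$, following the template of Theorem~\ref{clockook}. For $(a)\Rightarrow(b)$, assume fragmented $f$ with oscillation function and $C_{f}=\emptyset$ for contradiction; the closed sets $D_{k}:=\{x:\osc_{f}(x)\ge 1/2^{k}\}$ then cover $[0,1]$, so $\BCT_{[0,1]}$ furnishes a closed sub-interval $C\subset D_{k_{0}}$ for some $k_{0}$. Fragmentedness applied to $C$ with $\eps:=1/2^{k_{0}+1}$ yields a relatively open $O\subset C$ of $f$-diameter below $1/2^{k_{0}+1}$; since $C$ is an interval, $O$ contains an open sub-interval on which $\osc_{f}<1/2^{k_{0}}$, contradicting inclusion in $D_{k_{0}}$. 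The direction $(a)\Rightarrow(c)$ is immediate from Theorem~\ref{clockook}~\eqref{la2}, since cliquishness together with an oscillation function already delivers a continuity point; the $B$-measurability assumption plays no role here.

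For the reversals, fix an increasing sequence $(X_{n})_{n\in\N}$ of closed nowhere dense sets with $[0,1]=\bigcup_{n}X_{n}$ and consider $h$ as in \eqref{mopi}. By Theorems~\ref{fronkn} and~\ref{flap}, $h$ is usco, cliquish, and its own oscillation function. For $(b)\Rightarrow(a)$, the key step is verifying that $h$ is fragmented in $\ACAo+\IND_{\R}$: given non-empty closed $C\subset[0,1]$ and $\eps>0$, pick $k$ with $1/2^{k+1}<\eps$ and apply $\IND_{\R}$ to the $\exists^{2}$-decidable predicate ``$C\not\subset X_{i}$'' for $i\le k$. If this holds at every such $i$, pick $x_{0}\in C\setminus X_{k}$: closedness of $X_{k}$ gives $N$ with $B(x_{0},1/2^{N})\cap X_{k}=\emptyset$, and on the relatively open set $O:=C\cap B(x_{0},1/2^{N})$ we have $h<1/2^{k+1}<\eps$. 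Otherwise, let $i_{0}\le k$ be the least index with $C\subset X_{i_{0}}$: if $i_{0}=0$ then $h\equiv 1/2$ on $C$, while if $i_{0}>0$, any $x_{0}\in C\setminus X_{i_{0}-1}$ yields a relatively open subset of $C$ on which $h$ is constantly $1/2^{i_{0}+1}$. Applying item~(b) to $h$ then produces $y\in C_{h}$; since $h(y)=1/2^{n_{0}+1}>0$ under our contradiction hypothesis, density of $O_{n_{0}+1}:=[0,1]\setminus X_{n_{0}+1}$ together with continuity at $y$ yields a nearby $z$ with $|h(z)-h(y)|>1/2^{n_{0}+2}$, a contradiction.

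For $(c)\Rightarrow(a)$, the same $h$ and contradiction scheme apply; the new obligation is to verify that $h$ is $B$-measurable of class~$1$. Since $h$ has discrete range $\{1/2^{n+1}:n\in\N\}$, with $h^{-1}(\{0\})=\emptyset$ under our assumption, for every open $Y\subset\R$ we have $h^{-1}(Y)=\bigcup\{X_{n}\setminus X_{n-1}:1/2^{n+1}\in Y\}$, an indexing decidable via $\exists^{2}$; the task reduces to producing a sequence of closed sets whose union equals each $X_{n}\setminus X_{n-1}=X_{n}\cap O_{n-1}$, uniformly in $n$. The step I expect to be the main obstacle is this $F_{\sigma}$ decomposition: the natural candidate sets $\{x\in X_{n}:B(x,1/2^{m})\subset O_{n-1}\}$ are characterised by a universal quantifier over reals in a small ball and hence are not obviously definable from $\exists^{2}$ alone. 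The cleanest resolution should be to exploit the fragmentedness of $h$ already established in the $(b)\Rightarrow(a)$ direction via a Kuratowski-style recursive refinement of the $X_{n}$'s to produce the required closed pieces within $\ACAo+\IND_{\R}$; formalising this recursion is the principal technical content of this direction.
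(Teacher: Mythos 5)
Your directions (a)$\Rightarrow$(b), (a)$\Rightarrow$(c), and (b)$\Rightarrow$(a) are correct and essentially coincide with the paper's argument. The paper routes (a)$\Rightarrow$(b) through the observation that fragmented functions are cliquish and then invokes Theorem~\ref{clockook}, whereas you argue directly with the closed sets $D_{k}$; this is the same proof unfolded. Your verification that $h$ from \eqref{mopi} is fragmented is the paper's own case analysis (a least $i_{0}$ with $C\subset X_{i_{0}}$ versus $C\not\subset X_{i}$ for all $i\le k$), and your use of $\IND_{\R}$ to decide the finitely many instances of the existential formula ``$C\not\subset X_{i}$'' for $i\le k$ is precisely the role the paper assigns to that axiom; restricting to $i\le k$ with $1/2^{k+1}<\eps$ is, if anything, slightly tidier than the paper's trichotomy, which nominally involves all $k$.

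The gap is in (c)$\Rightarrow$(a): you never prove that $h$ is $B$-measurable of class~1; you only name the obstacle (an $\F_{\sigma}$-decomposition of $X_{n}\setminus X_{n-1}=X_{n}\cap O_{n-1}$ by closed sets definable from $\exists^{2}$) and defer it to an unformalised ``Kuratowski-style recursion''. As written, this direction is therefore not established, and it is the only part of the equivalence that actually uses hypothesis (c). For comparison, the paper performs no such recursion: its one-line verification is that $h^{-1}(G)$ equals the union of those $X_{n}$ with $1/2^{n+1}\in G$, which are already closed, so $\F_{\sigma}$-ness holds outright with no difference sets in sight. Note that your worry is not baseless: because of the ``least such $n$'' clause in \eqref{mopi}, the literal preimage is $\bigcup\{X_{n}\setminus X_{n-1}:1/2^{n+1}\in G\}$, and for arbitrary closed sets given only by characteristic functions the candidate closed pieces $\{x\in X_{n}: B(x,1/2^{m})\subset O_{n-1}\}$ are indeed defined by a universal quantifier over reals, not arithmetically. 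So you must either justify the paper's simpler identity for the preimage (or modify the counterexample function so that it holds), or actually carry out the decomposition you allude to; your proposal does neither, and this is a genuine gap rather than a routine detail.
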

\begin{proof}
First of all, we show that fragmented functions are cliquish (in $\ACAo$); Theorem \ref{clockook} then establishes the implication from $\BCT_{[0,1]}$ to item \eqref{fra2}.
Let $f:[0,1]\di \R$ be fragmented and fix $\eps>0$, $N\in \N$, and $x\in [0,1]$.  
For closed $C:=[x_{0}-\frac{1}{2^{N_{0}}}, x_{0}+\frac{1}{2^{N_{0}}}]$, there is a relatively open $O\subset C$ with $\textup{\textsf{diam}}(f(O))<\eps$.  
Since $C$ is a closed interval, the relative openness of $O$ implies that there are $a, b\in \R$ with $(a, b)\subset O$.  
Since $\textup{\textsf{diam}}(f(O))<\eps$, we have $|f(x)-f(y)|<\eps$ for $x, y\in (a, b)\subset B(x, \frac{1}{2^{N_{0}}})$, i.e.\ $f$ is cliquish. 

\smallskip

Secondly, we show that item \eqref{fra2} implies $\BCT_{[0,1]}$.  
To this end, let $(X_{n})_{n\in \N}$ be an increasing sequence of closed nowhere dense sets in $[0,1]$ such that $[0,1]=\cup_{n\in \N}X_{n}$. 
We now show that $h:[0,1]\di \R$ from \eqref{mopi} is fragmented.  
Fix $\eps >0$ and closed $C\subset [0,1]$ and consider the following case distinction.
\begin{itemize}
\item In case $C\subset X_{0}$, put $O:= C$ and note that $O$ is relatively open (in $C$).  Moreover, $f(x)=\frac{1}{2}$ for $x\in O\subset X_{0}$, i.e.\ Definition \ref{frag} is trivially satisfied.  
\item In case $C\subset \cup_{k\leq k_{0}} X_{k}$ for (minimal) $k_{0}>0$, put $O:= C\cap O_{k_{0}-1}$ where $O_{k}:=[0,1]\setminus X_{k}$.  Since $O_{k_{0}-1}$ is open, $O$ is relatively open (in $C$).  
Similar to the previous item, $f(x)=\frac{1}{2^{k_{0}}}$ for $x\in O$, i.e.\ Definition~\ref{frag} is again trivially satisfied.  
\item For the remaining case, since $C\cap X_{k}\ne \emptyset $ for any $k\in \N$, choose $k_{0}\in \N\setminus \{0\}$ with $\frac{1}{2^{k_{0}}}<\eps$ and define $O:= C\cap O_{k_{0}-1}$.  
Similar to the previous item, $O_{k_{0}-1}$ is open and $O$ is relatively open (in $C$).  
By assumption, $f(x)< \frac{1}{2^{k_{0}}}$ for $x\in O\subset  [0,1]\setminus X_{k_{0}}$.  Hence, $\textup{\textsf{diam}}(f(O))\leq \frac{1}{2^{k_{0}}}<\eps$, as required. 
\end{itemize}
Hence $h$ from \eqref{mopi} is fragmented while the second item seems to need $\IND_{\R}$ to guarantee that $k_{0}$ is minimal. 
We also obtain an oscillation function by Theorem~\ref{fronkn}, i.e.\ we may apply item \eqref{fra2} to show that $h$ is continuous at some $x_{0}\in [0,1]$.  
As in the proof of Theorem~\ref{clockook}, we obtain $\BCT_{[0,1]}$, and we are done.

\smallskip

Thirdly, item \eqref{fra3} implies $\BCT_{[0,1]}$ as follows: let $(X_{n})_{n\in \N}$ be an increasing sequence of closed nowhere dense sets in $[0,1]$ such that $[0,1]=\cup_{n\in \N}X_{n}$. 
Now consider $h$ from \eqref{mopi} and note that for any $G\subset \R$, $h^{-1}(G)$ is the union of all $X_{n}$ such that $\frac{1}{2^{n+1}}\in G$, i.e.\ $\F_{\sigma}$ by definition. 
Together with Theorem \ref{flap}, we observe that item \eqref{fra3} applies to $h$; as in the proof of Theorem~\ref{clockook}, we obtain $\BCT_{[0,1]}$.
The other direction is immediate by Theorem \ref{clockook}. 
\end{proof}
\begin{cor}[$\ACAo$]\label{tokkiecor}
The following statement 
\begin{center}
\emph{any fragmented function $f:[0,1]\di \R$ is Baire 1} 
\end{center}
implies $\BCT_{[0,1]}$ and the statement \textsf{\textup{open}} from Corollary \ref{lopi}.
\end{cor}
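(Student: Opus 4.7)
The plan is to follow the template of Corollary \ref{lopi}, leveraging the observation that the characteristic function of a closed set is fragmented so that the hypothesis supplies a Baire 1 representation, from which \textsf{\textup{open}} is extracted via $\mu^{2}$.  Having secured \textsf{\textup{open}}, I would derive $\BCT_{[0,1]}$ either by a direct fragmentation argument applied to $h$ from \eqref{mopi}, or by invoking the Baire category theorem for RM-codes of open sets, provable in $\RCAo$ by \cite{simpson2}*{II.5.8}.

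For \textsf{\textup{open}}, let $C\subset [0,1]$ be closed and given by its characteristic function.  I verify in $\ACAo$ that $\mathbb{1}_{C}$ is fragmented: for $\eps>0$ and non-empty closed $D\subset [0,1]$, classical LEM applied to the statement \textquotedblleft $D\subset C$\textquotedblright\ yields either $O:=D$ (on which $\mathbb{1}_{C}\equiv 1$) or $O:=D\cap ([0,1]\setminus C)$, which is non-empty and relatively open in $D$ since $[0,1]\setminus C$ is open (and on which $\mathbb{1}_{C}\equiv 0$); either way $\textup{\textsf{diam}}(\mathbb{1}_{C}(O))=0<\eps$.  The hypothesis then supplies a sequence $(g_{n})_{n\in \N}$ of continuous functions converging pointwise to $\mathbb{1}_{C}$; since $\mathbb{1}_{C}$ is usco, the proof of Corollary \ref{lopi} applies verbatim, invoking \cite{dagsamXIV}*{Theorem 2.9} to produce the supremum functional of $\mathbb{1}_{C}$ on balls and then $\mu^{2}$ to produce $Y:\R\di \N$ with $B(x,\frac{1}{2^{Y(x)}})\subset [0,1]\setminus C$ for $x\in [0,1]\setminus C$, so that $\bigcup_{q\in\Q\cap ([0,1]\setminus C)} B(q,\frac{1}{2^{Y(q)+1}})$ is an RM-code for $[0,1]\setminus C$, as required for \textsf{\textup{open}}.

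For $\BCT_{[0,1]}$, I would apply the hypothesis to the function $h$ from \eqref{mopi} built from a decreasing sequence $(O_{n})_{n\in\N}$ of dense open sets with $X_{n}:=[0,1]\setminus O_{n}$, which is usco by Theorem \ref{flap}.  To show $h$ is fragmented in $\ACAo$, fix $\eps>0$, non-empty closed $C\subset [0,1]$, and $n_{0}$ with $\frac{1}{2^{n_{0}+1}}<\eps$; classical LEM on \textquotedblleft $C\cap O_{n_{0}-1}\ne \emptyset$\textquotedblright\ yields either $O:=C\cap O_{n_{0}-1}$ (on which $0\leq h\leq \frac{1}{2^{n_{0}+1}}$) or the case $C\subset X_{n_{0}-1}$, in which $h$ takes at most $n_{0}$ values on $C$ from the set $\{\frac{1}{2^{k+1}}:k<n_{0}\}$.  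In the latter case, finitely many further applications of LEM identify the maximum $k^{*}<n_{0}$ with $C\cap (X_{k^{*}}\setminus X_{k^{*}-1})\ne \emptyset$, and then $O:=C\cap O_{k^{*}-1}$ is a non-empty relatively open subset of $C$ on which $h$ is constant $\frac{1}{2^{k^{*}+1}}$.  The hypothesis now gives $h$ a Baire 1 representation, Theorem \ref{toblo} provides $x_{0}\in C_{h}$, and the argument in the proof of Theorem \ref{clockook} yields $x_{0}\in \bigcap_{n\in\N}O_{n}$, establishing $\BCT_{[0,1]}$.

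The main obstacle is verifying fragmentation of $h$ in $\ACAo$ without $\IND_{\R}$, a concern explicitly flagged in the proof of Theorem \ref{clockal}.  The proposal above side-steps this by performing only finitely many classical case distinctions bounded by $n_{0}$, which should be admissible under the classical logic of $\ACAo$; should this finer analysis prove inadequate, one can alternatively obtain a sequence of RM-codes for $(X_{n})_{n\in\N}$ uniformly via the construction of the second paragraph and appeal to the Baire category theorem for RM-codes of open sets to conclude $\BCT_{[0,1]}$.
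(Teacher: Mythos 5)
Your proposal follows the paper's own route: for \textsf{\textup{open}} you verify that $\mathbb{1}_{C}$ is fragmented and then run the argument of Corollary \ref{lopi} on the Baire~1 representation supplied by the hypothesis, and for $\BCT_{[0,1]}$ you apply the hypothesis to $h$ from \eqref{mopi} (whose fragmentation is exactly what the proof of Theorem \ref{clockal} establishes) and extract a point of continuity; the paper invokes Theorem \ref{flang} where you invoke Theorem \ref{toblo}, but since $h$ is usco both apply. One caveat on your claimed improvement: the workaround of ``finitely many further applications of LEM\dots bounded by $n_{0}$'' does not actually dispense with $\IND_{\R}$. The bound $n_{0}$ is a variable depending on $\eps$, and iterating excluded middle $n_{0}$ times over the formula ``$C\subset X_{k}$'' --- which contains a quantifier over $\R$ and is therefore not decided by $\exists^{2}$ --- is precisely an instance of the bounded least-number principle outside the induction available in $\ACAo$; this is the very point the paper flags in the proof of Theorem \ref{clockal}. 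Your single-LEM reduction to the case $C\subset X_{n_{0}}$ is fine, but locating the least $k_{0}\le n_{0}$ with $C\subset X_{k_{0}}$ (needed so that $C\cap O_{k_{0}-1}\ne\emptyset$) is where $\IND_{\R}$ enters. Your fallback via a \emph{sequence} of RM-codes for $(X_{n})_{n\in\N}$ also requires a uniformity that the pointwise hypothesis ``every fragmented function is Baire~1'' does not obviously supply without a choice principle. To be fair, the paper states the corollary over $\ACAo$ while its proof leans on Theorem \ref{clockal}, so it inherits the same concern; apart from this shared wrinkle, your argument is correct and essentially identical to the paper's.
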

\begin{proof}
The first part is immediate from Theorem \ref{flang} and the theorem.  For the second part, one readily shows that for closed $C\subset [0,1]$, the function $\mathbb{1}_{C}$ is fragmented.
As in the proof of Corollary \ref{lopi}, one obtains an RM-code for $C$. 
\end{proof}
Finally, we have not been able to obtain similar results for other equivalent definitions of Baire 1 functions on the reals.

\subsection{On Volterra's early work}\label{vork}
In this section, we connect the Baire category theorem and Volterra's early work as sketched in Section \ref{vintro}, including a version of Blumberg's theorem.  
As a result, Volterra's theorem and corollary cannot be proved in $\Z_{2}^{\omega}$ while the restrictions to quasi-continuous functions are provable in $\ACAo$ by Theorem \ref{flonkqua}. 

\smallskip

First of all, we have the following theorem, where we recall that `cliquish' and `pointwise discontinuous' are equivalent on the reals (see Remark \ref{donola}).
However, in light of Theorem \ref{clockook}, this equivalence cannot be proved in $\Z_{2}^{\omega}$.  

\smallskip

As in the previous sections, we observe a certain robustness for our results: we could replace `usco' by any notion between the latter and cliquishness. 
\begin{thm}[$\ACAo$]\label{flonk}
The following are equivalent.
\begin{enumerate}
\renewcommand{\theenumi}{\alph{enumi}}
\item The Baire category theorem $\BCT_{[0,1]}$.\label{volare0}
\item \emph{Volterra's theorem}: there do not exist two \emph{pointwise discontinuous} $f, g:[0,1]\di \R$ with associated oscillation functions for which the continuity points of one are the discontinuity points of the other, and vice versa.\label{volare1337}
\item \emph{Volterra's theorem} for `pointwise discontinuous' replaced by `cliquish'.\label{volare2}
\item \emph{Volterra's theorem} for `pointwise discontinuous' replaced by `usco'.\label{volare1} 
\item \emph{Volterra's corollary}: there is no function with an oscillation function that is continuous on $\Q\cap[0,1]$ and discontinuous on $[0,1]\setminus\Q$.\label{volare4}
\item \emph{Volterra's corollary} restricted to usco functions.\label{volare3}
\item Generalised Volterra's corollary \(Theorem \ref{dorki}\) for usco functions and enumerable $D$ \(or: countable $D$, or: strongly countable $D$\). \label{volare5}
\item Blumberg's theorem \(\cite{bloemeken}\) restricted to usco \(or cliquish with an oscillation function\) functions on $[0,1]$.\label{volare6}
\end{enumerate}
The equivalences remain correct if we replace `usco' by `Baire $1^{*}$'.  
\end{thm}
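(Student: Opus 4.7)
The plan is to organize the equivalences around a hub, namely item~\eqref{volare1} (Volterra for usco), and to use the counterexample function $h$ from \eqref{mopi} together with Thomae-type functions as the combinatorial engine. The forward implications $(a)\Rightarrow(b),(c),(d)$ all follow the same recipe: by Theorem~\ref{clockook}, any usco (or cliquish-with-oscillation, or pointwise-discontinuous-with-oscillation) function $f$ has dense continuity set $C_f$ and discontinuity set $D_f=\cup_k\{x:\osc_f(x)\geq 2^{-k}\}$, a union of closed nowhere dense sets. For two such functions $f,g$, applying $\BCT_{[0,1]}$ to the combined sequence yields $y\in C_f\cap C_g$; but the Volterra hypothesis $C_f=D_g$ forces $C_f\cap C_g=\emptyset$, the contradiction. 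For $(a)\Rightarrow (e),(f),(g)$, I pair an alleged counterexample $f$ with a Thomae-type function: the classical $T$ from \eqref{thomae} for $(e),(f)$, and, for $(g)$, a function $T_D$ defined by $T_D(d_n)=1/(n+1)$ at the least index with $x=d_n$ and $T_D(x)=0$ otherwise. A direct check (like Theorem~\ref{fronkn}) shows $T$ and $T_D$ are usco and are their own oscillation functions, with continuity set exactly the complement of $\mathbb{Q}$ resp.\ $D$; then $(f,T)$ or $(f,T_D)$ violates~\eqref{volare1}. Blumberg's theorem \eqref{volare6} follows from $(a)$ by setting $D:=C_f$, which is dense by Theorem~\ref{clockook}.

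For the reversals, I fix an increasing sequence $(X_n)_{n\in\N}$ of closed nowhere dense sets with $\cup_n X_n=[0,1]$, witnessing the failure of $\BCT_{[0,1]}$, and form $h$ as in \eqref{mopi}. By Theorems~\ref{flap} and~\ref{fronkn}, $h$ is usco and cliquish and its own oscillation function, and $C_h=\emptyset$. The easy cases are $(d)\Rightarrow(a)$ and $(c)\Rightarrow(a)$: the pair $(h,0)$ satisfies $C_0=[0,1]=D_h$ and $D_0=\emptyset=C_h$, both functions are usco/cliquish with oscillation, and this contradicts item~\eqref{volare2} or~\eqref{volare1}. For $(h)\Rightarrow(a)$, Blumberg applied to $h$ produces a dense $D$ with $h_{\upharpoonright D}$ continuous; at any $d\in D$ with $h(d)=1/2^{n+1}$, choosing $\eps=1/2^{n+2}$ yields $\delta>0$ forcing $h>1/2^{n+2}$ on $D\cap B(d,\delta)$, contradicting the density of $D\cap([0,1]\setminus X_{n+1})$ (dense as the intersection of a dense set with an open dense set). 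Items~\eqref{volare4}$\Rightarrow$(a), \eqref{volare3}$\Rightarrow$(a), and \eqref{volare5}$\Rightarrow$(a) follow from a single reversal: given $(X_n)$, enumerate a countable dense $D$ (e.g.\ $D=\Q\cap[0,1]$ for~\eqref{volare4}), pass to $Y_n:=X_n\cup\{d_0,\ldots,d_n\}$, and construct $g(x):=0$ on $D$ and $g(x):=1/2^{n+1}$ off $D$ with $n$ least with $x\in Y_n$. Routine verification shows $g$ is usco with oscillation and has $C_g=D$, $D_g=[0,1]\setminus D$, contradicting the corresponding Volterra's corollary.

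The main obstacle is $(b)\Rightarrow (a)$: in item~\eqref{volare1337} both functions must be genuinely \emph{pointwise discontinuous}, i.e.\ have dense continuity set, so the slick pair $(h,0)$ is unavailable ($h$ has $C_h=\emptyset$). The plan here is to pair Thomae's function $T$ (pointwise discontinuous unconditionally) with the function $g$ constructed in the previous paragraph for $D=\Q\cap[0,1]$. Since $C_g=\Q$ is dense, $g$ is pointwise discontinuous, both $T$ and $g$ have their oscillation functions, and the continuity/discontinuity sets are exactly swapped, violating~\eqref{volare1337}. Verifying that this $g$ has $C_g=\Q$ and $D_g=[0,1]\setminus\Q$ is the technical heart: continuity at rationals uses that the sets $[0,1]\setminus Y_k$ are open neighbourhoods on which $g\leq 2^{-k}$, while discontinuity at each irrational $x$ uses that $x\in Y_{n(x)}$ for some least $n(x)$ and $\Q$ is dense.

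Finally, the last sentence is handled by Theorem~\ref{b1sd}: a bounded Baire $1^*$ function decomposes as $f=g_0+g_1$ with $g_0$ usco and $g_1$ lsco, so $C_f\supseteq C_{g_0}\cap C_{g_1}$ and the entire argument transfers (replacing $g_1$ by $-g_1$ where needed) from usco to Baire~$1^*$, exactly as in the passage from Theorem~\ref{clockn} to Theorem~\ref{clockm}.
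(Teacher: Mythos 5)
Your forward implications, the Blumberg equivalence, and the reversals from items \eqref{volare2} and \eqref{volare1} are essentially sound; indeed, the pair $(h,0)$ for \eqref{volare2},\eqref{volare1}$\,\di\,$\eqref{volare0} is a clean shortcut that the paper does not use (it routes those items through Volterra's corollary instead). The genuine problem is the reversal from Volterra's corollary and from item \eqref{volare1337} --- precisely the step you call ``the technical heart'' --- and your construction there fails. You define $g$ to be $0$ on $D$ and $2^{-(n+1)}$ at $x\notin D$ with $n$ least such that $x\in Y_n:=X_n\cup\{d_0,\dots,d_n\}$, and claim $C_g=D$. But continuity of $g$ at $d\in D$ requires, for each $k$, a ball around $d$ disjoint from $Y_k\setminus D=X_k\setminus D$, i.e.\ $d\notin\overline{X_k\setminus D}$; nothing prevents a point of $D$ from being a limit of points of $X_k\setminus D$. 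Concretely, take $D=\Q\cap[0,1]$ and let $X_0$ contain the middle-thirds Cantor set: the rational $1/4$ lies in $X_0$ and is a limit of irrational points of $X_0$ (every relatively open nonempty subset of a perfect set is uncountable), so $g(1/4)=0$ while $g=1/2$ at irrationals of $X_0$ arbitrarily close to $1/4$. Thus $g$ is discontinuous at $1/4\in D$, $C_g\ne D$, the pair $(T,g)$ does not have swapped continuity sets, and no contradiction with items \eqref{volare1337}, \eqref{volare4}, \eqref{volare3}, \eqref{volare5} is obtained. The sets $[0,1]\setminus Y_k$ you invoke are open, but they are not neighbourhoods of those $d\in D$ that lie in $X_k$.

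The paper takes a different tack here: it does not try to manufacture a counterexample to Volterra's corollary from an \emph{arbitrary} witness to $\neg\BCT_{[0,1]}$, but starts from a sequence of closed nowhere dense sets whose union is \emph{exactly} $[0,1]\setminus\Q$ (resp.\ $[0,1]\setminus D$) and applies \eqref{mopi} to that sequence, so that Theorem \ref{fronkn} gives $C_h=\Q\cap[0,1]$ on the nose, with Thomae's function supplying the pointwise discontinuous partner for item \eqref{volare1337}. Your construction was meant to supply exactly the bridge from ``$[0,1]=\cup_nX_n$'' to such a configuration, and it does not: $Y_n\setminus D$ need not be closed, and its closure may swallow points of $D$. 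As written, the reversals \eqref{volare1337}, \eqref{volare4}, \eqref{volare3}, \eqref{volare5}$\,\di\,$\eqref{volare0} are therefore unproved in your proposal. (Two smaller points: for item \eqref{volare1} the usco functions need not come with oscillation functions, so the decomposition of $D_f$ should go through the sets $F_q$ of Theorem \ref{clockn} rather than Theorem \ref{clockook}; and the Baire $1^{*}$ variant needs the boundedness hypothesis of Theorem \ref{b1sd}. Both are cosmetic next to the main gap.)
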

\begin{proof}
First of all, to prove item \eqref{volare1} using $\BCT_{[0,1]}$, suppose $f, g:[0,1]\di \R$ are usco and such that $C_{f}=D_{g}$ and $C_{g}=D_{f}$, noting that 
these sets exist by Theorem~\ref{plonk}.  Now consider the (closed and nowhere dense) sets $F_{q}$ from the proof of Theorem~\ref{clockn} and let $H_{q}$ be the associated (closed and nowhere dense) sets for $g$.
Since $D_{f}\subset \cup_{q\in \Q}F_{q}$ and $D_{g}\subset \cup_{q\in \Q}H_{q} $, we have $ [0,1]=C_{f}\cup D_{f}=D_{g}\cup D_{f}=\cup_{q\in \Q}(F_{q}\cup H_{q})$, which contradicts $\BCT_{[0,1]}$.

\smallskip

A similar proof goes through for item \eqref{volare2}, namely by defining $D_{k}^{f}:= \{x\in [0,1]:  \osc_{f}(x)\geq \frac{1}{2^{k}}\}$ and noting that $[0,1]=\cup_{n\in \N}(D_{n}^{f}\cup D_{n}^{g}) $ where the latter sets 
are nowhere dense as in the proof of Theorem \ref{clockook}.  Items \eqref{volare4} and \eqref{volare3} follow from items \eqref{volare2} and \eqref{volare1} as Thomae's function from \eqref{thomae} is continuous on the irrationals and discontinuous on $\Q$; this function is also usco and cliquish.  By definition, pointwise discontinuous functions are cliquish, i.e.\ item \eqref{volare1337} also follows.

\smallskip

Secondly, we now derive $\BCT_{[0,1]}$ from Volterra's corollary as in item \eqref{volare4} or \eqref{volare3}.  
To this end, let $(X_{n})_{n\in \N}$ be a sequence of closed nowhere dense sets such that $[0,1]\setminus \Q=\cup_{n\in \N}X_{n}$.   
Now consider $h:[0,1]\di \R$ as in \eqref{mopi}, which is as required for item \eqref{volare4} or \eqref{volare3} by Theorems \ref{fronkn} and \ref{flap}. 
By Theorem \ref{fronkn}, $h$ is also continuous at any $q\in \Q\cap [0,1]$ (since $h(q)=\osc_{h}(q)=0$) and discontinuous at any $y\in [0,1]\setminus \Q$ (since $h(y)=\osc_{h}(y)>0$).  This contradicts Volterra's corollary (for usco or cliquish functions), and $\BCT_{[0,1]}$ follows.  The same argument works for item \eqref{volare1337} using Thomae's function and $h:[0,1]\di \R$ as the latter is pointwise discontinuous, namely continuous at every rational, by assumption.

\smallskip

Thirdly, we only need to show that $\BCT_{[0,1]}$ implies item \eqref{volare5}, as $\Q$ is enumerable and dense. To this end, proceed as in the previous paragraph, replacing $\Q$ by a countable dense set $D\subset [0,1]$ wherever relevant.  Note in particular that $D=\cup_{n\in \N} \{x\in D: Y(x)= n\}$ suffices to replace an enumeration of $\Q$ if $Y:[0,1]\di \N$ is injective on $D$. 

\smallskip

Fourth, $\BCT_{[0,1]}$ implies items \eqref{volare6} by Theorem \ref{clockn} since $D=C_{f}$ is as required.  
To prove \eqref{volare6}$\di\BCT_{[0,1]}$, let $(X_{n})_{n\in \N}$ be a sequence of closed and nowhere dense sets in $[0,1]$ and consider $h:[0,1]\di \R$ from \eqref{mopi}, which is usco and cliquish by Theorem \ref{flap}.  
Let $D$ be the dense set provided by item~\eqref{volare6} and consider $y\in D$.  In case $h(y)\ne 0$, say $h(y)\geq \frac{1}{2^{k_{0}}}$, note that $O_{k_{0}}\cap D$ is dense in $[0,1]$. 
Hence, for any $N\in \N$ there is $z\in B(x, \frac{1}{2^{N}})\cap D$ with $h(z)\leq\frac{1}{2^{k_{0}+1}}$, i.e.\ $h_{\upharpoonright D}$ is not continuous (on $D$).  
This contradiction implies that $h(y)=0$, meaning $y\in [0,1]\setminus\cup_{n\in \N}X_{n}$, i.e.\ $\BCT_{[0,1]}$ follows.  
The final sentence is immediate in light of Theorem \ref{b1sd}.
\end{proof}
We could use the notion \emph{height countable} (see \cite{dagsamX}) for item \eqref{volare5} of Theorem~\ref{flonk}, but then we need (at least) extra induction to accommodate finite sets.

\smallskip
\noindent
Next, we show that Volterra's theorems from \cite{volaarde2} and related results are readily proved if we restrict to quasi-continuous functions. 
\begin{thm}[$\ACAo$]\label{flonkqua}~
\begin{enumerate}
\renewcommand{\theenumi}{\alph{enumi}}
\item \emph{Volterra's theorem}: there do not exist two quasi-continuous functions defined on the unit interval for which the continuity points of one are the discontinuity points of the other, and vice versa.\label{qolare1}
\item \emph{Volterra's corollary}: there is no quasi-continuous function that is continuous on $\Q\cap[0,1]$ and discontinuous on $[0,1]\setminus\Q$.\label{qolare2}
\item Generalised Volterra's corollary \(Theorem \ref{dorki}\) for quasi-continuous functions and enumerable $D$ \(or: countable $D$, or: strongly countable $D$\). \label{qolare3}
\item Blumberg's theorem \(\cite{bloemeken}\) restricted to quasi-continuous functions on $[0,1]$.\label{qolare4}
\end{enumerate}
\end{thm}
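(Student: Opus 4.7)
The plan is to reduce all four items to Theorem~\ref{plonkook}, which provides in $\ACAo$ that for quasi-continuous $f:[0,1]\to\R$, the continuity set $C_f$ is dense and the discontinuity set decomposes as $D_f=\cup_{n\in\N} D_n^f$ with each $D_n^f$ an R2-closed nowhere dense set. The other key ingredient is the Baire category theorem for R2-open sets, which is provable in $\ACAo$ by \cite{dagsamVII}*{Theorem~7.10}. With these in hand, the arguments will structurally mirror those in the proof of Theorem~\ref{flonk}, but with R2-decompositions replacing direct appeals to $\BCT_{[0,1]}$.

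First, for item~\eqref{qolare1}, I would suppose quasi-continuous $f,g:[0,1]\to\R$ satisfy $C_f=D_g$ and $C_g=D_f$, and apply Theorem~\ref{plonkook} to obtain sequences $(D_n^f)_n$ and $(D_n^g)_n$ of R2-closed nowhere dense sets with $D_f=\cup_n D_n^f$ and $D_g=\cup_n D_n^g$; then $[0,1]=C_f\cup D_f=\cup_n(D_n^f\cup D_n^g)$ contradicts BCT for R2-open sets (noting that finite unions of R2-closed nowhere dense sets remain so, with an R2-representation given by the pointwise minimum of the two candidate moduli). Item~\eqref{qolare2} then follows either by taking Thomae's function \eqref{thomae} for $g$ in~\eqref{qolare1}, or directly: if $f$ were continuous on $\Q\cap[0,1]$ and discontinuous on its complement, I would enumerate the rationals as $(q_n)_n$, note that each singleton $\{q_n\}$ is visibly R2-closed and nowhere dense (via $Y(y):=|y-q_n|/2$), and combine this with the decomposition of $D_f$ from Theorem~\ref{plonkook} to contradict BCT for R2-open sets. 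Item~\eqref{qolare3} is the same argument with $D$ replacing $\Q\cap[0,1]$: for enumerable $D$ the singleton decomposition is immediate, while for countable (or strongly countable) $D$ with injection $Y$, one uses $D=\cup_n\{x\in D:Y(x)=n\}$, each piece having at most one element.

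Item~\eqref{qolare4} is the easiest: Theorem~\ref{plonkook} gives that $D:=C_f$ is dense, and $f_{\upharpoonright D}$ is continuous by the very definition of $C_f$. The main obstacle, though mild, arises in the countable case of~\eqref{qolare3}: producing uniformly in $n$ an R2-representation for $\{x\in D:Y(x)=n\}$ requires locating the candidate point (when one exists) using $\exists^{2}$ together with the characteristic function of $D$, while in the empty case the set is trivially R2-closed. Crucially, no appeal to $\BCT_{[0,1]}$ is required anywhere in the argument, which is exactly what pulls the quasi-continuous forms of Volterra's theorems and Blumberg's theorem down from the logical level of $\BCT_{[0,1]}$ to that of $\ACAo$.
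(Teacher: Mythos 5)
Your overall strategy---reducing all four items to Theorem \ref{plonkook} (the decomposition $D_{f}=\cup_{n\in\N}D_{n}^{f}$ into R2-closed nowhere dense sets, plus the density of $C_{f}$) and then invoking the Baire category theorem for R2-open sets from \cite{dagsamVII}*{Theorem 7.10} instead of $\BCT_{[0,1]}$---is exactly the paper's proof, which spells out only item \eqref{qolare1} and dismisses the rest with ``the other items are proved in the same way''. Your arguments for \eqref{qolare1}, \eqref{qolare4}, the direct version of \eqref{qolare2}, and the enumerable case of \eqref{qolare3} are correct, and the remark about taking pointwise minima of the moduli to get R2-representations of finite unions is the right bookkeeping. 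One minor slip: your alternative route to \eqref{qolare2}, ``taking Thomae's function for $g$ in \eqref{qolare1}'', does not work in this theorem, because Thomae's function is usco and cliquish but \emph{not} quasi-continuous (at a rational $p/q$ with value $1/q$, no non-empty open set has $f$-values within $1/(2q)$ of $1/q$, since the irrationals are dense), so item \eqref{qolare1} does not apply to that pair. Your direct argument via the singletons $\{q_{n}\}$ is the one to keep.

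The genuine gap is in the countable case of \eqref{qolare3}. Your proposed fix---``locating the candidate point (when one exists) using $\exists^{2}$ together with the characteristic function of $D$''---does not go through: any bisection search for the unique $x$ with $x\in D\wedge Y(x)=n$ must decide formulas of the shape $(\exists x\in[p,q])(x\in D\wedge Y(x)=n)$, which quantify over $\R$ against an arbitrary characteristic function and are therefore not decidable from $\exists^{2}$ (this is essentially the strength of $\exists^{3}$). Hence you never obtain the R2-representations of the sets $\{x\in D:Y(x)=n\}$ that your appeal to the R2-version of the Baire category theorem requires, and the countable (as opposed to enumerable) case is not established by your argument. Worse, on the literal reading the countable case cannot be patched inside $\ACAo$ at all: applying the generalised Volterra corollary to the countable dense set $D=[0,1]$ (witnessed by a putative injection $Y:[0,1]\di\N$) and the quasi-continuous function $f\equiv 0$ makes both disjuncts false, so that instance of item \eqref{qolare3} already implies $\NIN_{[0,1]}$, which is unprovable even in $\Z_{2}^{\omega}$. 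The paper's one-line dismissal does not address this either, but as written this step of your proof is a genuine gap rather than a routine omission.
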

\begin{proof}
For item \eqref{qolare1}, for quasi-continuous $f, g:[0,1]\di \R$, the sets $D_{f}$ and $D_{g}$ exist and be can be expressed as unions $\cup_{n\in \N}D_{n}^{f}$ and $\cup_{m\in \N}D_{m}^{g}$ of R2-closed nowhere dense sets by Theorem \ref{plonkook}.  
Hence, in case $C_{f}=D_{g}$ and $C_{g}=D_{f}$, we have $[0,1]=C_{f}\cup D_{f}=D_{g}\cup D_{f}=\bigcup_{n, m\in \N} D_{n}^{f}\cup D_{m}^{g}  $.
However, the Baire category theorem for R2-closed sets is provable in $\ACAo$ (\cite{dagsamVII}*{Theorem 7.10}), a contradiction. 
The other items are proved in the same way. 
\end{proof}
\noindent
In conclusion, we have established equivalences involving $\BCT_{[0,1]}$ and Volterra's early work (and related results), rendering the latter unprovable in $\Z_{2}^{\omega}$.
The latter deal with pointwise discontinuous function, which are exactly the cliquish ones by Remark \ref{donola}. 
Also by the latter, cliquish functions are closely related to quasi-continuous functions, but Volterra's results for quasi-continuous functions are readily proved in $\ACAo$ by Theorem \ref{flonkqua}.
In contrast to the close relation between cliquishness and quasi-continuity, there is a great divide, some might say `abyss', between $\ACAo$ and $\Z_{2}^{\omega}$. 
We have no explanation for this phenomenon at the moment and welcome any suggestions.

\section{Tao's pigeonhole principle for measure spaces}\label{related}
\subsection{Introduction}
In this section, we establish the equivalences sketched in Section \ref{vower} for $\PHP_{[0,1]}$, i.e.\ Tao's {pigeonhole principle for measure spaces} from \cite{taoeps}. 
We also study $\WBCT_{[0,1]}$, a `hybrid' principle between $\PHP_{0,1}$ and $\BCT_{[0,1]}$ which is interesting in its own right, also from a historical point of view. 

\smallskip

First of all, it is trivial that `measure zero' implies `nowhere dense'.  Replacing the latter by the former in $\BCT_{[0,1]}$, we obtain the weaker $\WBCT_{[0,1]}$ (see Principle \ref{WBCTI} in Section \ref{pix}) and the equivalences from the previous section then go through \emph{mutatis mutandis}. 
Nonetheless, this weakening is more than \emph{spielerei} as $\WBCT_{[0,1]}$ is connected to Hankel's 1870 work on Riemann integration (\cite{hankelwoot}).
In a nutshell, Hankel formulates necessary and sufficient conditions for the Riemann integral akin to the Vitali-Lebesgue theorem from Section \ref{vower}.  
While Hankel's theorem is correct, the proof is not: he incorrectly tries to reverse the implication that a Riemann integrable function is pointwise discontinuous, leading to Smith's counterexample based on Cantor sets (\cite{snutg}) and Ascoli's proof of Hankel's theorem (\cite{ascoli1}).  
Equivalences involving $\WBCT_{[0,1]}$ and Hankel's results for usco and cliquish functions are studied in Section \ref{pix}, including the fundamental theorem of calculus.

\smallskip

Secondly, $\WBCT_{[0,1]}$ only replaces `nowhere dense' by `measure zero' in the antecedent; Tao's {pigeonhole principle for measure spaces} from \cite{taoeps} ensues if we perform this replacement in the consequent as well.  In Section \ref{duif}, we study the RM of the latter principle, called $\PHP_{[0,1]}$, obtaining an equivalence to the Vitali-Lebesgue theorem (including restrictions to usco and cliquish functions).  
As will become clear, the RM of $\PHP_{[0,1]}$ is rather similar to the RM of $\WBCT_{[0,1]}$, which is in turn similar to the RM of $\BCT_{[0,1]}$, despite the fundamental differences between measure and category.  
Finally, the restrictions of the aforementioned theorems to \emph{quasi-continuous} or \emph{Baire 1} functions, are provable in $\ACAo$ by Theorem \ref{qvl}.
Recall that quasi-continuity is very close to cliquishness by Remark \ref{donola}.

\subsection{Measure-theoretic Baire category theorem}\label{pix}
We introduce a `measure-theoretic' version of the Baire category theorem and obtain some equivalences to well-known theorems, including (variations of) the fundamental theorem of calculus and the Vitali-Lebesgue theorem from Section \ref{vower}.

\smallskip

First of all, one part of the Vitali-Lebesgue theorem is readily proved. 
\begin{thm}[$\ACAo+\QFAC^{0,1}$]
A Riemann integrable $f:[0,1]\di \R$ is bounded.
\end{thm}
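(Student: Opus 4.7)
The plan is the standard proof by contradiction: assume $f:[0,1]\di\R$ is Riemann integrable with value $I$ yet unbounded, and then exhibit two Riemann sums on the same fine partition that differ by an arbitrarily large amount, violating the Cauchy criterion inherent in Riemann integrability.

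First, fix $\eps=1$ and let $\delta>0$ be a mesh-size as supplied by Riemann integrability for $\eps=1$. Pick $n\in\N$ with $1/n<\delta$ and consider the uniform partition $P:=\{k/n:0\le k\le n\}$. Take tags $s_{k}$ equal to the midpoint of $I_{k}:=[k/n,(k+1)/n]$ and form the associated Riemann sum $S:=\sum_{k=0}^{n-1}f(s_{k})/n$, which satisfies $|S-I|<1$.

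Second, unboundedness of $f$ on $[0,1]$ is the statement $(\forall N\in\N)(\exists x\in [0,1])(|f(x)|>N)$, which is arithmetical given $\exists^{2}$. Applying $\QFAC^{0,1}$ (available in our base theory) produces a sequence $(y_{N})_{N\in\N}$ in $[0,1]$ with $|f(y_{N})|>N$ for all $N$. Using $\exists^{2}$ we then compute, for each $N$, the index $k_{N}\in\{0,\ldots,n-1\}$ with $y_{N}\in I_{k_{N}}$. A finite pigeonhole argument, routinely formalised in $\ACAo$, yields some $k^{*}\in\{0,\ldots,n-1\}$ such that $k_{N}=k^{*}$ for infinitely many $N$; in particular $|f|$ is unbounded on $I_{k^{*}}$, so we may fix an index $N_{0}$ with $|f(y_{N_{0}})|>2n+|f(s_{k^{*}})|+|I|+2$.

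Finally, replace the tag $s_{k^{*}}$ by $y_{N_{0}}$ in the Riemann sum to produce a second sum $S':=S+\bigl(f(y_{N_{0}})-f(s_{k^{*}})\bigr)/n$ for the \emph{same} partition $P$. By Riemann integrability, $|S'-I|<1$, whence $|S'-S|<2$ and $|f(y_{N_{0}})-f(s_{k^{*}})|<2n$; this contradicts the choice of $N_{0}$. The only non-routine point is extracting the sequence $(y_{N})$ from unboundedness, which is precisely the role of $\QFAC^{0,1}$; everything else is a straightforward formalisation in $\ACAo$.
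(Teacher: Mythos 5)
Your proof is correct and follows the same overall strategy as the paper: extract a sequence $(y_{N})_{N\in\N}$ witnessing unboundedness via $\QFAC^{0,1}$, then derail a fine Riemann sum by swapping a single tag for a point where $|f|$ is huge. The one genuine difference is how you locate the subinterval on which to do the swap: the paper first applies sequential compactness (Bolzano--Weierstrass, \cite{simpson2}*{III.2.2}) to get a convergent subsequence and works in the partition interval containing its limit, whereas you fix the partition first and run a finite pigeonhole argument over its $n$ subintervals to find one containing infinitely many $y_{N}$. Your route is slightly more elementary and self-contained (no appeal to sequential compactness, only arithmetical reasoning plus bounded pigeonhole, all available in $\ACAo$), and it is also more quantitatively explicit about the contradiction ($|f(y_{N_{0}})-f(s_{k^{*}})|<2n$ versus your choice of $N_{0}$); the paper's version buys nothing extra here beyond brevity, since $\ACAo$ already proves Bolzano--Weierstrass. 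The only points needing care in your write-up --- deciding membership $y_{N}\in I_{k}$ and selecting $N_{0}$ --- are indeed handled by $\exists^{2}$ as you indicate, so there is no gap.
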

\begin{proof}
Suppose $f:[0,1]\di \R$ is Riemann integrable and unbounded, i.e.\ $(\forall n\in \N)(\exists x\in [0,1])(|f(x)|>n)$. 
Apply $\QFAC^{0,1}$ to obtain a sequence $(x_{n})_{n\in \N}$ with $|f(x_{n})|>n$ for $n\in \N$.  
By sequential compactness (see \cite{simpson2}*{III.2.2}), there is a convergent sub-sequence $(y_{n})_{n\in \N}$, say with limit $y\in [0,1]$.  
In particular, $f(y_{n})$ is arbitrarily large as $y_{n}$ approaches $y$.  Thus, for any Riemann sum $\sum_{i=0}^{k} f(t_{i}) (x_{i+1}-x_{i})  $, consider the point $t_{j}$ such that $y\in [x_{j}, x_{j+1}]$ and the latter contains infinitely many elements of the sequence $(y_{n})_{n\in \N}$.
Changing this $t_{j}$ to $y_{m}$ for large enough $m\in \N$, the Riemann sum can be arbitrarily large, a contradiction.
\end{proof}
Similarly, $\ACAo+\QFAC^{0,1}$ establishes that a continuous almost everywhere bounded function, is Riemann integrable on the unit interval;
the proof of \cite{brownrie}*{Lemma} is readily formalised, as it is based (only) on Riemann sums, i.e.\ avoiding the Darboux integral and Lebesgue measure.  
Alternative proofs surely exist in the modern literature, noting that \cite{brownrie} is almost 100 years old.  As an aside, the Darboux integral is not directly available as the supremum principle for 
Riemann integrable functions is hard\footnote{As observed below Corollary \ref{lopi}, the principle $\open$ in the latter is `explosive' in that $\FIVE^{\omega}+\open$ proves $\SIX$ (\cite{dagsamXI}), while $\FIVE^{\omega}$ is $\Pi_{3}^{1}$-conservative over $\FIVE$ (\cite{yamayamaharehare}).  The supremum principle for Riemann integrable functions (even with a modulus) behaves the same. 
}  to prove by the results in \cite{dagsamXIV}*{\S2.8}. 

\smallskip

Secondly, let $\WBCT_{[0,1]}$ be the restriction of $\BCT_{[0,1]}$ to measure one sets.  
\begin{princ}[$\WBCT_{[0,1]}$]\label{WBCTI}
If $ (O_n)_{n \in \N}$ is a decreasing sequence of measure one open sets of reals in $[0,1]$, then $ \bigcap_{n \in\N } O_n$ is non-empty.
\end{princ} 
Recall that, like in second-order RM, statements like `$E\subset [0,1]$ has measure $0$' can be made without introducing the Lebesgue measure (see Definition \ref{char}).  
The RM of $\WBCT_{[0,1]}$ seems to require the principle $\IND_{\R}$ introduced in Section \ref{1227}.

\smallskip

Thirdly, we have the following theorem where we note that Hankel studies Riemann integration using oscillation functions in \cite{hankelwoot}. 
We also invite the reader to compare item \eqref{pono2} to the main topic of \cite{hahakaka}, which we discuss further in Section \ref{biggerer}. 
\begin{thm}[$\ACAo+\IND_{\R}+\QFAC^{0,1}$]\label{duck5}
The following are equivalent.  
\begin{enumerate}
\renewcommand{\theenumi}{\alph{enumi}}
\item The weak Baire category theorem $\WBCT_{[0,1]}$.
\item \(Hankel \cite{hankelwoot} \& Dini \cite{dinipi}*{\S188}\) For Riemann integrable $f:[0,1]\di \R$ with an oscillation function, the set $C_{f}\ne \emptyset$ \(or: is dense\).\label{pono0}
\item For Riemann integrable usco $f:[0,1]\di \R$, the set $C_{f}\ne \emptyset$ \(or: is dense\).\label{pono1}
\item For Riemann integrable $f:[0,1]\di \R$ with an oscillation function and $\int_{0}^{1}f(x)dx=0$, there is $x\in [0,1]$ with $f(x)=0$. \(Bourbaki, \cite{boereng}*{p.\ 61}\).\label{pono2}
\item For Riemann integrable usco $f:[0,1]\di [0,1]$ with $\int_{0}^{1}f(x)dx=0$, there is $x\in [0,1]$ with $f(x)=0$. \(Bourbaki, \cite{boereng}*{p.\ 61}\). \label{pono3}
\item \textsf{\textup{(FTC)}} For Riemann integrable $f:[0,1]\di \R$ with an oscillation function and $F(x):=\lambda x.\int_{0}^{x}f(t)dt$, there is $x_{0}\in (0,1)$ where $F(x)$ is differentiable with derivative $f(x_{0})$.\label{ponofar}
\item \textsf{\textup{(FTC)}} For Riemann integrable usco $f:[0,1]\di \R$ with $F(x):=\lambda x.\int_{0}^{x}f(t)dt$, there is $x_{0}\in (0,1)$ where $F(x)$ is differentiable with derivative $f(x_{0})$.\label{ponofar2}
\item For Riemann integrable usco $f:[0,1]\di \R^{+}$, there are $a, b\in  [0,1]$ with $0<\inf_{x\in [a,b]}f(x)$.\label{ponofaro}
\item For Riemann integrable $f:[0,1]\di \R^{+}$ with an oscillation function, there are $a, b\in  [0,1]$ with $0<\inf_{x\in [a,b]}f(x)$.\label{ponofaro2}
\item For totally discontinuous $f:[0,1]\di \R$ with oscillation function $\osc_{f}:[0, 1]\di \R$, there is $N\in \N$ and $E\subset  [0,1]$ of positive measure with $\osc_{f}(x)\geq \frac{1}{2^{N}}$ for $x\in E$.\label{topanga}
\item \emph{Volterra's theorem}: there do not exist two Riemann integrable $f, g:[0,1]\di \R$ with associated oscillation functions for which the continuity points of one are the discontinuity points of the other, and vice versa.\label{topanga4}
\item \emph{Volterra's corollary}: there is no Riemann integrable function with an oscillation function that is continuous on $\Q\cap[0,1]$ and discontinuous on $[0,1]\setminus\Q$.\label{topanga3}
\end{enumerate}
We can replace `usco' by `cliquish with an oscillation function' in the above.
We can restrict to pointwise discontinuous functions in items \eqref{topanga4}-\eqref{topanga3}. 
\end{thm}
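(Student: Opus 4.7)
The plan is to follow the same architecture as Theorems \ref{clockn}, \ref{clockook}, and \ref{flonk}, replacing ``nowhere dense'' by ``measure zero'' throughout, and using oscillation functions as the bridge together with the function $h$ from \eqref{mopi} as the universal counterexample for the reverse directions. The core technical input is that the discontinuity set of a Riemann integrable function decomposes as $\cup_{k\in\N} D_k$ where $D_k:=\{x:\osc_f(x)\geq 1/2^k\}$, and that in $\ACAo+\QFAC^{0,1}$ each $D_k$ has measure zero (the ``Hankel/Vitali-Lebesgue'' direction via upper and lower Darboux sums).

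For the forward direction, assume $\WBCT_{[0,1]}$. For items \eqref{pono0}--\eqref{pono1}, with $f$ Riemann integrable having oscillation function (and usco in \eqref{pono1}, invoking Theorem \ref{plonk} to define $\osc_f$), the sets $D_k$ are closed by Theorem \ref{flappie} and have measure zero, so $O_k:=[0,1]\setminus D_k$ is open of measure one and (after intersecting initial segments) decreasing. $\WBCT_{[0,1]}$ then yields $\cap_k O_k = C_f \neq \emptyset$, and density follows by applying the argument on subintervals. Items \eqref{pono2}--\eqref{pono3} follow since a continuity point $x_0$ with $f(x_0)>0$ would force $f>f(x_0)/2$ on a neighbourhood, contradicting $\int f=0$; hence continuity points witness $f=0$. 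Items \eqref{ponofar}--\eqref{ponofar2} are the standard FTC observation that $f$ continuous at $x_0$ implies $F'(x_0)=f(x_0)$. Items \eqref{ponofaro}--\eqref{ponofaro2} similarly use local positivity at a continuity point. Item \eqref{topanga} for totally discontinuous $f$ follows because $[0,1]=\cup_k D_k$ and $\WBCT_{[0,1]}$ forbids all $D_k$ from being measure zero. Items \eqref{topanga4}--\eqref{topanga3} follow since $[0,1]=D_f\cup C_f=\cup_k (D_k^f\cup D_k^g)$ is then a countable union of closed measure-zero sets, contradicting that $[0,1]$ has measure one (invoking $\QFAC^{0,1}$ for countable subadditivity); for Volterra's corollary one pairs with Thomae's function from \eqref{thomae}.

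For the reverse directions, the key observation is that for any increasing sequence $(X_n)_{n\in\N}$ of closed measure-zero subsets of $[0,1]$, the function $h$ from \eqref{mopi} is usco and cliquish by Theorem \ref{flap}, its own oscillation function by Theorem \ref{fronkn}, and bounded with discontinuity set $\cup_n X_n$ of measure zero by countable subadditivity; hence $h$ is Riemann integrable with $\int_0^1 h=0$ and $C_h = [0,1]\setminus \cup_n X_n$. For each item \eqref{pono0}--\eqref{topanga}, contrapositively assume $\WBCT_{[0,1]}$ fails so $\cup_n X_n=[0,1]$ and $C_h=\emptyset$; applying the item to $h$ yields a continuity point, a zero, a differentiability point, a positivity interval, or a subinterval where $\osc_h\geq 1/2^N$ respectively, each of which contradicts $\cup_n X_n=[0,1]$ via the explicit formula for $h$ (exactly as in the proofs of Theorems \ref{clockn} and \ref{clockook}). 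For Volterra's corollary \eqref{topanga3} and theorem \eqref{topanga4}, we adapt the $\BCT$-argument from Theorem \ref{flonk}: enumerate $\Q\cap[0,1]=(q_k)_{k\in\N}$ and set $Y_n:=X_n\setminus\bigcup_{k\leq n}(q_k-\tfrac{1}{2^{n+k+2}},q_k+\tfrac{1}{2^{n+k+2}})$, which is closed, measure zero, disjoint from $\{q_0,\dots,q_n\}$, and whose union still contains every irrational (since for irrational $y\in X_{n_0}$, eventually all excised neighbourhoods shrink below $d(y,q_k)$). The resulting $h$ associated with $(Y_n)$ is continuous on $\Q$, discontinuous on the irrationals, usco, cliquish and Riemann integrable, contradicting item \eqref{topanga3}; pairing with Thomae's function yields the contradiction for \eqref{topanga4}.

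The main obstacle is formalising, in $\ACAo+\IND_\R+\QFAC^{0,1}$, the Hankel/Vitali-Lebesgue direction that Riemann integrability implies each $D_k$ has measure zero: for $\eps>0$ one selects partitions with upper minus lower Darboux sum below $\eps/2^k$, reads off intervals on which $\osc_f\geq 1/2^k$, and combines via $\QFAC^{0,1}$ to produce a cover of $D_k$ of small total length. A secondary obstacle is the final clause allowing `cliquish with an oscillation function' to replace `usco'; this requires Theorem \ref{clockook} together with the fact that the $h$ produced in the reverse direction is already cliquish, while for the restriction of items \eqref{topanga4}--\eqref{topanga3} to pointwise discontinuous functions we use that on $\R$ the pointwise discontinuous functions coincide with the cliquish ones (Remark \ref{donola}), replacing $h$ by $h+T$ where necessary to guarantee density of $C_f$. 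The use of $\IND_\R$ is needed (as in Theorem \ref{clockal}) to ensure the least-index operations implicit in \eqref{mopi} and in the truncated oscillation decomposition are available uniformly.
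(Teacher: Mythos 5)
Your overall architecture matches the paper's (decompose $D_{f}$ as $\cup_{k}D_{k}$ with $D_{k}:=\{x:\osc_{f}(x)\geq\frac{1}{2^{k}}\}$, apply $\WBCT_{[0,1]}$ in contrapositive form, and use $h$ from \eqref{mopi} as the universal counterexample for the reversals), but two of your load-bearing steps are not available in the base theory. First, you propose to prove that each $D_{k}$ has measure zero ``via upper and lower Darboux sums''. The Darboux sums of a Riemann integrable function require its suprema and infima on subintervals, and the supremum principle for Riemann integrable functions is explicitly flagged in the paper as unprovable here (it is as strong as the explosive principle $\open$); the items only assume the \emph{pointwise} oscillation function $\osc_{f}$, not the interval oscillation. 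The paper instead switches tags: in every partition subinterval meeting $D_{k_{0}}$ one picks two sample points $t_{i}',t_{i}''$ with $|f(t_{i}')-f(t_{i}'')|>\frac{1}{2^{k_{0}+1}}$ (possible since $\osc_{f}\geq\frac{1}{2^{k_{0}}}$ there), yielding two Riemann sums that differ by more than $\eps_{0}\cdot\frac{1}{2^{k_{0}+1}}$; the finite manipulation is exactly where $\IND_{\R}$ enters. Second, in the reversals you establish Riemann integrability of $h$ by asserting that $D_{h}=\cup_{n}X_{n}$ has measure zero ``by countable subadditivity''. Countable subadditivity of measure-zero closed sets is precisely $\PHP_{[0,1]}$, which Theorem \ref{pinko} shows is not provable in $\Z_{2}^{\omega}+\QFAC^{0,1}$ — and it is in any case incompatible with your contrapositive hypothesis $\cup_{n}X_{n}=[0,1]$. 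The paper proves directly that $h$ is Riemann integrable with zero integral using only that each \emph{single} $X_{n_{0}}$ has measure zero: cover $X_{n_{0}}$ by intervals of total length $\leq\frac{1}{2^{n_{0}+2}}$, extract a finite subcover of the closed set, and split the Riemann sum.

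Two smaller points. For the usco items you invoke Theorem \ref{plonk} ``to define $\osc_{f}$'', but that theorem only yields $C_{f}$ and $D_{f}$; the oscillation function of a usco function again needs the (too strong) supremum principle, and the paper instead works with the closed sets $E_{q,l}:=\{x: f(x)\leq q\wedge(\forall N)(\exists z\in B(x,\frac{1}{2^{N}}))(f(z)>q+\frac{1}{2^{l}})\}$, definable from $\exists^{2}$ via the rational-range trick. Finally, your excision construction $Y_{n}:=X_{n}\setminus\bigcup_{k\leq n}B(q_{k},\frac{1}{2^{n+k+2}})$ for the reversal of Volterra's corollary does not cover all irrationals: an irrational that is approximated by the enumeration faster than the excised radii shrink (e.g.\ $|y-q_{k}|<\frac{1}{2^{2k+2}}$ for infinitely many $k$) lies in the excised region at every stage, so the resulting $h$ is continuous at such points and is not a counterexample to the corollary. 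The paper handles the rationals by \emph{adding} them to the closed sets ($E_{k}:=D_{k}\cup\{q_{k}\}$) in the forward direction rather than excising neighbourhoods in the reversal.
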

\begin{proof}
First of all, assume $\WBCT_{[0,1]}$ and let $f:[0,1]\di \R$ be Riemann integrable with oscillation function $\osc_{f}:[0,1]\di \R$.  
Suppose $[0,1]=D_{f}=\cup_{k\in \N}D_{k}$ where $D_{k}:=\{x\in [0,1]:\osc_{f}(x)\geq \frac{1}{2^{k}}  \}$, which is closed since any oscillation function is usco by Theorem \ref{flappie}.
By $\WBCT_{[0,1]}$, there is $k_{0}\in \N$ such that $D_{k_{0}}$ does not have measure zero. 
Let $\eps_{0}>0$ be such that the measure of $D_{k_{0}}$ is at least $\eps_{0}$ and let $\delta_{0}>0$ be as provided by the definition of Riemann integrability for $\eps=\eps_{0}\frac{1}{2^{k_{0}+1}}$.   
For any partition $P$ with mesh below $\delta_{0}$  given by $0=x_{0}, t_{0},x_{1}, t_{1}, x_{1}, \dots, x_{k-1}, t_{k}, x_{k}=1 $, we define two partitions $P', P''$ as follows: in case $D_{k_{0}}\cap [x_{i}, x_{i+1}]= \emptyset$, do nothing; otherwise, change $t_{i}$ from $P$ to some $t_{i}'$ for $P'$ and $t_{i}''$ for $P''$ such that $|f(t_{i}')-f(t_{i}'')|> \frac{1}{2^{k_{0}+1}}$. The latter reals exist by the assumption $D_{k_{0}}\cap [x_{i}, x_{i+1}]\ne\emptyset$.  One readily shows that $|S(f, P')-S(f, P'')|>\eps_{0}\frac{1}{2^{k_{0}+1}}$, i.e.\ a contradiction, and item \eqref{pono0} follows. 
Note that the finite manipulation of $P$ can be done using $\IND_{\R}$.   Item \eqref{topanga3} follows in the same way by assuming $[0,1]\setminus \Q=D_{f} $; the proof then proceeds in the same way 
by noting $[0,1]=\cup_{k\in \N}E_{k}$ where $E_{k}:=D_{k}\cup\{q_{k}\}$ is closed and where $(q_{n})_{n\in \in \N}$ is an enumeration of the rationals in $[0,1]$.
Noting that Thomae's function from \eqref{thomae} is Riemann integrable, item \eqref{topanga4} also follows. 

\smallskip

Secondly, assume item \eqref{pono0} and let $(O_{n})_{n\in \N}$ be a sequence as in $\WBCT_{[0,1]}$.    Define $X_{n}:=[0,1]\setminus O_{n}$ and consider $h:[0,1]\di \R$ from \eqref{mopi}, which we show to be Riemann integrable with zero integral. 
To this end, fix $\eps>0$ and let $n_{0}\in \N$ be such that $\frac{1}{2^{n_{0}+1}}<\eps$.  Since $X_{n_{0}}$ has measure zero, we can find a sequence $(I_{n})_{n\in \N}$ of intervals that covers $X_{n_{0}}$ and has total length at most $\frac{1}{2^{n_{0}+2}}$.  By \cite{dagsamVII}*{Theorem 3.3}, the closed set $X_{n_{0}}$ is also covered by $\cup_{n\leq k_{0}}I_{n}$ for some $k_{0}\in \N$.    
Now let $P$ be any partition with mesh at most $\frac{1}{2^{n_{0}+2}}$.
To show that $\eps>S(f, P)=\sum_{i=0}^{|P|}h(t_{i}) (x_{i+1}-x_{i})  $, we split the intervals $[x_{i},x_{i+1}]$ into sub-intervals $[x_{i+1}, a_{0}], [a_{0}, a_{1}], \dots, [a_{m}, a_{m+1}], [a_{m+1}, x_{i}]$, where the $a_{i}$ are end-points of the $I_{n}$ in $[x_{i+1}-x_{i}]$ for some $n\leq k_{0}$.  
Now split $S(f, p)$ into two sums, one over the intervals not in $\cup_{n\leq k_{0}}I_{n}$, and one with the rest. 
By our assumptions, each separate sum is at most $\frac{1}{2^{n_{0}+2}}$ as required.  Now apply item \eqref{pono0} for $h$ (see Theorem~\ref{fronkn} for $\osc_{h}:[0,1]\di \R$) and note that $y\in C_{h}$ implies $h(y)=\osc_{h}(y)=0$, i.e.\ $y\in \cap_{n\in \N}O_{n}$, as required for $\WBCT_{[0,1]}$.  Similarly, item \eqref{topanga3} implies $\WBCT_{[0,1]}$ by considering \eqref{mopi} where $(X_{n})_{n\in \N}$ is a sequence of closed sets with $[0,1]\setminus \Q=\cup_{n\in \N}X_{n}$.  Moreover, item \eqref{topanga4} implies item \eqref{topanga3} in light of Thomae's function from \eqref{thomae},  

\smallskip

Thirdly, for item \eqref{pono1}, the absence of an oscillation function complicates matters (slightly).  We modify $F_{q}$ defined as in \eqref{ling} as follows: let $`x\in E_{q, l}$' in case
\[\textstyle
f(x)\leq q \wedge (\forall N\in \N)(\exists z\in B(x, \frac{1}{2^{N}}))( f(z)>q+\frac{1}{2^{l}}  ).
\]
As in the proof of Theorem \ref{clockn}, $E_{q,l}$ is closed and $D_{f}\subseteq\cup_{l\in \N, q\in \Q}E_{q,l}$. 
Now suppose $C_{f}=\emptyset$, i.e.\ $\cup_{l\in \N, q\in \Q}E_{q,l}=[0,1]$.  
By $\WBCT_{[0,1]}$, there is $l_{0}\in \N$, $q_{0}\in \Q$ such that $E_{q_{0},l_{0}}$ has non-zero measure. 
As in the first paragraph of the proof, $f$ is not Riemann integrable, and item \eqref{pono1} follows.  The latter implies $\WBCT_{[0,1]}$ in the same way as in the previous paragraph, noting that $h$ from \eqref{mopi} is usco by Theorem \ref{flap}.  

\smallskip

Fourth, for items \eqref{pono2} and \eqref{pono3}, the latter follow from items \eqref{pono0} and items \eqref{pono1} by noting that $x\in C_{f}$ and $\int_{0}^{1}f(x)dx=0$ implies $f(x)=0$.
For the reversals, use $h:[0,1]\di \R$ from \eqref{mopi}, which has Riemann integral equal to zero by the second paragraph of this proof.    

\smallskip

Fifth, for items \eqref{ponofar} and \eqref{ponofar2}, the latter are equivalent to items \eqref{pono2} and \eqref{pono3} in the same way.
In particular, the function $h:[0,1]\di \R$ from \eqref{mopi} has a primitive, namely the constant zero function.  Moreover, the usual epsilon-delta proof establishes that $F(x):=\int_{0}^{x}f(t)dt$ is continuous and that $F'(y)=f(y)$ if $f$ is continuous at $y$.  

\smallskip

Sixth, item \eqref{ponofaro} immediately follows from item \eqref{pono0} by taking a small enough ball around a point of continuity.  
Similarly, item \eqref{ponofaro} implies item \eqref{pono3} by noting that $\int_{a}^{b}f(x)dx\leq \int_{c}^{d}f(x)dx$ in case $f$ is non-negative and $[a, b]\subset [c, d]$.
Item \eqref{ponofaro2} is treated in the same way.
The final sentence follows by Theorem \ref{flap}. 

\smallskip

Seventh, let $f:[0,1]\di \R$ be as in item \eqref{topanga} and define the closed set $D_{k}:=\{ x\in [0,1]: \osc_{f}(x)\geq\frac{1}{2^{k}}\}$. 
By assumption, $[0,1]=\cup_{k\in \N}D_{k}$ and $\WBCT_{[0,1]}$ implies there is $k_{0}\in\N$ and $E\subset [0,1]$ of positive measure such that $E\subset D_{k_{0}}$, as required.
For the reversal, assume item \eqref{topanga} and let $f:[0,1]\di \R$ be Riemann integrable and with oscillation function $\osc_{f}:[0,1]\di \R$.  
In case $C_{f}\ne \emptyset$, we are done in light of item \eqref{pono1}. 
In case $C_{f}=\emptyset$, $f$ is totally discontinuous and we can apply item~\eqref{topanga}.  
Let $N_{0}\in \N$ and $E\subset  [0,1]$ of positive measure be such that $\osc_{f}(x)\geq \frac{1}{2^{N_{0}}}$ for $x\in E$.  
As in the first paragraph of this proof, $f$ is not Riemann integrable, a contradiction.  
For the penultimate sentence, it suffices to observe that $h$ from \eqref{mopi} is cliquish and has an oscillation function by Theorems \ref{fronkn} and \ref{flap}.
The final sentence follows by observing that the functions used in the above proof of the implications $\eqref{topanga4}\di \eqref{topanga3}\di\WBCT_{[0,1]}$ are pointwise discontinuous.  
\end{proof}
In the previous theorem, we can replace the use of $\QFAC^{0,1}$ (not provable in $\ZF$) by $\NCC$ (provable in $\Z_{2}^{\Omega}$) from \cite{dagsamIX}.
It seems we can (also) avoid the use of $\QFAC^{0,1}$ if we require the sets to have \emph{Jordan} measure zero. 
We do not know if we can weaken the induction axiom in Theorem \ref{duck5}.   
We do have the following corollary, which is of separate interest.
\begin{cor}[$\ACAo$]\label{sepa}
For Riemann integrable $f:[0,1]\di \R$ with oscillation function $\osc_{f}:[0,1]\di \R$, $D_{k}:=\{x\in [0,1]:\osc_{f}(x)\geq \frac{1}{2^{k}}\} $ has measure zero.
\end{cor}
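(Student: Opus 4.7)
The plan is to isolate the contradiction-producing step in the proof of Theorem \ref{duck5}\eqref{pono0} and recast it as a direct construction of a cover of $D_{k}$ by open intervals of arbitrarily small total length. Since $\osc_{f}$ is usco by Theorem~\ref{flappie}, the set $D_{k}=\{x\in[0,1]:\osc_{f}(x)\ge 1/2^{k}\}$ is closed, so working in $\ACAo$ suffices to manipulate it.

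Fix $\eps>0$. By Riemann integrability, pick $\delta>0$ such that any two tagged partitions with mesh below $\delta$ yield Riemann sums differing by less than $\tfrac{\eps}{2^{k+1}}$. Choose a single partition $P:0=x_{0}<x_{1}<\dots<x_{n}=1$ with mesh below $\delta$. Using $\mu^{2}$, define $I\subseteq\{0,\ldots,n-1\}$ as the (finite, decidable) collection of indices $i$ for which there exist rationals $r,s\in[x_{i},x_{i+1}]$ with $f(r)-f(s)>\tfrac{1}{2^{k+1}}$, and for each such $i$ record witnesses $t_{i}',t_{i}''$. Form tagged partitions $P',P''$ agreeing with $P$ off of $I$ and using $t_{i}',t_{i}''$ on indices $i\in I$; then
\[
\tfrac{\eps}{2^{k+1}}>S(f,P')-S(f,P'')=\sum_{i\in I}(f(t_{i}')-f(t_{i}''))(x_{i+1}-x_{i})>\tfrac{1}{2^{k+1}}\sum_{i\in I}(x_{i+1}-x_{i}),
\]
so $\sum_{i\in I}(x_{i+1}-x_{i})<\eps$.

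It remains to verify that the family $\{(x_{i},x_{i+1}):i\in I\}$, slightly enlarged, covers $D_{k}$. If $y\in D_{k}$ lies in the interior of some $[x_{i},x_{i+1}]$, then $\osc_{f}(y)\ge 1/2^{k}$ yields a ball $B(y,1/2^{N})\subset(x_{i},x_{i+1})$ with oscillation at least $1/2^{k}$; density of $\Q$ provides rationals $r,s$ in that ball with $f(r)-f(s)>1/2^{k+1}$, forcing $i\in I$. For the finitely many partition points $x_{i}$ themselves, enlarge each selected interval by a tiny $\eta>0$, obtaining open intervals $(x_{i}-\eta,x_{i+1}+\eta)$ whose total length is at most $\eps+2n\eta$; choosing $\eta<\eps/(2n)$ yields total length below $2\eps$. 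Since $\eps$ was arbitrary, $D_{k}$ has measure zero.

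The one genuine subtlety is the boundary-point case, where a point of $D_{k}$ could sit exactly at an $x_{i}$ and not be picked up by the ``rational tags in one subinterval'' test; the uniform enlargement by $\eta$ absorbs this, at the cost only of a harmless doubling of $\eps$. Everything else—deciding $I$, selecting tags $t_{i}',t_{i}''$, and computing the two Riemann sums—is arithmetical relative to $f$ and $\osc_{f}$ and hence available in $\ACAo$, so no appeal to $\IND_{\R}$, $\QFAC^{0,1}$, or $\WBCT_{[0,1]}$ is needed.
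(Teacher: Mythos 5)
Your overall engine---perturb the tags of a fine partition on the subintervals where the oscillation is large, so that Riemann integrability forces those subintervals to have small total length---is exactly the paper's (the paper runs it by contradiction from ``$D_{k_0}$ has measure at least $\eps_0$'' rather than as a direct cover construction, a cosmetic difference). The genuine gap is the step that makes your index set $I$ decidable. You put $i\in I$ when there are \emph{rational} $r,s\in[x_i,x_{i+1}]$ with $f(r)-f(s)>\frac{1}{2^{k+1}}$, and you justify the covering claim by asserting that density of $\Q$ yields such rational witnesses inside any ball on which the oscillation is at least $\frac{1}{2^{k}}$. That assertion is false for a general Riemann integrable $f$: the suprema and infima defining $\osc_f$ range over all reals, and nothing forces them to be approached along $\Q$. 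Concretely, $f=\mathbb{1}_{\{c\}}$ for irrational $c$ is Riemann integrable and is its own oscillation function, so $c\in D_{0}$; yet $f$ vanishes on $\Q$, your set $I$ is empty, and your cover misses $D_{0}$ entirely. The replacement of real by rational quantifiers is legitimate elsewhere in the paper only because of semi-continuity (cf.\ \eqref{ting2} and \eqref{dumbel1}); no such reduction is available for an arbitrary Riemann integrable function with an oscillation function.

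If you repair this by allowing real witnesses $t_i',t_i''$, the covering argument does go through, but then membership in $I$ is a formula with real quantifiers, and forming $I$ plus selecting the finitely many witnesses is exactly what $\IND_{\R}$ (and a whiff of choice) are used for in the proof of Theorem \ref{duck5}---so the advertised gain of staying in bare $\ACAo$ without those principles evaporates. A smaller, fixable slip: enlarging only the \emph{selected} intervals by $\eta$ need not cover a point of $D_k$ sitting at a partition node $x_j$ whose two adjacent subintervals each have small oscillation (the large oscillation at $x_j$ can be split across the node, as for $\mathbb{1}_{[x_j,1]}$); one should instead add separate intervals of total length below $\eps$ around all $n+1$ partition points.
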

\begin{proof}
The first paragraph of the proof of the theorem contains the required proof by contradiction, starting with `\emph{Let $\eps_{0}>0$ be such that\dots}'.
\end{proof}
Corollary \ref{sepa} identifies the `missing link' to the Vitali-Lebesgue theorem:  each $D_{k}$ has measure zero and since $D_{f}=\cup_{k\in\N}D_{k}$, 
we only need Tao's pigeonhole principle for measure spaces to conclude that $D_{f}$ has measure zero.  

\smallskip

Finally, with the gift of hindsight, one can show that the following strengthening of $\WBCT_{[0,1]}$ is equivalent to $\BCT_{[0,1]}$ again.  
\begin{center}
\emph{For a sequence $(C_{n})_{n\in \N}$ of closed sets with $[0,1]=\cup_{n\in \N}C_{n}$, there is $m\in \N$ such that $C_{m}$ has non-zero measure and 
$[0,1]$ cannot be covered by finite copies of $C_{m}$.}  
\end{center}
We could also obtain an equivalence between $\WBCT_{[0,1]}$ and a version of the uniform boundedness principle from Theorem \ref{rink}.  
Moreover, $\WBCT_{[0,1]}$ implies the following interesting statement, which follows from e.g.\ \cite{bage}*{Theorem}, 
\begin{center}
\emph{for a closed set $E\subset \R$ of measure zero, there is $x\in \R$ such that $x+E\subset \R\setminus \Q$,}
\end{center}
and we expect a reversal to be possible.

\subsection{The pigeonhole principle}\label{duif}
We introduce Tao's {pigeonhole principle for measure spaces} from \cite{taoeps} and obtain equivalences involving the fundamental theorem of calculus and the Vitali-Lebesgue theorem.
Hence, the latter theorems are not provable in $\Z_{2}^{\omega}$ while the restrictions to Baire 1 and quasi-continuous functions and RM-closed sets are provable in $\ACAo$ by Theorems \ref{pinko} and \ref{qvl}. 

\smallskip

First of all, we shall study the pigeonhole principle for \emph{closed} sets as in $\PHP_{[0,1]}$ below.  
We recall that like in second-order RM, statements of the form `$E\subset [0,1]$ has measure $0$' can be made without introducing the Lebesgue measure.
\begin{princ}[$\PHP_{[0,1]}$]\label{PHP}
If $ (X_n)_{n \in \N}$ is an increasing sequence of measure zero closed sets of reals in $[0,1]$, then $ \bigcup_{n \in\N } X_n$ is measure zero.
\end{princ} 
By the the main result of \cite{trohim}, not all nowhere dense measure zero sets are the countable union of measure zero closed sets, i.e.\ $\PHP_{[0,1]}$ does not generate `all' measure zero sets. 
With some effort, one can derive $\PHP_{[0,1]}$ restricted to R2-closed sets from Cousin's lemma as studied in \cite{dagsamIII}.

\smallskip

Secondly, we establish that while $\PHP_{[0,1]}$ is hard to prove in terms of conventional comprehension, the former principle can be proved \emph{without} the Axiom of Choice, i.e.\ in a fragment of $\ZF$.
Thus, the properties of $\PHP_{[0,1]}$ are not due to the latter implying (a fragment of) the Axiom of Choice.  By contrast, $\PHP_{[0,1]}$ \emph{restricted to RM-codes} is provable in a relatively weak system.  
\begin{thm}\label{pinko}~
\begin{itemize}
\item The $\Z_{2}^{\omega}+\QFAC^{0,1}$ does not prove $\PHP_{[0,1]}$.
\item The system $\Z_{2}^{\Omega}$ proves $\PHP_{[0,1]}$.  
\item The system $\ACAo$ proves $\PHP_{[0,1]}$ restricted to RM-codes for closed sets. 
\end{itemize}
\end{thm}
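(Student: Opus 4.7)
The theorem has three clauses, which I would address in turn.

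For clause (i), my plan is to derive $\PHP_{[0,1]} \to \NIN_{[0,1]}$ while working in $\Z_2^\omega+\QFAC^{0,1}+\IND_0$, adapting the template of Theorem~\ref{tonko}. Given an injection $Y:[0,1]\to\N$, define $X_n := \{x \in [0,1] : Y(x) \leq n\}$; since $Y$ is injective, $X_n$ has at most $n+1$ elements, and $\IND_0$ supplies a finite listing of these. From the listing, $X_n$ is closed in the sense of Definition~\ref{char} (its complement is a union of finitely many open intervals) and has measure zero (cover each listed point by an interval of length $\eps/(n+2)$). Applying $\PHP_{[0,1]}$ to $(X_n)_{n\in\N}$ would then force $[0,1] = \bigcup_{n\in\N} X_n$ to have measure zero, which is absurd. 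Since $\NIN_{[0,1]}$ is not provable in $\Z_2^\omega+\QFAC^{0,1}+\IND_0$ (a consequence of Gandy selection, cf.\ \cite{dagsamXI}*{Theorem 2.16}), one concludes that $\PHP_{[0,1]}$ is not provable in $\Z_2^\omega+\QFAC^{0,1}$: a proof there would extend conservatively to the system with $\IND_0$, contradicting the unprovability of $\NIN_{[0,1]}$.

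For clause (ii), the plan is the classical measure-theoretic argument, now formalised in $\Z_2^\Omega$. Fix $\eps>0$ and an increasing sequence $(X_n)_{n\in\N}$ of measure zero closed sets. For each $n$, the hypothesis supplies a sequence of open intervals covering $X_n$ with total length below $\eps/2^{n+1}$. The essential role of $\exists^3$ is to supply the countable choice required to extract a uniform double sequence of such interval systems. Concatenation then yields a cover of $\bigcup_{n\in\N} X_n$ of total length at most $\eps$, and since $\eps$ was arbitrary, $\bigcup_{n\in\N} X_n$ has measure zero.

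For clause (iii), the plan is to exploit the explicit combinatorial content of RM-codes to avoid the strong choice used in clause (ii). An RM-coded closed subset $C\subseteq[0,1]$ is compact in the RM sense, so by the Heine--Borel theorem for RM-codes (provable in $\WKL_{0} \subseteq \ACAo$) the property ``$C$ has measure zero'' is equivalent to the arithmetical statement that for every $k\in\N$ there exists a \emph{finite} sequence of basic open intervals with rational endpoints, of total length below $1/2^k$, covering $C$. Given RM-codes $(X_n)_{n\in\N}$ satisfying this, $\mu^2$ together with arithmetical choice available in $\ACAo$ yields, for each pair $(n,k)$, a finite covering of $X_n$ of total length below $1/2^{n+k+1}$. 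Concatenating over $n$ at each fixed level $k$ produces a cover of $\bigcup_{n\in\N} X_n$ of total length below $1/2^k$, establishing the restriction of $\PHP_{[0,1]}$ to RM-codes.

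The principal obstacle I anticipate lies in clause (iii): carefully verifying that the reduction to an arithmetical finite-cover statement genuinely goes through in $\ACAo$, and that the subsequent extraction of a double sequence of finite covers requires only the arithmetical choice available in $\ACAo$ rather than something stronger.
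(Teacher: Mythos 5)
Your clauses (i) and (iii) follow essentially the paper's route: (i) reduces $\PHP_{[0,1]}$ to $\NIN_{[0,1]}$ (the paper uses the singleton sets $E_{n}:=\{x:Y(x)=n\}$, which are closed and measure zero with no induction at all, whereas your $X_{n}:=\{x:Y(x)\leq n\}$ needs $\IND_{0}$ but does respect the ``increasing'' hypothesis of Principle \ref{PHP}); and (iii) uses Heine--Borel for RM-codes in $\WKL$ to replace ``measure zero'' by an arithmetical finite-cover statement, after which $\mu^{2}$ and $\QFAC^{0,0}$ finish the job. The one verification you flag as an obstacle in (iii) -- that ``$C_{n}\subset\cup_{i\leq k}(q_{2i-1},q_{2i})$'' is genuinely decidable from $\exists^{2}$ -- is exactly where the paper does some work: it passes to the continuous function $f_{n,k}$ representing the complementary RM-open set (\cite{simpson2}*{II.7.1}) and tests $0<\inf_{x\in[0,1]}f_{n,k}(x)$, using that continuous functions attain their infimum in $\WKL_{0}$.

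The genuine gap is in clause (ii). You write that ``the essential role of $\exists^{3}$ is to supply the countable choice required to extract a uniform double sequence of such interval systems.'' But the choice principle you need here is an instance of $\QFAC^{0,1}$ (for each $n$, choose a type-one object, namely a sequence of intervals), and $\Z_{2}^{\Omega}=\RCAo+(\exists^{3})$ does \emph{not} prove $\QFAC^{0,1}$: the base theory only contains $\QFAC^{1,0}$, and $\QFAC^{0,1}$ is not even provable in $\ZF$, of which $\Z_{2}^{\Omega}$ is a fragment. Which instances of countable choice \emph{are} recoverable in $\Z_{2}^{\Omega}$ is precisely the delicate point addressed by $\NCC$ and $\MCC$ in \cite{dagsamIX}, and one cannot simply wave at $\exists^{3}$ to get them. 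The paper's actual argument sidesteps the problem: it first invokes the $\Z_{2}^{\Omega}$-provable theorem that countable coverings of (third-order) closed sets admit finite sub-coverings (\cite{dagsamIX}*{Theorems 3.1 and 3.5}), so that the witness to ``$X_{n}$ is covered with total length $<2^{-k}$'' becomes a \emph{finite tuple of rationals}, i.e.\ a type-zero object; the matrix is then decidable by $\exists^{3}$ and only $\QFAC^{0,0}$ -- which is in the base theory -- is needed to extract the double sequence. Your proof of (ii) needs this compactness reduction (or an explicit appeal to $\MCC$); as written, the choice step does not go through.
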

\begin{proof}
For the first item, we have $\PHP_{[0,1]}\di \NIN_{[0,1]}$ as for any injection $Y:[0,1]\di \N$, the set $E_{n}:=\{x\in [0,1]:Y(x)=n\}$ has measure zero but the union does not.  
By \cite{dagsamXI}*{Theorem 3.1}, the system  $\Z_{2}^{\omega}+\QFAC^{0,1}$ cannot prove $\NIN_{[0,1]}$. 

\smallskip

For the second item, the following formula expresses that $(X_{n})_{n\in \N}$ is a sequence of closed measure zero sets. 
\be\label{tempa}\textstyle
(\forall n, k\in \N)( \exists (I_{m})_{m\in \N}  )\big( X_{n}\subset \cup_{i\in \N} I_{i} \wedge \frac{1}{2^{k}}>  \sum_{j\in \N} |I_{j}|\big), 
\ee
where each $I_{n}$ is an open interval with rational end-points.  
By \cite{dagsamIX}*{Theorems 3.1 and 3.5}, $\Z_{2}^{\Omega}$ proves that countable coverings of closed sets have finite sub-coverings.  Applying this covering result to \eqref{tempa}, we obtain the following formula
\[\textstyle
(\forall n, k\in \N)( \exists q_{0},\dots , q_{2k} \in \Q  )\big[ X_{n}\subset \cup_{1\leq i\leq k} (q_{2i-1}, q_{2i}) \wedge \frac{1}{2^{k}}>  \sum_{1\leq i\leq k} |q_{2i-1}- q_{2i}|\big],
\]
where the formula in square brackets is decidable using $\exists^{3}$.   Now apply $\QFAC^{0,0}$, provable in $\ZF$ and included in $\RCAo$, to obtain the conclusion of $\PHP_{[0,1]}$.

\smallskip

For the third item, consider \eqref{tempa} and assume $(X_{n})_{n\in \N}$ is given as a sequence $(C_{n})_{n\in \N}$ of RM-closed sets.
By \cite{brownphd}*{Lemma~3.13}, a countable covering of an RM-closed set has a finite sub-covering assuming $\WKL_{0}$, i.e.\ we obtain 
\[\textstyle
(\forall n, k\in \N)( \exists q_{0},\dots , q_{2k} \in \Q  )\big[ C_{n}\subset \cup_{1\leq i\leq k} (q_{2i-1}, q_{2i}) \wedge \frac{1}{2^{k}}>  \sum_{1\leq i\leq k} |q_{2i-1}- q_{2i}|\big].
\]
In case the formula in square brackets is decidable (using $\exists^{2}$), we can apply $\QFAC^{0,0}$ and obtain the required interval covering of $\cup_{n\in \N}C_{n}$.  

\smallskip

To show that $C_{n}\subset \cup_{1\leq i\leq k} (q_{2i-1}, q_{2i})$ is decidable using $\exists^{2}$, note that this formula is equivalent to `$(\forall x\in [0,1])(x\in O_{n,k})$', where the RM-open set $O_{n,k}\subset [0,1]$ is the union of the RM-open sets $[0,1]\setminus C_{n}$ and $\cup_{1\leq i\leq k} (q_{2i-1}, q_{2i})$.  By \cite{simpson2}*{II.7.1},  there is a (code for a) continuous function $f_{n,k}$ such that $x\in O_{n,k}\asa f_{n,k}(x)>0$ for all $n,k\in \N$ and $x\in [0,1]$.
By \cite{dagsamXIV}*{Theorem~2.2}, codes for continuous functions on $\R$ yield third-order functions on $\R$, even in $\RCAo$. 
Using $\exists^{2}$, we may compute the infimum of $f_{n,k}$ on $[0,1]$ by \cite{kohlenbach2}*{Prop.~3.14}. 
The following equivalence now holds for all $n,k\in \N$:
\be\label{Deci}
\big[C_{n}\subset \cup_{1\leq i\leq k} (q_{2i-1}, q_{2i})\big]\asa \big[0< \inf_{x\in [0,1]}f_{n,k}(x)\big],
\ee
as continuous functions attain their infimum on the unit interval given $\WKL_{0}$ by \cite{simpson2}*{IV.2.3}.
The right-hand side of \eqref{Deci} is decidable given $\exists^{2}$. 
 \end{proof}
In light of the previous proof, $\PHP_{[0,1]}$ follows from the principle $\MCC$ from \cite{dagsamIX}.  Moreover, the third item in Theorem \ref{pinko} goes through for `$\ACAo$' replaced by `$\RCAo+\WKL$', using \cite{kohlenbach2}*{Prop.\ 3.15} to obtain \eqref{Deci} without recourse to $(\exists^{2})$.

\smallskip

\noindent
Thirdly, the following theorem should be contrasted with Theorem \ref{duck555}, keeping in mind the close connection between quasi-continuity and cliquishness (see Remark~\ref{donola}); the set $C_{f}$ exists by Theorems \ref{flang} and \ref{plonkook}.
\begin{thm}[$\ACAo$]\label{qvl}~
\begin{itemize}
\item For Riemann integrable quasi-contiuous $f:[0,1]\di \R$, $C_{f}$ has measure one.
\item For Riemann integrable Baire 1 $f:[0,1]\di \R$, the set $C_{f}$ has measure one.
\end{itemize}
\end{thm}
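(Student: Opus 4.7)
The plan is to combine Corollary~\ref{sepa} with the third item of Theorem~\ref{pinko}, namely that $\ACAo$ proves $\PHP_{[0,1]}$ restricted to RM-codes for closed sets. In both items the oscillation function $\osc_f:[0,1]\di\R$ exists (by Theorem~\ref{plonkook} in the quasi-continuous case and Theorem~\ref{flang} in the Baire~1 case), so we may define the sets $D_k:=\{x\in[0,1]:\osc_f(x)\geq 1/2^k\}$. These form an increasing sequence with $D_f=\bigcup_{k\in\N}D_k$, and by Corollary~\ref{sepa} each $D_k$ has measure zero. If we can further supply a uniform sequence of RM-codes for $(D_k)_{k\in\N}$, then Theorem~\ref{pinko} yields that $D_f$ has measure zero, whence $C_f=[0,1]\setminus D_f$ has measure one.

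For the first item, Theorem~\ref{plonkook} supplies the sets $D_k$ as R2-closed uniformly in $k$; explicitly, the equivalence established via \eqref{poiuy} in its proof shows that $x\in[0,1]\setminus D_k$ is equivalent to the existence of $M\in\N$ with $\osc_f(B(x,1/2^M))<1/2^k$, and $\mu^2$ returns the least such $M=:Y_k(x)$. Following the technique in the proof of Corollary~\ref{lopi}, the set $\bigcup\{B(q,1/2^{Y_k(q)+1}):q\in\Q\cap([0,1]\setminus D_k)\}$ is an RM-code for $[0,1]\setminus D_k$; decidability of $q\in D_k$ is via $\exists^2$ applied to the arithmetical formula defining $D_k$. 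This yields a sequence of RM-codes uniformly in $k$.

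For the second item, given a Baire~1 representation $(f_n)_{n\in\N}$ of $f$, by \cite{dagsamXIV}*{Theorem 2.9} (as invoked in the proof of Theorem~\ref{toblo}) the suprema and infima of $f$ on balls are available via third-order functionals, so $\osc_f(B(x,1/2^N))$ is a definable function of $(x,N)$. For $x\not\in D_k$, $\mu^2$ locates the least $N$ with $\osc_f(B(x,1/2^N))<1/2^k$; since $B(y,1/2^{N+1})\subset B(x,1/2^N)$ whenever $y\in B(x,1/2^{N+1})$, we get $\osc_f(y)<1/2^k$ and hence $B(x,1/2^{N+1})\subset[0,1]\setminus D_k$. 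Thus $[0,1]\setminus D_k$ is R2-open uniformly in $k$, and we upgrade to a uniform sequence of RM-codes exactly as in the previous paragraph.

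The main obstacle I anticipate is ensuring that all the coding is genuinely \emph{uniform in $k$}, so that one obtains a single sequence $(C_k)_{k\in\N}$ of codes, rather than merely an RM-code for each $D_k$ in isolation. This is what allows the RM-restricted $\PHP_{[0,1]}$ of Theorem~\ref{pinko} to be applied directly. A secondary but routine matter is to record `$D_f$ has measure zero implies $C_f$ has measure one' within the conventions of Definition~\ref{char}; this amounts to the complement convention and requires no further measure-theoretic input.
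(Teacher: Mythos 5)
Your overall strategy is exactly the paper's: existence of $\osc_{f}$ (Theorems \ref{plonkook} and \ref{flang}), measure zero of each $D_{k}$ via Corollary \ref{sepa}, a uniform sequence of RM-codes, and then the third item of Theorem \ref{pinko}; the complement convention at the end is indeed routine. For the quasi-continuous case your argument coincides with the paper's, which likewise extracts RM-codes for the sets $O_{m}$ from \eqref{obvio}/\eqref{poiuy} via $\mu^{2}$ and the least-witness trick of Corollary \ref{lopi}. For the Baire 1 case you diverge slightly: the paper does \emph{not} code the sets $D_{k}$ themselves but instead reuses the sets $F_{m,n}=D_{m,n}\setminus \textsf{int}(D_{m,n})$ from the proof of Theorem \ref{flang}, which are already RM-closed (being defined by $\Pi^{0}_{1}$-conditions on the codes $\Phi_{n}$ for the approximating continuous functions $f_{n}$), notes that $\cup_{m}F_{m,n}\subset D_{n}$ so each has measure zero by Corollary \ref{sepa}, and applies Theorem \ref{pinko} to $\cup_{n,m}F_{m,n}\supseteq D_{f}$. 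Your alternative -- coding $[0,1]\setminus D_{k}$ directly via the supremum/infimum functional of \cite{dagsamXIV}*{Theorem 2.9} for Baire 1 functions, using $\mu^{2}$ on the decidable condition $\osc_{f}(B(x,\frac{1}{2^{N}}))<\frac{1}{2^{k}}$ and the monotonicity $\osc_{f}(y)\leq \osc_{f}(B(x,\frac{1}{2^{N}}))$ for $y\in B(x,\frac{1}{2^{N+1}})$ -- is correct and arguably cleaner, since it treats both items by one uniform recipe (code $D_{k}$, then apply Corollary \ref{sepa} and Theorem \ref{pinko}); the paper's route has the minor advantage of not needing the supremum functional beyond what Theorem \ref{flang} already established, working instead with sets whose RM-codes come for free from second-order facts about the codes $\Phi_{n}$. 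Your concern about uniformity in $k$ is well placed but unproblematic in both versions, as all witnesses are produced by $\mu^{2}$ uniformly in the parameters.
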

\begin{proof}
For the first item, let $f:[0,1]\di \R$ be quasi-continuous.
Our starting point is the proof of Theorem \ref{plonkook}, in particular the set $O_{m}$ defined as the collection of all $x\in [0,1]$ such that \eqref{obvio}.
By the definition of quasi-continuity, \eqref{obvio} and \eqref{poiuy} are equivalent, and this remains true if $N=M$.  
Using $\mu^{2}$, let $Y_{m}(x)$ be the least $N\in \N$ as in \eqref{obvio}, if such exists, and zero otherwise.  
Hence, we $x\in O_{m}\asa x\in \cup_{q\in \Q\cap O_{m}}B(q,\frac{1}{2^{Y_{m}(q)}})$, for any $x\in [0,1]$. 
In this way, we have obtained an RM-code for the sequence $(O_{m})_{m\in \N}$.  Recall that $D_{f}=\cup_{m\in \N}D_{m}$ where $D_{m}:=[0,1]\setminus O_{m}$ and $D_{m}:=\{x\in [0,1]:\osc_{f}(x)\geq \frac{1}{2^{m}}\}$
If $f:[0,1]\di \R$ is also Riemann integrable then Corollary \ref{sepa} guarantees that the set $D_{k}:=\{x\in [0,1]:\osc_{f}(x)\geq \frac{1}{2^{k}}\}$ has measure zero.  
Now apply Theorem \ref{pinko} to conclude that $D_{f}$ has measure zero.  

\smallskip

For the second item, let $f:[0,1]\di \R$ be the pointwise limit of the sequence of continuous functions $(f_{n})_{n\in \N}$.
Now consider the sets $F_{m,n}$ from the proof of Theorem \ref{flang}.
One readily proves that $\cup_{m\in \N} F_{m,n} \subset D_{n}:=\{x\in [0,1]:\osc_{f}(x)\geq \frac{1}{2^{n}}\}$.  The latter is measure zero by Corollary \ref{sepa}, in case $f$ is also Riemann integrable.
By the proof of Theorem \ref{flang}, $F_{n,m}$ is represented by RM-codes, i.e.\ we may apply the third item of Theorem \ref{pinko} to conclude that $\cup_{n,m\in \N} F_{m,n}$ has measure zero.
Since $D_{f}$ is a subset of the latter union, we are done. 
\end{proof}
Fourth, we have the following theorem where we note that the sets $C_{f}$ and $ D_{f}$ exists by Theorem \ref{plonk} in e.g.\ item \eqref{tao2} of Theorem \ref{duck555}. 
The last sentence of the theorem should be contrasted with Corollary \ref{qvl}, recalling Remark \ref{donola}.
We recall the induction fragment $\IND_{\R}$ introduced in Section \ref{1227}.
\begin{thm}[$\ACAo+\IND_{\R}+\QFAC^{0,1}$]\label{duck555}
The following are equivalent.  
\begin{enumerate}
\renewcommand{\theenumi}{\alph{enumi}}
\item The pigeonhole principle for measure spaces as in $\PHP_{[0,1]}$.
\item \(Vitali-Lebesgue\) For Riemann integrable $f:[0,1]\di \R$ with an oscillation function, the set $D_{f}$ has measure $0$.\label{tao1}
\item For Riemann integrable usco $f:[0,1]\di \R$, the set $D_{f}$ has measure $0$.\label{tao2}
\item For Riemann integrable $f:[0,1]\di [0,1]$ with an oscillation function and $\int_{0}^{1}f(x)dx=0$, the set $\{x\in [0,1]:f(x)=0\}$ has measure one.\label{tao3}
\item For Riemann integrable usco $f:[0,1]\di [0,1]$ with $\int_{0}^{1}f(x)dx=0$, the set $\{x\in [0,1]:f(x)=0\}$ has measure one.\label{tao4}
\item \textsf{\textup{(FTC)}} For Riemann integrable $f:[0,1]\di \R$ with an oscillation function and $F(x):=\lambda x.\int_{0}^{x}f(t)dt$, the following set exists: 
\be\label{tanko}
\{x\in [0,1]:F \textup{ is differentiable at $x$ with derivative } f(x)\}
\ee
and has measure one.\label{tao5}
\item \textsf{\textup{(FTC)}} The previous item for usco functions.\label{tao6}
\item For $f:[0,1]\di \R$ not continuous almost everywhere with oscillation function $\osc_{f}:[0, 1]\di \R$, 
there is $N\in \N$ and $E\subset  [0,1]$ of positive measure such that $\osc_{f}(x)\geq \frac{1}{2^{N}}$ for $x\in E$.\label{topanga2}
\end{enumerate}
We can replace `usco' by `cliquish with an oscillation function' in the above.  
\end{thm}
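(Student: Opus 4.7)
The plan is to adapt the proof of Theorem~\ref{duck5} \emph{mutatis mutandis}, with $\PHP_{[0,1]}$ replacing $\WBCT_{[0,1]}$ and with Corollary~\ref{sepa} serving as the essential bridge. Indeed, for Riemann integrable $f:[0,1]\di\R$ with oscillation function $\osc_{f}$, Corollary~\ref{sepa} already gives that each $D_{k}:=\{x\in[0,1]:\osc_{f}(x)\geq\tfrac{1}{2^{k}}\}$ has measure zero, and each $D_{k}$ is closed since $\osc_{f}$ is usco by Theorem~\ref{flappie}. Writing $D_{f}=\bigcup_{k\in\N}D_{k}$ (replacing the sequence by an increasing one is harmless) and applying $\PHP_{[0,1]}$ then immediately yields item~\eqref{tao1}; the set $C_{f}$ thus has measure one.

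For the remaining forward implications, item~\eqref{tao2} follows in the same manner by using the closed sets $E_{q,l}$ from the proof of Theorem~\ref{duck5}~\eqref{pono1} in place of the $D_{k}$, so that no oscillation function for $f$ itself is needed (the oscillation-like decomposition is still a countable union of closed measure-zero sets). Items~\eqref{tao3}-\eqref{tao4} follow from \eqref{tao1}-\eqref{tao2} because $x\in C_{f}$ together with $\int_{0}^{1}f=0$ and $f\geq 0$ forces $f(x)=0$. Items~\eqref{tao5}-\eqref{tao6} follow from \eqref{tao1}-\eqref{tao2} via the standard $\ACAo$-provable fact that $F(x)=\int_{0}^{x}f(t)dt$ is differentiable at every continuity point of $f$ with derivative $f(x)$, so that the set \eqref{tanko} contains $C_{f}$. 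Item~\eqref{topanga2} is the contrapositive: if every $D_{k}$ has measure zero, then $\PHP_{[0,1]}$ makes $D_{f}$ measure zero and $f$ continuous almost everywhere.

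For the reversals, $h:[0,1]\di\R$ from \eqref{mopi} is once again the universal test function. Given an increasing sequence $(X_{n})_{n\in\N}$ of closed measure zero sets, Theorems~\ref{fronkn} and~\ref{flap} respectively show that $\osc_{h}=h$ and that $h$ is usco and cliquish, while the second paragraph of the proof of Theorem~\ref{duck5} shows that $h$ is Riemann integrable with $\int_{0}^{1}h(x)dx=0$. Since $\osc_{h}=h$, we have $D_{h}=\{x\in[0,1]:h(x)>0\}=\bigcup_{n\in\N}X_{n}$, so items~\eqref{tao1}-\eqref{tao2} directly yield that $\bigcup_{n\in\N}X_{n}$ is measure zero, as required. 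Items~\eqref{tao3}-\eqref{tao6} yield the same conclusion since $\{x:h(x)=0\}$ is the complement of $\bigcup_{n}X_{n}$ and since the primitive $F\equiv 0$ of $h$ satisfies $F'(x)=0=h(x)$ exactly on that complement. For item~\eqref{topanga2}, if $\bigcup_{n}X_{n}$ had positive measure, then $h$ would not be continuous almost everywhere, giving $N\in\N$ and $E\subseteq[0,1]$ of positive measure with $\osc_{h}(x)\geq\tfrac{1}{2^{N}}$ on $E$; but $\osc_{h}=h$ and the explicit form of \eqref{mopi} then force $E\subseteq X_{N-1}$, contradicting the measure-zero hypothesis on $X_{N-1}$.

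The expected main obstacles are bookkeeping rather than conceptual, and lie in the choice and induction principles. The application of Corollary~\ref{sepa} requires the $\IND_{\R}$-style finite manipulation of Riemann partitions already used in the first paragraph of the proof of Theorem~\ref{duck5}, while collecting the interval covers witnessing measure zero across the countably many sets $D_{k}$ or $E_{q,l}$ requires $\QFAC^{0,1}$ (which is why the base theory has been strengthened accordingly). A second minor subtlety is that usco functions in item~\eqref{tao2} are not assumed to come with an oscillation function; the $E_{q,l}$-workaround from Theorem~\ref{duck5}~\eqref{pono1} handles this exactly as there, and the same substitution delivers the promised `cliquish with oscillation function' variant noted in the final sentence.
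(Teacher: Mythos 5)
Your proposal is correct and follows essentially the same route as the paper's own proof: the forward implications come from Corollary \ref{sepa} (each $D_{k}$, resp.\ each $E_{q,l}$, has measure zero by the Riemann-sum perturbation argument) combined with $\PHP_{[0,1]}$, and all reversals run through the test function $h$ from \eqref{mopi} exactly as in the proof of Theorem \ref{duck5}. Your treatment of items \eqref{tao5}--\eqref{tao6} -- using only the inclusion $C_{f}\subseteq \eqref{tanko}$ for the forward direction and the explicit computation $F\equiv 0$ for $h$ in the reversal -- is in fact slightly more careful than the paper's blanket identification of \eqref{tanko} with $C_{f}$, though you should still add a word on why the set \eqref{tanko} \emph{exists} (continuity of $F$ plus $\exists^{2}$ suffices).
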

\begin{proof}
First of all, most items follows as in the proof of Theorem \ref{duck5}.  For instance, assume $\PHP_{[0,1]}$ and suppose for $f:[0,1]\di \R$ as in item \eqref{tao1} of the theorem, $D_{f}$ has positive measure.  
Then some $D_{k}:=\{x\in [0,1]:\osc_{f}(x)\geq \frac{1}{2^{k}}\}$ has positive measure by $\PHP_{[0,1]}$.  Now proceed as in the first paragraph of the proof of Theorem~\ref{duck5} to derive a contradiction. 

\smallskip

In turn, item \eqref{tao1} of the theorem implies $\PHP_{[0,1]}$ by fixing a sequence $(X_{n})_{n\in \N}$ of closed sets and considering the second paragraph of the proof of Theorem \ref{duck5}; the only modification is that $C_{h}$ has measure one, which implies that $\cup_{n\in \N}X_{n}$ has measure zero (instead of just being a strict subset), and $\PHP_{[0,1]}$ follows as required for the equivalence between the first two items.
The equivalences for items \eqref{tao2}-\eqref{tao4} are proved in the same way.  

\smallskip

For items \eqref{tao5} and \eqref{tao6}, the usual epsilon-delta proof establishes that $F(x):=\int_{0}^{x}f(t)dt$ is continuous and that $F'(y)=f(y)$ if and only if $f$ is continuous at $y$.  
Hence, the set $C_{f}$ is exactly the set in \eqref{tanko}.

\smallskip

Finally, let $f:[0,1]\di \R$ be as in item \eqref{topanga2} and define the closed set $D_{k}:=\{ x\in [0,1]: \osc_{f}(x)\geq\frac{1}{2^{k}}\}$. 
By assumption, $D_{f}=\cup_{k\in \N}D_{k}$ has positive measure and $\PHP_{[0,1]}$ implies there is $k_{0}\in\N$ such that $ D_{k_{0}}$ has positive measure, as required.
For the reversal, assume item \eqref{topanga2} and let $f:[0,1]\di \R$ be Riemann integrable oscillation function $\osc_{f}:[0,1]\di \R$.  
In case $C_{f}$ has measure one, we are done in light of item \eqref{tao1}. 
In case $C_{f}$ does not have measure one, $f$ is not continuous almost everywhere and we can apply item \eqref{topanga2}.  
Let $N_{0}\in \N$ and $E\subset  [0,1]$ of positive measure be such that $\osc_{f}(x)\geq \frac{1}{2^{N_{0}}}$ for $x\in E$.  
As in the first paragraph of this proof, $f$ is not Riemann integrable, a contradiction, and we are done. 
\end{proof}
We note that $\PHP_{[0,1]}$ does not (seem to) provide a proof of the fact that sequences of measure zero sets have RM-codes.  
Similar to the third item of Theorem~\ref{deng}, we \emph{could} study $\PHP_{[0,1]}$ restricted to sequences $(X_{n})_{n\in \N}$ such that $[0,1]\setminus \cup_{n\in \N}X_{n}$ is dense.
By Theorem \ref{duck5}, this study would however require either $\WBCT_{[0,1]}$ or the restriction to Riemann integrable functions with dense $C_{f}$.  
This suggests an asymmetry between measure and category from the RM-point of view. 

\smallskip

Thus, we have established equivalences involving $\PHP_{[0,1]}$ and the Vitali-Lebesgue theorem, rendering the latter unprovable in $\Z_{2}^{\omega}$.
The same holds for the restrictions to usco or cliquish functions while the restrictions to quasi-continuous functions are provable in $\ACAo$.
In contrast to the close relation between cliquishness and quasi-continuity, there is a great divide, some might say `abyss', between $\ACAo$ and $\Z_{2}^{\omega}$. 
We have no explanation for this phenomenon at the moment.  

\smallskip

Finally, the following theorem can be proved using the Baire category theorem.
\begin{center}
\emph{For any continuous $f:[0,1] \di \R$, define $f_{0}=f$ and $f_{k+1}(x):=\int_{0}^{x}f_{k}(t)dt$.  In case $(\forall x\in [0,1])(\exists k\in \N)(f_{k}(x)=0 )$, then $f$ is identically zero.}
\end{center}
Replacing `continuous $f:[0,1] \di \R$' by `Riemann integrable $f:[0,1] \di [0, 1]$' and `identically' by `almost everywhere', the resulting theorem is equivalent to $\PHP_{[0,1]}$. 
We conjecture there to be many more similar variations.  

\section{A Bigger Picture}\label{bigger}
\noindent
In this section, we discuss the bigger picture associated with our above results.
\begin{itemize}
\item In Section \ref{reflm}, we discuss the intimate connection between our above results and the RM of the uncountability of $\R$ (\cite{samBIG}). 
\item In Section \ref{biggerer}, we exhibit parallels between some of our results and well-known properties of the Axiom of Choice in set theory. 
\item In Section \ref{nex}, we discuss Banach's theorem from \cite{kanach} as follows: 
\begin{center}
\emph{the continuous nowhere differentiable functions are dense in $C([0,1])$}.
\end{center}
We show that \emph{slight} variations of Banach's result are not provable in $\Z_{2}^{\omega}$.  
\end{itemize}
Regarding the final item, we shall study the notion of \emph{generalised absolute continuity}, which is intermediate between the continuous and absolutely continuous functions. 
We refer to e.g.\ \cites{bartle,gordonkordon, saks} for details but do mention that generalised absolute continuity is intimately connected to integration theory and the (most general version of) the fundamental theorem of calculus. 

\smallskip

Finally, the reader should recall Remark \ref{dichtbij} before reading Section \ref{nex}.  In the former remark, it is shown that 
closed sets as in Definition \ref{char}, i.e.\ without any additional representation, are readily found in basic analysis, like the study 
of regulated or bounded variation functions.    The above study of usco functions provides similar, but less basic, examples. 
The results in Section \ref{nex} are the most basic in nature, as they pertain to continuous functions.  The implications for coding in second-order RM are clearly significant.  

\subsection{Reflections on and of Reverse Mathematics}\label{reflm}
We discuss Figure \ref{xxx} below which summarises how the above principles are related; we also discuss how our results are intimately connected to the RM of the uncountability of $\R$ from \cite{samBIG}.

\smallskip

First of all, we note that the statement \emph{a countable set is meagre} has a trivial proof in $\RCAo$, i.e.\ there is some non-trivial asymmetry between measure and category in Figure \ref{xxx}. 
Furthermore, we {conjecture} that $\BCT_{[0,1]}$ does not imply $\PHP_{[0,1]}$, say over $\Z_{2}^{\omega}+\QFAC^{0,1}$, and vice versa.  
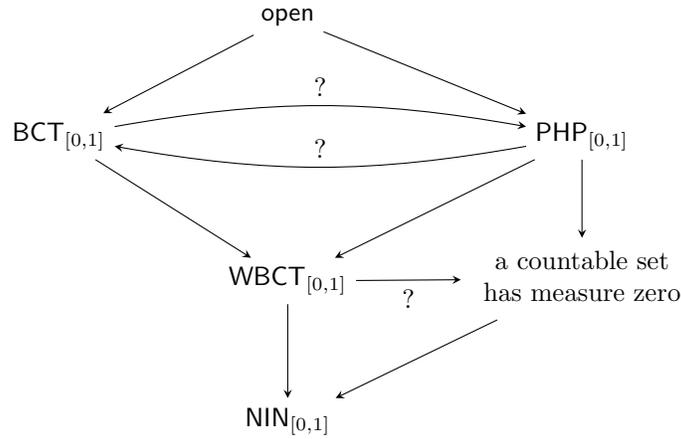
\begin{figure}[H]
\begin{tikzpicture}
  \matrix (m) [matrix of math nodes,row sep=3em,column sep=4em,minimum width=2em]
  { ~ & \open& ~\\
     \BCT_{[0,1]} & ~              & \PHP_{[0,1]}\\
       ~          &  \WBCT_{[0,1]} &           \textup{\begin{tabular}{c}a countable set \\ has measure zero\end{tabular}} \\
        ~          &  \NIN_{[0,1]} &           ~ \\};
  \path[-stealth]
    (m-1-2) edge node [left] {} (m-2-1)
        (m-1-2) edge node [left] {} (m-2-3)
    (m-2-1) edge node [left] {} (m-3-2)
    (m-2-3) edge node [left] {} (m-3-2)
    (m-3-2) edge node [left] {} (m-4-2)
    (m-2-3) edge node [left] {} (m-3-3) 
    (m-3-3) edge node [left] {} (m-4-2)
    (m-3-2) edge node [below] {?} (m-3-3)
    (m-2-1) edge[bend left=10] node [above] {?} (m-2-3)
    (m-2-3) edge[bend right=-10] node [above] {?} (m-2-1)
  ;
\end{tikzpicture}
\caption{Some relations among our principles}
\label{xxx}
\end{figure}

Secondly, we have developed the RM of the uncountability of $\R$ in \cite{samBIG}. The following where shown to be equivalent over $\ACAo+\QFAC^{0,1}+\FIN$, where the axiom $\FIN$ allows one to neatly handle finite sets of reals.
\begin{enumerate}
 \renewcommand{\theenumi}{\alph{enumi}}
\item The principle $\NIN_{\alt}$: for a sequence of \textbf{finite} sets $(X_{n})_{n\in \N}$ in $[0,1]$, there is $x\in [0,1]\setminus \cup_{n\in \N}X_{n}$. 
\item For regulated $f:[0,1]\di \R$, there is a point $x\in [0,1]$ where $f$ is continuous (or quasi-continuous, or lower semi-continuous, or Darboux).\label{fila1}
\item For regulated $f:[0,1]\di [0,1]$ with Riemann integral $\int_{0}^{1}f(x)dx=0$, there is $x\in [0,1]$ with $f(x)=0$ (Bourbaki, \cite{boereng}*{p.\ 61, Cor.\ 1}).
\item (Volterra \cite{volaarde2}) For regulated $f,g:[0,1]\di \R$, there is $x\in [0,1]$ such that $f$ and $g$ are both continuous or both discontinuous at $x$. \label{volkerta1}
\item (Volterra \cite{volaarde2}) For regulated $f:[0,1]\di \R$, there is either $q\in \Q\cap [0,1]$ where $f$ is discontinuous, or $x\in [0,1]\setminus \Q$ where $f$ is continuous.\label{volkerta2}
\item \textsf{(FTC)} For regulated $f:[0,1]\di \R$, there is $y\in (0,1)$ where $F(x):=\lambda x.\int_{0}^{x}f(t)dt$ is differentiable with derivative equal to $f(y)$.\label{bongk}
\item Blumberg's theorem (\cite{bloemeken}) restricted to regulated functions. \label{fila8}
\end{enumerate}
Clearly item \eqref{bongk} involves the fundamental theorem of calculus while item \eqref{volkerta1} involves Volterra's early work, all restricted to \emph{regulated} functions.  
We have obtained similar results for functions of bounded variation.  
In this paper, we have shown that item~\eqref{bongk} for usco functions is equivalent to $\BCT_{[0,1]}$ (Theorem \ref{flonk}) while item \eqref{volkerta1} for usco functions is equivalent to $\PHP_{[0,1]}$ (Theorem \ref{duck555}).  We have obtained similar results for cliquish functions (that come with an oscillation function).    

\smallskip

In light of the above, we can say that the RM of the uncountability of $\R$ is the product of taking the RM of $\PHP_{[0,1]}$ and the RM of $\BCT_{[0,1]}$ and pushing everything down from usco/cliquish functions to the level of regulated/bounded variation functions.  This observation provides a partial explanation why the RM of $\PHP_{[0,1]}$ and the RM of $\BCT_{[0,1]}$ can proceed along the same lines despite the apparent differences between measure and category.   

\smallskip

Thirdly, with the gift on hindsight, we could obtain equivalences between the following principles, say working over the aforementioned system $\ACAo+\QFAC^{0,1}+\FIN$ from \cite{samBIG}.
The first item expresses that \emph{countable sets have measure zero}, as can be found in Figure \ref{xxx} on the right-hand side. 
\begin{itemize} 
\item For a sequence of \textbf{finite} sets $(X_{n})_{n\in \N}$ in $[0,1]$, $\cup_{n\in \N}X_{n}$ has measure zero.
\item For regulated $f:[0,1]\di \R$, the set $C_{f}$ has measure one. 
\item The previous item for `continuity' replaced by quasi-continuity or usco.  
\item For regulated $f:[0,1]\di [0,1]$ with Riemann integral $\int_{0}^{1}f(x)dx=0$, the set $\{ x\in [0,1]:f(x)=0\}$ has measure one.  
\item A set $A\subset \R$ without limit points has measure zero.  
\item \dots
\end{itemize}
Similar equivalences are possible for functions of bounded variation, like in \cite{samBIG}*{\S3.4}, and the many intermediate classes of `generalised' bounded variation (see \cite{voordedorst}). 
A real challenge would be to study \emph{Carleson's theorem} in this context (\cite{carleson1}). 

\smallskip

Finally, the above analogy is not absolute:  while \emph{symmetrically continuous} functions feature in the RM of the uncountability of $\R$ (see \cite{samBIG}*{Theorem 3.9}), we have not been able to obtain similar results for $\BCT_{[0,1]}$, $\PHP_{[0,1]}$, or $\WBCT_{[0,1]}$.

\subsection{Reflections on and of set theory}\label{biggerer}
We discuss the relationship between our results and set theory; the reader disinterested in foundational musings will skip this section.  In a nutshell, we will show evidence for a parallel between our results and some central results in set theory, like the mercurial nature of the cardinality of the real numbers, disasters in the absence of the Axiom of Choice ($\AC$ for short), and the essential role of $\AC$ in measure and integration theory.

\smallskip

First of all, following the famous work of G\"odel (\cite{goeset}) and Cohen (\cite{cohen1, cohen2}), the \emph{Continuum Hypothesis} cannot be proved or disproved in $\ZFC$, i.e.\ Zermelo-Fraenkel set theory with $\AC$, the usual foundations of mathematics.  Thus, the exact cardinality of $\R$ cannot be established in $\ZFC$.  
A parallel observation in higher-order RM is that $\Z_{2}^{\omega}$ cannot prove that there is no injection from $\R$ to $\N$ (see \cite{dagsamX} for details).  In a nutshell, the cardinality of $\R$ has a particularly mercurial nature, in both set theory and higher-order arithmetic.

\smallskip

Secondly, many standard results in mainstream mathematics are not provable in $\ZF$, i.e.\ $\ZFC$ with $\AC$ removed, as explored in great detail \cite{heerlijkheid}.
The absence of $\AC$ is even said to lead to \emph{disasters} in topology and analysis (see \cite{kermend}).  A parallel phenomenon was observed in \cites{dagsamVII, dagsamV}, namely 
that certain rather basic equivalences go through over $\RCAo+\QFAC^{0,1}$, but not over $\Z_{2}^{\omega}$.  

\smallskip

Examples include the equivalence between compactness results and local-global principles, which are intimately related according to Tao (\cite{taokejes}).  
In this light, it is fair to say that disasters happen in both set theory and higher-order arithmetic in the absence of $\AC$.
It should be noted that $\QFAC^{0,1}$ (not provable in $\ZF$) can be replaced by $\NCC$ from \cite{dagsamIX} (provable in $\Z_{2}^{\Omega}$) in the aforementioned. 

\smallskip

Thirdly, we discuss the essential role of $\AC$ in measure and integration theory, which leads to rather concrete parallel observations in higher-order arithmetic.
Indeed, the full pigeonhole principle for measure spaces is not provable in $\ZF$, which immediately follows from e.g.\ \cite{heerlijkheid}*{Diagram~3.4}.
A parallel phenomenon in higher-order arithmetic is that even the restriction to closed sets, namely $\PHP_{[0,1]}$ cannot be proved in $\Z_{2}^{\omega}+\QFAC^{0,1}$ by Theorem \ref{pinko} (but $\Z_{2}^{\Omega}$ suffices). 

\smallskip

Moreover, a more `down to earth' observation is obtained by comparing the topic of \cite{hahakaka} and item~\eqref{pono2} in Theorem~\ref{duck5}.  
Regarding \cite{hahakaka}, the authors show that $\AC$ is needed to justify our intuition of the Lebesgue integral as the area under the graph of a function.  
Theorem \ref{duck5} establishes the parallel observation that the same justification for the \emph{Riemann} integral cannot be proved in $\Z_{2}^{\omega}+\QFAC^{0,1}$ (but $\Z_{2}^{\Omega}$ suffices as usual).  

\subsection{On the scope of coding}\label{nex}
In the above, we have obtained equivalences for the Baire category theorem as in $\BCT_{[0,1]}$, which is not provable in $\Z_{2}^{\omega}$ by Theorem~\ref{tonko}.
By contrast, the restrictions of $\BCT_{[0,1]}$ to RM-codes and R2-representations are provable in $\ACAo$, following \cite{simpson2}*{II.5.8} and \cite{dagsamVII}*{Theorem 7.10}.
A natural question is whether the latter `coded' versions suffice for the study of $C([0,1])$, the space of continuous functions.   
In this section, we discuss this question and provide a counterexample based on \emph{generalised absolute continuity} as in Definition \ref{ACGDEF}.  

\smallskip

First of all, Banach (\cite{kanach}) already showed that `most' continuous functions on the unit interval are nowhere differentiable, in the sense that such functions are dense in $C([0,1])$.  
Nemoto (\cite{nemoto1}*{Theorem 5.4}) provides a proof of this fact in constructive mathematics, with the same coding for $C([0,1])$ and for open sets as in second-order RM (\cite{simpson2}*{II.5.6 and IV.2.13}).  
In particular, the proof makes use of the second-order Baire category theorem (\cite{simpson2}*{II.5.8} and \cite{nemoto1}*{\S3}) and goes through (up to insignificant technical details) in $\RCA_{0}$ with intuitionistic logic, following the results in \cite{nemoto2}.  By \cite{dagsamXIV}*{Cor.\ 2.5}, continuous functions on the unit interval have RM-codes, i.e.\ Banach's aforementioned theorem about $C([0,1])$ \emph{without codes} can be proved using the Baire category theorem for RM-codes, working in $\RCAo+\WKL$.  

\smallskip

Secondly, we now show that slight variations of the aforementioned theorem by Banach cannot be proved in $\Z_{2}^{\omega}$.  
To this end, we introduce some sub-classes of $C([0,1])$ in Definition \ref{ACGDEF} that are connected to integration theory.
In particular, $f:[0,1]\di \R$ is gauge (or: Perron, or: Denjoy) integrable if and only if there is $F\in \ACG^{*}$ such that $F'=f$ almost everywhere.  Details may be found in \cites{bartle,gordonkordon, saks}. 
\bdefi[Generalised absolute continuity]\label{ACGDEF} Fix $f:[0,1]\di \R$ and $X\subset [0,1]$.  
\begin{itemize}
\item We say that $f\in \AC(X)$ if for all $\eps>0$, there is $\delta>0$ such that for any finite collection of pairwise disjoint $(a_{0},b_{0}),(a_{1},b_{1}),É,(a_{k},b_{k})$ 
with endpoints in $X$ we have $\sum_{i\leq k}|a_{i}-b_{i}|<\delta\di \sum_{i\leq k}|f(a_{i})-f(b_{i})|<\eps$.
\item We say that $f\in \AC^{*}(X)$ if it satisfies the previous item with `$|f(a_{i})-f(b_{i})|$' replaced by `$\osc_{f}([a_{i}, b_{i}])$' in the conclusion.
\item We say that $f\in \ACG$ in case it is continuous on $[0,1]$ and there is a sequence of closed sets $(C_{n})_{n\in \N}$ such that $f_{\upharpoonright C_{n}}$ is $\AC(C_{n})$ for each $n\in \N$.
\item We say that $f\in \ACG^{*}$ for the previous item with `$\AC$' replaced by `$\AC^{*}$'.  
\end{itemize}
\edefi
Lee establishes the equivalence of various definitions of $\ACG^{*}$ in \cite{ACG}.  
The previous definition from \cite{ACG} is the most basic one, in our opinion.
By Remark~\ref{dichtbij}, closed sets as in Definition \ref{char}, i.e.\ without any representation, already pop up `in the wild' in the study of regulated and bounded variation functions.  
In this light, we should not assume closed sets to have extra constructive data, like RM-codes or R2-representations. 

\smallskip

Thirdly, we have the following theorem where we note that $\ACG$ and $\ACG^{*}$ functions are not nowhere differentiable (\cite{ACG}*{Theorem 4} and \cite{gordonkordon}*{p.\ 235}). 
In other words, the complements of $\ACG$ and $\ACG^{*}$ are larger than the class of nowhere differential functions from Banach's aforementioned theorem.  
\begin{thm}
The following cannot be proved in $\Z_{2}^{\omega}+\IND_{0}$.
\begin{itemize}
\item Any function in $\ACG$ is differentiable at some point of the unit interval.
\item Any function in $\ACG^{*}$ is differentiable at some point of the unit interval.
\item The complement of $\ACG$ is dense in $C([0,1])$.
\item The complement of $\ACG^{*}$ is dense in $C([0,1])$.
\end{itemize}
\end{thm}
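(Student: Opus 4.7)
The plan is to show that each of the four items implies $\NIN_{[0,1]}$, which by \cite{dagsamXI}*{Theorem 2.16} is not provable in $\Z_{2}^{\omega}+\IND_{0}$.  The argument parallels the slick contraposition used in Theorem \ref{tonko}, but the role of the Baire category theorem is replaced by the observation that the $\ACG$ and $\ACG^{*}$ conditions are trivial when the pieces of the generalised decomposition are singletons.

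Given an injection $Y:[0,1]\to\N$, I would put $C_{n}:=\{x\in[0,1]:Y(x)=n\}$.  By injectivity, each $C_{n}$ has at most one element and is therefore closed in the sense of Definition \ref{char}; uniformly in $n$, the closedness is verified in $\Z_{2}^{\omega}+\IND_{0}$ in the same manner as in Theorem \ref{tonko}, with $\IND_{0}$ used to exhibit the unique candidate element whenever it exists.  Since $Y$ is total, one also has $[0,1]=\bigcup_{n\in\N}C_{n}$.

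Next I would observe that for any continuous $f:[0,1]\to\R$, the restriction $f_{\upharpoonright C_{n}}$ belongs vacuously to both $\AC(C_{n})$ and $\AC^{*}(C_{n})$: any finite collection of non-degenerate pairwise disjoint intervals whose endpoints lie in a singleton or in the empty set is itself empty, so the defining implication is trivially satisfied, regardless of $\eps$ and $\delta$.  Consequently every continuous $f:[0,1]\to\R$ lies in $\ACG\cap\ACG^{*}$.

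Finally, I would invoke a concrete continuous nowhere differentiable function, such as the Weierstrass function $W(x)=\sum_{n\in\N}a^{n}\cos(b^{n}\pi x)$ for suitable $a,b$ (or the Takagi function), whose construction and continuity are routine in $\RCAo$ and whose nowhere-differentiability is established by elementary estimates that formalise in the same system.  The existence of such a $W\in\ACG\cap\ACG^{*}$ that is nowhere differentiable immediately refutes the first two items.  For the last two items, the fact that $\ACG$ and $\ACG^{*}$ exhaust $C([0,1])$ makes their complements empty and hence not dense.  Each of the four items therefore implies $\NIN_{[0,1]}$, as desired.  The main obstacle is the careful formal verification, in the weak base theory, that the chosen counterexample is continuous and nowhere differentiable; once this classical computation is in place, the four items collapse uniformly to $\NIN_{[0,1]}$ and the theorem follows.
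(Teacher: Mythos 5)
Your proposal is correct and follows essentially the same route as the paper: reduce each item to $\NIN_{[0,1]}$ by using an injection $Y:[0,1]\di\N$ to decompose $[0,1]$ into closed pieces on which any continuous $f$ is vacuously $\AC$/$\AC^{*}$, so that $C([0,1])=\ACG=\ACG^{*}$, and then contradict this with a continuous nowhere differentiable function. The only (harmless) difference is that you take the singletons $\{x:Y(x)=n\}$ where the paper takes the finite sets $\{x:Y(x)\leq n\}$; both require $\IND_{0}$ to certify closedness and the vacuous $\AC$ conditions.
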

\begin{proof}
First of all, a continuous and nowhere differentiable function is readily defined in $\RCA_{0}$ following the results in \cite{nemoto1}.  
One could also use the well-known definition of Weierstrass' function and establish its properties directly using \cite{simpson2}*{II.6.5}.

\smallskip

Secondly, we recall that $\NIN_{[0,1]}$ is not provable in $\Z_{2}^{\omega}+\IND_{0}$ by \cite{dagsamXI}*{Theorem~2.16}. 
To establish the theorem, we show that all items imply $\NIN_{[0,1]}$ over $\ACAo+\IND_{0}$.  To this end, let $Y:[0, 1]\di \N$ be an injection, fix continuous $f:[0,1]\di \R$,
and define $C_{n}:=\{ x\in [0,1]: Y(x)\leq n \}$.  Using $\IND_{0}$, one proves that each $C_{n}$ is closed (because finite), and $[0,1]=\cup_{n\in \N}C_{n}$ by assumption.
By definition, $f_{\upharpoonright C_{n}}$ is in $\AC(C_{n})$ and $\AC^{*}(C_{n})$, again using $\IND_{0}$, for $n\in \N$.  
Hence, we have shown that $C([0,1])$ equals $\ACG$ and $\ACG^{*}$, which yields a contradiction with the first two items, as Weierstrass' function is nowhere differentiable. 
The last two items similarly yield a contradiction, and we are done. 
\end{proof}
We note that the notion of \emph{generalised bounded variation} (\cite{gordonkordon}) can be treated in the same way as in the previous theorem.
Similarly, one can treat Gordon's classes \textsf{ACGs} and $\ACG_{\delta}$ from \cites{gordon1, gordonkordon}. 

\begin{ack}\rm 
We thank Anil Nerode for his valuable advice, especially the suggestion of studying nsc for the Riemann integral, and discussions related to Baire classes.
We thank Dave R.\ Renfro for his efforts in providing a most encyclopedic summary of analysis, to be found online.  
Our research was supported by the \emph{Deutsche Forschungsgemeinschaft} via the DFG grant SA3418/1-1 and the \emph{Klaus Tschira Boost Fund} via the grant Projekt KT43.
We express our gratitude towards the aforementioned institutions.    
\end{ack}
%


\appendix
\section{Technical Appendix: introducing Reverse Mathematics}\label{RMA}
We discuss Reverse Mathematics (Section \ref{introrm}) and introduce -in full detail- Kohlenbach's base theory of \emph{higher-order} Reverse Mathematics (Section \ref{rmbt}).
Some common notations may be found in Section \ref{kkk}.
\subsection{Introduction}\label{introrm}
Reverse Mathematics (RM for short) is a program in the foundations of mathematics initiated around 1975 by Friedman (\cites{fried,fried2}) and developed extensively by Simpson (\cite{simpson2}).  
The aim of RM is to identify the minimal axioms needed to prove theorems of ordinary, i.e.\ non-set theoretical, mathematics. 

\smallskip

We refer to \cite{stillebron} for a basic introduction to RM and to \cite{simpson2, simpson1,damurm} for an overview of RM.  We expect basic familiarity with RM, but do sketch some aspects of Kohlenbach's \emph{higher-order} RM (\cite{kohlenbach2}) essential to this paper, including the base theory $\RCAo$ (Definition \ref{kase}).  

\smallskip

First of all, in contrast to `classical' RM based on \emph{second-order arithmetic} $\Z_{2}$, higher-order RM uses a subset of $\L_{\omega}$, the richer language of \emph{higher-order arithmetic}.  
Indeed, while the former is restricted to natural numbers and sets of natural numbers, higher-order arithmetic can accommodate sets of sets of natural numbers, sets of sets of sets of natural numbers, et cetera.  
To formalise this idea, we introduce the collection of \emph{all finite types} $\mathbf{T}$, defined by the two clauses:
\begin{center}
(i) $0\in \mathbf{T}$   and   (ii)  If $\sigma, \tau\in \mathbf{T}$ then $( \sigma \di \tau) \in \mathbf{T}$,
\end{center}
where $0$ is the type of natural numbers, and $\sigma\di \tau$ is the type of mappings from objects of type $\sigma$ to objects of type $\tau$.
In this way, $1\equiv 0\di 0$ is the type of functions from numbers to numbers, and  $n+1\equiv n\di 0$.  Viewing sets as given by characteristic functions, we note that $\Z_{2}$ only includes objects of type $0$ and $1$.    

\smallskip

Secondly, the language $\L_{\omega}$ includes variables $x^{\rho}, y^{\rho}, z^{\rho},\dots$ of any finite type $\rho\in \mathbf{T}$.  Types may be omitted when they can be inferred from context.  
The constants of $\L_{\omega}$ include the type $0$ objects $0, 1$ and $ <_{0}, +_{0}, \times_{0},=_{0}$  which are intended to have their usual meaning as operations on $\N$.
Equality at higher types is defined in terms of `$=_{0}$' as follows: for any objects $x^{\tau}, y^{\tau}$, we have
\be\label{aparth}
[x=_{\tau}y] \equiv (\forall z_{1}^{\tau_{1}}\dots z_{k}^{\tau_{k}})[xz_{1}\dots z_{k}=_{0}yz_{1}\dots z_{k}],
\ee
if the type $\tau$ is composed as $\tau\equiv(\tau_{1}\di \dots\di \tau_{k}\di 0)$.  
Furthermore, $\L_{\omega}$ also includes the \emph{recursor constant} $\mathbf{R}_{\sigma}$ for any $\sigma\in \mathbf{T}$, which allows for iteration on type $\sigma$-objects via generalisations of the defining axiom in item \eqref{poliop} of Definition \ref{kase}.  Formulas and terms are defined as usual.  
One obtains the sub-language $\L_{n+2}$ by restricting the above type formation rule to produce only type $n+1$ objects (and related types of similar complexity).        

\subsection{The base theory of higher-order Reverse Mathematics}\label{rmbt}
We introduce Kohlenbach's base theory $\RCAo$, first introduced in \cite{kohlenbach2}*{\S2}.
\bdefi\label{kase} 
The base theory $\RCAo$ consists of the following axioms.
\begin{enumerate}
 \renewcommand{\theenumi}{\alph{enumi}}
\item  Basic axioms expressing that $0, 1, <_{0}, +_{0}, \times_{0}$ form an ordered semi-ring with equality $=_{0}$.
\item Basic axioms defining the well-known $\Pi$ and $\Sigma$ combinators \(aka $K$ and $S$ in \cite{avi2}\), which allow for the definition of \emph{$\lambda$-abstraction}. 
\item \label{poliop} The defining axiom of the recursor constant $\mathbf{R}_{0}$: for $m^{0}$ and $f^{1}$: 
\be\label{special}
\mathbf{R}_{0}(f, m, 0):= m \textup{ and } \mathbf{R}_{0}(f, m, n+1):= f(n, \mathbf{R}_{0}(f, m, n)).
\ee
\item The \emph{axiom of extensionality}: for all $\rho, \tau\in \mathbf{T}$, we have:
\be\label{EXT}\tag{$\textsf{\textup{E}}_{\rho, \tau}$}  
(\forall  x^{\rho},y^{\rho}, \varphi^{\rho\di \tau}) \big[x=_{\rho} y \di \varphi(x)=_{\tau}\varphi(y)   \big].
\ee 
\item The induction axiom for quantifier-free formulas of $\L_{\omega}$.
\item $\QFAC^{1,0}$: the quantifier-free Axiom of Choice as in Definition \ref{QFAC}.
\end{enumerate}
\edefi
\noindent
Note that variables (of any finite type) are allowed in quantifier-free formulas of the language $\L_{\omega}$: only quantifiers are banned.
G\"odel called the functionals obtained from $\mathbf{R}_\rho$ for all $\rho\in\mathbf{T}$ the \emph{primitive recursive functionals of finite type}.  Their theory
is axiomatised by \emph{G\"odel's system $T$}, studied in detail in \cite{avi2}*{Section 2.2}. 
\bdefi\label{QFAC} The axiom $\QFAC$ consists of the following for all $\sigma, \tau \in \textbf{T}$:
\be\tag{$\QFAC^{\sigma,\tau}$}
(\forall x^{\sigma})(\exists y^{\tau})A(x, y)\di (\exists Y^{\sigma\di \tau})(\forall x^{\sigma})A(x, Y(x)),
\ee
for any quantifier-free formula $A$ in the language of $\L_{\omega}$.
\edefi
As discussed in \cite{kohlenbach2}*{\S2}, $\RCAo$ and $\RCA_{0}$ prove the same sentences `up to language' as the latter is set-based and the former function-based.   
This conservation results is obtained via the so-called $\ECF$-interpretation, which we now discuss. 
\begin{rem}[The $\ECF$-interpretation]\rm
The (rather) technical definition of $\ECF$ may be found in \cite{troelstra1}*{p.\ 138, \S2.6}.
Intuitively, the $\ECF$-interpretation $[A]_{\ECF}$ of a formula $A\in \L_{\omega}$ is just $A$ with all variables 
of type two and higher replaced by type one variables ranging over so-called `associates' or `RM-codes' (see \cite{kohlenbach4}*{\S4}); the latter are (countable) representations of continuous functionals.  
The $\ECF$-interpretation connects $\RCAo$ and $\RCA_{0}$ (see \cite{kohlenbach2}*{Prop.\ 3.1}) in that if $\RCAo$ proves $A$, then $\RCA_{0}$ proves $[A]_{\ECF}$, again `up to language', as $\RCA_{0}$ is 
formulated using sets, and $[A]_{\ECF}$ is formulated using types, i.e.\ using type zero and one objects.  
\end{rem}
In light of the widespread use of codes in RM and the common practise of identifying codes with the objects being coded, it is no exaggeration to refer to $\ECF$ as the \emph{canonical} embedding of higher-order into second-order arithmetic.

\subsection{Notations and the like}\label{kkk}
We introduce the usual notations for common mathematical notions, like real numbers, as also introduced in \cite{kohlenbach2}.  
\begin{defi}[Real numbers and related notions in $\RCAo$]\label{keepintireal}\rm~
\begin{enumerate}
 \renewcommand{\theenumi}{\alph{enumi}}
\item Natural numbers correspond to type zero objects, and we use `$n^{0}$' and `$n\in \N$' interchangeably.  Rational numbers are defined as signed quotients of natural numbers, and `$q\in \Q$' and `$<_{\Q}$' have their usual meaning.    
\item Real numbers are coded by fast-converging Cauchy sequences $q_{(\cdot)}:\N\di \Q$, i.e.\  such that $(\forall n^{0}, i^{0})(|q_{n}-q_{n+i}|<_{\Q} \frac{1}{2^{n}})$.  
We use Kohlenbach's `hat function' from \cite{kohlenbach2}*{p.\ 289} to guarantee that every $q^{1}$ defines a real number.  
\item We write `$x\in \R$' to express that $x^{1}:=(q^{1}_{(\cdot)})$ represents a real as in the previous item and write $[x](k):=q_{k}$ for the $k$-th approximation of $x$.    
\item Two reals $x, y$ represented by $q_{(\cdot)}$ and $r_{(\cdot)}$ are \emph{equal}, denoted $x=_{\R}y$, if $(\forall n^{0})(|q_{n}-r_{n}|\leq {2^{-n+1}})$. Inequality `$<_{\R}$' is defined similarly.  
We sometimes omit the subscript `$\R$' if it is clear from context.           
\item Functions $F:\R\di \R$ are represented by $\Phi^{1\di 1}$ mapping equal reals to equal reals, i.e.\ extensionality as in $(\forall x , y\in \R)(x=_{\R}y\di \Phi(x)=_{\R}\Phi(y))$.\label{EXTEN}
\item The relation `$x\leq_{\tau}y$' is defined as in \eqref{aparth} but with `$\leq_{0}$' instead of `$=_{0}$'.  Binary sequences are denoted `$f^{1}, g^{1}\leq_{1}1$', but also `$f,g\in C$' or `$f, g\in 2^{\N}$'.  Elements of Baire space are given by $f^{1}, g^{1}$, but also denoted `$f, g\in \N^{\N}$'.
\item For a binary sequence $f^{1}$, the associated real in $[0,1]$ is $\r(f):=\sum_{n=0}^{\infty}\frac{f(n)}{2^{n+1}}$.\label{detrippe}
\item Sets of type $\rho$ objects $X^{\rho\di 0}, Y^{\rho\di 0}, \dots$ are given by their characteristic functions $F^{\rho\di 0}_{X}\leq_{\rho\di 0}1$, i.e.\ we write `$x\in X$' for $ F_{X}(x)=_{0}1$. \label{koer} 
\end{enumerate}
\end{defi}
For completeness, we list the following notational convention for finite sequences.  
\begin{nota}[Finite sequences]\label{skim}\rm
The type for `finite sequences of objects of type $\rho$' is denoted $\rho^{*}$, which we shall only use for $\rho=0,1$.  
Since the usual coding of pairs of numbers goes through in $\RCAo$, we shall not always distinguish between $0$ and $0^{*}$. 
Similarly, we assume a fixed coding for finite sequences of type $1$ and shall make use of the type `$1^{*}$'.  
In general, we do not always distinguish between `$s^{\rho}$' and `$\langle s^{\rho}\rangle$', where the former is `the object $s$ of type $\rho$', and the latter is `the sequence of type $\rho^{*}$ with only element $s^{\rho}$'.  The empty sequence for the type $\rho^{*}$ is denoted by `$\langle \rangle_{\rho}$', usually with the typing omitted.  

\smallskip

Furthermore, we denote by `$|s|=n$' the length of the finite sequence $s^{\rho^{*}}=\langle s_{0}^{\rho},s_{1}^{\rho},\dots,s_{n-1}^{\rho}\rangle$, where $|\langle\rangle|=0$, i.e.\ the empty sequence has length zero.
For sequences $s^{\rho^{*}}, t^{\rho^{*}}$, we denote by `$s*t$' the concatenation of $s$ and $t$, i.e.\ $(s*t)(i)=s(i)$ for $i<|s|$ and $(s*t)(j)=t(|s|-j)$ for $|s|\leq j< |s|+|t|$. For a sequence $s^{\rho^{*}}$, we define $\overline{s}N:=\langle s(0), s(1), \dots,  s(N-1)\rangle $ for $N^{0}<|s|$.  
For a sequence $\alpha^{0\di \rho}$, we also write $\overline{\alpha}N=\langle \alpha(0), \alpha(1),\dots, \alpha(N-1)\rangle$ for \emph{any} $N^{0}$.  By way of shorthand, 
$(\forall q^{\rho}\in Q^{\rho^{*}})A(q)$ abbreviates $(\forall i^{0}<|Q|)A(Q(i))$, which is (equivalent to) quantifier-free if $A$ is.   
\end{nota}

\begin{bibdiv}
\begin{biblist}
\bib{voordedorst}{book}{
  author={Appell, J\"{u}rgen},
  author={Bana\'{s}, J\'{o}zef},
  author={Merentes, Nelson},
  title={Bounded variation and around},
  series={De Gruyter Series in Nonlinear Analysis and Applications},
  volume={17},
  publisher={De Gruyter, Berlin},
  date={2014},
  pages={x+476},
}

\bib{ascoli1}{article}{
  author={Ascoli, Guilio},
  title={Sul concetto di integrale definito},
  year={1875},
  journal={Atti della Accademia Reale dei Lincei. Rendiconti. Classe di Scienze Fisiche, Matematiche e Naturali},
  number={2},
  volume={2},
  pages={862-872},
}

\bib{avi2}{article}{
  author={Avigad, Jeremy},
  author={Feferman, Solomon},
  title={G\"odel's functional \(``Dialectica''\) interpretation},
  conference={ title={Handbook of proof theory}, },
  book={ series={Stud. Logic Found. Math.}, volume={137}, },
  date={1998},
  pages={337--405},
}

\bib{bage}{article}{
  author={Bagemihl, F.},
  title={A note on Scheeffer's theorem},
  journal={Michigan Math. J.},
  volume={2},
  date={1953/54},
  pages={149--150 (1955)},
}

\bib{beren2}{article}{
  author={Baire, Ren\'{e}},
  title={Sur les fonctions de variables r\'eelles},
  journal={Ann. di Mat.},
  date={1899},
  pages={1--123},
  volume={3},
  number={3},
}

\bib{beren}{book}{
  author={Baire, Ren\'{e}},
  title={Le\c {c}ons sur les fonctions discontinues},
  language={French},
  series={Les Grands Classiques Gauthier-Villars},
  note={Reprint of the 1905 original},
  publisher={\'{E}ditions Jacques Gabay, Sceaux},
  date={1995},
  pages={viii+65},
}

\bib{kanach}{article}{
  author={Banach, Stephan},
  title={\"Uber die Baire'sche Kategorie gewisser Funktionenmengen},
  journal={Studia Mathematica},
  volume={3},
  date={1931},
  pages={174--179},
}

\bib{basket}{article}{
  title={The reverse mathematics of Cousin's lemma},
  author={Barrett, Jordan Mitchell},
  year={2020},
  journal={Honours thesis, Victoria University of Wellington, Supervised by Rod Downey and Noam Greenberg. xi+51 pages},
}

\bib{basket2}{article}{
  title={Cousin's lemma in second-order arithmetic},
  author={Barrett, Jordan Mitchell},
  author={Downey, Rodney G.},
  author={Greenberg, Noam},
  year={2021},
  journal={Preprint, arxiv: \url {https://arxiv.org/abs/2105.02975}},
}

\bib{bartle}{book}{
  author={Bartle, Robert G.},
  title={A modern theory of integration},
  series={Graduate Studies in Mathematics},
  volume={32},
  publisher={American Mathematical Society},
  date={2001},
  pages={xiv+458},
}

\bib{bloemeken}{article}{
  author={Blumberg, Henry},
  title={New properties of all real functions},
  journal={Trans. Amer. Math. Soc.},
  volume={24},
  date={1922},
  number={2},
  pages={113--128},
}

\bib{bish1}{book}{
  author={Bishop, Errett},
  title={Foundations of constructive analysis},
  publisher={McGraw-Hill},
  date={1967},
  pages={xiii+370},
}

\bib{bors}{article}{
  author={Bors\'{\i }k, J\'{a}n},
  author={Dobo\v {s}, Jozef},
  title={A note on real cliquish functions},
  journal={Real Anal. Exchange},
  volume={18},
  date={1992/93},
  number={1},
  pages={139--145},
}

\bib{quasibor2}{article}{
  author={Bors\'{\i }k, J\'{a}n},
  title={Sums of quasicontinuous functions defined on pseudometrizable spaces},
  journal={Real Anal. Exchange},
  volume={22},
  date={1996/97},
  number={1},
  pages={328--337},
}

\bib{boereng}{book}{
  author={Bourbaki, Nicolas},
  title={Functions of a real variable},
  series={Elements of Mathematics},
  publisher={Springer},
  date={2004},
  pages={xiv+338},
}

\bib{bro}{article}{
  author={Broughan, Kevin A.},
  title={The boundedness principle characterizes second category subsets},
  journal={Bull. Austral. Math. Soc.},
  volume={16},
  date={1977},
  number={2},
  pages={257--265},
}

\bib{brownrie}{article}{
  author={Brown, A. B.},
  title={A Proof of the Lebesgue Condition for Riemann Integrability},
  journal={Amer. Math. Monthly},
  volume={43},
  date={1936},
  number={7},
  pages={396--398},
}

\bib{brownphd}{book}{
  author={Brown, Douglas K.},
  title={Functional analysis in weak subsystems of second-order arithmetic},
  year={1987},
  publisher={PhD Thesis, The Pennsylvania State University, ProQuest LLC},
}

\bib{trevor}{article}{
  author={Brown, Douglas K.},
  author={Simpson, Stephen G.},
  title={The Baire category theorem in weak subsystems of second-order arithmetic},
  journal={J. Symbolic Logic},
  volume={58},
  date={1993},
  number={2},
  pages={557--578},
}

\bib{boekskeopendoen}{book}{
  author={Buchholz, Wilfried},
  author={Feferman, Solomon},
  author={Pohlers, Wolfram},
  author={Sieg, Wilfried},
  title={Iterated inductive definitions and subsystems of analysis},
  series={LNM 897},
  publisher={Springer},
  date={1981},
  pages={v+383},
}

\bib{carleson1}{article}{
  author={Carleson, Lennart},
  title={On convergence and growth of partial sums of Fourier series},
  journal={Acta Math.},
  volume={116},
  date={1966},
  pages={135--157},
}

\bib{cohen1}{article}{
  author={Cohen, Paul},
  title={The independence of the continuum hypothesis},
  journal={Proc. Nat. Acad. Sci. U.S.A.},
  volume={50},
  date={1963},
  pages={1143--1148},
}

\bib{cohen2}{article}{
  author={Cohen, Paul},
  title={The independence of the continuum hypothesis. II},
  journal={Proc. Nat. Acad. Sci. U.S.A.},
  volume={51},
  date={1964},
  pages={105--110},
}

\bib{darb}{article}{
  author={Darboux, Gaston},
  title={M\'emoire sur les fonctions discontinues},
  journal={Annales scientifiques de l'\'Ecole Normale Sup\'erieure},
  pages={57--112},
  publisher={Elsevier},
  volume={2e s{\'e}rie, 4},
  year={1875},
}

\bib{dinipi}{book}{
  author={U. {Dini}},
  title={{Fondamenti per la teorica delle funzioni di variabili reali}},
  year={1878},
  publisher={{Nistri, Pisa}},
}

\bib{dobo}{article}{
  author={Dobo\v {s}, Jozef},
  author={\v {S}al\'{a}t, Tibor},
  title={Cliquish functions, Riemann integrable functions and quasi-uniform convergence},
  journal={Acta Math. Univ. Comenian.},
  volume={40/41},
  date={1982},
  pages={219--223},
}

\bib{dendunne}{article}{
  author={Dunham, William},
  title={A historical gem from Vito Volterra},
  journal={Math. Mag.},
  volume={63},
  date={1990},
  number={4},
  pages={234--237},
}

\bib{damurm}{book}{
  author={Dzhafarov, Damir D.},
  author={Mummert, Carl},
  title={Reverse Mathematics: Problems, Reductions, and Proofs},
  publisher={Springer Cham},
  date={2022},
  pages={xix, 488},
}

\bib{ellis}{article}{
  author={Ellis, H. W.},
  title={Darboux properties and applications to non-absolutely convergent integrals},
  journal={Canad. J. Math.},
  volume={3},
  date={1951},
  pages={471--485},
}

\bib{ewert2}{article}{
  author={Ewert, Janina},
  title={Characterization of cliquish functions},
  journal={Acta Math. Hungar.},
  volume={89},
  date={2000},
  number={4},
  pages={269--276},
}

\bib{fried}{article}{
  author={Friedman, Harvey},
  title={Some systems of second order arithmetic and their use},
  conference={ title={Proceedings of the International Congress of Mathematicians (Vancouver, B.\ C., 1974), Vol.\ 1}, },
  book={ },
  date={1975},
  pages={235--242},
}

\bib{fried2}{article}{
  author={Friedman, Harvey},
  title={ Systems of second order arithmetic with restricted induction, I \& II (Abstracts) },
  journal={Journal of Symbolic Logic},
  volume={41},
  date={1976},
  pages={557--559},
}

\bib{gaud}{article}{
  author={Gauld, David},
  title={Did the Young Volterra Know about Cantor?},
  journal={Math. Mag.},
  volume={66},
  date={1993},
  number={4},
  pages={246--247},
}

\bib{goeset}{article}{
  author={G\"odel, Kurt},
  title={The Consistency of the Axiom of Choice and of the Generalized Continuum-Hypothesis},
  journal={Proceedings of the National Academy of Science},
  year={1938},
  volume={24},
  number={12},
  pages={556-557},
}

\bib{gordon1}{article}{
  author={Gordon, Russell A.},
  title={The inversion of approximate and dyadic derivatives using an extension of the Henstock integral},
  journal={Real Anal. Exchange},
  volume={16},
  date={1990/91},
  number={1},
  pages={154--168},
}

\bib{gordonkordon}{book}{
  author={Gordon, Russell A.},
  title={The integrals of Lebesgue, Denjoy, Perron, and Henstock},
  series={Graduate Studies in Mathematics},
  volume={4},
  publisher={American Mathematical Society},
  date={1994},
  pages={xii+395},
}

\bib{hankelwoot}{book}{
  author={Hankel, Hermann},
  title={{Untersuchungen \"uber die unendlich oft oscillirenden und unstetigen Functionen.}},
  pages={pp.\ 51},
  publisher={Ludwig Friedrich Fues, Memoir presented at the University of T\"ubingen on 6 March 1870},
}

\bib{hankelijkheid}{book}{
  author={Hankel, Hermann},
  title={{Untersuchungen \"uber die unendlich oft oscillirenden und unstetigen Functionen.}},
  volume={20},
  pages={63--112},
  year={1882},
  publisher={Math. Ann., Springer},
}

\bib{heerlijkheid}{book}{
  author={Herrlich, Horst},
  title={Axiom of choice},
  series={Lecture Notes in Mathematics},
  volume={1876},
  publisher={Springer},
  date={2006},
  pages={xiv+194},
}

\bib{hillebilly2}{book}{
  author={Hilbert, David},
  author={Bernays, Paul},
  title={Grundlagen der Mathematik. II},
  series={Zweite Auflage. Die Grundlehren der mathematischen Wissenschaften, Band 50},
  publisher={Springer},
  date={1970},
}

\bib{holausco}{book}{
  author={Hol\'{a}, \v {L}ubica},
  author={Hol\'{y}, Du\v {s}an},
  author={Moors, Warren},
  title={USCO and quasicontinuous mappings},
  volume={81},
  publisher={De Gruyter, Berlin},
  date={2021},
  pages={viii+295},
}

\bib{holaseg}{article}{
  author={Hol\'{a}, \v {L}ubica},
  title={There are $2^{\germ {c}}$ quasicontinuous non Borel functions on uncountable Polish space},
  journal={Results Math.},
  volume={76},
  date={2021},
  number={3},
  pages={Paper No. 126, 11},
}

\bib{hunterphd}{book}{
  author={Hunter, James},
  title={Higher-order reverse topology},
  note={Thesis (Ph.D.)--The University of Wisconsin - Madison},
  publisher={ProQuest LLC, Ann Arbor, MI},
  date={2008},
  pages={97},
}

\bib{JR}{article}{
  author={Jayne, J. E.},
  author={Rogers, C. A.},
  title={First level Borel functions and isomorphisms},
  journal={J. Math. Pures Appl. (9)},
  volume={61},
  date={1982},
  number={2},
  pages={177--205},
}

\bib{hahakaka}{article}{
  author={Kanovei, Vladimir},
  author={Katz, Mikhail},
  title={A positive function with vanishing Lebesgue integral in Zermelo-Fraenkel set theory},
  journal={Real Anal. Exchange},
  volume={42},
  date={2017},
  number={2},
  pages={385--390},
}

\bib{kemphaan}{article}{
  author={Kempisty, Stefan},
  journal={Fundamenta Mathematicae},
  keywords={set theory, real functions},
  number={1},
  pages={184-197},
  title={Sur les fonctions quasicontinues},
  volume={19},
  year={1932},
}

\bib{kermend}{article}{
  author={Keremedis, Kyriakos},
  title={Disasters in topology without the axiom of choice},
  journal={Arch. Math. Logic},
  volume={40},
  date={2001},
  number={8},
}

\bib{kerkje}{article}{
  author={Kirchheim, Bernd},
  title={Baire one star functions},
  journal={Real Anal. Exchange},
  volume={18},
  date={1992/93},
  number={2},
  pages={385--399},
}

\bib{kohlenbach4}{article}{
  author={Kohlenbach, Ulrich},
  title={Foundational and mathematical uses of higher types},
  conference={ title={Reflections on the foundations of mathematics}, },
  book={ series={Lect. Notes Log.}, volume={15}, publisher={ASL}, },
  date={2002},
  pages={92--116},
}

\bib{kohlenbach2}{article}{
  author={Kohlenbach, Ulrich},
  title={Higher order reverse mathematics},
  conference={ title={Reverse mathematics 2001}, },
  book={ series={Lect. Notes Log.}, volume={21}, publisher={ASL}, },
  date={2005},
  pages={281--295},
}

\bib{kozy}{article}{
  author={Kosi\'{n}ski, \L ukasz},
  author={Martel, \'{E}tienne},
  author={Ransford, Thomas},
  title={A uniform boundedness principle in pluripotential theory},
  journal={Ark. Mat.},
  volume={56},
  date={2018},
  number={1},
  pages={101--109},
}

\bib{kosten}{article}{
  author={Kostyrko, Pavel},
  title={Some properties of oscillation},
  journal={Mathematica Slovaca},
  volume={30},
  pages={157--162},
  year={1980},
}

\bib{koumer}{article}{
  author={Koumoullis, George},
  title={A generalization of functions of the first class},
  journal={Topology Appl.},
  volume={50},
  date={1993},
  number={3},
  pages={217--239},
}

\bib{kura}{book}{
  author={Kuratowski, K.},
  title={Topology. Vol. I},
  publisher={Academic Press, New York-London},
  date={1966},
  pages={xx+560},
}

\bib{lebesborn}{book}{
  author={Lebesgue, Henri},
  title={Int\'egrale, Longueur, Aire},
  note={Ph.D. dissertation, Universit\'a Henri Poincar\'e Nancy 1},
  pages={iv+129},
  year={1902},
  publisher={Published in: Annali di Mathematica Pura ed Applicata (3), 231-359.},
}

\bib{ACG}{article}{
  author={Lee, Peng-Yee},
  title={On ${\rm ACG}^*$ functions},
  journal={Real Anal. Exchange},
  volume={15},
  date={1989/90},
  number={2},
  pages={754--759},
}

\bib{leebaire}{article}{
  author={Lee, Peng-Yee},
  author={Tang, Wee-Kee},
  author={Zhao, Dongsheng},
  title={An equivalent definition of functions of the first Baire class},
  journal={Proc. Amer. Math. Soc.},
  volume={129},
  date={2001},
  number={8},
  pages={2273--2275},
}

\bib{longmann}{book}{
  author={Longley, John},
  author={Normann, Dag},
  title={Higher-order Computability},
  year={2015},
  publisher={Springer},
  series={Theory and Applications of Computability},
}

\bib{malin}{article}{
  author={Maliszewski, A.},
  title={On the products of bounded Darboux Baire one functions},
  journal={J. Appl. Anal.},
  volume={5},
  date={1999},
  number={2},
  pages={171--185},
}

\bib{mentoch}{article}{
  author={Menkyna, Robert},
  title={On representations of Baire one functions as the sum of lower and upper semicontinuous functions},
  journal={Real Anal. Exchange},
  volume={38},
  date={2012/13},
  number={1},
  pages={169--175},
}

\bib{myerson}{article}{
  author={Myerson, Gerald I.},
  title={First-class functions},
  journal={Amer. Math. Monthly},
  volume={98},
  date={1991},
  number={3},
  pages={237--240},
}

\bib{montahue}{article}{
  author={Montalb{\'a}n, Antonio},
  title={Open questions in reverse mathematics},
  journal={Bull. Sym. Logic},
  volume={17},
  date={2011},
  number={3},
  pages={431--454},
}

\bib{mietje}{article}{
  author={Mytilinaios, Michael E.},
  author={Slaman, Theodore A.},
  title={On a question of Brown and Simpson},
  conference={ title={Computability, enumerability, unsolvability}, },
  book={ series={London Math. Soc. Lecture Note Ser.}, volume={224}, },
  date={1996},
  pages={205--218},
}

\bib{neeman}{article}{
  author={Neeman, Itay},
  title={Necessary use of $\Sigma ^1_1$ induction in a reversal},
  journal={J. Symbolic Logic},
  volume={76},
  date={2011},
  number={2},
  pages={561--574},
}

\bib{nemoto1}{article}{
  author={Nemoto, Takako},
  title={A constructive proof of the dense existence of nowhere-differentiable functions in $C[0, 1]$},
  journal={Computability},
  volume={9},
  date={2020},
  number={3-4},
  pages={315--326},
}

\bib{nemoto2}{article}{
  author={Nemoto, Takako},
  author={Sato, Kentaro},
  title={A marriage of Brouwer's intuitionism and Hilbert's finitism I: Arithmetic},
  journal={J. Symb. Log.},
  volume={87},
  date={2022},
  number={2},
  pages={437--497},
}

\bib{nieuwbronna2}{article}{
  author={Neubrunnov\'{a}, Anna},
  title={On certain generalizations of the notion of continuity},
  journal={Mat. \v {C}asopis Sloven. Akad. Vied},
  volume={23},
  date={1973},
  pages={374--380},
}

\bib{nieuwebronna}{article}{
  author={Neubrunnov\'{a}, Anna},
  title={On quasicontinuous and cliquish functions},
  journal={\v {C}asopis P\v {e}st. Mat.},
  volume={99},
  date={1974},
  pages={109--114},
}

\bib{nguyen1}{article}{
  author={Nguyen, Ba Minh},
  author={Nguyen, Xuan Tan},
  title={On the continuity of vector convex multivalued functions},
  journal={Acta Math. Vietnam.},
  volume={27},
  date={2002},
  number={1},
  pages={13--25},
}

\bib{dagsamIII}{article}{
  author={Normann, Dag},
  author={Sanders, Sam},
  title={On the mathematical and foundational significance of the uncountable},
  journal={Journal of Mathematical Logic, \url {https://doi.org/10.1142/S0219061319500016}},
  date={2019},
}

\bib{dagsamVI}{article}{
  author={Normann, Dag},
  author={Sanders, Sam},
  title={The Vitali covering theorem in Reverse Mathematics and computability theory},
  journal={Submitted, arXiv: \url {https://arxiv.org/abs/1902.02756}},
  date={2019},
}

\bib{dagsamVII}{article}{
  author={Normann, Dag},
  author={Sanders, Sam},
  title={Open sets in Reverse Mathematics and Computability Theory},
  journal={Journal of Logic and Computation},
  volume={30},
  number={8},
  date={2020},
  pages={pp.\ 40},
}

\bib{dagsamV}{article}{
  author={Normann, Dag},
  author={Sanders, Sam},
  title={Pincherle's theorem in reverse mathematics and computability theory},
  journal={Ann. Pure Appl. Logic},
  volume={171},
  date={2020},
  number={5},
  pages={102788, 41},
}

\bib{dagsamIX}{article}{
  author={Normann, Dag},
  author={Sanders, Sam},
  title={The Axiom of Choice in Computability Theory and Reverse Mathematics},
  journal={Journal of Logic and Computation},
  volume={31},
  date={2021},
  number={1},
  pages={297-325},
}

\bib{dagsamXI}{article}{
  author={Normann, Dag},
  author={Sanders, Sam},
  title={On robust theorems due to Bolzano, Jordan, Weierstrass, and Cantor in Reverse Mathematics},
  journal={Journal of Symbolic Logic, DOI: \url {doi.org/10.1017/jsl.2022.71}},
  pages={pp.\ 51},
  date={2022},
}

\bib{dagsamX}{article}{
  author={Normann, Dag},
  author={Sanders, Sam},
  title={On the uncountability of $\mathbb {R}$},
  journal={Journal of Symbolic Logic, DOI: \url {doi.org/10.1017/jsl.2022.27}},
  pages={pp.\ 43},
  date={2022},
}

\bib{dagsamXII}{article}{
  author={Normann, Dag},
  author={Sanders, Sam},
  title={Betwixt Turing and Kleene},
  journal={LNCS 13137, proceedings of LFCS22},
  pages={pp.\ 18},
  date={2022},
}

\bib{dagsamXIII}{article}{
  author={Normann, Dag},
  author={Sanders, Sam},
  title={On the computational properties of basic mathematical notions},
  journal={Journal of Logic and Computation, DOI: \url {doi.org/10.1093/logcom/exac075}},
  pages={pp.\ 44},
  date={2022},
}

\bib{dagsamXIV}{article}{
  author={Normann, Dag},
  author={Sanders, Sam},
  title={The Biggest Five of Reverse Mathematics},
  journal={To appear in Journal of Mathematical Logic, arxiv: \url {https://arxiv.org/abs/2212.00489}},
  pages={pp.\ 44},
  date={2023},
}

\bib{novady}{article}{
  author={Novikov, P. S.},
  author={Adyan, S. I.},
  title={On a semicontinuous function},
  language={Russian},
  journal={Moskov. Gos. Ped. Inst. U\v {c}. Zap.},
  volume={138},
  date={1958},
  pages={3--10},
}

\bib{fosgood}{article}{
  author={Osgood, William F.},
  title={Non-Uniform Convergence and the Integration of Series Term by Term},
  journal={Amer. J. Math.},
  volume={19},
  date={1897},
  number={2},
  pages={155--190},
}

\bib{pawla}{article}{
  author={Pawlak, Ryszard Jerzy},
  title={On some class of functions intermediate between the class $B^*_1$ and the family of continuous functions},
  note={Real functions, Part I},
  journal={Tatra Mt. Math. Publ.},
  volume={19},
  date={2000},
  number={part I},
  part={part I},
  pages={135--144},
}

\bib{gepeperd}{article}{
  author={Pfeffer, Washek F.},
  title={A note on the generalized Riemann integral},
  journal={Proc. Amer. Math. Soc.},
  volume={103},
  date={1988},
  number={4},
  pages={1161--1166},
}

\bib{riehabi}{book}{
  author={Riemann, Bernhard},
  title={Ueber die Darstellbarkeit einer Function durch eine trigonometrische Reihe},
  publisher={Abhandlungen der K\"oniglichen Gesellschaft der Wissenschaften zu G\"ottingen, Volume 13},
  note={Habilitation thesis defended in 1854, published in 1867, pp.\ 47},
}

\bib{rieal}{book}{
  author={Riemann (auth.), Bernhard},
  author={Roger Clive Baker and Charles O.\ Christenson and Henry Orde (trans.)},
  title={Bernhard Riemann: collected works},
  publisher={Kendrick Press},
  year={2004},
  pages={555},
}

\bib{yamayamaharehare}{article}{
  author={Sakamoto, Nobuyuki},
  author={Yamazaki, Takeshi},
  title={Uniform versions of some axioms of second order arithmetic},
  journal={MLQ Math. Log. Q.},
  volume={50},
  date={2004},
  number={6},
  pages={587--593},
}

\bib{saks}{book}{
  author={Saks, Stanis\l aw},
  title={Theory of the integral},
  publisher={Dover Publications, Inc., New York},
  date={1964},
  pages={xv+343},
}

\bib{samcie22}{article}{
  author={Sanders, Sam},
  title={The uncountability of $\R $ in Reverse Mathematics},
  year={2022},
  journal={Lecture notes in Computer Science 13359, Proceedings of CiE22, Springer},
  pages={272--286},
}

\bib{samcsl23}{article}{
  author={Sanders, Sam},
  title={On the computational properties of the Baire category theorem},
  year={2022},
  journal={Submitted, arxiv: \url {https://arxiv.org/abs/2210.05251}},
}

\bib{samBIG}{article}{
  author={Sanders, Sam},
  title={Big in Reverse Mathematics: the uncountability of the real numbers},
  year={2023},
  journal={Journal of Symbolic logic, doi: \url {https://doi.org/10.1017/jsl.2023.42 }},
  pages={pp.\ 34},
}

\bib{volterraplus}{article}{
  author={Silva, Cesar E.},
  author={Wu, Yuxin},
  title={No Functions Continuous Only At Points In A Countable Dense Set},
  journal={Preprint, arxiv: \url {https://arxiv.org/abs/1809.06453v3}},
  date={2018},
}

\bib{simpson1}{collection}{
  title={Reverse mathematics 2001},
  series={Lecture Notes in Logic},
  volume={21},
  editor={Simpson, Stephen G.},
  publisher={ASL},
  place={La Jolla, CA},
  date={2005},
  pages={x+401},
}

\bib{simpson2}{book}{
  author={Simpson, Stephen G.},
  title={Subsystems of second order arithmetic},
  series={Perspectives in Logic},
  edition={2},
  publisher={CUP},
  date={2009},
  pages={xvi+444},
}

\bib{snutg}{article}{
  author={Smith, Henry J. Stephen},
  title={On the Integration of Discontinuous Functions},
  journal={Proc. Lond. Math. Soc.},
  volume={6},
  date={1874/75},
  pages={140--153},
}

\bib{stillebron}{book}{
  author={Stillwell, J.},
  title={Reverse mathematics, proofs from the inside out},
  pages={xiii + 182},
  year={2018},
  publisher={Princeton Univ.\ Press},
}

\bib{taokejes}{collection}{
  author={Tao, Terence},
  title={{Compactness and Compactification}},
  editor={Gowers, Timothy},
  pages={167--169},
  year={2008},
  publisher={The Princeton Companion to Mathematics, Princeton University Press},
}

\bib{taoeps}{book}{
  author={Tao, Terence},
  title={An epsilon of room, I: real analysis},
  series={Graduate Studies in Mathematics},
  volume={117},
  publisher={American Mathematical Society, Providence, RI},
  date={2010},
}

\bib{thomeke}{book}{
  author={Thomae, Carl J.T.},
  title={Einleitung in die Theorie der bestimmten Integrale},
  publisher={Halle a.S. : Louis Nebert},
  date={1875},
  pages={pp.\ 48},
}

\bib{troelstra1}{book}{
  author={Troelstra, Anne Sjerp},
  title={Metamathematical investigation of intuitionistic arithmetic and analysis},
  note={Lecture Notes in Mathematics, Vol.\ 344},
  publisher={Springer Berlin},
  date={1973},
  pages={xv+485},
}

\bib{trohim}{article}{
  author={Trohim\v {c}uk, Ju. Ju.},
  title={An example of a point-set},
  language={Russian},
  journal={Ukrain. Mat. \v {Z}.},
  volume={13},
  date={1961},
  number={1},
  pages={117--118},
}

\bib{volaarde2}{article}{
  author={Volterra, Vito},
  title={Alcune osservasioni sulle funzioni punteggiate discontinue},
  journal={Giornale di matematiche},
  volume={XIX},
  date={1881},
  pages={76-86},
}

\bib{poro}{article}{
  author={Zaj\'{\i }\v {c}ek, L.},
  title={On $\sigma $-porous sets in abstract spaces},
  journal={Abstr. Appl. Anal.},
  date={2005},
  number={5},
  pages={509--534},
}

\end{biblist}
\end{bibdiv}
\bye